\title{Serre functors and graded categories} 
\author{Joseph Grant}
\date{\vspace{-1em}}
\setlist[itemize]{
 topsep=0pt}
\setlist[enumerate]{
 topsep=0pt}
\newcommand{\Sh}{\operatorname{Sh}}
\newcommand{\Oring}{\mathcal{O}}
\newcommand{\bimod}{\operatorname{-mod-}}
\newcommand{\da}{\text{-}}
\newcommand{\mMod}{\operatorname{-mod}}
\newcommand{\B}{\mathcal{B}}
\newcommand{\C}{\mathcal{C}}
\newcommand{\D}{\mathcal{D}}
\newcommand{\biB}{{\boldsymbol{\mathcal{B}}}}
\newcommand{\biC}{{\boldsymbol{\mathcal{C}}}}
\newcommand{\biF}{\twofun{F}}
\newcommand{\biG}{\twofun{G}}
\newcommand{\biP}{\twofun{P}}
\newcommand{\M}{\mathcal{M}}
\newcommand{\N}{\mathcal{N}}
\newcommand{\PP}{\mathcal{P}}
\newcommand{\U}{\mathcal{U}}
\newcommand{\V}{\mathcal{V}}
\newcommand{\End}{\operatorname{End}\nolimits}
\newcommand{\Pic}{\operatorname{Pic}\nolimits}
\newcommand{\DPic}{\operatorname{DPic}\nolimits}
\newcommand{\PicEnd}{\operatorname{PicEnd}\nolimits}
\newcommand{\Db}{\operatorname{D^b}\nolimits}
\newcommand{\into}{\hookrightarrow}
\newcommand{\unit}{{\mathbbmss{1}}} 
\newcommand{\arr}[1]{\stackrel{#1}{\to}}
\newcommand{\arrr}[1]{\stackrel{#1}{\longrightarrow}}
\newcommand{\id}{1} 
\newcommand{\ob}{\operatorname{ob}}
\newcommand{\op}{{\operatorname{op}\nolimits}}
\newcommand{\rev}{{\operatorname{rev}\nolimits}}
\newcommand{\fdv}{{\operatorname{fVec}\nolimits}} 
\newcommand{\twofun}[1]{\operatorname{\textbf{#1}}}
\newcommand{\kk}{\mathbbmss{k}}
\newcommand{\Z}{\mathbbmss{Z}}
\newcommand{\Autom}{\operatorname{Autom}\nolimits}
\newcommand{\Hom}{\operatorname{Hom}\nolimits}
\newcommand{\Autoeq}{\operatorname{Autoeq}\nolimits}
\newcommand{\Cat}{\operatorname{\twofun{Cat}}}
\newcommand{\aCat}{\operatorname{-\kk\Cat}}
\newcommand{\GrCat}[1]{\operatorname{\kk\Cat}^{#1}}
\newcommand{\aSCat}{\operatorname{-\se^\chi\Cat}}
\newcommand{\GrSCat}[1]{\operatorname{\se^\chi\Cat}^{#1}}
\newcommand{\then}{\Rightarrow}
\newcommand{\Alg}{\operatorname{\textbf{Alg}}}
\newcommand{\Aut}{\operatorname{Aut}\nolimits}
\newcommand{\ic}{\twofun{ic}}
\newcommand{\Ind}{\twofun{Ind}}
\newcommand{\ind}{\Ind}
\newcommand{\skel}{\operatorname{skel}}
\newcommand{\core}{\operatorname{core}\nolimits}
\newcommand{\tgpd}{\twofun{2Gpd}}
\newcommand{\tcat}{\twofun{2Cat}}
\newcommand{\Mat}{\twofun{Mat}}
\newcommand{\deloop}{\twofun{B}}
\newcommand{\looping}{\Omega}
\newcommand{\catei}{\twofun{Cat}_\ei}
\newcommand{\cati}{\twofun{Cat}_{\mathbf{2,1}}}
\newcommand{\ei}{\mathbf{2,0}}
\newcommand{\BaseCat}{\twofun{BaseCat}}
\newcommand{\AddCat}{\twofun{AddCat}}
\newcommand{\fBaseCat}{\twofun{fBaseCat}}
\newcommand{\fAddCat}{\twofun{fAddCat}}
\newcommand{\fAddSCat}{\twofun{fAdd}\se\Cat}
\newcommand{\fBasicAlg}{\twofun{fBasicAlg}}
\newcommand{\fBasicFAlg}{\twofun{fBasicFAlg}}
\newcommand{\Tri}{{\twofun{Tri}}}
\newcommand{\fBim}{{\twofun{ffBim}}}
\newcommand{\ev}{\operatorname{ev}}
\newcommand{\smsh}{\#}
\newcommand{\se}{{\mathbbmss{S}}}
\newcommand{\te}{{\mathbbmss{T}}}
\newcommand{\sse}{s}
\newcommand{\trivG }{{\Delta}}
\newcommand{\gen}[1]{\langle#1\rangle}
\newcommand{\st}{\;\left|\right.\;}
\newcommand{\grsh}[1]{\{{#1}\}}
\newcommand{\centre}{\mathcal{Z}}
\newcommand{\sgn}{\operatorname{sgn}}
\newcommand{\tr}{\operatorname{tr}}
\newcommand{\res}[1]{|_{#1}}
\newcommand{\dert}{\otimes^{\operatorname{\textbf{L}}}}
\newcommand{\prim}{\operatorname{prim}}
\newcommand{\xyc}[1]{\hbox to 3em{\hss$\displaystyle{#1}$\hss}}
\newcommand{\dashvvert}{\rotatebox[origin=c]{-90}{\ensuremath\dashv}}
\newcommand{\abs}[1]{\left|#1\right|}
\newcommand{\isom}{\simeq}
\newcommand{\Mon}{\operatorname{\mathcal{M}on}}
\newcommand{\Grp}{\operatorname{\mathcal{G}rp}}
\newcommand{\SCat}{\se\V\Cat}
\newcommand{\FAlg}{\operatorname{\textbf{FAlg}}}
\newtheorem{theorem}{Theorem}[section]
\newtheorem{corollary}[theorem]{Corollary}
\newtheorem{lemma}[theorem]{Lemma}
\newtheorem{proposition}[theorem]{Proposition}
\newtheorem{question}[theorem]{Question}
\newtheorem*{thma}{Theorem A}
\newtheorem*{thmb}{Theorem B}
\theoremstyle{definition}
\newtheorem{definition}[theorem]{Definition}
\newtheorem{remark}[theorem]{Remark}
\newtheorem{example}[theorem]{Example}
\begin{document}

\maketitle

\begin{abstract}
We study Serre structures on categories enriched in pivotal monoidal categories, and apply this to study Serre structures on two types of graded $\kk$-linear categories: categories with group actions and categories with graded hom spaces.  We check that Serre structures are preserved by taking orbit categories and skew group categories, and describe the relationship with graded Frobenius algebras.  Using a formal version of Auslander-Reiten translations, we show that the derived category of a $d$-representation finite algebra is fractionally Calabi-Yau if and only if its preprojective algebra has a graded Nakayama automorphism of finite order.  This connects various results in the literature and gives new examples of fractional Calabi-Yau algebras.

\emph{Keywords:} Serre functor, orbit category, enriched category, derived Picard group, fractional Calabi-Yau, preprojective algebra
\end{abstract}

\tableofcontents

\setlength{\parindent}{0pt} 
\setlength{\parskip}{1em plus 0.5ex minus 0.2ex}


\section{Introduction}

This paper relates two algebraic structures: Frobenius algebras and categories with Serre duality.  It’s well-known that they are related, but we set out the relationship quite carefully in order to prove some statements in the representation theory of quivers.

We want to show that the Nakayama automorphism of a preprojective algebra has finite order if and only if the derived category of the corresponding quiver is fractional Calabi-Yau.  The Nakayama automorphism and the Serre functor agree in certain situations so, up to some technical considerations about gradings, this is about them both having finite order.

To give a precise statement we use graded categories, which live in a 2-category, and each one is enriched in a fixed monoidal category.  We give equivalences between such structures, so we are really studying higher category theory, but we only use 2-categories in this paper.

\subsection*{Frobenius algebras and Calabi-Yau categories}

Given a tensor category, such as vector spaces, one can look for algebra objects and coalgebra objects.  A Frobenius algebra is an object with both algebra and coalgebra structures simultaneously, satisfying certain axioms of a topological nature \cite{abrams}.  This reflects the fact that commutative Frobenius algebras classify 2-dimensional topological quantum field theories: see \cite{kock} for a nice explanation.  If our tensor category has duals then we can express the Frobenius algebra axioms in a form that would look more recognisable to a classically trained algebraist, using nondegenerate forms or module isomorphisms.  Reformulating the definitions in this way has the advantage of exhibiting an automorphism of our object known classically as the Nakayama automorphism.  If this automorphism is the identity, one calls the algebra a symmetric algebra.

We might want a ``many-object'' version of a Frobenius algebra.  This is a category $\C$ with Serre duality.  It comes with a Serre functor $\se:\C\arr\sim\C$, an autoequivalence which replaces the Nakayama automorphism.  In situations of interest our categories often have extra structure (e.g., a triangulated structure) and come with a suspension functor $\Sigma:\C\arr\sim\C$.  Then automorphisms of $\C$ should really be equivariant, i.e., they should come equipped with natural isomorphisms witnessing a weak commutation with $\Sigma$.

If the Serre functor is naturally isomorphic to an $n$th power of $\Sigma$, $\C$ is said to be a \emph{Calabi-Yau} category of dimension $n$ \cite{kon-ens}.  Calabi-Yau categories, and more generally Calabi-Yau $A_\infty$-categories, are also connected to TQFTs \cite{costello} as well as to homological mirror symmetry.  There is a subtlety in their definition: we should really have an equivariant isomorphism of equivariant functors between $\se$ and $\Sigma^n$.  Calabi-Yau categories are abundant: as well as the examples coming from geometry which motivate the terminology, there is a formal construction which, from a dg-algebra, produces a new dg-algebra whose derived category is Calabi-Yau \cite{kel-dcy}.  

One could weaken the definition of a Calabi-Yau category and look for categories with Serre duality where we have a natural isomorphism between some power $\se^m$ of the Serre functor and $\Sigma^N$: these are known as fractional Calabi-Yau categories of dimension $N/m$ \cite{kon-ens}.  At first the definition may seem surprising, but there are interesting examples of such categories in many parts of mathematics.  Perhaps the simplest is quiver representations \cite{my}, but they also appear in algebraic geometry, matrix factorisations, theoretical physics, and other parts of representation theory \cite{taka1,aa,klm,himo,kuz}.

The aim of this paper is to establish methods to prove that certain categories are fractional Calabi-Yau.  We do this in two steps: first we change the equivariant structure to one inspired by classical Auslander-Reiten theory, thereby moving to a situation which is often better understood in the representation theory of algebras.  Then we move back to a one-object setting, so we can test the fractional Calabi-Yau property via the Nakayama automorphism of a well-known algebra called the preprojective algebra.

When investigating Calabi-Yau properties, we are asking about isomorphims of functors.  This is a 2-categorical notion, and we believe it is easier if we embrace this from the start.  Although this seems reasonable to us, it is less common within the representation theory of finite-dimensional algebras, so we take the opportunity to formulate 2-categorical versions of some well-known results.

\subsection*{Quivers and preprojective algebras}

Let $Q$ be a quiver and let $\kk Q$ be its path algebra.  Assume $Q$ has no oriented cycles, then $\kk Q$ is finite-dimensional.  According to Gabriel's theorem, the representation theory of the algebra $\kk Q$ behaves very differently depending on the underlying graph of $Q$: if it is an ADE Dynkin graph then $\kk Q$ has finitely many indecomposable modules; otherwise it has infinitely many.

There is another algebra we can associate to $Q$: its preprojective algebra $\Pi(Q)$.  An explicit presentation of this is given by doubling the arrows of $Q$ and quotienting out by certain relations; a basis-free description of $\Pi(Q)$ was given by Baer, Geigle, and Lenzing \cite{bgl}. 
Whether or not the underlying graph is Dynkin is reflected in the algebra $\Pi(Q)$ itself: in the Dynkin case it is finite-dimensional; otherwise it is infinite-dimensional.

The preprojective algebra arose out of an attempt to better understand Gabriel's theorem.  The proof of Gabriel's theorem by Bernstein, Gelfand, and Ponomarev introduced and made use of Coxeter functors on the category of $\kk Q$-modules, then Gelfand and Ponomarev introduced model algebras to understand the image of the projective $\kk Q$-modules under these functors.  In the modern approach, Coxeter functors are replaced by Auslander-Reiten functors $\tau^-$ and model algebras by preprojective algebras.  

In the Dynkin case, $\Pi(Q)$ has another nice property: it is self-injective.  This was a folklore result for a long time.  A careful proof, depending on complicated case-by-case checks, was written down by Brenner, Butler, and King \cite{bbk}.  In fact, they showed more than just self-inectivity: they proved that $\Pi(Q)$ is a Frobenius algebra and gave an explicit formula for its Nakayama automorphism.  In particular, it squares to the identity.  Using the derived category interpretation of the preprojective algebra, the Nakayama automorphism should correspond to the Serre functor $\se$: this has previously been explained on the level of the module category \cite{grant-nak}.  
So we expect $\se^2$ to act trivially on the orbit category; on $\Db(\kk Q)$, it should be a power of $\tau^-$.  This can be deduced on the Grothendieck group $K_0(\kk Q)$ from work of Gabriel.
The derived functor of $\tau^-$ is realised as the shifted inverse Serre functor $\se^-\Sigma$.  So a natural isomorphism between $\se^2$ and $\tau^{-N}$ should correspond to a natural isomorphism between $\se^{N+2}$ and $\Sigma^N$.  This is the fractional Calabi-Yau property of $\Db(\kk Q)$, which was introduced by Kontsevich and proved in general by Miyachi and Yekutieli \cite{kon-ens,my}.

Cluster algebras, introduced by Fomin and Zelevinsky, have been hugely influential in modern representation theory.  The construction of the cluster category by Buan, Marsh, Reineke, Reiten, and Todorov \cite{bmrrt} 
involves starting with the derived category of $\kk Q$-modules and constructing a new category whose objects are the orbits of an endofunctor.

Iyama and Oppermann realised that the abstract construction of the preprojective algebra given by Baer, Geigle, and Lenzing could be interpreted using orbit categories: $\Pi(Q)$ is an endomophism algebra of a generator of $\Db(\kk Q)/\tau^-$.  With this perspective, they were able to give a very general explanation for the fact that $\Pi(Q)$ is self-injective: it follows from the existence of a Serre functor on the derived category of $\kk Q$ \cite{io-stab}.

The work of Iyama and Oppermann is more general than described above: they work with $d$-representation finite algebras $\Lambda$, for which there exists a good higher dimensional analogue of the Auslander-Reiten functor \cite{iya-ar}.  
Around the same time, Herschend and Iyama noticed that examples of $d$-representation finite algebras had the fractional Calabi-Yau property, and were able to show that this property always holds up to some twists \cite{hi-frac}.

At this point, the picture seems clear: 
\begin{center}
  \begin{tabular}{ c c c }
  $d$-representation finite algebra & $\longleftrightarrow$ & $(d+1)$-preprojective algebra \\ 
  Serre duality on the derived category & $\longleftrightarrow$ & self-injectivity of preprojective algebra \\ 
  fractional Calabi-Yau property & $\longleftrightarrow$ & finite order Nakayama automorphism
  \end{tabular}
\end{center}
However, there are two problems:
\begin{itemize}
\item  Even in the classical $d=1$ case, for Dynkin quivers, things don't seem to match up exactly.  For a quiver of type $D_4$, the derived category is  Calabi-Yau of dimension $2/3$ \cite{my}, which suggests the Nakayama automorphism of the corresponding preprojective algebra should have order $3-2$, i.e., it should be the identity.  But we know that this automorphism is non-trivial, even in the group of outer automorphisms \cite{bbk}.
\item  The results of Herschend and Iyama aren't as strong as one might hope.  They only obtain twisted fractional Calabi-Yau properties, and are only able to make a precise connection to the higher preprojective algebra under a ``homogeneous'' assumption which is quite restrictive.
\end{itemize}
In this article we address these problems as follows:
\begin{itemize}
\item We carefully analyse the relationship between Serre functors and Nakayama automorphisms in a graded setting and expose a mismatch of signs in the naive correspondence between them.  When taking account of these signs, the results of Miyachi-Yekutieli and Brenner-Butler-King do match.  Based on this, we propose new graded Nakayama automorphism of the preprojective algebra, which is not related to the classical Nakayama automorphism by an inner automorphism.  From the perspective of derived categories, we see this graded automorphism as more fundamental than the classical Nakayama automorphism.
\item We prove a general result which states that the derived category of a $d$-representation finite algebra is  Calabi-Yau if and only if the appropriate graded Nakayama automorphism of the preprojective algebra is of finite order.  
\end{itemize}
This also allows us to connect two other results in the literature: the higher preprojective algebras of type $A$ have finite order Nakayama automorphism \cite{hi-frac} and the iterated higher Auslander algebras of type $A$ are fractional Calabi-Yau \cite{djw,djl}.

\subsection*{Description of main results}

Given suitable finiteness and linearity assumptions, there is a correspondence between algebras and categories.  We discuss this in Section \ref{s:cats}.
\[ \text{ algebras (A) } \longleftrightarrow \text{ categories (C) } \]
Given a group $G$, there are two notions of $G$-structure on a category $\C$: either the hom-spaces can be $G$-graded, or we can have an action of $G$ by functors $\C\to\C$ and work equivariantly.
  We discuss this in Section \ref{s:graded}.
\[ \text{ graded (G) } \longleftrightarrow \text{ equivariant (E) } \]
In all these situations, there is a notion of Serre duality where we have a Serre functor $\se:\C\to\C$ with quasi-inverse $\se^-$ (see Section \ref{s:serre}.)
If our category is triangulated, we also have a suspension $\Sigma:\C\to\C$ .  We could use the Serre functor to replace the suspension with a translation $\te=\se^-\Sigma$.
  We discuss this in Section \ref{s:cy}.
\[ \text{ translated (T) } \longleftrightarrow \text{ suspended (S)} \]
These three choices give $2^3$ possibilities:
\[
\xymatrix@!0{
\text{TGA}\ar@{-}[rr]\ar@{-}[dd]
& & \text{TEA} \ar@{-}'[d][dd] \\
& \text{TGC} \ar@{-}[ul]\ar@{-}[rr]\ar@{-}[dd]
& & \text{TEC} \ar@{-}[ul]\ar@{-}[dd] \\
 \text{SGA} \ar@{-}'[r][rr] & & \text{SEA}
\\
& \text{SGC} \ar@{-}[rr]\ar@{-}[ul] & & \text{SEC} \ar@{-}[ul]
}
\]
Partly due to technical convenience and partly due to historical accident, some of these combinations are more well-studied than others.  Two corners are particularly well studied.
To the top left corner of our diagram, translated graded algebras (TGA), belongs the preprojective algebra of a quiver.  To the bottom right corner, suspended equivariant categories (SEC), belongs the derived category of a quiver.  We study how to move between these corners, keeping track of Serre duality.

We would like to swap the fractional Calabi-Yau relation $\se^m\cong \Sigma^N$ in a triangulated category for the relation $\se^{m-N}\cong \te^N$, but this is too naive: we need to keep track of the suspended structure and record this in a new translated structure.  To give a correct technical formulation, we note that Serre functors come with canonical commutation transformations.  In particular, if $(\D,F)$ is a $\Z$-equivariant category, such as a triangulated category with $F=\Sigma$, we have maps $\zeta_F:\se F\arr\sim F\se$.  The study of commutation transformations for triangulated functors goes back to Verdier \cite{verdier}, and their importance in studying Calabi-Yau structures was emphasised in work of Keller \cite{k-cy,k-orbit-corr}.  
With this insight, we can state our main result (see Theorems \ref{thm:scy-criteria} and \ref{thm:fcy-orbit}) as follows:
\begin{thma}
Let $\D$ be a category with automorphism $F:\D\to\D$ and let $\chi:\Z\to\kk^\times$ be a character.  Let $\te=\se^- F$.  Then we have an  $F$-equivariant isomorphism of functors 
\[ (\se, \chi(1) \zeta_F)^m\cong (F, \chi(1) \id_{F^2})^N \]
if and only if we have 
a $\te$-equivariant isomorphism of functors 
\[ (\se, \chi(1) \zeta_\te)^{m-N}\cong (\te, \id_{\te^2})^N. \]
Moreover, if the orbit category $\C=\D/\te$ has finitely many isomorphism classes of objects, we construct a graded Frobenius algebra $A$.  Then the natural isomorphisms above exist if and only if the $\chi$-Nakayama automorphism of $A$ is the identity map up to a shift by $N$.
\end{thma}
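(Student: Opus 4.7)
The plan is to prove Theorem A in two stages, matching the two assertions. For the first part, the equivalence between the $F$-equivariant isomorphism $(\se, \chi(1)\zeta_F)^m\cong (F, \chi(1)\id_{F^2})^N$ and the $\te$-equivariant isomorphism $(\se, \chi(1)\zeta_\te)^{m-N}\cong (\te, \id_{\te^2})^N$, the key algebraic identity is the definitional relation $F=\se\te$. On underlying functors this lets us rewrite
\[ F^N = (\se\te)^N \isom \se^N\te^N \]
by iteratively applying the commutation $\zeta_\te:\se\te\to\te\se$; substituting into $\se^m\isom F^N$ yields $\se^{m-N}\isom \te^N$. So the content is not that the underlying natural isomorphisms exist, but that the equivariance data and the scalar twists by $\chi(1)$ translate correctly between the $F$- and $\te$-equivariant settings.

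I would first isolate a small technical lemma: given an autoequivalence $G$ on $\D$ together with a prescribed commutation with $\se$, an $F$-equivariant natural isomorphism of functors is the same data as a $\te$-equivariant natural isomorphism, under the translation that replaces $F$ by $\se\te$ and conjugates the equivariance square using $\zeta_\te$. The scalars $\chi(1)$ sit exactly where the character is applied to the commutation of $\se$ with the translating functor, so once the dictionary is set up, tracking them through the factorisation $F=\se\te$ is mechanical. The main obstacle in this part is precisely that bookkeeping: the same underlying natural isomorphism represents different equivariant data depending on how $\chi$ enters the coherence squares, and these are the signs that the earlier sections of the paper were set up to handle. I expect the proof to consist mostly of assembling commutative diagrams of coherence 2-cells and reading off that the two equivariance conditions become identified after multiplying by the appropriate power of $\se$.

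For the second part, I would invoke the correspondences developed in Sections \ref{s:cats} and \ref{s:graded}: finitely many isomorphism classes of objects in $\C=\D/\te$ lets us form the endomorphism algebra $A$ of a set of representatives in $\C$, and the $\Z$-grading on $A$ is induced by the powers of $\te$ since $\te$ becomes the shift functor on the orbit category. The Serre structure on $\D$ descends to one on $\C$ (via the results quoted from Section \ref{s:serre}), which by the TGA/SGA correspondence is equivalent to a graded Frobenius structure on $A$ with a $\chi$-twisted Nakayama automorphism $\nu_\chi$.

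Finally, I would translate the $\te$-equivariant isomorphism $(\se,\chi(1)\zeta_\te)^{m-N}\isom (\te,\id_{\te^2})^N$ across this equivalence. Under the orbit construction $\te$ becomes the shift-by-one functor and $\se^{m-N}$ becomes the $(m-N)$-fold Serre functor on $\C$, so an isomorphism with $\te^N$ in the $\te$-equivariant 2-category says exactly that the Serre structure on $\C$ is the $N$-shift; on the algebra side this says that $\nu_\chi$ equals the shift-by-$N$ automorphism of $A$. The hard part, again, is verifying that the specific scalar $\chi(1)$ appearing in the equivariant isomorphism matches the $\chi$-twist in the definition of $\nu_\chi$, not some other cocycle that could a priori differ by a unit; this reduces to comparing two natural coherence constants in the TGA/SGA correspondence, and is where the careful 2-categorical setup pays off.
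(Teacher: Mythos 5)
Your overall strategy is the one the paper follows: pass from $F$-equivariance to $\te$-equivariance using the Serre functor, then descend to the orbit category $\C=\D/\te$ and its graded base algebra. Your second stage is essentially the paper's proof of Theorem \ref{thm:fcy-orbit}: apply the orbit 2-functor, note that $\te$ becomes the degree-shift $(\id_\C,\underline{1})$ (Lemma \ref{lem:gshift}), that the $\chi$-equivariant Serre structure descends to a $\chi$-hom-graded one (Proposition \ref{prop:orbiting-serre}), and that this corresponds to a $\chi$-graded Frobenius structure on the base algebra (Theorem \ref{thm:graded-ss-base}), with Asashiba's biequivalence making the steps reversible.

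The gap is in the first stage, exactly where you declare the work to be ``mechanical bookkeeping''. Your dictionary between $F$-equivariant and $\te$-equivariant isomorphisms is, in the paper, the change-of-action isomorphism $(\se^-,\zeta^\dagger)_\star:\M_{\D,F}\to\M_{\D,\te}$ of Proposition \ref{prop:central-equiv}; for it to exist and to be \emph{monoidal} (you need monoidality to transport the $m$-th and $N$-th powers) one must know that $(\se^-,\zeta^\dagger)$ lies in the Drinfeld centre of $\Autom(\D)$, i.e.\ Proposition \ref{prop:serre-centre} and Lemma \ref{lem:dualincentre}; conjugating squares by $\zeta_\te$ alone does not obviously give a compatible assignment on composites. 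More seriously, the precise form of the target isomorphism in the statement is not automatic: you must show (i) that under this dictionary the equivariant AR functor acquires the \emph{identity} commutation datum $\id_{\te^2}$ rather than some other natural automorphism of $\te^2$ (a sign, say) --- this is Lemma \ref{lem:taucomm1}, whose proof needs the explicit computation $\zeta_{\se^-}=\eta_1\circ\varepsilon_2$ of Lemma \ref{lem:commutewithqinv}, itself resting on the compatibility Lemma \ref{lem:serretwice}; and (ii) that the transported Serre datum is again $\chi(1)\zeta_\te$, i.e.\ genuinely a $\chi$-equivariant Serre structure for $(\D,\te)$ --- this is Proposition \ref{prop:transfer-serre}. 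Since the whole point of the theorem is to pin down these scalars (this is the sign mismatch between Miyachi--Yekutieli and Brenner--Butler--King that motivates the paper), asserting that they come out right by bookkeeping, without proving (i) and (ii), leaves the substantive content of the statement unestablished. A smaller point: $F$ is only naturally isomorphic to $\se\te$, not equal, so your rewriting $F^N\cong\se^N\te^N$ should be organised as in the paper, by composing $\se^m\cong F^N$ with $(\se^-,\sse^-)^N$ and using centrality to commute it past $(F,\chi(1)\id_{F^2})$.
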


We apply this in the case of $d$-representation algebras and higher preprojective algebras.  Suppose $\Lambda$ is a basic $d$-representation finite algebra, so it has a $d$-cluster tilting module (see \cite{io-stab}).  Let $\Pi$ denote its higher preprojective algebra, which has a natural grading coming from its definition as a tensor algebra.  Then $\Pi$ is Frobenius, so it has a (classical) Nakayama automorphism $\alpha$.  Define $\beta(a)=(-1)^{dp}$ for $a\in\Pi$ a degree $p$ homogeneous element of $\Pi$.
Then, by Theorem \ref{thm:drf-fcy} and Corollary \ref{cor:fcyfinitenak}, we have:
\begin{thmb}
The derived category of a $d$-representation finite algebra $\Lambda$ is fractionally Calabi-Yau if and only if the graded Nakayama automorphism $\beta$ of its preprojective algebra has finite order.
\end{thmb}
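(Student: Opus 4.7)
The plan is to reduce Theorem B to Theorem A together with Theorem \ref{thm:drf-fcy} and Corollary \ref{cor:fcyfinitenak}, by specialising the abstract orbit-category setup to the derived category of a $d$-representation finite algebra. Take $\D = \Db(\Lambda)$ with automorphism $F = \Sigma^d$, so that $\te = \se^- F = \se^-\Sigma^d$ is the derived version of the $d$-Auslander--Reiten translation $\tau_d^-$. Choose the character $\chi:\Z\to\kk^\times$ with $\chi(1)=(-1)^d$; this is the character that produces the sign $(-1)^{dp}$ on the degree-$p$ part of $\Pi$, and is therefore what will turn the $\chi$-Nakayama automorphism supplied by Theorem A into $\beta$ rather than the classical $\alpha$.

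First I would verify the hypotheses of Theorem A. Since $\Lambda$ is basic and $d$-representation finite, Iyama--Oppermann provide a $d$-cluster tilting module $M$ whose additive closure is stable under $\tau_d^-$, and the orbit category of this subcategory under $\te$ has finitely many isomorphism classes of objects. Next I would identify the graded Frobenius algebra $A$ that Theorem A manufactures from this orbit category: by the standard orbit-category realisation of the higher preprojective algebra, $A$ is canonically isomorphic to $\Pi$ with its tensor-algebra grading. With $\chi(1)=(-1)^d$ built into the construction, the $\chi$-Nakayama automorphism produced by Theorem A agrees with $\beta$ on each graded piece.

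Now I would translate the fractional Calabi--Yau property of $\Db(\Lambda)$, namely an $\Sigma$-equivariant isomorphism of the form $\se^m\cong\Sigma^N$, into the $F$-equivariant form on the left-hand side of Theorem A; this repackaging is the content of Theorem \ref{thm:drf-fcy}, which is where the signs $\chi(1)\zeta_F$ first become visible. Theorem A then converts fractional Calabi--Yauness into the statement that $\beta$ acts on $\Pi$ as the identity up to a grading shift by $N$, for some pair $(m,N)$. Finally Corollary \ref{cor:fcyfinitenak} identifies ``identity up to some grading shift'' with ``finite order'' for a graded automorphism of a positively graded Frobenius algebra of finite type, closing the biconditional.

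The main obstacle is bookkeeping for the signs: verifying that the Frobenius algebra produced by Theorem A really is $\Pi$ with its standard grading, and that the $\chi$-Nakayama automorphism coming out of the theorem is pointwise equal to $\beta(a)=(-1)^{dp}\alpha(a)$, rather than to $\alpha$ or some other twist. This is precisely the mismatch flagged in the introduction via the $D_4$ example, where the classical Nakayama automorphism has order two but the derived category has Calabi--Yau dimension $2/3$; the sign $(-1)^d$ in $\chi$ is exactly what reconciles the Miyachi--Yekutieli and Brenner--Butler--King computations, and tracing through the definitions of Sections \ref{s:cats}--\ref{s:cy} should show that no other sign choice is consistent with the canonical commutation data $\zeta_F$ on a triangulated category.
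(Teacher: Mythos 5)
Your proposal is correct and takes essentially the same route as the paper: Theorem B is obtained by combining Theorem \ref{thm:drf-fcy} (which is exactly Theorem A applied to the $d$-cluster tilting subcategory $(\U,\Sigma^d)$ with $\chi=(\sgn)^d$ and $\te=\se^-\Sigma^d$, whose orbit category has $\Z$-graded base algebra $\Pi$, so the $\chi$-Nakayama automorphism is $\beta=\alpha^{(\sgn)^d}$) with Corollary \ref{cor:fcyfinitenak}. The only imprecision is setting $\D=\Db(\Lambda)$ at the outset rather than $\U$ — the bridge between the fractional Calabi-Yau property of $\Db(\Lambda)$ and the synthetic property of $(\U,\Sigma^d)$ is Proposition \ref{prop:u-fcy}, which needs the bimodule argument — but you effectively cover this by restricting to the cluster-tilting subcategory and citing Theorem \ref{thm:drf-fcy}.
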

The algebra automorphism $\beta$ is part of a degree-adjusted automorphism $(\beta,\ell)$ and this tells us about the Calabi-Yau dimension: $(\beta,\ell)^{m-N}\cong (\id,\underline{N})$ if and only if $\Db(\Lambda)$ is fractional Calabi-Yau of dimension $dN/m$.  Using Theorem B, we give a new proof that Iyama's higher Auslander algebras of type A are fractional Calabi-Yau, and we prove that Pasquali's 2-representation finite algebras coming from Postnikov diagrams are fractional Calabi-Yau.

\subsection*{Summary of contents}

We now outline the contents of this article.

Section \ref{s:cats} starts with the basic 2-dimensional category theory we will use, including adjunctions, equivalences, and 2-groupoids.  We describe idempotent completion as a 2-functor.  We then discuss monoidal categories, including duals, pivotal structures, and the Drinfeld centre.  Next we explain the Picard group of a monoidal category.  We then give basic facts about $\kk$-linear categories and a 2-categorical treatment of indecomposable objects.  Finally we discuss the connection between $\kk$-categories and $\kk$-algebras, and give the definition of a base algebra (Definition \ref{def:basealg}) which will play an important role later.

In Section \ref{s:serre} we meet the definition of a Serre structure on a category enriched in a monoidal category with pivotal structure.  We discuss uniqueness and transfer of structure across equivalences of categories.  We see the compatibility lemma: roughly, this says that if we know how a Serre functor acts on objects, and we have the structural isomorphisms in the definition of a Serre structure, then we can recover the action of the Serre functor on morphisms.  We revise the well-known result that Serre functors $\se$ commute with autoequivalences, note that this in fact gives us an element of the Drinfeld centre of the monoidal category of automorphisms, and prove a technical result on the commutation map $\se\se^-\to\se^-\se$ which will be useful later.  Finally, we discuss how Serre structures on $\kk$-categories are related to Frobenius structures on $\kk$-algebras.

In Section \ref{s:graded} we see the two notions of grading on a category.  The first we call an equivariant structure: this is where a group acts on our category.  We discuss strict and weak actions, and the equivariant centre.  The second we call a hom-graded structure: this is where the hom spaces form a ``graded algebra with several objects''.  Then we introduce graded Serre structures (in both settings) which depend on a character of our group.  In the hom-graded setting, they have appeared in the physics literature \cite{laz} but don't seem to have been used much.  This author finds working with an arbitrary character conceptually easier than keeping track of minus signs.  We carefully prove uniqueness results and make a connection to graded Frobenius structures.  Then we consider moving between the two notions of grading using the well-known operations of orbit categories and smash products.  We use Asashiba's theorem that these are 2-functors and they give an equivalence of 2-categories \cite{asa2}.  We show that both operations preserve Serre structures in the appropriate graded setting.  In the case of orbit categories, this is related to a result of Dugas \cite{dug-mesh}.

We restrict to the $\Z$-graded setting in Section \ref{s:cy}.  We start by discussing triangulated categories and triangulated functors, and recall results of Bondal-Kapranov and Van den Bergh on triangulated Serre functors.  Then we define an abstract (inverse) Auslander-Reiten functor $\te$ on a $\Z$-equivariant category.  We study ``change of action'', where the $\Z$-equivariant structure on a category is modified by an element of the Drinfeld centre, such a Serre functor.  Keeping the triangulated situation in mind, this allows us to prove a nice compatibility result for the Auslander-Reiten functor: the map $\te\te\to\te\te$ we obtain by change of action is just the identity map.  Next we study a formal, or ``synthetic'', version of the fractional Calabi-Yau property.  Given three functors $F$, $\se$, and $\te=\se^-F$, a relation can be expressed in three equivalent ways: $\se^m=F^N$, $\se^k=\te^N$, and $\te^m=F^k$, where $m=k+N$.  We provide an equivariant version of this basic idea, and observe that $\se^k=\te^N$ can be checked on the orbit category or, given a finiteness condition, on the graded base algebra.

Finally, we apply our theory in Section \ref{s:apps}.  First we outline how tensor products of chain complexes give graded functors between derived categories and use this to connect the strong Calabi-Yau property and the bimodule Calabi-Yau property.  Then we discuss Dynkin quivers.  After recalling the background information we quote the fractional Calabi-Yau result of Miyachi and Yekutieli, and the Nakayama automorphism result of Brenner, Butler, and King.  We show how these results are related and, largely, determine each other.  Then we explain how the theory is applied to the $d$-representation finite algebras which appear in Iyama's higher homological algebra: these are algebras of higher global dimension $0\leq d<\infty$ to which many parts of the representation theory of Dynkin quivers can be generalised.  We explain how to use Herschend and Iyama's calculation of the Nakayama automorphism for the preprojective algebras of ``higher type $A$ algebras'' to recover Dyckerhoff, Jasso, and Walde's result that these algebras are fractional Calabi-Yau.  We finish by considering algebras constructed from Postnikov diagrams by Pasquali \cite{pas}: these 2-representation finite algebras are always fractional Calabi-Yau and we show in an explicit example how to calculate their Calabi-Yau dimension.

\emph{Acknowledgements:} 
Thanks to Bethany Marsh for originally directing me to the fractional Calabi-Yau property.
Thanks to Alex Dugas, Martin Herschend, and Gustavo Jasso for helpful discussions and pointers to the literature.  
Thanks to an anonymous referee for helpful comments and corrections.

Parts of this paper were written during visits to the Institut des Hautes \'Etudes Scientifiques
and the Institut Henri Poincar\'e in Paris.  Thanks to both institutions, and the Jean-Paul Gimon Fund, for financial support and for providing a great working environment.

\section{2-categories} \label{s:cats}

\emph{Convention:} 
$g\circ f$ means $\bullet\arr f \bullet\arr g\bullet$.


\subsection{2-dimensional category theory}

We recall some standard facts from 2-dimensional category theory, partly to fix notations and terminology.  Precise definitions can be found in \cite{leibi} or \cite{jy}, and our discussion is also motivated by \cite{nlab}.

\subsubsection{Bicategories and 2-categories}

A bicategory is a weak $2$-dimensional category.  
Bicategories $\biB$ have 0-cells $x,y,\ldots$ (also called objects), $1$-cells $f,g:x\to y$ between 0-cells, and 2-cells $\alpha:f\to g$ between 1-cells.  We write $\ob\biB$ for the objects of $\biB$.  Between any two objects $x$ and $y$ there is a hom category $\biB(x,y)$ whose objects are the $1$-cells and whose morphisms are the $2$-cells.  
$\biB$ comes with (horizontal) composition functors 
\[ \biB(y,z) \times \biB(x,y) \to \biB(x,z) \]
which are associative up to specified natural isomorphism and with unit functors
\[ \{ \xymatrix{\star \ar@(ur,dr)[]|{id} 
 }\} \to  \biB(x,x) \]
which are unital up to specified natural isomorphism.
In particular, each object $x$ has an identity 1-cell $\id_x$, and each 1-cell $f:x\to y$ has an identity 2-cell $\id_f$.

\begin{example}
There is a bicategory of bimodules.  Its 0-cells are rings, its 1-cells are $R\da S$-bimodules, and its 2-cells are bimodule maps.  Horizontal composition is given by tensor product.  Note that the weak structure in the definition of a bicategory is necessary here: for an $R\da S$-bimodule $M$, we have $R\otimes_RM\cong M$ but $R\otimes_RM\neq M$.
\end{example}

A (strict, locally small) 2-category $\biC$ is a category enriched in categories.  This means that 
between each ordered pair of objects we have a morphism category $\biC(x,y)$, and the composition is functorial.  2-categories are bicategories where the natural  isomorphisms for associativity and unitality are identities.  
Note that the $0$- and $1$-cells of $\biC$ form a (classical) 1-category, called the underlying category of $\biC$.

\begin{example}
A category $\C$ is called \emph{small} if $\ob\C$ is a set.
There is a $2$-category $\Cat$ whose 0-cells are small categories, 1-cells are functors, and 2-cells are natural transformations.  
\end{example}

\subsubsection{2-functors}

Let $\biB$ and $\biC$ be bicategories.  A \emph{weak 2-functor} $\biF:\biB\to\biC$ is a function $\ob\biB\to\ob\biC$, which we also denote $\biF$, together with a collection of functors
\[ \biF_{x,y}: \biB(x,y)\to \biC(\biF x,\biF y) \]
which commute with the composition and unit functors up to specified natural isomorphisms.  

Let $\biB$ and $\biC$ be 2-categories.  A 2-functor $\biF:\biB\to\biC$ is a weak 2-functor which strictly preserves units and horizontal composition of 2-cells (see \cite[Proposition 4.1.8]{jy}).

Given two 2-functors $\biF,\biG:\biB\to\biC$,
a 2-transformation $\alpha:\biF\to \biG$ is a function sending each $x\in\ob\biB$ to a 1-cell $\alpha_x:\biF x\to \biG x$ such that, for all $f\in\biB(x,y)$, we have $\biG f\circ \alpha_x=\alpha_y\circ \biF f:\biF x\to \biG y$.

\begin{example}
There is a $2$-category $\tcat$ whose 0-cells are small 2-categories, 1-cells are 2-functors, and 2-cells are 2-transformations.  (As for categories, a 2-category $\biC$ is \emph{small} if $\ob\biC$ is a set.)
\end{example}

Given any property $P$ of functors, we say a $2$-functor $\biF:\biB\to\biC$ is \emph{locally $P$} if for all $x,y\in\ob\biB$, the component functor $\biF_{x,y}$
has property $P$.  For example, $\biF$ is locally an equivalence if every functor $\biF_{x,y}$ is an equivalence of categories.

\begin{definition}
Given a 2-category $\biB$ and a subset $S\subset\ob\biB$, the \emph{full sub-2-category} of $\biB$ on $S$ is the 2-category $\biB\res{S}$ with 0-cells $S$ and morphism categories $\biB\res{S}(x,y)=\biB(x,y)$.
\end{definition}
Note that every full sub-2-category comes with an inclusion 2-functor $\biB\res{S}\to\biB$ which is locally an equivalence.
\begin{lemma}\label{lem:2funfull}
Let $\biF:\biB\to\biC$ be a 2-functor and let $S\subset\ob\biB$ and $T\subset\ob\biC$.
If $\biF(S)\subseteq T$ then $\biF$ restricts to a 2-functor $\biF\res{S}:\biB\res{S}\to\biC\res{T}$.
\end{lemma}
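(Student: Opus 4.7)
The plan is to define $\biF\res{S}$ componentwise from $\biF$ and then verify that each ingredient lands in the restricted 2-category and still satisfies the 2-functor axioms. On 0-cells, set $\biF\res{S}(x) = \biF(x)$ for $x \in S$; this is a well-defined function $S \to T$ precisely because of the hypothesis $\biF(S) \subseteq T$. On morphism categories, observe that by the definition of the full sub-2-category we have $\biB\res{S}(x,y) = \biB(x,y)$ and $\biC\res{T}(\biF x, \biF y) = \biC(\biF x, \biF y)$ for $x,y \in S$, so we may simply set $(\biF\res{S})_{x,y} = \biF_{x,y}$, viewed as a functor between these categories.

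Next I would verify the 2-functor axioms. Since $\biB$ and $\biC$ are (strict) 2-categories and $\biF$ is a (strict) 2-functor, I need to check that $\biF\res{S}$ strictly preserves horizontal composition and identity 1-cells. Both conditions are identities between functors built out of the components $\biF_{x,y}$ for $x,y \in S$; because the composition and unit functors in $\biB\res{S}$ and $\biC\res{T}$ are inherited verbatim from $\biB$ and $\biC$ by fullness, these identities are just restrictions of the corresponding identities for $\biF$. Hence they hold automatically.

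The only thing that could go wrong is that some intermediate composite falls outside of $T$, but this cannot happen: composable 1-cells in $\biC\res{T}$ have source and target in $T$, and any object appearing as $\biF(x)$ for $x \in S$ lies in $T$ by assumption. So there is really no obstacle; the content of the lemma is bookkeeping, and its usefulness is in justifying later notational restrictions of 2-functors to full sub-2-categories. If I wanted to generalise this to a weak 2-functor between bicategories, the same argument would go through with the associator and unitor natural isomorphisms of $\biF$ also restricting, and again the fullness of the sub-bicategories would make the verification trivial.
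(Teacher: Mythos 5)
Your argument is correct and is exactly the routine verification the paper has in mind (the paper states this lemma without proof, treating it as immediate from the definition of full sub-2-categories): restrict on 0-cells using $\biF(S)\subseteq T$, keep the component functors since the hom-categories are unchanged by fullness, and the strict functoriality axioms are inherited verbatim.
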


\subsubsection{Equivalences and adjunctions}

A 2-cell is called \emph{iso} (or an \emph{isomorphism}) if it has a two-sided inverse.  If there exists an iso $2$-cell $\alpha:f\to g$ we write $f\isom g$.   
A 1-cell $f:x\to y$ is an \emph{equivalence} if there exists $g:y\to x$ such that $gf\isom \id_x$ and $fg\isom \id_y$, and we say $g$ is a \emph{quasi-inverse} of $f$.  If such an equivalence exists, we say $x$ and $y$ are equivalent.
A weak 2-functor $\biF:\biB\to\biC$ is a \emph{biequivalence} if every functor $\biF_{x,y}$ is an equivalence of categories and every object of $\biB$ is equivalent to $\biF x$ for some $x\in\ob\biB$.

The following simple fact is quite useful.
\begin{lemma}\label{lem:equvbiequiv}
Every equivalence $f:x\to y$ in a bicategory $\biB$ induces a biequivalence $F:\biB\to\biB$ such that $Fx=y$.
\end{lemma}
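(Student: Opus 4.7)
The plan is to build $F$ by conjugating across $f$. Fix a quasi-inverse $g:y\to x$ and promote $f\ladj g$ to an adjoint equivalence, which is always possible (see \cite{leibi,jy}). For every object $z\in\ob\biB$, choose an equivalence $e_z:z\to Fz$ with quasi-inverse $e_z^*$: set $e_x=f$, $e_x^*=g$, and $e_z=e_z^*=\id_z$ for $z\neq x$. Then define $F$ on objects by $Fx=y$ and $Fz=z$ otherwise, on 1-cells by $Fh=e_b\circ h\circ e_a^*:Fa\to Fb$ for $h:a\to b$, and on 2-cells by whiskering. So $F$ fixes all 1-cells away from $x$ (up to unitors) and transports any 1-cell touching $x$ across the adjoint equivalence.

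Next I would verify that $F$ is a weak 2-functor. The coherence 2-cell $Fh_2\circ Fh_1\isom F(h_2\circ h_1)$ is built by inserting $e_b^*\circ e_b\isom\id_b$ in the middle of $Fh_2\circ Fh_1$, and the coherence 2-cell $F(\id_a)\isom\id_{Fa}$ comes from $e_a\circ e_a^*\isom\id_{Fa}$. Starting from an adjoint equivalence (rather than an arbitrary quasi-inverse) means these isomorphisms already satisfy the triangle identities, so the hexagon and unit axioms required of a weak 2-functor reduce to the coherence axioms of $\biB$ itself.

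Finally, $F$ is a biequivalence. Essential surjectivity on objects is immediate since $Fz\isom z$ for every $z$ via $e_z$. Locally, $F_{a,b}=e_b\circ(-)\circ e_a^*$ is an equivalence of hom-categories because horizontal composition with an equivalence in a bicategory induces an equivalence of hom-categories, with quasi-inverse $e_b^*\circ(-)\circ e_a$. The main obstacle is the middle step: bookkeeping the coherence 2-cells is fiddly, but working with an adjoint equivalence from the outset is precisely what keeps the verification from becoming painful.
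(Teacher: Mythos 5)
Your construction is correct and is essentially the paper's own proof: the paper likewise fixes all objects other than $x$, sends $x$ to $y$, and defines the hom-functors as the identity away from $x$ and as pre-/post-composition with $f$ or its quasi-inverse when $x$ is involved, calling the coherence verification a standard construction. Your write-up just packages this uniformly as conjugation by chosen equivalences $e_z$ and spells out the point the paper leaves implicit, namely that upgrading to an adjoint equivalence makes the compositor and unitor satisfy the weak 2-functor axioms.
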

\begin{proof} This is a standard construction.  Define a weak 2-functor $\biF$ which acts on 0-cells by sending $x$ to $y$ and fixing other objects.  For $w,z\neq x$ let $\biF_{w,z}$ be the identity functor.  If one or both of $w,z$ are $x$, define $\biF_{w,z}$ by pre- or post-composing with $f$ or its quasi-inverse.
\end{proof}

An \emph{adjunction} in a 2-category is the data $(f,g,\eta,\varepsilon)$ where $f:x\to y$ and $g:y\to x$ are 1-cells and 
$\eta:\id_x\to gf$ and $\varepsilon:fg\to \id_y$ 
are 2-cells 
satisfying the triangle identities:
\[ \xymatrix @C=30pt {
f  \ar[rr]^{\id_f} \ar[dr]_{\id_{f}\eta} && f &&g \ar[rr]^{\id_g} \ar[dr]_{\eta \id_g} && g \\
& fgf \ar[ur]_{\varepsilon\id_f} & &&& gfg \ar[ur]_{\id_g\varepsilon} & 
}\]
$f$ is called the left adjoint and $g$ the right adjoint.  We write $f\dashv g$.

An \emph{adjoint equivalence} is an adjunction such that $\eta$ and $\varepsilon$ are both isomorphisms.  Any equivalence $f:x\arr\sim y$ can be upgraded to an adjoint equivalence $(f,g,\eta,\varepsilon)$.

\subsubsection{2-groupoids}\label{sss:2grpds}

\begin{definition}
A 2-category is called a \emph{2-groupoid} if all its 1-cells are equivalences and all its 2-cells are isomorphisms.
\end{definition}
The full sub-2-category of $\tcat$ on the 2-groupoids is called the 2-category of 2-groupoids, and is denoted $\tgpd$.

Every 2-category $\biC$ has a maximal sub-2-groupoid, which we denote $\core\biC$.  This construction is functorial: if $\biF:\biB\to\biC$ is a $2$-functor we get a new 2-functor $\core \biF:\core\biB\to\core\biC$.  It is not $2$-functorial on $\tcat$, because an arbitrary $2$-transformation involves component 1-cells which may not be isomorphisms, but it is 2-functorial on the sub-2-category $\cati$ of $\tcat$ which contains only iso $2$-transformations.

The $2$-category $\core(\Cat)$, which we denote $\catei$, will be particularly important for us.  
Its 0-cells are categories, its 1-cells are equivalences of categories, and its 2-cells are natural isomorphisms.


\subsubsection{Idempotent completion}

Let $\C$ be a category and let $e\in\C(x,x)$ be an idempotent, i.e., $e^2=e$.  We say that $e$ \emph{splits} if $\C$ has an object $y$ and and morphisms $f:x\to y$ and $g:y\to x$ such that $e=gf$ and $\id_y=fg$.  
If all idempotents in $\C$ split we say that $\C$ is \emph{idempotent complete}.  We have a full sub-2-category $\ic\Cat$ of $\Cat$ on the indempotent complete categories.

Any category $\C$ has an idempotent completion $\ic\C$.  Its objects are pairs $(x,e)$ where $x\in\ob\C$ and $e^2=e\in\C(x,x)$.  The morphism set from $(x,e)$ to $(y,d)$ is $d\C(x,y)e=\{dfe\st f\in\C(x,y)\}$.
Note $\ic\C$ is idempotent complete, and we have a fully faithful inclusion functor $\C\into\ic\C$.
\begin{lemma}
If $\C$ is idempotent complete then $\C\into\ic\C$ is dense, so $\C$ and $\ic\C$ are equivalent.
\end{lemma}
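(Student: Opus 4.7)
The plan is to verify density directly using the splitting of idempotents, and then invoke the standard fact that a fully faithful, essentially surjective functor is an equivalence (the inclusion $\C\into\ic\C$ was already noted to be fully faithful just before the statement).

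For density, I would take an arbitrary object $(x,e)$ of $\ic\C$, so $e\in\C(x,x)$ with $e^2=e$. Since $\C$ is idempotent complete, $e$ splits: there exists $y\in\ob\C$ and morphisms $f:x\to y$, $g:y\to x$ in $\C$ with $gf=e$ and $fg=\id_y$. I claim $(x,e)\cong(y,\id_y)$ in $\ic\C$, where $(y,\id_y)$ lies in the image of the inclusion $\C\into\ic\C$.

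To produce the isomorphism, observe that $f$ lies in the hom set $\id_y\,\C(x,y)\,e$ of $\ic\C((x,e),(y,\id_y))$, because $\id_y\cdot f\cdot e=(fg)f=f(gf)=fe=f$; symmetrically $g\in e\,\C(y,x)\,\id_y=\ic\C((y,\id_y),(x,e))$. The composites are $g\circ f=e$, which is the identity morphism of $(x,e)$ in $\ic\C$, and $f\circ g=\id_y$, which is the identity of $(y,\id_y)$. Hence $f$ and $g$ are mutually inverse isomorphisms in $\ic\C$, so $(x,e)$ is isomorphic to an object in the image of $\C$.

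There is no real obstacle: the only thing to be careful about is keeping track of which compositions are being computed in $\C$ versus in $\ic\C$, and checking that the ``sandwiching'' idempotents $d$ and $e$ in the hom-sets $d\C(x,y)e$ of $\ic\C$ interact correctly with the splitting data. Once density is established, combining it with full faithfulness of the inclusion gives the desired equivalence $\C\simeq\ic\C$.
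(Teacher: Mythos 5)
Your proof is correct and is exactly the standard argument intended here (the paper states this lemma without proof): splitting $e$ as $gf$ with $fg=\id_y$ exhibits $(x,e)\cong(y,\id_y)$ in $\ic\C$, noting that the identity of $(x,e)$ is $e$, which gives density, and together with the already-noted full faithfulness of $\C\into\ic\C$ this yields the equivalence.
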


This construction extends to a 2-functor $\ic:\Cat\to\Cat$ as follows.  
 Given a functor $F:\C\to\D$ we define $\ic F$ on objects by $(x,e)\mapsto (Fx,Fe)$ and on maps it is just $F$. 
For a natural transformation $\alpha:F\to G$, define components by 
$$\ic \alpha_{(x,e)}:Fx\arrr{Fe}Fx\arrr{\alpha_x}Gx\arrr{Ge} Gx.$$

The full subcategory of $\ic\C$ on objects $(x,\id_x)$ is equivalent to $\C$.  By considering the components of natural transformations on these objects, we get the following:
\begin{lemma}\label{lem:ic-lff}
The 2-functor $\ic:\Cat\to\Cat$ is locally fully faithful.
\end{lemma}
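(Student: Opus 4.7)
The goal is to show that for any pair of categories $\C, \D$, the functor $\ic_{\C,\D}: \Cat(\C,\D) \to \Cat(\ic\C, \ic\D)$ that sends $F \mapsto \ic F$ and $\alpha \mapsto \ic\alpha$ is fully faithful on the hom-categories of natural transformations. My plan is to exploit the hint in the paragraph preceding the lemma: the full subcategory of $\ic\C$ spanned by objects of the form $(x, \id_x)$ is isomorphic (not just equivalent) to $\C$, and on such an object the defining formula $\ic\alpha_{(x,e)} = Ge \circ \alpha_x \circ Fe$ specialises to $\ic\alpha_{(x,\id_x)} = \alpha_x$. This is the single observation powering both halves of the statement.

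For faithfulness, suppose $\ic\alpha = \ic\beta$ for natural transformations $\alpha, \beta: F \to G$. Evaluating at the objects $(x, \id_x)$ and using the computation above gives $\alpha_x = \beta_x$ for every $x \in \ob\C$, hence $\alpha = \beta$.

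For fullness, take any $\gamma: \ic F \to \ic G$ and define $\alpha_x := \gamma_{(x, \id_x)} \in \D(Fx, Gx)$. The plan splits into two steps: first verify that $\alpha$ is natural with respect to morphisms in $\C$, and second verify that $\ic\alpha = \gamma$. Naturality of $\alpha$ follows by applying naturality of $\gamma$ to any morphism $f: x \to y$ of $\C$ viewed as a morphism $(x, \id_x) \to (y, \id_y)$ in $\ic\C$; this yields the required square $Gf \circ \alpha_x = \alpha_y \circ Ff$ in $\D$. For the second step, fix an object $(x,e)$ of $\ic\C$ and use naturality of $\gamma$ on the morphism $e: (x, \id_x) \to (x, e)$, whose image under $\ic F$ and $\ic G$ is $Fe$ and $Ge$ respectively. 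This gives $Ge \circ \alpha_x = \gamma_{(x,e)} \circ Fe$. Since $\gamma_{(x,e)}$ is a morphism $(Fx, Fe) \to (Gx, Ge)$ in $\ic\D$, it already satisfies $Ge \circ \gamma_{(x,e)} \circ Fe = \gamma_{(x,e)}$, so post-composing the previous equality with $Fe$ (or equivalently, observing both sides are absorbed into $\gamma_{(x,e)}$) yields $\ic\alpha_{(x,e)} = Ge \circ \alpha_x \circ Fe = \gamma_{(x,e)}$.

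There is no real obstacle here beyond bookkeeping: the main point to get right is that the identity morphism of $(x,e)$ in $\ic\C$ is $e$ itself, and the two natural transformation squares for the morphisms $e: (x, \id_x) \leftrightarrows (x, e)$ combine with the constraint $\gamma_{(x,e)} \in Ge \cdot \D(Fx, Gx) \cdot Fe$ to force $\gamma_{(x,e)} = \ic\alpha_{(x,e)}$. Once this is set up, both halves of the lemma fall out of the identity $\ic\alpha_{(x,\id_x)} = \alpha_x$.
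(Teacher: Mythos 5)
Your proof is correct and takes essentially the same route as the paper: both define the candidate preimage of $\gamma$ by $x\mapsto\gamma_{(x,\id_x)}$ and then use the naturality square for the idempotent $e$ between $(x,\id_x)$ and $(x,e)$, together with $e^2=e$ and the fact that $\gamma_{(x,e)}$ is absorbed by $Fe$ and $Ge$, to conclude $\ic\tilde{\gamma}=\gamma$. The only cosmetic difference is that you run the naturality square along $e:(x,\id_x)\to(x,e)$ while the paper uses the opposite direction, and you spell out the (easy) naturality check for $\tilde{\gamma}$ that the paper leaves implicit.
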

\begin{proof}
Given functors $F,G:\C\to\D$ and natural transformations $\alpha,\beta:F\to G$, if $\ic\alpha=\ic\beta$ then in particular, for all $x\in\C$, $\alpha_x=\ic\alpha_{(x,\id_x)}=\ic\beta_{(x,\id_x)}=\beta_x$, so $\alpha=\beta$.  Thus $\ic_{\C,\D}$ is faithful.  Fullness follows from naturality: given a natural transformation $\gamma:\ic F\to\ic G$, define $\tilde{\gamma}:F\to G$ by $\tilde{\gamma}_x=\gamma_{(x,\id_x)}$ and then the naturality square for $(x,e)\to(x,\id_x)$, together with $e^2=e$, shows that $\ic\tilde{\gamma}=\gamma$.
\end{proof}

\subsection{Monoidal categories}\label{ss:moncat}

Monoidal categories will appear in at least two places: as enriching categories (e.g., a monoidal category with vector spaces as objects) and as endomorphism categories (e.g., a monoidal category with endofunctors as objects).

We outline just what we need; precise definitions can be found in \cite{egno}.  

\subsubsection{Monoidal categories and monoidal transformations}

A monoidal category $\M$ is a category with a tensor product bifunctor $-\otimes-:\M\times\M\to\M$ and a unit object $\unit$.  We often omit associator isomorphisms, pretending $\M$ is strictly associative; by Mac Lane's coherence theorem we can do this without loss of generality.
We do not assume $\M$ is symmetric or even braided.  

One-object bicategories correspond to monoidal categories by a process known as ``looping''.
\begin{example}
Any object $x$ in a bicategory $\biB$ 
has an endomorphism category $\biB(x,x)$ which has monoidal structure given by composition.  
\end{example}

Let $\M,\N$ be two monoidal categories.  A \emph{monoidal functor} from $\M$ to $\N$ is a pair $(F,J)$ consisting of a functor $F:\M\to\N$ and a binatural isomorphism $J:F(-)\otimes F(-)\arr\sim F(-\otimes-)$ satisfying unit and associativity conditions.  
If $J$ is the identity, we say $F$ is a \emph{strict} monoidal functor.
We get a category $\Mon$ of monoidal categories and monoidal functors.  

A \emph{monoidal transformation} $\alpha:(F,J)\to (G,K)$ between monoidal functors is a natural transformation such that, for all $X,Y\in\M$, the following diagram commutes:
\[ \xymatrix @C=30pt {
FX\otimes FY\ar[d]^{J_{X,Y}} \ar[r]^{\alpha_X\otimes \alpha_Y} & GX \otimes GY \ar[d]^{K_{X,Y}}  \\
F(X\otimes Y) \ar[r]^{\alpha_{X\otimes Y}} &G(X \otimes Y)
} \]
We get a 2-category of monoidal categories, monoidal functors, and monoidal transformations.   
If $F$ is an equivalence of categories then we say $(F,J)$ is a \emph{monoidal equivalence}.
These are precisely the 1-cells which are equivalences.

\subsubsection{Duals}

A  \emph{dual pair} in $\M$ is a 4-tuple $(L, R, c,e)$ where $L$ and $R$ are objects in $\M$ and $c:\unit\to R\otimes L$ (``coevaluation'') and $e: L\otimes R\to\unit$ (``evaluation'') are maps in $\M$ which satisy the usual triangle identities:  
\[ \xymatrix{
L \ar[rr]^{\id_L} \ar[dr]_{\id_L\otimes c} && L && R  \ar[rr]^{\id_R} \ar[dr]_{c\otimes \id_R} && R \\
& L\otimes R \otimes L \ar[ur]_{ e\otimes \id_L} & && & R\otimes L\otimes R\ar[ur]_{\id_R \otimes e} & 
}\]
We say $L$ is left dual to $R$, written $L={R^\vee }$, and $R$ is right dual to $L$, written $R={^\vee L}$.  So the left dual is written on the left in an evaluation map.  Note that the left dual is written with a superscript on the \emph{right}, and vice versa.  However, left matches left in the following:
\begin{example}
If $f:x\to x$ is left adjoint to $g:x\to x$ in a bicategory $\biB$, then 
$f$ is left dual to $g$ in the monoidal category $\biB(x,x)$.
\end{example}

Left and right duals are defined up to canonical isomorphism: 
if $L_1$ and $L_2$ are both left duals of $R$, so we have two dual pairs $(L_1,R,c_1,e_1)$ and $(L_2,R,c_2,e_2)$, then one can check that the map
\[ L_1 \arrr{\id\otimes c_2} L_1\otimes R\otimes L_2 \arrr{e_1\otimes \id} L_2 \]
is iso.
Duals are preserved by monoidal functors.  The unit object $\unit$ is self-dual.  

We will mainly work with left duals, and will sometimes add a subscript to the maps $c$ and $e$ as follows:
\[ c_X: \unit\to X\otimes X^\vee \;\; \text{ and } \;\; e_X: X^\vee\otimes X \to \unit.\]

From now on we assume $\M$ is \emph{left rigid}, i.e., every object has a left dual.
A category which is both left and right rigid is simply called \emph{rigid} (or, in some other parts of the literature, \emph{autonomous}).

Given a map $f:X\to Y$ we get a left dual map $f^\vee:Y^\vee\to X^\vee$ defined by
\[ Y^\vee \arrr{\id\otimes c_X} Y^\vee\otimes X\otimes X^\vee  \arrr{\id\otimes f\otimes \id} Y^\vee\otimes Y\otimes X^\vee \arrr{\ev_Y\otimes\id} X^\vee \]
If we fix a dual $X^\vee$ for each object $X\in\M$ then taking duals is a monoidal functor $\M\to\M^\op$, where $\M^\op$ is the same category as $\M$ with the opposite tensor product \cite[Exercise 2.10.7]{egno}.

From the definitions, we get the following useful results:
\begin{lemma}\label{lem:dualevcoev}
For all $X\in\M$ we have $e_X^\vee=c_{X^\vee}$ and $c_X^\vee=e_{X^\vee}$.  
\end{lemma}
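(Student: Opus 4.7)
The plan is to prove both equalities by directly unpacking the explicit formula for $f^\vee$ and applying the triangle identity at $X$. Both identities reduce to the same ``zig-zag'' move, so the proofs run in parallel.

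For $e_X^\vee = c_{X^\vee}$, I would apply the displayed formula for $f^\vee$ with $f=e_X:X^\vee\otimes X\to\unit$. Using the canonical identifications $\unit^\vee\isom\unit$ and $(X^\vee\otimes X)^\vee\isom X^\vee\otimes X^{\vee\vee}$ (which come from the monoidal-functor structure on the duality functor cited in the excerpt), the formula becomes
\[ e_X^\vee = (e_X\otimes\id_{X^\vee\otimes X^{\vee\vee}})\circ c_{X^\vee\otimes X}:\unit\to X^\vee\otimes X^{\vee\vee}. \]
The monoidal-functor structure on taking duals also provides the factorisation
\[ c_{X^\vee\otimes X} \;=\; (\id_{X^\vee}\otimes c_X\otimes\id_{X^{\vee\vee}})\circ c_{X^\vee}, \]
so after substitution the inner block of the composition is $(e_X\otimes\id_{X^\vee})\circ(\id_{X^\vee}\otimes c_X)$, which collapses to $\id_{X^\vee}$ by the triangle identity at $X$. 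What remains is exactly $c_{X^\vee}$.

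The second identity $c_X^\vee = e_{X^\vee}$ is proved by the mirror computation: the formula for $f^\vee$ with $f=c_X$ yields
\[ c_X^\vee = e_{X\otimes X^\vee}\circ(\id_{X^{\vee\vee}\otimes X^\vee}\otimes c_X):X^{\vee\vee}\otimes X^\vee\to\unit, \]
and the dual compatibility $e_{X\otimes X^\vee}=e_{X^\vee}\circ(\id_{X^{\vee\vee}}\otimes e_X\otimes\id_{X^\vee})$ turns the middle of the composition back into $(e_X\otimes\id_{X^\vee})\circ(\id_{X^\vee}\otimes c_X)=\id_{X^\vee}$, leaving $e_{X^\vee}$. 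I expect no real obstacle: the arithmetic content is a single application of the triangle identity in each case, and the only bookkeeping is suppressing associators and unitors, which is harmless under Mac Lane's coherence theorem (already invoked in the excerpt). A string-diagram presentation would make both identities into visually obvious zig-zag moves.
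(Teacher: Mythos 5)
Your proposal is correct and matches the intended argument: the paper states this lemma without proof as immediate from the definitions, and your computation — unpacking $f^\vee$ for $f=e_X$ and $f=c_X$ using the canonical dual structure on $\unit$ and on tensor products, then cancelling a single zig-zag via the triangle identity at $X$ — is exactly that verification. The only point worth making explicit is that the factorisations $c_{X^\vee\otimes X}=(1\otimes c_X\otimes 1)\circ c_{X^\vee}$ and $e_{X\otimes X^\vee}=e_{X^\vee}\circ(1\otimes e_X\otimes 1)$ are not extra inputs but are precisely the canonical dual-pair structure on a tensor product (the same identifications, together with $\unit^\vee\cong\unit$, that are needed for the statement to type-check), so invoking them is legitimate.
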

\begin{lemma}\label{lem:coevisom}
If the map $f:X\to Y$ in $\M$ is an isomorphism then the composition
\[ \unit \arrr{c_X} X\otimes X^\vee \arrr{f\otimes (f^{-1})^\vee} Y\otimes Y^\vee \]
is the coevaluation $c_Y$.
\end{lemma}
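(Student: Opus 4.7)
The plan is to unfold the definition of $(f^{-1})^\vee$ and simplify the resulting composition using the triangle identity for $(X, X^\vee, c_X, e_X)$ together with the interchange law for the tensor bifunctor. First, decompose
\[ f \otimes (f^{-1})^\vee = (f \otimes \id_{Y^\vee}) \circ (\id_X \otimes (f^{-1})^\vee), \]
so the task reduces to showing $(\id_X \otimes (f^{-1})^\vee) \circ c_X = (f^{-1} \otimes \id_{Y^\vee}) \circ c_Y$; composing with $f \otimes \id_{Y^\vee}$ then yields $c_Y$ via $f \circ f^{-1} = \id_Y$.

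Substituting the definition of $(f^{-1})^\vee$ into the left hand side expands it to the five-term composition
\[
\unit \arrr{c_X} X \otimes X^\vee \arrr{\id \otimes c_Y} X \otimes X^\vee \otimes Y \otimes Y^\vee \arrr{\id \otimes \id \otimes f^{-1} \otimes \id} X \otimes X^\vee \otimes X \otimes Y^\vee \arrr{\id \otimes e_X \otimes \id} X \otimes Y^\vee.
\]
By the interchange law, the opening $(\id \otimes c_Y) \circ c_X$ equals $(c_X \otimes \id_{Y \otimes Y^\vee}) \circ c_Y$ (both realise $c_X \otimes c_Y$), and then $f^{-1}$ (acting on the third factor) can be pulled past $c_X$ (which creates the first two factors), as they act on disjoint tensor slots. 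After these rearrangements the composition reads
\[
\unit \arrr{c_Y} Y \otimes Y^\vee \arrr{f^{-1} \otimes \id} X \otimes Y^\vee \arrr{c_X \otimes \id} X \otimes X^\vee \otimes X \otimes Y^\vee \arrr{\id \otimes e_X \otimes \id} X \otimes Y^\vee,
\]
and the final two maps factor as $((\id_X \otimes e_X) \circ (c_X \otimes \id_X)) \otimes \id_{Y^\vee}$, which collapses to $\id_{X \otimes Y^\vee}$ by the triangle identity. What remains is exactly $(f^{-1} \otimes \id_{Y^\vee}) \circ c_Y$, as required.

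The only point that needs real care is the interchange step: correctly identifying which pairs of operations commute on disjoint factors so that the residual configuration matches the triangle identity snake exactly. A more conceptual alternative is to verify directly that $(Y, X^\vee, (f \otimes \id) \circ c_X, e_X \circ (\id \otimes f^{-1}))$ is itself a left dual pair for $Y$, then invoke uniqueness of left duals: the canonical comparison iso $\psi : X^\vee \to Y^\vee$, given by $\psi = (e' \otimes \id) \circ (\id \otimes c_Y)$, computes to $(f^{-1})^\vee$, and the general intertwining identity $(\id_Y \otimes \psi) \circ c' = c_Y$ for such comparison isos is then precisely the claim. Either route gives the result.
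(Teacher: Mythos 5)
Your proof is correct and is essentially the argument the paper intends: the paper gives no proof, stating only that the lemma follows from the definitions, and your computation --- expand $(f^{-1})^\vee$, slide maps acting on disjoint tensor factors past one another by bifunctoriality, and collapse the snake $(\id_X\otimes e_X)\circ(c_X\otimes\id_X)=\id_X$ --- is exactly that routine verification, using the correct triangle identity for the paper's conventions $c_X:\unit\to X\otimes X^\vee$, $e_X:X^\vee\otimes X\to\unit$.
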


The following lemma is stated in \cite[Lemma 4.9]{sel} and credited to Saavedra Rivano \cite[Prop. 5.2.3]{saav}.  The reference isn't easy to access, so we give a proof.
\begin{lemma}\label{lem:saav}
If $\alpha:(F,J)\to (G,K)$ is a monoidal transformation 
then the following diagram commutes:
\[ \xymatrix{
F(X^\vee) \ar[r]^{\alpha_{X^\vee}} \ar[d]^\sim& G(X^\vee) \ar[d]^\sim \\
F(X)^\vee   & G(X)^\vee\ar[l]^{\alpha_{X}^\vee}
} \]
In particular, every monoidal transformation between functors on rigid categories 
is an isomorphism.
\end{lemma}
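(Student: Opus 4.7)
The left vertical isomorphism should be identified as the canonical map $\phi^F_X : F(X^\vee) \to F(X)^\vee$ witnessing that a monoidal functor preserves left duals. Explicitly, applying $(F, J)$ to the dual pair $(X^\vee, X, c_X, e_X)$ and pre/post-composing with the structure map $J$ (and the unit isomorphism of $F$) produces a dual pair $(F(X^\vee), F(X))$ in $\N$; uniqueness of duals yields the unique iso $\phi^F_X$ compatible with these structures. Analogously we obtain $\phi^G_X$, so the claim is the identity $\phi^F_X = \alpha_X^\vee \circ \phi^G_X \circ \alpha_{X^\vee}$.

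To establish commutativity my plan is to expand both sides into their defining compositions — involving $c_X$, $e_X$, $J$, $K$, and the unit isos — and to chase the resulting diagram. Three ingredients are needed: naturality of $\alpha$ applied to $c_X : \unit \to X \otimes X^\vee$ and $e_X : X^\vee \otimes X \to \unit$; the monoidal coherence square $K_{A,B} \circ (\alpha_A \otimes \alpha_B) = \alpha_{A\otimes B} \circ J_{A,B}$; and the unit-compatibility of $\alpha$, which forces $\alpha_{\unit}$ to be the identity after identifying $F(\unit)$ and $G(\unit)$ with $\unit_\N$. The only real obstacle is the notational bookkeeping of this chase; there is no single subtle step.

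For the ``in particular'' assertion, rearrange commutativity as
\[ \bigl((\phi^F_X)^{-1} \circ \alpha_X^\vee \circ \phi^G_X\bigr) \circ \alpha_{X^\vee} = \id_{F(X^\vee)}, \]
exhibiting $\alpha_{X^\vee}$ as a split monomorphism. Rigidity of $\M$ guarantees that every object is (canonically) of the form $X^\vee$ for some $X$ (take $X = {}^\vee Y$), so each component $\alpha_Y$ is split mono. Transported across the canonical iso $Y \isom ({}^\vee Y)^\vee$, the formula above assembles into a natural transformation $s : G \to F$, and one checks using the monoidality of $\alpha$ together with the tensor-compatibility of $\phi^F$ and $\phi^G$ that $s$ is again a monoidal transformation. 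Applying the split-mono conclusion to $s$ produces a retraction $t$ of $s$, and the elementary calculation
\[ t_Y = t_Y \circ s_Y \circ \alpha_Y = \id \circ \alpha_Y = \alpha_Y \]
gives $\alpha_Y \circ s_Y = t_Y \circ s_Y = \id$, so each $\alpha_Y$ is also split epi and hence an isomorphism. The step requiring most care is verifying that $s$ is genuinely monoidal; this is routine but requires combining compatibility of duals with tensor products and the monoidal axiom for $\alpha$.
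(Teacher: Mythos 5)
Your treatment of the square itself is essentially the paper's argument: the paper reduces, via the duality adjunction $\M(FX^\vee,FX^\vee)\cong\M(FX^\vee\otimes FX,\unit)$, to showing that $e_{GX}\circ(\alpha_{X^\vee}\otimes\alpha_X)$ is the evaluation of the induced dual pair for $FX$, and proves this with one diagram using the tensor square for $\alpha$, naturality at $e_X$, and unit compatibility; your planned chase is the same computation written out without the adjunction shortcut. (You are right to insist on the unit axiom: the paper's displayed definition of monoidal transformation omits it, but its proof invokes ``unitality'', and without it the statement fails --- the zero endotransformation of the identity functor of $\fdv$ satisfies the tensor square and is not invertible.) Where you genuinely add something is the ``in particular'' clause, which the paper does not argue at all. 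Your observation that the square alone only ever yields split monomorphisms is correct --- applying it at ${}^\vee Y$, or dualizing, keeps producing left inverses --- so an extra input really is needed, and your two-step device (assemble the retractions into $s:G\to F$, check $s$ is natural and monoidal, apply the split-mono conclusion to $s$, and deduce $t=\alpha$, hence $\alpha_Y\circ s_Y=\id$) is sound, provided you actually carry out the flagged verification that $s$ is monoidal \emph{before} knowing $\alpha$ is invertible; that verification is true but is a diagram chase of the same size as the step it is meant to avoid. The standard, more economical finish is to check directly that $s$ is a two-sided inverse: $s_Y\circ\alpha_Y=\id$ is exactly the evaluation identity you already proved (at $X={}^\vee Y$), and $\alpha_Y\circ s_Y=\id$ is its coevaluation mirror $(\alpha_{{}^\vee Y}\otimes\alpha_Y)\circ c^F=c^G$, proved by the same three ingredients with naturality of $\alpha$ at $c$ in place of $e$; monoidality of $s$ then comes for free as the inverse of a monoidal isomorphism. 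So: correct, with the second half taking a valid but more roundabout route than necessary.
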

\begin{proof}
We want to show that $\alpha_{X}^\vee\alpha_{X^\vee}=\id_{FX^\vee}:FX^\vee\to FX^\vee$.  Using the definition of dual maps, and the adjunction $\M(FX^\vee,FX^\vee)\cong\M(FX^\vee\otimes FX,\unit)$, this is equivalent to showing that the map
\[ FX^\vee\otimes FX \arrr{\alpha_{X^\vee}\otimes{\alpha_{X}}} GX^\vee\otimes GX \arrr{e_{GX}} \unit \]
is the evaluation for $FX$.  This follows from monoidality, naturality, and unitality, as in the following commutative diagram:
\[\xymatrix{
 FX^\vee\otimes FX \ar[r]^{\alpha_{X^\vee}\otimes \alpha_X} \ar@/_3pc/[ddd]_{e_{FX}}   & GX^\vee\otimes GX \ar@/^3pc/[ddd]^{e_{GX}}  \\
 F(X^\vee\otimes X) \ar[u]_{J_{X^{\vee},X}}^\sim \ar[r]^{\alpha_{X^\vee\otimes X}} \ar[d]^{Fe_X} &
   G(X^\vee\otimes X) \ar[u]^{K_{X^{\vee},X}}_\sim \ar[d]_{Ge_X} \\
 F\unit \ar[r]^{\alpha_\unit} \ar[d]_\sim & G\unit \ar[d]^{\sim}  \\
 \unit \ar@{=}[r] & \unit
}\]
\end{proof}

\subsubsection{A technical lemma}\label{ss:techlemma}

Given a map $f:X\otimes Y\to Z$, we can dualize to get $f^\vee:Z^\vee \to Y^\vee\otimes X^\vee$.  Then we define two maps $f^\ell$ and $f^r$ as follows:
\[ \xymatrix{
Y^{\vee\vee}\otimes Z^\vee \ar[rr]^{f^\ell} \ar[dr]_{1\otimes f^\vee} && X^\vee && Z^\vee\otimes X  \ar[rr]^{f^r} \ar[dr]_{f^\vee \otimes 1} && Y^\vee \\
& Y^{\vee\vee}\otimes Y^\vee \otimes X^\vee \ar[ur]_{e_{Y^\vee}\otimes 1} & && & Y^\vee\otimes X^\vee\otimes X \ar[ur]_{1\otimes e_X} & 
}\]
The following lemma is immediate from the triangle identities and the definition of $f^\vee$:
\begin{lemma}\label{lem:altduals}
The following diagrams commute (we omit the $\otimes$ sign):
\[ \xymatrix{
(Y^{\vee\vee}) ( Z^\vee) \ar[r]^{f^\ell} \ar[d]_{1  c_{X  Y}} & X^\vee & (Z^\vee)  X  \ar[r]^{f^r} \ar[d]_{1   c_Y} & Y^\vee \\
(Y^{\vee\vee})  (Z^\vee)  X   Y   (Y^\vee)  (X^\vee) \ar[r]_{1  f  1} & (Y^{\vee\vee})  (Z^\vee)  Z   (Y^\vee)  (X^\vee) \ar[u]^{e_{Z  Y^\vee}  1} & (Z^\vee)  X  Y  (Y^\vee)  \ar[r]_{1  f  1}   & (Z^\vee)  Z  (Y^\vee) \ar[u]^{e_Z  1}
}\]
\end{lemma}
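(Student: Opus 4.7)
The plan is to verify each diagram by direct substitution of the definition of $f^\vee$ and collapse of a pair of coevaluation/evaluation maps via a triangle identity. The two verifications are formally identical, so I would write out the argument for the diagram defining $f^r$ and then indicate the minor modification needed for $f^\ell$.

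First I would unfold $f^\vee : Z^\vee \to Y^\vee \otimes X^\vee$ according to its definition, viewing $f : X \otimes Y \to Z$ as a single map between two objects. This gives
\[ f^\vee = (e_Z \otimes 1) \circ (1 \otimes f \otimes 1) \circ (1 \otimes c_{X\otimes Y}), \]
and the key point is that the coevaluation $c_{X\otimes Y} : \unit \to X \otimes Y \otimes Y^\vee \otimes X^\vee$ factors canonically as $(1_X \otimes c_Y \otimes 1_{X^\vee}) \circ c_X$, by uniqueness of duals applied to $(X\otimes Y, Y^\vee \otimes X^\vee)$ (or equivalently, by Lemma~\ref{lem:coevisom} together with the fact that taking duals is a monoidal functor). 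Substituting into $f^r = (1_{Y^\vee} \otimes e_X) \circ (f^\vee \otimes 1_X)$ and using interchange to commute the tensor factors, I obtain a long composite in which the $c_X$ coming from the decomposition of $c_{X\otimes Y}$ ends up adjacent, on the $X \otimes X^\vee \otimes X$ portion of the expression, to the terminal $e_X$ from $f^r$.

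At this point the triangle identity $(1_X \otimes e_X) \circ (c_X \otimes 1_X) = 1_X$ for the dual pair $(X,X^\vee)$ collapses the two maps to an identity, and what remains is exactly
\[ (e_Z \otimes 1_{Y^\vee}) \circ (1_{Z^\vee} \otimes f \otimes 1_{Y^\vee}) \circ (1_{Z^\vee} \otimes 1_X \otimes c_Y), \]
which is the alternative expression asserted by the right-hand diagram. For the left-hand diagram, the same recipe works with the roles of the two outside tensor factors swapped: one instead uses the triangle identity for the dual pair $(Y^\vee,Y^{\vee\vee})$ to collapse the $c_{Y^\vee}$ (introduced by $f^\ell$) against the $e_{Y^\vee}$ sitting inside the evaluation $e_{Z \otimes Y^\vee}$.

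There is no conceptual obstacle here—the result really is immediate, as the authors state. The only hazard is bookkeeping: keeping track of the tensor positions of the many identity factors and using Mac Lane coherence to silently absorb associators, while being careful with the canonical identification $(X \otimes Y)^\vee \cong Y^\vee \otimes X^\vee$ and the resulting decomposition of $c_{X\otimes Y}$. Once the long composite is written out and the correct adjacent coevaluation/evaluation pair is identified, the triangle identity finishes the proof.
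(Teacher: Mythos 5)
Your verification of the right-hand square is correct and is exactly the intended argument: unfold $f^\vee$, write $c_{X\otimes Y}=(1_X\otimes c_Y\otimes 1_{X^\vee})\circ c_X$, slide the terminal $e_X$ of $f^r$ down past $c_Y$, $f$ and $e_Z$ (these act on disjoint tensor positions, so the interchange law applies), and cancel it against $c_X$ via the triangle identity $(1_X\otimes e_X)\circ(c_X\otimes 1_X)=1_X$; what remains is precisely the bottom path $(e_Z\otimes 1)\circ(1\otimes f\otimes 1)\circ(1\otimes c_Y)$.

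Your sketch for the left-hand square, however, does not go through as stated: there is no $c_{Y^\vee}$ anywhere to collapse. The definition of $f^\ell$ introduces only the evaluation $e_{Y^\vee}$, and the map $e_{Z\otimes Y^\vee}$ in the bottom path decomposes into evaluations, $e_{Z\otimes Y^\vee}=e_{Y^\vee}\circ(1_{Y^{\vee\vee}}\otimes e_Z\otimes 1_{Y^\vee})$, so neither composite contains a coevaluation for the pair $(Y^\vee,Y^{\vee\vee})$ and no zigzag for that pair is available (or needed). The correct symmetric step is simply to use this nested decomposition of $e_{Z\otimes Y^\vee}$ -- the counterpart of your decomposition of $c_{X\otimes Y}$: once $f^\vee$ is unfolded inside $f^\ell=(e_{Y^\vee}\otimes 1)\circ(1_{Y^{\vee\vee}}\otimes f^\vee)$, the two paths of the left square are literally the same composite, with no triangle identity required. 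The asymmetry between the two squares is genuine: in $f^r$ the ambient $X$ in the source must be traded for the copy of $X$ created inside $c_{X\otimes Y}$, which is what the zigzag accomplishes, whereas in $f^\ell$ the ambient $Y^{\vee\vee}$ is paired off directly by $e_{Y^\vee}$. This is a small and easily repaired slip, but as written the second half of your argument would stall at the point where you go looking for the coevaluation to cancel.
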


As a corollary, we get a useful technical lemma:
\begin{lemma}\label{lem:tech1}
The following diagram commutes:
\[ \xymatrix{
Y^{\vee\vee} \ar[rr]^{(f^r)^\vee} \ar[dr]^{1\otimes c_{Z^\vee}} && X^\vee\otimes Z^{\vee\vee} \\
 & Y^{\vee\vee} \otimes Z^\vee \otimes Z^{\vee\vee} \ar[ur]^{f^\ell\otimes 1} &
}\]
\end{lemma}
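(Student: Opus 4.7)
The plan is to verify the identity by exploiting the universal property that characterises a dual map. Recall that for any morphism $g\colon A\to B$ in a left rigid monoidal category, the dual $g^\vee\colon B^\vee\to A^\vee$ is uniquely determined by
\[ e_A\circ (g^\vee\otimes \id_A) = e_B\circ(\id_{B^\vee}\otimes g). \]
Applied to $g=f^r\colon Z^\vee\otimes X\to Y^\vee$, and using the standard formula $e_{Z^\vee\otimes X}=e_X\circ(\id_{X^\vee}\otimes e_{Z^\vee}\otimes \id_X)$ for the evaluation of a tensor product (itself a routine consequence of the axioms of left duals), it therefore suffices to show that the map $h:=(f^\ell\otimes 1)\circ(1\otimes c_{Z^\vee})$ satisfies
\[ e_{Z^\vee\otimes X}\circ (h\otimes \id_{Z^\vee\otimes X}) = e_{Y^\vee}\circ(\id_{Y^{\vee\vee}}\otimes f^r). \]

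I would then compute the left-hand side directly. After tensoring $h$ with $\id_{Z^\vee\otimes X}$, one reads off a sub-composite of the form $(\id_{Z^\vee}\otimes e_{Z^\vee})\circ (c_{Z^\vee}\otimes \id_{Z^\vee})$ acting on the middle $Z^\vee$ strand; this collapses to $\id_{Z^\vee}$ by one of the triangle identities for the dual pair $(Z^\vee, Z^{\vee\vee})$. After this simplification the left-hand side reduces to $e_X\circ (f^\ell\otimes\id_X)$ acting on $Y^{\vee\vee}\otimes Z^\vee\otimes X$.

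To finish, I would expand both $f^\ell$ and $f^r$ using their defining formulas from Section~\ref{ss:techlemma}, namely $f^\ell=(e_{Y^\vee}\otimes \id_{X^\vee})\circ (\id_{Y^{\vee\vee}}\otimes f^\vee)$ and $f^r=(\id_{Y^\vee}\otimes e_X)\circ(f^\vee\otimes \id_X)$. Substituting these in and using the interchange law (so that $e_{Y^\vee}$ and $e_X$ can be evaluated independently of each other's identity strand), each side reduces to the same composite
\[ (e_{Y^\vee}\otimes e_X)\circ(\id_{Y^{\vee\vee}}\otimes f^\vee\otimes \id_X), \]
which yields the claim.

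The only real obstacle is the bookkeeping of tensor factors; the argument uses nothing beyond the triangle identities and the universal property of dualisation, and would be visually transparent in string-diagram notation (where the collapsing of $c_{Z^\vee}$ against $e_{Z^\vee}$ is just a planar isotopy straightening a zig-zag). No additional structure on $\M$, such as a braiding or a pivotal structure, is required.
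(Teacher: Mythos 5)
Your proposal is correct, and it reaches the lemma by a genuinely different (though equally elementary) route than the paper. The paper's own proof is a one-line citation: it unwinds the explicit formula for $(f^r)^\vee$, using Lemma \ref{lem:dualevcoev} ($e_X^\vee=c_{X^\vee}$, $c_X^\vee=e_{X^\vee}$) and the alternative descriptions of $f^\ell$ and $f^r$ from Lemma \ref{lem:altduals}. You never dualise $f^r$ explicitly; instead you characterise $(f^r)^\vee$ as the unique morphism $Y^{\vee\vee}\to X^\vee\otimes Z^{\vee\vee}$ inducing the pairing $e_{Y^\vee}\circ(\id_{Y^{\vee\vee}}\otimes f^r)$ under the bijection $\M(Y^{\vee\vee},X^\vee\otimes Z^{\vee\vee})\cong\M(Y^{\vee\vee}\otimes Z^\vee\otimes X,\unit)$ — the same adjunction trick the paper itself uses in the proof of Lemma \ref{lem:saav} — and then check that $(f^\ell\otimes 1)\circ(1\otimes c_{Z^\vee})$ induces the same pairing. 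Your verification goes through: the triangle identity $(\id_{Z^\vee}\otimes e_{Z^\vee})\circ(c_{Z^\vee}\otimes \id_{Z^\vee})=\id_{Z^\vee}$ collapses the inserted zig-zag, and substituting the defining formulas $f^\ell=(e_{Y^\vee}\otimes \id_{X^\vee})\circ(\id_{Y^{\vee\vee}}\otimes f^\vee)$ and $f^r=(\id_{Y^\vee}\otimes e_X)\circ(f^\vee\otimes \id_X)$ reduces both sides to $(e_{Y^\vee}\otimes e_X)\circ(\id_{Y^{\vee\vee}}\otimes f^\vee\otimes \id_X)$. What each approach buys: the paper's route recycles its already-proved Lemmas \ref{lem:dualevcoev} and \ref{lem:altduals}, so the written proof is just a pointer; yours is self-contained, avoids those two lemmas entirely, and is transparent in string-diagram form, at the modest cost of making explicit the (implicit in the statement) convention that the dual of $Z^\vee\otimes X$ is $X^\vee\otimes Z^{\vee\vee}$ with evaluation $e_X\circ(\id_{X^\vee}\otimes e_{Z^\vee}\otimes \id_X)$, together with the uniqueness characterisation of dual maps — both routine consequences of the triangle identities, and correctly flagged as such. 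You are also right that no braiding or pivotal structure is needed.
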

\begin{proof}
Apply Lemmas \ref{lem:dualevcoev} and  \ref{lem:altduals} and use the definitions of $f^\ell$ and $f^r$. 
\end{proof}

\subsubsection{Pivotal categories}\label{sss:pivotal}

A \emph{pivotal structure} on a rigid category $\M$ is a natural isomorphism of monoidal functors from the identity functor on $\M$ to the double dual.  Therefore, for any object $X$, we have $\iota_{X}^\vee=\iota_{X^\vee}^{-1}$ by Lemma \ref{lem:saav}.

Unwinding the definition of a pivotal structure, for each object $X\in\M$ we have an isomorphism
$\iota_X:X\arr\sim X^{\vee\vee}$ satisfying $\iota_{X\otimes Y}=\iota_X\otimes \iota_Y$ and, for any map $f:X\to Y$, the following diagram commutes:
\[\xymatrix{
X\ar[r]^{\iota_X} \ar[d]^f & X^{\vee\vee}\ar[d]^{f^{\vee\vee}} \\
Y\ar[r]^{\iota_Y} & Y^{\vee\vee}
}\]
A \emph{pivotal} category (sometimes called a \emph{sovereign} category) is a rigid category with a specified pivotal structure.  Pivotal categories have a useful graphical calculus: see \cite{sel} or \cite{tv-mon} for surveys.

\begin{example}\label{eg:vec-piv}
The category $\fdv$ of finite dimensional vector spaces over a field $\kk$ is left rigid, with $V^\vee=V^*=\Hom_{\kk}(V,\kk)$, the space of linear functions from $V$ to $\kk$.  The evaluation map $V^*\otimes V\to\kk$ sends $f\otimes v$ to $f(v)$.  It has a pivotal structure $\iota_V:V\to V^{**}$ sending $v\in V$ to $\ev_v\in V^{**}$ which acts by $\ev_v(f)=f(v)$. 
\end{example}

\begin{example}
Given a bicategory $\biB$ and an object $x\in\biB$, the full subcategory $\Autoeq_\biB(x)\subseteq \biB(x,x)$ of equivalences $f:x\to x$ is left rigid: left duals $f^\vee$ are precisely left adjoints of $f$.  As $f$ is an equivalence, its left and right adjoints are canonically isomorphic.  This gives a pivotal structure on $\Autoeq_\biB(x)$. 
\end{example}

Note that a pivotal structure identifies left and right duals.  
In fact, such an identification is equivalent to the existence of a pivotal structure, and in some sources it is taken as the definition.

\subsubsection{The centre}\label{sss:centre}

Let $\M$ be a monoidal category.
\begin{definition}\label{def:centre}
The \emph{centre} (or \emph{Drinfeld centre}) of $\M$ is the following category $\centre(\M)$.  Its objects are pairs $(Z,z)$ where $Z\in\ob\M$ and $z$ is a natural isomorphism $Z\otimes-\arr\sim -\otimes Z$ such that, for all $X,Y\in \M$, the following diagram commutes:
\[ \xymatrix{
Z\otimes X\otimes Y \ar[rr]^{z_{X\otimes Y}} \ar[dr]_{z_X\otimes\id_Y} && X\otimes Y\otimes Z \\
 &  X\otimes Z\otimes Y \ar[ur]_{\id_X\otimes z_Y} &
}\]
Its morphisms $f:(Z,z)\to (W,w)$ are morphisms $f:Z\to W$ in $\M$ such that, for all $X\in \M$, the following diagram commutes:
\[\xymatrix{
Z\otimes X \ar[r]^{z_X} \ar[d]^{f\otimes \id_X} & X\otimes Z \ar[d]^{\id_X\otimes f} \\
W\otimes X \ar[r]^{z_W}  & X\otimes W 
}\]
\end{definition}
$\centre(\M)$ is a monoidal category, with tensor product $(Z,z)\otimes (W,w)=(Z\otimes W, (z\otimes\id)\circ (\id\otimes w))$.

Suppose $(Z,z)\in\centre(\M)$ and $Z$ has left dual $Z^\vee$, with evaluation and coevaluation maps $e:Z^\vee\otimes Z\to\unit$ and $c:\unit\to Z\otimes Z^\vee$.  Then we define a natural transformation $z^\dagger:Z^\vee\otimes-\arr\sim -\otimes Z^\vee$ with components constructed as follows:
\[ z^\dagger_X:Z^\vee\otimes X\arrr{\id\otimes c} Z^\vee\otimes X\otimes Z\otimes Z^\vee \arrr{\id\otimes z_X^{-1}\otimes\id} Z^\vee\otimes Z\otimes X\otimes Z^\vee \arrr{e\otimes\id} X\otimes Z^\vee \]
The following is easy to check: see \cite[Section 5.2]{tv-mon}.
\begin{lemma}\label{lem:dualincentre}
$(Z^\vee,z^\dagger)$ is left dual to $(Z,z)$, with evaluation and coevaluation maps $e$ and $c$.
\end{lemma}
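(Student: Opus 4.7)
The plan is to split the verification into two main tasks and reduce everything else to those. Namely, I will check (a) that $(Z^\vee, z^\dagger)$ is a legitimate object of $\centre(\M)$, and (b) that the morphisms $e$ and $c$ are morphisms in $\centre(\M)$ between the appropriate objects. Once (a) and (b) are in place, the triangle identities for the quadruple $((Z^\vee, z^\dagger), (Z, z), c, e)$ in $\centre(\M)$ are automatic: they are equalities of morphisms which already hold in $\M$ by hypothesis, and the forgetful functor $\centre(\M) \to \M$ is faithful on morphisms.

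For (a), naturality of $z^\dagger$ in $X$ follows from naturality of $z^{-1}$ in the middle factor of its definition, together with functoriality of $\otimes$. Each $z^\dagger_X$ is an isomorphism because the candidate inverse, obtained by running the same composite backwards (using $z_X$ in place of $z_X^{-1}$, with the duality data inserted on the opposite side of $X$), satisfies both compositions via the zigzag identities for $Z$. The remaining condition is the hexagon axiom
\[ z^\dagger_{X \otimes Y} = (\id_X \otimes z^\dagger_Y) \circ (z^\dagger_X \otimes \id_Y), \]
which is the main technical point. Unfolding the right hand side produces a long composite containing two copies of the $(c,e)$ pair for $Z$; one is killed by a zigzag, and then the hexagon for $(Z,z)$ applied to $z_{X \otimes Y}^{-1} = (\id_X \otimes z_Y^{-1}) \circ (z_X^{-1} \otimes \id_Y)$ collapses the result to the definition of the left hand side.

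For (b), the requirement that $e \colon Z^\vee \otimes Z \to \unit$ be a morphism in $\centre(\M)$ amounts to the equation
\[ (\id_X \otimes e) \circ (z^\dagger_X \otimes \id_Z) \circ (\id_{Z^\vee} \otimes z_X) = e \otimes \id_X, \]
since, by part (a) together with the centre hexagon, the half-braiding on $Z^\vee \otimes Z$ is the composite of $z^\dagger$ and $z$, while the half-braiding on $\unit$ is trivial. Substituting the definition of $z^\dagger_X$ and applying a zigzag for $Z$ cancels the inserted $(c,e)$ pair and reduces the composite to $e \otimes \id_X$. The dual condition for $c$ is entirely analogous, using a zigzag for $Z^\vee$ instead.

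The step I expect to require the most care is the hexagon for $z^\dagger$: keeping track of the duality data and correctly applying the hexagon for $z^{-1}$ involves a diagram chase on both sides of the half-braiding, and is the place where the construction of $z^\dagger$ via $z_X^{-1}$ (rather than $z_X$) really matters. Everything else is a direct translation of the dual-pair data for $Z$ in $\M$ into the centre.
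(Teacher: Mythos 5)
Your overall decomposition is the right one, and it is the natural expansion of what the paper leaves to the citation of Turaev--Virelizier: check that $(Z^\vee,z^\dagger)$ is an object of $\centre(\M)$, check that $c$ and $e$ are morphisms in $\centre(\M)$, and observe that the triangle identities then come for free because they hold in $\M$ and the forgetful functor is faithful. Your verifications for the centrality of $e$ and $c$ and your sketch of the hexagon for $z^\dagger$ are essentially correct (beyond the zigzags you also need interchange rearrangements, and note that with the paper's convention $z^{-1}_{X\otimes Y}=(z_X^{-1}\otimes\id_Y)\circ(\id_X\otimes z_Y^{-1})$, not the order you wrote, but these are minor).

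The genuine gap is the invertibility of $z^\dagger_X$, which is needed for $(Z^\vee,z^\dagger)$ to be an object of $\centre(\M)$ at all, since Definition \ref{def:centre} requires the half-braiding to be a natural \emph{isomorphism}. Your candidate inverse --- ``the same composite backwards, with $z_X$ in place of $z_X^{-1}$ and the duality data inserted on the opposite side of $X$'' --- does not typecheck: the mirror-image construction requires a coevaluation $\unit\to Z^\vee\otimes Z$ and an evaluation $Z\otimes Z^\vee\to\unit$, i.e.\ data exhibiting $Z^\vee$ as a \emph{right} dual of $Z$, whereas only the left-duality data $c\colon\unit\to Z\otimes Z^\vee$ and $e\colon Z^\vee\otimes Z\to\unit$ are given. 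Moreover, even if such right-dual data were available, the two composites are not killed by zigzag identities alone: the mixed pairings of one duality's cup against the other's cap are not snake identities, and one needs naturality of $z$ and the hexagon to untangle them. A correct treatment must either first manufacture right-duality data from the half-braiding (e.g.\ $\tilde{c}=z_{Z^\vee}\circ c$ and $\tilde{e}=e\circ z_{Z^\vee}$, whose snake identities are themselves a nontrivial computation), or invoke the standing hypotheses of the cited reference (rigidity of $\M$), or note that in the paper's applications $\M$ consists of autoequivalences, where $Z$ is invertible and the point is immediate. As written, this step of your argument would fail, and it is precisely the step the rest of the lemma hangs on.
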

Similarly, a pivotal structure on $\M$ induces a pivotal structure on $\centre(\M)$.

The following result is also useful.  Its proof follows from the naturality of $z$: see \cite[Exercise 5.1.5]{tv-mon}.  Let $(Z,z)\in\centre(\M)$.
\begin{lemma}\label{lem:inversebraid}
If $z_\unit=\id_Z$ and 
$Z$ has a right dual ${^\vee Z}$ then $z_X$ has two-sided inverse $z_{(^\vee Z)}^\dagger$.
\end{lemma}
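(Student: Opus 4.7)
The plan is to verify invertibility by combining naturality of $z$ on the unit-creating and unit-destroying maps of the right dual, the hypothesis $z_\unit=\id_Z$, and the triangle identities.

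First I would record the two key consequences of naturality. Applied to the morphism $c':\unit\to{^\vee Z}\otimes Z$ we get a commutative square whose lower-right edge is $z_{{^\vee Z}\otimes Z}$; since $z_\unit=\id_Z$, this collapses to
\[ z_{{^\vee Z}\otimes Z}\circ(\id_Z\otimes c') \;=\; c'\otimes\id_Z. \]
Applied to $e':Z\otimes{^\vee Z}\to\unit$ it collapses similarly to
\[ (e'\otimes\id_Z)\circ z_{Z\otimes{^\vee Z}} \;=\; \id_Z\otimes e'. \]
These two identities are the workhorses of the proof, and they would not simplify without the hypothesis $z_\unit=\id_Z$.

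Next I would write down the candidate inverse $\bar z_X\colon X\otimes Z\to Z\otimes X$ constructed by the same template as $z^\dagger$ but with the right-dual data $(c',e')$ replacing $(c,e)$, and with $z$ used in place of $z^{-1}$ (the point being that in the right-dual formula no inverse of $z$ is required). Then checking $\bar z_X\circ z_X=\id_{Z\otimes X}$ and $z_X\circ\bar z_X=\id_{X\otimes Z}$ becomes a diagram chase: expand one composite, use the hexagon axiom to rewrite $z_{{^\vee Z}\otimes Z}$ or $z_{Z\otimes{^\vee Z}}$ in terms of $z_Z$, $z_{{^\vee Z}}$, and $z_X$, insert the relevant identity from the previous paragraph, and finally collapse the remaining $(e'\otimes\id)\circ(\id\otimes c')$ via the triangle identity for the right dual $^\vee Z$.

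The main obstacle is bookkeeping: ensuring that the instance of naturality we quote matches the strand we wish to eliminate, and ensuring that the hexagon is applied at the right parenthesisation. Once those are aligned, both compositions reduce to an identity produced by a triangle identity, so no further ingredients beyond those quoted in the statement (and the hexagon built into Definition~\ref{def:centre}) are needed. In particular, no rigidity of $\M$ as a whole is used — only the right dualisability of the single object $Z$, as advertised.
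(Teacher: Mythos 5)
Your two preliminary identities are correct (naturality of $z$ at $c'$ and $e'$ together with $z_\unit=\id_Z$ does give exactly $z_{{}^\vee Z\otimes Z}\circ(\id_Z\otimes c')=c'\otimes\id_Z$ and $(e'\otimes\id_Z)\circ z_{Z\otimes{}^\vee Z}=\id_Z\otimes e'$), but the proof breaks at the next step: the candidate inverse you propose to ``write down'' does not exist. With $c'\colon\unit\to{}^\vee Z\otimes Z$ and $e'\colon Z\otimes{}^\vee Z\to\unit$, substituting the right-dual data and $z$ (in place of $z^{-1}$) into the template of $z^\dagger$ never typechecks as a map $X\otimes Z\to Z\otimes X$: whichever side you insert $c'$, the inserted $Z$ is never adjacent to $X$ in the order needed for a component of $z$, and at the end you never have $Z\otimes{}^\vee Z$ in the order required by $e'$. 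The obstruction is structural, not bookkeeping: in \emph{any} composite of identities, $c'$, $e'$ and components of $z$ used covariantly, the leftmost tensor factor can never become $Z$, because $z$ only moves a $Z$-factor to the right, $c'$ creates its ${}^\vee Z$ to the \emph{left} of its $Z$, and $e'$ consumes a $Z$ standing to the \emph{left} of a ${}^\vee Z$; the desired target $Z\otimes X$ has $Z$ on the left. So under your self-imposed constraints (only $Z$'s right dual, no use of $z^{-1}$) there is no morphism to verify, and the planned chase via the hexagon, your two identities and a triangle identity has nothing to act on.

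The workable route is different. Since $(Z,z)\in\centre(\M)$ already makes each $z_X$ invertible by Definition \ref{def:centre}, the force of the lemma is to \emph{identify} the inverse with an explicit rotated expression, and the rotation that produces a well-formed map uses duality data for $X$ (or else uses $z^{-1}$ itself at auxiliary objects): this is exactly how the lemma is applied in the paragraph following it, where $z_X^{-1}$ is exhibited as $z_{X^\vee}$ sandwiched between the coevaluation and evaluation attached to $X$ (the pivotal structure supplying $X$'s right dual). For that composite, the analogues of your two naturality identities — taken at $X$'s evaluation and coevaluation rather than at $c'$, $e'$ for $Z$ — together with the hexagon, $z_\unit=\id_Z$ and a triangle identity do close the verification; this is what the paper's one-line appeal to naturality and to \cite[Exercise 5.1.5]{tv-mon} amounts to (the paper writes out no proof of this lemma itself). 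So your overall plan of ``naturality plus hexagon plus triangle identity'' is the right kind of argument, but it must be anchored to a candidate inverse that actually exists, which your construction from $Z$'s right dual alone does not provide.
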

Therefore, if (i) $z_\unit=\id_Z$, (ii) $\M$ is rigid, and (iii) we have a pivotal structure $\iota_X:X\to X^{\vee\vee}$, then the inverse of $z_X$ is the following map:
\[ z_X^{-1}: X\otimes Z\arrr{\id\otimes c} X\otimes Z\otimes X^\vee\otimes X^{\vee\vee}  \arrr{\iota_X\otimes z_{X^\vee}\otimes \iota_X^{-1}} X^{\vee\vee}\otimes X^\vee\otimes Z\otimes X \arrr{e\otimes\id} Z\otimes X.\]


\subsection{Picard groups}

Let $\M$ be a monoidal category.  We say that an object $X\in\M$ is \emph{invertible} if there exists another object $Y\in\M$ such that $X\otimes Y$ and $Y\otimes X$ are both isomorphic to $\unit$.  Note that invertible objects are dualizable, and their evaluation and coevaluation maps are isomorphisms.

The following definition was given in \cite[Definition A.2.7]{hps} in the case of a closed symmetric monoidal category, based on earlier work in algebraic topology (e.g., \cite{hopms}).  As suggested at the start of \cite[Section 2]{may-pic}, it also works in the non-symmetric case, and this is useful for algebra.
\begin{definition}\label{def:pic}
The \emph{Picard group} of $\M$, denoted $\Pic(\M)$, is the set of isomorphism classes $[X]$ of invertible objects $X$ of $\M$ with group operation $[X][Y]=[X\otimes Y]$.
\end{definition}
Equivalently, one could take the maximal subgroup of the monoid given by isomorphism classes of all objects.

Definition \ref{def:pic} generalizes the following classical situation.
\begin{example}
Let $(X,\Oring_X)$ be a ringed space and let $\Sh X$ be the monoidal category of sheaves on $X$, with monoidal structure given by the usual tensor product of sheaves.  Then $\Pic(\Sh X)$ is the usual Picard group $\Pic(X)$.  (See Prop II.6.12 of \cite{har}.)
\end{example}

The following construction gives many examples of Picard groups.
\begin{example}
If $x$ is an object in a bicategory $\biB$, define
$\PicEnd_\biB(x)$ to be the Picard group of the monoidal category $\biB(x,x)$.
\end{example}
One could define the Picard groupoid of a bicategory, but we will not need this.

There are other examples coming from algebra.
\begin{example}
Let $R$ be a ring, which we do not assume is commutative.  Let $R\bimod R$ be the monoidal category of finitely generated $R\da R$-bimodules, with monoidal structure given by tensor product over $R$.  Then $\Pic(R\bimod R)$ is the group of invertible finitely generated $R\da R$-bimodules, sometimes called the Picard group of $R$.  Note that, by Morita theory, we can recover this group from functors between categories of left modules: $\Pic(R\bimod R)\cong \PicEnd_{\Cat}(R\mMod)$.
\end{example}
\begin{example}
Let $R$ be a ring and let $\Db(R\da R)$ be the derived category of $R\da R$-bimodules, with monoidal structure given by derived tensor product over $R$.  Then $\Pic(\Db(R\da R))$ is the derived Picard group of $R$ (see \cite{yeku} and \cite{rz}).  
Let $\Tri$ denote the 2-category of triangulated categories and let $\Db(R)$ be the derived category of $R$-modules.  Then, by Rickard's theory \cite{ric2}, we have an embedding of groups
$\Pic(\Db(R\da R))\into \PicEnd_\Tri(\Db(R))$.
\end{example}
Derived Picard groups have been computed in some cases: see, for example, \cite{my,vz,mi-dpg}.

\begin{proposition}
Taking Picard groups gives a functor $\Pic:\Mon\to\Grp$.
\end{proposition}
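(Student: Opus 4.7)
The plan is to verify the group structure on $\Pic(\M)$ and then check that any monoidal functor induces a group homomorphism and that this assignment respects composition and identities. Since all the required ingredients (coherence of associativity/unit in $\M$, and coherence of the monoidality isomorphism $J$) are already available, the proof is a straightforward verification, so I will organise it as a checklist.

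First I would confirm that $\Pic(\M)$ really is a group. The operation $[X][Y]=[X\otimes Y]$ is well-defined because $\otimes$ preserves isomorphisms, and it takes invertible objects to invertible objects because if $X\otimes X'\isom\unit\isom X'\otimes X$ and $Y\otimes Y'\isom\unit\isom Y'\otimes Y$ then $(X\otimes Y)\otimes(Y'\otimes X')\isom\unit$ and symmetrically. Associativity follows from the associator of $\M$, the identity is $[\unit]$, and inverses exist by the very definition of invertibility.

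Next, given a monoidal functor $(F,J):\M\to\N$, define $\Pic(F)[X]=[FX]$. Well-definedness on isomorphism classes follows because $F$ preserves isomorphisms. To see $FX$ is invertible when $X$ is, pick an inverse $X'$ with isos $X\otimes X'\isom\unit\isom X'\otimes X$; applying $F$ and composing with $J_{X,X'}$ (respectively $J_{X',X}$) and the unit isomorphism from the monoidal structure on $F$ produces isos $FX\otimes FX'\isom\unit\isom FX'\otimes FX$. The map $\Pic(F)$ is a group homomorphism by the same argument: $\Pic(F)([X][Y])=[F(X\otimes Y)]=[FX\otimes FY]=\Pic(F)[X]\cdot\Pic(F)[Y]$, where the middle equality uses $J_{X,Y}$. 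The identity is sent to the identity because a monoidal functor preserves the unit up to specified isomorphism.

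Finally, functoriality: for a composition $(G,K)\circ(F,J)$ equipped with the induced monoidality $K_{F-,F-}\circ G(J_{-,-})$, we have $\Pic(G\circ F)[X]=[GFX]=\Pic(G)[FX]=\Pic(G)\Pic(F)[X]$, and $\Pic(\id_\M)$ is visibly the identity of $\Pic(\M)$. None of these steps presents a real obstacle; the only thing to be careful about is checking that the two different coherence isomorphisms needed for ``invertible goes to invertible'' and ``product goes to product'' are genuinely available from the data of a (strong) monoidal functor, which is precisely what Section~\ref{ss:moncat} records.
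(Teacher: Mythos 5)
Your proof is correct and follows essentially the same route as the paper: define $[F][X]=[FX]$, use the monoidality isomorphism $J$ to see that invertibility and products are preserved, and note functoriality directly. You simply spell out in more detail (group axioms for $\Pic(\M)$, preservation of the unit, compatibility with composition) what the paper's proof leaves implicit.
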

\begin{proof}
Given a monoidal functor $(F,J)$, write $[F]=\Pic((F,J))$ with $J$ implicit.  Define a function $[F]:\Pic(\M)\to\Pic(\N)$ in the obvious way: $[F][X]=[FX]$.  This is well-defined on isomorphism classes of objects because $F$ is a functor, and clearly preserves identity morphisms.  The isomorphisms given by $J$ ensure that $FX$ is invertible, so $[FX]\in\Pic(\N)$, and that $[F]$ is a homomorphism.
\end{proof}
So given a monoidal functor $(F,J):\M\to\N$ we get a group homomorphism $$\Pic(F,J):\Pic(\M)\to\Pic(\N).$$
Functoriality immediately implies that isomorphic monoidal categories have isomorphic Picard groups.  

The following result is easy.
\begin{proposition}\label{prop:picinjsurj}
Let $(F,J)$ be a monoidal functor.
\begin{enumerate}[label=(\alph*)]
\item If $F:\M\to\N$ is full and faithful then $\Pic(F,J)$ is injective.
\item $F$ is essentially surjective if and only if $\Pic(F,J)$ is surjective.
\end{enumerate}
\end{proposition}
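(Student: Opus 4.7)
Plan: For part (a), my approach is the classical observation that fully faithful functors reflect isomorphisms. Suppose $[X], [Y] \in \Pic(\M)$ with $[FX] = [FY]$ in $\Pic(\N)$, so there exists an isomorphism $\phi : FX \arr\sim FY$. Apply fullness to lift $\phi$ and $\phi^{-1}$ to morphisms $f:X\to Y$ and $g:Y\to X$. Then $F(gf) = \phi^{-1}\phi = \id_{FX} = F(\id_X)$, and faithfulness yields $gf = \id_X$; symmetrically $fg = \id_Y$. Hence $f$ is an isomorphism, $[X] = [Y]$, and $\Pic(F,J)$ is injective.

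For part (b), the ``only if'' direction (surjectivity of $\Pic(F,J)$ implies essential surjectivity) is essentially a tautology once ``essentially surjective'' is read in the natural way for this context: if $\Pic(F,J)$ is surjective then every invertible object $Z\in\N$ is isomorphic to $FX$ for some invertible $X\in\M$, which is exactly essential surjectivity on the subcategory of invertible objects.

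For the ``if'' direction, I would assume $F$ essentially surjective and fix $[Z]\in\Pic(\N)$ with monoidal inverse $[Z^{-1}]$. Essential surjectivity produces $X, X' \in \M$ with $FX \cong Z$ and $FX' \cong Z^{-1}$. Using the coherence isomorphism $J$ one computes $F(X\otimes X') \cong FX \otimes FX' \cong Z \otimes Z^{-1} \cong \unit_\N \cong F\unit_\M$ and symmetrically $F(X'\otimes X)\cong F\unit_\M$, after which one concludes $X\otimes X' \cong \unit \cong X'\otimes X$ in $\M$, so that $X$ is invertible with $\Pic(F,J)[X] = [Z]$.

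The main obstacle is the last step of the ``if'' direction: pulling the downstairs isomorphism $F(X\otimes X')\cong F\unit$ back up to an isomorphism $X\otimes X'\cong\unit$ in $\M$. This does not follow from essential surjectivity alone, so in a careful write-up I would either tighten the hypothesis to ``essentially surjective onto invertibles by invertible preimages'' (making both directions routine) or invoke some additional reflection property of $F$ to combine with the computation above; this is the delicate point where the otherwise straightforward diagram chase needs attention.
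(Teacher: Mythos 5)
Part (a) of your proposal is correct and is surely the intended argument: the paper itself gives no proof (the proposition is asserted as ``easy''), and the standard fact that a fully faithful functor reflects isomorphisms, applied to a lift of an isomorphism $FX\arr\sim FY$ between images of invertible objects, is all that is needed.

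For part (b), the obstacle you flag at the end is not just a delicate point -- it is a genuine obstruction, and the literal biconditional fails in both directions for arbitrary monoidal functors. For ``essentially surjective $\Rightarrow$ $\Pic(F,J)$ surjective'', take $\M$ and $\N$ to be the discrete strict monoidal categories on the additive monoids $\Nat$ and $\Z/2$ (or their $\kk$-linearisations, with hom spaces $\kk$ on the diagonal and $0$ elsewhere) and $F$ reduction modulo $2$: then $F$ is surjective on objects, but $\Pic(\M)$ is trivial while $\Pic(\N)\cong\Z/2$, precisely because a preimage of an invertible object need not be invertible -- the step you could not justify. For the converse, the inclusion of the full monoidal subcategory of invertible objects (e.g.\ the one-dimensional spaces inside $\fdv$) induces a bijection on Picard groups but is not essentially surjective, so your re-reading of ``essentially surjective'' as ``essentially surjective onto invertibles'' is not a harmless rephrasing but a change of statement (you have also swapped the labels ``if'' and ``only if'', though this does not affect the mathematics). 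What the paper actually uses downstream (Corollary \ref{cor:moneqpiciso}, via Proposition \ref{prop:bieqmoneq}) is only the case where $F$ is in addition fully faithful, and there your own computation closes the gap: full faithfulness implies $F$ reflects isomorphism classes, so from $F(X\otimes X')\cong\unit_\N\cong F\unit_\M$ and the symmetric isomorphism one gets $X\otimes X'\cong\unit_\M\cong X'\otimes X$, hence $X$ is invertible and $\Pic(F,J)[X]=[Z]$. In other words, the ``additional reflection property'' you ask for is exactly full faithfulness; with it the useful implication of (b) follows by your argument, and without it no proof is possible because the statement, read literally, is false.
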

Note that injectivity of $\Pic(F,J)$ does not imply that $F$ is full or that $F$ is faithful.  
\begin{corollary}\label{cor:moneqpiciso}
$\Pic$ sends monoidal equivalences to group isomorphisms.
\end{corollary}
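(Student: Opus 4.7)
The plan is to deduce the corollary directly from the preceding Proposition \ref{prop:picinjsurj}. Recall that a monoidal equivalence $(F,J):\M\to\N$ is defined (in the paragraph introducing monoidal transformations) as a monoidal functor whose underlying functor $F:\M\to\N$ is an equivalence of categories. An equivalence of categories is, by standard category theory, both fully faithful and essentially surjective.

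Given this, I would simply apply both parts of Proposition \ref{prop:picinjsurj}. Part (a) says that if $F$ is fully faithful then $\Pic(F,J)$ is injective, and part (b) says that if $F$ is essentially surjective then $\Pic(F,J)$ is surjective. Combining these, $\Pic(F,J)$ is a bijective group homomorphism, hence a group isomorphism.

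There is no real obstacle here; the content of the corollary is entirely absorbed by Proposition \ref{prop:picinjsurj}, and the only thing to note is the unpacking of the definition of monoidal equivalence. If one wanted to be extra careful, one could also observe that functoriality of $\Pic$ (the previous proposition) ensures $\Pic(F,J)$ and $\Pic(G,K)$ for a quasi-inverse $(G,K)$ are mutually inverse homomorphisms, giving an alternative route: apply $\Pic$ to isomorphisms $(G,K)\circ(F,J)\isom\id_\M$ and $(F,J)\circ(G,K)\isom\id_\N$. Either way, the proof is essentially one line.
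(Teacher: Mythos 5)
Your argument is correct and matches the paper's intent exactly: the corollary is stated as an immediate consequence of Proposition \ref{prop:picinjsurj}, obtained by unpacking that a monoidal equivalence has an underlying functor which is fully faithful and essentially surjective, so $\Pic(F,J)$ is a bijective homomorphism. The paper gives no separate proof, so your one-line deduction is precisely the intended one.
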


\begin{proposition}\label{prop:bieqmoneq}
A biequivalence $\Phi:\biC\to\biB$ induces a monoidal equivalence $$(F,J):\End_\biC(\C)\arr\sim\End_\biB(\Phi\C)$$ for every $\C\in\biC$.
\end{proposition}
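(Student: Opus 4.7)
The plan is to observe that, after restriction to endomorphisms of a single object, the defining coherence data of a weak 2-functor is precisely the data of a monoidal functor, and the axioms match up on the nose.

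First I would unpack the definition. Since $\Phi:\biC\to\biB$ is a weak 2-functor, for any objects $\C,\C'\in\biC$ it provides a functor $\Phi_{\C,\C'}:\biC(\C,\C')\to\biB(\Phi\C,\Phi\C')$ together with natural isomorphisms witnessing compatibility with horizontal composition and identities. Taking $\C=\C'$, let $F=\Phi_{\C,\C}$; this is already a functor between the endomorphism categories. The compositor of $\Phi$ furnishes, for $f,g\in\biC(\C,\C)$, a natural isomorphism
\[ J_{g,f}: \Phi(g)\circ \Phi(f) \arr\sim \Phi(g\circ f), \]
and the unitor of $\Phi$ provides an isomorphism $\id_{\Phi\C}\arr\sim \Phi(\id_\C)$, i.e., an isomorphism $\unit\to F\unit$ in $\biB(\Phi\C,\Phi\C)$. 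These are exactly the bits of structure required of a monoidal functor.

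Next I would verify the monoidal functor axioms. The associativity pentagon for $(F,J)$ is, after using Mac Lane coherence to suppress the associators in $\End_\biC(\C)$ and $\End_\biB(\Phi\C)$, exactly the hexagon axiom for the compositor of a weak 2-functor, applied to three composable 1-cells $h,g,f\in\biC(\C,\C)$. Similarly, the two unit diagrams for $(F,J)$ reduce to the two unit coherence axioms of the weak 2-functor. Hence $(F,J)$ is a monoidal functor.

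Finally, since $\Phi$ is a biequivalence, it is in particular locally an equivalence, so $F=\Phi_{\C,\C}$ is an equivalence of categories. A monoidal functor whose underlying functor is an equivalence is a monoidal equivalence, so $(F,J):\End_\biC(\C)\arr\sim\End_\biB(\Phi\C)$ is the desired monoidal equivalence. There is really no obstacle to overcome here; the content is entirely bookkeeping, namely recognising that ``weak 2-functor restricted to one object'' and ``monoidal functor'' are the same notion.
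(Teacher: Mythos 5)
Your proof is correct and takes the same route as the paper: set $F=\Phi_{\C,\C}$ and let $J$ be the compositor of the weak 2-functor comparing composition in $\biC(\C,\C)$ and $\biB(\Phi\C,\Phi\C)$, with local equivalence of $\Phi$ giving that $F$ is an equivalence. The paper states this in one line; your version simply spells out the coherence bookkeeping explicitly.
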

\begin{proof}
We set $F=\Phi_{\C,\C}$ and $J$ is given by the natural isomorphism which compares composition in $\biC(\C,\C)$ and $\biB(\Phi\C,\Phi\C)$.
\end{proof}
Together, Proposition \ref{prop:bieqmoneq} and Corollary \ref{cor:moneqpiciso} imply: 
\begin{corollary}
A biequivalence $\Phi:\biC\to\biB$ induces group isomorphisms $$\PicEnd_\biC(\C)\cong\PicEnd_\biB(\Phi\C).$$
\end{corollary}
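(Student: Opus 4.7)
The proof should be essentially immediate from the two results cited just before the corollary, so my plan is to simply chain them together. The strategy is: first apply Proposition \ref{prop:bieqmoneq} to the biequivalence $\Phi$ at the object $\C$, which produces a monoidal equivalence of endomorphism monoidal categories
\[ (F,J):\End_\biC(\C)\arr\sim\End_\biB(\Phi\C). \]
Then apply the functor $\Pic$ to this monoidal equivalence. By Corollary \ref{cor:moneqpiciso}, $\Pic$ sends monoidal equivalences to group isomorphisms, and by the very definition of $\PicEnd$ as $\Pic$ applied to the endomorphism monoidal category, we get
\[ \PicEnd_\biC(\C)=\Pic(\End_\biC(\C))\cong \Pic(\End_\biB(\Phi\C))=\PicEnd_\biB(\Phi\C). \]

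There is essentially no obstacle here, since both ingredients have already been established: Proposition \ref{prop:bieqmoneq} packages the biequivalence data into a monoidal equivalence using the component functor $\Phi_{\C,\C}$ with the compositor as $J$, and Corollary \ref{cor:moneqpiciso} is itself an immediate consequence of the functoriality of $\Pic:\Mon\to\Grp$. The only thing worth being slightly careful about is bookkeeping: confirming that the compositor natural isomorphism coming from $\Phi$ is compatible with the unit so that $(F,J)$ really is a monoidal functor (not just a functor of the underlying categories), but this is exactly what Proposition \ref{prop:bieqmoneq} asserts.

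Hence the proof can be written in a single sentence: combine Proposition \ref{prop:bieqmoneq} (to pass from the biequivalence $\Phi$ to a monoidal equivalence between endomorphism categories) with Corollary \ref{cor:moneqpiciso} (to pass from a monoidal equivalence to a group isomorphism of Picard groups), noting that $\PicEnd_{(-)}(-)$ is by definition $\Pic$ applied to $\End_{(-)}(-)$.
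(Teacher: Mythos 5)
Your proposal is correct and matches the paper's argument exactly: the corollary is stated there as an immediate consequence of Proposition \ref{prop:bieqmoneq} combined with Corollary \ref{cor:moneqpiciso}, which is precisely your chain of reasoning. Nothing further is needed.
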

So, from Proposition \ref{prop:bieqmoneq} and Lemma \ref{lem:equvbiequiv}, we get:
\begin{corollary}
Equivalent objects in a bicategory have isomorphic PicEnd groups.
\end{corollary}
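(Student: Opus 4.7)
The plan is to simply chain together the two previous results. Suppose $x$ and $y$ are equivalent objects of a bicategory $\biB$, so there is an equivalence $f:x\arr\sim y$ with some quasi-inverse. By Lemma \ref{lem:equvbiequiv}, this equivalence can be promoted to a biequivalence $\Phi:\biB\to\biB$ with $\Phi x=y$ (the construction fixes all other objects and pre-/post-composes with $f$ and its quasi-inverse where needed).

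Now we apply the immediately preceding corollary (the one obtained from combining Proposition \ref{prop:bieqmoneq} and Corollary \ref{cor:moneqpiciso}) to the biequivalence $\Phi$ and the object $x$. This yields a group isomorphism
\[ \PicEnd_\biB(x)\cong \PicEnd_\biB(\Phi x)=\PicEnd_\biB(y), \]
which is what we wanted.

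There is essentially no obstacle here: both ingredients are already in place, and the only thing to notice is that Lemma \ref{lem:equvbiequiv} takes an equivalence between two specific objects and upgrades it to a biequivalence of the ambient bicategory, allowing the previous corollary (which is phrased for biequivalences) to apply.
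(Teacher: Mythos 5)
Your proof is correct and follows exactly the paper's route: upgrade the equivalence $x\simeq y$ to a biequivalence via Lemma \ref{lem:equvbiequiv}, then apply the preceding corollary (from Proposition \ref{prop:bieqmoneq} and Corollary \ref{cor:moneqpiciso}) to conclude $\PicEnd_\biB(x)\cong\PicEnd_\biB(y)$.
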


\subsection{$\kk$-linear and additive categories} 

Much of the material here can be found in Chapters 2 and 3 of Gabriel and Roiter's book \cite{garo}.

Fix a field $\kk$.  
Throughout, we work with $\kk$-linear categories, also called $\kk$-categories, in which all hom spaces are $\kk$-modules.  Note that products and coproducts in such categories coincide: they are biproducts, given by the direct sum.    In general we do not assume our categories are additive, so biproducts may not exist.
We always work with $\kk$-linear functors, which automatically preserve biproducts: see \cite[Section VIII.2]{macl}.  These are the 0- and 1-cells of a 2-category $\kk\Cat$ with arbitrary natural transformations as 2-cells.  As in Section \ref{sss:2grpds}, we have variants 
$\kk\cati$ and $\kk\catei$
where cells of dimension $\geq2$ and $\geq1$, respectively, are equivalences.

Note that idempotent completion restricts to a locally fully faithful 2-functor on $\kk$-linear categories.

A $\kk$-category is called \emph{additive} if it has all finite biproducts.
Given any $\kk$-category $\C$, we can form a new $\kk$-category $\Mat\C$ whose objects are formal finite direct sums of objects in $\C$.  Maps are given by matrices with entries in $\C$.  For example, if $\C$ is the one-object category with endomorphism ring $\kk$ then $\Mat\C$ is equivalent to the category of $\kk$-vector spaces.

\begin{lemma}\label{lem:mat-idemp}
$\Mat\C$ is additive.
If $\C$ is additive then $\C$ and $\Mat\C$ are equivalent.
\end{lemma}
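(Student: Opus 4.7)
The plan is to prove the two statements in sequence, with the heavy lifting for the first part being a direct unpacking of the matrix calculus used to define $\Mat\C$, and the second part following cleanly from an inclusion functor.

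For additivity of $\Mat\C$, I would first observe that the empty direct sum provides a zero object: any matrix to or from it is (uniquely) the empty matrix. Given two objects $X = X_1\oplus\cdots\oplus X_n$ and $Y = Y_1\oplus\cdots\oplus Y_m$, I propose the concatenation $X\oplus Y := X_1\oplus\cdots\oplus X_n\oplus Y_1\oplus\cdots\oplus Y_m$ as their biproduct. The structural maps are the block matrices
\[ \iota_X = \begin{pmatrix}\mathbbm{1}_X\\0\end{pmatrix},\qquad \iota_Y = \begin{pmatrix}0\\\mathbbm{1}_Y\end{pmatrix},\qquad \pi_X = \begin{pmatrix}\mathbbm{1}_X & 0\end{pmatrix},\qquad \pi_Y = \begin{pmatrix}0 & \mathbbm{1}_Y\end{pmatrix}. \]
The biproduct identities $\pi_X\iota_X=\mathbbm{1}_X$, $\pi_Y\iota_Y=\mathbbm{1}_Y$, $\pi_Y\iota_X=0$, $\pi_X\iota_Y=0$, and $\iota_X\pi_X+\iota_Y\pi_Y=\mathbbm{1}_{X\oplus Y}$ are then immediate from the rules for matrix multiplication, which is how composition is defined in $\Mat\C$. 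Together with the zero object, this gives all finite biproducts.

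For the second part, suppose $\C$ is additive. The natural inclusion $\iota:\C\to\Mat\C$ sending $X$ to the length-one formal sum $X$ is $\kk$-linear and is fully faithful on the nose, since hom spaces from $X$ to $Y$ in $\Mat\C$ are by definition $1\times 1$ matrices, i.e., $\C(X,Y)$. It remains to show $\iota$ is essentially surjective. Given any object $X_1\oplus\cdots\oplus X_n$ in $\Mat\C$, additivity of $\C$ lets us pick an actual biproduct $Z=X_1\oplus_\C\cdots\oplus_\C X_n$ in $\C$ with its structural inclusions $a_i:X_i\to Z$ and projections $b_i:Z\to X_i$. Then the matrices $(a_1,\ldots,a_n)$ and its transpose built from the $b_i$ are mutually inverse isomorphisms in $\Mat\C$ between $X_1\oplus\cdots\oplus X_n$ and $\iota(Z)$, again by the biproduct relations satisfied in $\C$. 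Hence $\iota$ is an equivalence.

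I do not anticipate any real obstacle here; both steps are a matter of checking that the matrix-multiplication structure of $\Mat\C$ faithfully encodes the biproduct calculus. The only mild subtlety is to be careful that on the $\Mat\C$ side the "biproduct" is just syntactic concatenation, so the universal property must be checked against the matrix definition of composition rather than any preexisting categorical structure.
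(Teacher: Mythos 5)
Your proof is correct and is precisely the standard matrix-calculus verification that the paper leaves implicit (the lemma is stated without proof, with a pointer to Gabriel--Roiter for this material): block matrices give the biproducts, and when $\C$ is additive the evident inclusion is fully faithful and essentially surjective via the matrices of structural inclusions and projections. The only point worth flagging is your (sensible) convention that the empty formal sum is an object of $\Mat\C$, which is what supplies the zero object needed for additivity.
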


The construction of $\Mat\C$ extends to a $2$-functor $\Mat:\kk\Cat\to\kk\Cat$ as follows.  
Given two $\kk$-categories $\C$ and $\D$ and a functor $F:\C\to\D$ between them, define $\Mat F:\Mat\C\to\Mat \D$ on objects by $(\Mat F)(x\oplus y)=Fx\oplus Fy$.  On maps, just apply $F$ termwise to the matrix.
Given a natural transformation $\eta:F\to G$ with components $\eta_x:Fx\to Gx$, 
define $(\Mat\eta)_{x\oplus y}:Fx\oplus Fy\to Gx\oplus Gy$ as the diagonal matrix with entries $\eta_x$ and $\eta_y$.

The full subcategory of $\Mat\C$ on the objects $x\in\C$ is isomorphic to $\C$.  So, similarly to Lemma \ref{lem:ic-lff}, we get the following:
\begin{lemma}\label{lem:mat-lff}
The 2-functor $\Mat:\kk\Cat\to\kk\Cat$ is locally fully faithful.
\end{lemma}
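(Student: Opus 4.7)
The plan is to mimic the proof of Lemma \ref{lem:ic-lff} almost verbatim, exploiting the remark immediately preceding the statement: the full subcategory of $\Mat\C$ on single-summand objects $x \in \C$ is isomorphic to $\C$. So any natural transformation between functors $\Mat F, \Mat G : \Mat\C \to \Mat\D$ restricts to a natural transformation between $F$ and $G$, and it remains only to check that this restriction-and-extension procedure is a two-sided inverse to $\Mat_{\C,\D}$ on 2-cells.

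First I would handle faithfulness. Suppose $\alpha, \beta : F \to G$ are natural transformations with $\Mat\alpha = \Mat\beta$. By construction of $\Mat$ on 2-cells, evaluating on a single-summand object $x$ gives $\alpha_x = (\Mat\alpha)_x = (\Mat\beta)_x = \beta_x$, so $\alpha = \beta$.

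For fullness, take a natural transformation $\gamma : \Mat F \to \Mat G$, and define $\tilde\gamma : F \to G$ by $\tilde\gamma_x := \gamma_x$ for $x \in \C$; naturality of $\tilde\gamma$ is immediate from naturality of $\gamma$ restricted to $\C \subset \Mat\C$. The one thing to verify is that $\Mat\tilde\gamma = \gamma$, i.e.\ that on a formal biproduct $x \oplus y$ the component $\gamma_{x\oplus y}$ is the diagonal matrix $\mathrm{diag}(\gamma_x, \gamma_y)$. This is the only nontrivial point, but it follows by a standard biproduct argument: apply naturality of $\gamma$ to the injection 1-cells $\iota_x : x \to x\oplus y$ and $\iota_y : y \to x\oplus y$ and to the projections $\pi_x, \pi_y$ (which in $\Mat\C$ are represented by the evident column and row matrices and are sent by $\Mat F$ and $\Mat G$ to the analogous matrices). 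Writing $\gamma_{x\oplus y}$ as a $2\times 2$ matrix with entries $\gamma_{ij}$, the four naturality squares $\Mat G(\iota_i) \circ \gamma_x = \gamma_{x\oplus y} \circ \Mat F(\iota_i)$ and $\gamma_{x} \circ \Mat F(\pi_i) = \Mat G(\pi_i) \circ \gamma_{x\oplus y}$ force $\gamma_{11}=\gamma_x$, $\gamma_{22}=\gamma_y$, and the off-diagonal entries to vanish.

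I don't expect any real obstacles here; the only place one could slip is forgetting that an arbitrary natural transformation on $\Mat\C$ is not a priori determined by its values on $\C$, and justifying this requires precisely the biproduct naturality argument above. Since $\kk$-linear functors preserve biproducts automatically, the argument is purely formal and doesn't use anything special about $\kk$.
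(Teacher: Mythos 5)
Your proof is correct and is exactly the argument the paper intends: the paper only sketches it by saying the result follows ``similarly to Lemma \ref{lem:ic-lff}'' from the fact that the full subcategory of $\Mat\C$ on the objects of $\C$ is isomorphic to $\C$, and your verification that $\gamma_{x\oplus y}$ must be the diagonal matrix $\mathrm{diag}(\gamma_x,\gamma_y)$, via naturality against the injection and projection matrices (which $\Mat F$ and $\Mat G$ preserve since $\kk$-linear functors send $\id$ and $0$ to $\id$ and $0$), is precisely the omitted step.
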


\begin{remark}
One might wonder if Lemmas \ref{lem:ic-lff} and \ref{lem:mat-lff} are both consequences of some formal result in category theory about free 2-functors or Cauchy completion (see \cite{lawvere-metric}). 
\end{remark}

Note that it is possible for a morphism in $\Mat\C$ to be an isomorphism even if none of its matrix components is an isomorphism in $\C$. 
\begin{example}
Let $\fdv$ be the category of finite dimensional vector spaces and let $V_i$ be a vector space of dimension $i$. 
Let $\C$ be the full subcategory of $\fdv$ on the objects $V_2$ and $V_3$.  Then, in $\Mat\C$, there is an isomorphism between $V_2\oplus V_2\oplus V_2$ and $V_3\oplus V_3$.
\end{example}
In the previous example, things go wrong because $\C$, and hence also $\Mat\C$, is not idempotent complete.


\begin{definition}
Let $\C\in\ob\kk\Cat$.  
We say that an object $x\in\ob\C$ is \emph{indecomposable} if it is nonzero and there do not exist nonzero objects $y,z\in\ob\C$ such that $x\cong y\oplus z$.  
\end{definition}
\begin{proposition}\label{prop:ind2fun}
Taking the full subcategory of indecomposable objects defines a 2-functor
\[\ind:\kk\catei\to\kk\catei\]
and this 2-functor is locally fully faithful on the full sub-2-category of $\catei$ on categories where every object is a finite direct sum of indecomposable objects.
\end{proposition}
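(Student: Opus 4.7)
The plan proceeds in two stages: first, the construction of the 2-functor $\ind$; second, the verification of local fully faithfulness.

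For the construction, define $\ind\C$ as the full subcategory of $\C$ on indecomposable objects. The key point for 1-cells is that a $\kk$-linear equivalence $F:\C\to\D$ in $\kk\catei$ sends indecomposables to indecomposables. Indeed, $F$ is $\kk$-linear and so preserves finite biproducts and the zero object, and it admits a quasi-inverse $G$ with the same property. If $Fx\cong y\oplus z$ with $y,z$ both nonzero, then $x\cong GFx\cong Gy\oplus Gz$, where $Gy$ and $Gz$ are nonzero (else $FGy\cong y$ would be zero), contradicting indecomposability of $x$. Lemma \ref{lem:2funfull} then produces $\ind F:\ind\C\to\ind\D$. For a 2-cell $\alpha:F\to G$, define $\ind\alpha$ by restricting components to indecomposables; this is still a natural isomorphism. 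Checking 2-functoriality (identities, vertical composition, whiskering) is routine since everything is defined by restriction.

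For local fully faithfulness on the specified sub-2-category, fix $\C,\D$ whose objects are finite direct sums of indecomposables, and 1-cells $F,G:\C\to\D$. For faithfulness, let $\alpha,\beta:F\to G$ agree after restriction to $\ind\C$. For any $x\in\C$, pick a decomposition $x\cong\bigoplus_{i=1}^n x_i$ into indecomposables; since $F,G$ preserve biproducts, the canonical isomorphisms $Fx\cong\bigoplus Fx_i$ and $Gx\cong\bigoplus Gx_i$ combined with naturality at the split projections $\pi_i:x\to x_i$ force $\alpha_x=\bigoplus\alpha_{x_i}=\bigoplus\beta_{x_i}=\beta_x$. For fullness, given a natural isomorphism $\gamma:\ind F\to\ind G$, choose for each $x\in\C$ a decomposition $x\cong\bigoplus x_i$ and set $\alpha_x$ to be $\bigoplus \gamma_{x_i}$ transported through the biproduct isomorphisms. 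Naturality of $\alpha$ at an arbitrary morphism $f:x\to y$ reduces, via the matrix description of $f$ with respect to the chosen decompositions, to naturality of $\gamma$ at each entry $x_i\to y_j$. Independence of the choice of decomposition follows similarly: the change-of-decomposition map is itself a matrix of morphisms between indecomposables, and naturality of $\gamma$ at this matrix makes the two candidate definitions of $\alpha_x$ agree. Finally $\alpha$ is an isomorphism since each $\gamma_{x_i}$ is, and $\ind\alpha=\gamma$ by construction.

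The main obstacle is the bookkeeping in the fullness argument: one must verify both well-definedness (independence from the choice of decomposition) and naturality of the extension. Both are standard additivity computations using the matrix calculus of biproducts in $\kk$-linear categories, but they require writing every morphism between direct sums as a matrix and applying naturality of $\gamma$ entrywise.
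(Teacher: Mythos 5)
Your proof is correct and takes essentially the same approach as the paper: equivalences preserve biproducts and hence indecomposables, naturality determines transformations on indecomposables (faithfulness), and the extension along chosen finite decompositions with matrix bookkeeping is exactly what the paper compresses into ``extending to any direct sums in the obvious way''. One cosmetic remark: Lemma \ref{lem:2funfull} concerns restricting a 2-functor to full sub-2-categories, not restricting an ordinary functor to full subcategories, but the restriction of $F$ you need there is immediate once indecomposables are shown to be preserved.
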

\begin{proof}
Equivalences of categories preserve limits, such as direct sums, and therefore preserve indecomposable objects.  So equivalences $F:\C\to\D$ do restrict to equivalences $\ind F:\ind\C\to\ind\D$.  As isomorphisms preserve indecomposable objects, components of natural isomorphisms also restrict to indecomposable objects.

Now we restrict to the full sub-2-category where every object is a finite direct sum of indecomposable objects.
By naturality, natural transformations between functors $F,G:\C\to\D$ are determined by the components of indecomposable objects.  So $\ind_{\C,\D}$ is faithful.  And given a natural transformation $\gamma:\ic F\to\ic G$, we define $\tilde{\gamma}:F\to G$ by extending to any direct sums in the obvious way.  Then $\ind\tilde{\gamma}=\gamma$, so $\ind_{\C,\D}$ is full.
\end{proof}
If $F:\C\to\D$ does not preserve indecomposables then $\ind F$ is not defined, so we cannot extend $\ind$ to a 2-functor on $\kk\cati$.

Similar to Lemma \ref{lem:mat-idemp}, we have:
\begin{lemma}\label{lem:ind-idemp}
If every object of $\C$ is indecomposable then $\C$ and $\Ind\C$ are equivalent.
\end{lemma}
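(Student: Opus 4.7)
The statement is essentially formal once one unpacks the definition of $\Ind\C$ used in the previous proposition. The plan is to observe that $\Ind\C$ is, by construction, the full subcategory of $\C$ on the indecomposable objects, so there is a canonical inclusion $\kk$-linear functor $\iota:\Ind\C\hookrightarrow\C$, and then to check that this $\iota$ is an equivalence under the given hypothesis.

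First I would note that $\iota$ is fully faithful by definition, since $\Ind\C$ is a full subcategory: for $x,y\in\ob\Ind\C$ we have $(\Ind\C)(x,y)=\C(x,y)$ and $\iota$ acts as the identity on these hom-spaces. Next I would check essential surjectivity: given any $z\in\ob\C$, the assumption that every object of $\C$ is indecomposable forces $z\in\ob\Ind\C$, so trivially $z\cong\iota(z)$ in $\C$. Together, fully faithful plus essentially surjective gives that $\iota$ is an equivalence of $\kk$-linear categories, which is exactly what is to be shown.

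The only thing worth being slightly careful about is that equivalence here should be interpreted in $\kk\Cat$ (or $\kk\catei$), but since $\iota$ is a $\kk$-linear functor and the standard quasi-inverse construction preserves $\kk$-linearity, this is automatic. There is no real obstacle: the content of the lemma is just the tautology that a full subcategory containing every object is equivalent to the ambient category, combined with the unpacking of the definition of $\Ind$ from Proposition \ref{prop:ind2fun}.
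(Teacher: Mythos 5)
Your proof is correct and is exactly the intended argument: since $\Ind\C$ is by definition the full subcategory of $\C$ on the indecomposable objects, the hypothesis makes the inclusion fully faithful and essentially surjective (indeed the two categories coincide), which is why the paper states the lemma without proof.
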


We say an idempotent $e=e^2$ is \emph{primitive} when it cannot be written as the sum of two other nonzero idempotents.
The following result is straightforward.
\begin{lemma}\label{lem:indic}
An object $(x,e)\in\ic\C$ is indecomposable precisely when $e$ is a primitive idempotent.
\end{lemma}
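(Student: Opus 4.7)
The plan is to translate between biproduct decompositions of $(x,e)$ in $\ic\C$ and decompositions of $e$ as a sum of two nonzero orthogonal idempotents in $\C(x,x)$. This is essentially a bookkeeping exercise, not a deep argument, but it is worth doing carefully because the definitions of $\ic\C$ mean one has to repeatedly check the correct identity maps.

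First I would set up the basic dictionary. Recall that $\id_{(x,e)}=e$ and $\ic\C((x,e),(x,e))=e\C(x,x)e$, so $(x,e)$ is zero if and only if $e=0$, and an idempotent endomorphism of $(x,e)$ is the same as an idempotent $f\in\C(x,x)$ satisfying $efe=f$. I would also observe that any summand of $(x,e)$ in $\ic\C$ is isomorphic to one of the form $(x,f)$ for such an $f$: if $\iota:(y,d)\to(x,e)$ and $\pi:(x,e)\to(y,d)$ satisfy $\pi\iota=\id$, then $f=\iota\pi$ is such an idempotent and $(y,d)\cong(x,f)$ via $\iota,\pi$.

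Next, I would establish the key correspondence in both directions. Given a biproduct decomposition $(x,e)\cong (y_1,d_1)\oplus(y_2,d_2)$ with both summands nonzero, write $\iota_i,\pi_i$ for the inclusion and projection maps and set $f_i=\iota_i\pi_i\in\C(x,x)$. The biproduct relations $\pi_i\iota_j=\delta_{ij}\id$ and $\iota_1\pi_1+\iota_2\pi_2=e$ force $f_i^2=f_i$, $f_1f_2=f_2f_1=0$, and $f_1+f_2=e$; each $f_i$ is nonzero since $(x,f_i)\cong(y_i,d_i)\neq 0$. Conversely, given a decomposition $e=f_1+f_2$ with $f_1,f_2$ nonzero orthogonal idempotents in $\C(x,x)$, orthogonality gives $ef_i=f_i=f_ie$, so each $f_i$ represents a morphism both $(x,e)\to(x,f_i)$ and $(x,f_i)\to(x,e)$ in $\ic\C$; these morphisms satisfy the biproduct relations and exhibit $(x,e)$ as a biproduct of the nonzero objects $(x,f_1),(x,f_2)$.

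Finally, I would conclude by combining: $(x,e)$ is decomposable exactly when $e$ can be written as the sum of two nonzero (orthogonal) idempotents, and $(x,e)$ is nonzero exactly when $e$ is nonzero. Taking contrapositives yields the lemma. The only mild subtlety is ensuring that ``primitive'' is read in the orthogonal sense, which is the interpretation compatible with the biproduct formalism; this is automatic here because any decomposition $e=f_1+f_2$ arising from a biproduct splitting is orthogonal by construction.
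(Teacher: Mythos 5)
Your proof is correct, and it is precisely the routine idempotent--biproduct dictionary the paper has in mind: the paper states Lemma \ref{lem:indic} without proof (calling it straightforward), so your write-up simply supplies the omitted bookkeeping --- summands correspond to idempotents $\iota\pi$ lying in the corner $e\C(x,x)e$, and an orthogonal decomposition $e=f_1+f_2$ yields biproduct data for $(x,f_1)$ and $(x,f_2)$, and conversely. Your closing caveat is resolved exactly as you suggest: ``primitive'' is meant in the standard orthogonal sense, the idempotents $\iota_i\pi_i$ produced from a splitting are orthogonal by construction, and away from characteristic $2$ any two idempotents summing to an idempotent are automatically orthogonal, so the two readings agree.
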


Also, directly from the definitions, we have the following.
\begin{lemma}\label{lem:matind}
If $\C$ is $\kk$-linear then $\Mat\C\cong\Mat\Ind\C$ and $\Ind\C\cong\Ind\Mat\C$.
\end{lemma}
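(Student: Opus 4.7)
The plan is to identify both claimed equivalences via explicit comparison functors constructed from the canonical inclusions $\Ind\C\hookrightarrow\C$ and $\C\hookrightarrow\Mat\C$.

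For $\Mat\C \simeq \Mat\Ind\C$, I apply the $2$-functor $\Mat$ to the fully faithful inclusion $\Ind\C\hookrightarrow\C$, obtaining a functor $\Mat\Ind\C\to\Mat\C$ that is fully faithful by Lemma \ref{lem:mat-lff}. Essential surjectivity reduces to showing that every object of $\Mat\C$ is isomorphic to a formal direct sum of indecomposables. An arbitrary object of $\Mat\C$ has the form $x_1\oplus\cdots\oplus x_n$ with $x_i\in\C$, so it suffices to handle each $x_i$ individually. By definition, $x_i$ is either already indecomposable (hence in $\Ind\C$) or admits a decomposition $x_i\cong y\oplus z$ in $\C$ with $y,z$ nonzero, in which case we iterate on $y$ and $z$.

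For $\Ind\C \simeq \Ind\Mat\C$, the fully faithful inclusion $\jmath\colon\C\hookrightarrow\Mat\C$ (sending $x$ to the length-one formal sum) should restrict to indecomposable objects. First, an object $x_1\oplus\cdots\oplus x_n$ in $\Mat\C$ with $n\geq 2$ nonzero summands is obviously decomposable as $x_1\oplus(x_2\oplus\cdots\oplus x_n)$, so every indecomposable in $\Mat\C$ is isomorphic to some $\jmath(x)$ with $x\in\C$. Second, I check that $x\in\C$ is indecomposable in $\C$ if and only if it is indecomposable in $\Mat\C$: one direction is immediate, and for the converse, a biproduct decomposition $x\cong A\oplus B$ in $\Mat\C$ with $A,B$ nonzero unpacks to objects $a_1,\dots,a_k,b_1,\dots,b_\ell\in\C$ together with projections and inclusions satisfying the biproduct identities. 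Since these data live entirely in $\C$, they witness a corresponding biproduct decomposition of $x$ in $\C$.

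The main obstacle is the last step: extracting a genuine two-term biproduct decomposition of $x$ in $\C$ from a multi-term biproduct decomposition that exists only in $\Mat\C$. The idea is to single out one summand $a_1\in\C$, use the idempotent $i_1p_1\in\End_\C(x)$ to present $x\cong a_1\oplus w$, and identify $w$ with an object of $\C$ arising from the remaining summands. A clean formulation groups the summands so that each piece is a single object of $\C$; this is straightforward whenever the decomposition has length exactly two, and the length-$\geq 3$ case is handled by the same kind of iteration used in the first half of the proof.
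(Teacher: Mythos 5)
The paper offers no argument for this lemma (it is asserted to follow ``directly from the definitions''), so the real question is whether your argument stands on its own, and it has two genuine gaps, both located exactly at the points you half-acknowledge. First, in the proof of $\Mat\C\simeq\Mat\Ind\C$, essential surjectivity rests on iterating a decomposition $x_i\cong y\oplus z$ and recursing on $y$ and $z$; nothing guarantees this recursion terminates, and at the stated level of generality it cannot: for $\C$ the category of all (not necessarily finite-dimensional) $\kk$-vector spaces one has $\Mat\C\simeq\C$ (since $\C$ is additive) while $\Mat\Ind\C\simeq\fdv$, and these are not equivalent, because an infinite-dimensional space is not a finite biproduct of indecomposables. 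Termination is precisely the finiteness input (hom-finiteness, giving an induction on $\dim_\kk\End_\C(x_i)$, or more generally ``every object is a finite sum of indecomposables'') that is present in every situation where the paper invokes the lemma; your proof must assume something of this kind.

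Second, and more seriously, your claim that $x\in\C$ is indecomposable in $\C$ if and only if it is indecomposable in $\Mat\C$ is false for general $\kk$-linear $\C$, and the ``main obstacle'' you defer is a genuine obstruction rather than a bookkeeping issue. Splitting off $a_1$ produces the idempotent $e=i_1p_1\in\End_\C(x)$, and $\id_x-e$ splits in $\Mat\C$ through $a_2\oplus\cdots\oplus b_\ell$, but there is no reason for it to split through any object of $\C$ unless $\C$ is idempotent complete. Concretely, let $\C$ be the full subcategory of $\fdv$ on vector spaces $V_1$ and $V_5$ of dimensions $1$ and $5$: no biproduct of two nonzero objects of $\C$ has dimension $5$, so $V_5$ is indecomposable in $\C$, yet $V_5\cong V_1^{\oplus 5}$ in $\Mat\C$; hence $\Ind\C$ and $\Ind\Mat\C$ have different numbers of isomorphism classes. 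This is the same failure of idempotent completeness as the paper's $V_2,V_3$ example placed just before the lemma, so ``the same kind of iteration used in the first half'' cannot repair the length $\geq 3$ case. With $\C$ idempotent complete the argument does close (split $\id_x-e$ inside $\C$ and apply your length-two case), and indeed the paper only ever applies the lemma after $\ic$, and with finiteness when an equivalence is needed; so your proof, like the lemma as literally stated, is only valid under those hypotheses and you should state them. A minor further point: Lemma \ref{lem:mat-lff} concerns local full faithfulness of the $2$-functor $\Mat$ on natural transformations, so it does not justify that $\Mat$ of the fully faithful inclusion $\Ind\C\hookrightarrow\C$ is fully faithful; that fact is true but should be read off directly from the matrix description of hom spaces in $\Mat\C$.
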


The following terminology is used in \cite[Section 3.5]{garo}.
\begin{definition}
We say a $\kk$-category $\C$ is \emph{finite} if: 
\begin{itemize}
\item $\C$ has finitely many isomorphism classes of indecomposable objects, and
\item $\C$ is \emph{hom-finite}, i.e., all hom spaces $\C(x,y)$ are finite-dimensional.
\end{itemize}
\end{definition}
\begin{lemma}
If $\C$ is finite then every object of $\C$ is isomorphic to a direct sum of finitely many indecomposable objects.
\end{lemma}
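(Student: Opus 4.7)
The plan is to induct on the non-negative integer $n = \dim_\kk \End_\C(x)$, which is finite by the hom-finiteness part of the hypothesis. The base cases are immediate: if $n = 0$ then $\id_x = 0$ and $x$ is a zero object, which I interpret as the empty direct sum of indecomposables; and if $x$ is already indecomposable, then it is itself a one-term direct sum.

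For the inductive step, suppose $x$ is nonzero and not indecomposable. Then by the definition given in this section there exist nonzero objects $y, z \in \ob\C$ and an isomorphism $x \cong y \oplus z$ in $\C$. I would apply the inductive hypothesis to $y$ and $z$ separately and concatenate the resulting finite decompositions.

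The only quantitative point is the strict dimension drop $\dim_\kk \End_\C(y), \dim_\kk \End_\C(z) < n$ needed to run the induction. Since $\kk$-linear functors automatically preserve biproducts, we have a decomposition of $\kk$-vector spaces
\[ \End_\C(y \oplus z) \cong \End_\C(y) \oplus \End_\C(z) \oplus \Hom_\C(y,z) \oplus \Hom_\C(z,y). \]
Because $y$ and $z$ are nonzero, $\id_y$ and $\id_z$ are nonzero, giving $\dim_\kk \End_\C(y), \dim_\kk \End_\C(z) \geq 1$. Hence $\dim_\kk \End_\C(y) \leq n - \dim_\kk \End_\C(z) \leq n - 1 < n$, and symmetrically for $z$.

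The main (mild) obstacle is simply this strict-decrease step; everything else is bookkeeping. I also note in passing that the hypothesis of finitely many isomorphism classes of indecomposables is not actually needed for this lemma — hom-finiteness alone is what drives the induction — though of course it is part of the packaged definition of \emph{finite} used elsewhere in the paper.
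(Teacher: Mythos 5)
Your proof is correct, and it takes a somewhat different route from the paper's. The paper argues by contradiction: if some object were not a finite direct sum of indecomposables, it would admit a decomposition $x\cong\bigoplus_{i=1}^\infty x_i$ with all $x_i$ nonzero, and then (hom functors preserving biproducts) the identities $\id_{x_i}$ span an infinite-dimensional subspace of $\C(x,x)$, contradicting hom-finiteness. Your version replaces this with a well-founded induction on $\dim_\kk\End_\C(x)$, using the matrix decomposition of $\End_\C(y\oplus z)$ and the nonvanishing of $\id_y,\id_z$ to get the strict dimension drop. The two arguments hinge on the same fact -- hom-finiteness bounds the number of nonzero summands -- but your induction has the advantage of never invoking an infinite biproduct, which strictly speaking need not exist in $\C$ (the paper's ``suppose not'' step is really shorthand for ``there are decompositions with arbitrarily many nonzero summands''), so it is the more careful formulation; the paper's is shorter. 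Your side remarks are also accurate: only hom-finiteness is used, not the finiteness of the set of isomorphism classes of indecomposables, and the $n=0$ base case is just the convention that a zero object is the empty direct sum. One small nitpick: the decomposition of $\End_\C(y\oplus z)$ is best justified by applying the hom functors $\C(-,w)$ and $\C(w,-)$, which send biproducts to direct sums of vector spaces (as the paper does), rather than by the statement about $\kk$-linear functors preserving biproducts, though the content is the same.
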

\begin{proof}
Suppose not, then we have an object $x\cong \bigoplus_{i=1}^\infty x_i$ with each $x_i$ nonzero.  Each object $x_i$ has an identity morphism $\id_i$ so, as hom functors preserve limits, the infinite dimensional vector space with basis $\{\id_i\}_{i=1}^\infty$ is a subspace of $\C(x,x)$.  So $\C$ is not hom-finite.
\end{proof}

\begin{definition}
We define the following full sub-2-categories of $\kk\catei$:
\begin{itemize}
\item $\BaseCat$ is the full sub-2-category on the idempotent complete $\kk$-categories where every object is indecomposable; 
\item $\AddCat$ is the full sub-2-category on the additive idempotent complete $\kk$-categories.
\end{itemize}
We define $\fBaseCat$ and $\fAddCat$ as the full sub-2-categories of 
$\BaseCat$ and $\AddCat$, respectively,
on the finite $\kk$-categories.
\end{definition}
By Lemma \ref{lem:matind}, we have:
\[ \BaseCat  = \Ind\ic\kk\catei \;\; \text{ and } \;\; \AddCat = \Mat\ic\kk\catei. \]

\begin{proposition}\label{prop:base-add}
$\Mat$ and $\Ind$ restrict to 2-functors between $\BaseCat$ and $\AddCat$,
and $\Mat$ is left adjoint to $\Ind$ in the 2-category $\tgpd$.
\[ \xymatrix @R=30pt {
  \BaseCat \ar@/^/[rr]^\Mat   & \dashvvert & \AddCat \ar@/^/[ll]^\Ind   
} \]
Moreover, they further restrict to $\fBaseCat$ and $\fAddCat$, and on this restriction the above adjunction becomes an adjoint equivalence.  In particular, $\fBaseCat$ and $\fAddCat$ are 2-equivalent.
\end{proposition}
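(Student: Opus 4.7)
The plan is to verify well-definedness of the restricted 2-functors, then construct the unit and counit of the adjunction, check the triangle identities, and finally specialise to the finite case.

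First I would check that $\Mat$ sends $\BaseCat$ to $\AddCat$ and $\Ind$ sends $\AddCat$ to $\BaseCat$. Additivity of $\Mat\C$ comes from Lemma \ref{lem:mat-idemp}. For idempotent completeness of $\Mat\C$ when $\C\in\BaseCat$, I would note that idempotent completeness of $\C$ together with indecomposability of every object forces every idempotent in $\End_\C(x)$ to be $0$ or $\id_x$, and then any idempotent matrix in $\Mat\C$ can be block-diagonalised and split; equivalently one observes $\ic\Mat\C\simeq\Mat\ic\C\simeq\Mat\C$. For $\Ind$, objects of $\Ind\D$ are indecomposable by definition; and for any indecomposable $x\in\D$, an idempotent splitting $x=y\oplus z$ in $\D$ forces $y$ or $z$ to be zero, so the only idempotents in $\End(x)$ are $0$ and $\id_x$, both of which split trivially within $\Ind\D$.

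Next I would build the unit and counit. Define the unit component $\eta_\C:\C\to\Ind\Mat\C$ as the composition of the full inclusion $\C\hookrightarrow\Mat\C$ with the identification $\C\simeq\Ind\C\simeq\Ind\Mat\C$ from Lemmas \ref{lem:ind-idemp} and \ref{lem:matind} (using that every object of $\C$ is already indecomposable); this is an equivalence. Define the counit component $\varepsilon_\D:\Mat\Ind\D\to\D$ by sending a formal direct sum of indecomposables to its realisation as a biproduct in $\D$; this is fully faithful by the universal property of biproducts, and its essential image is the subcategory of objects of $\D$ that decompose as finite direct sums of indecomposables. 2-naturality of $\eta$ and $\varepsilon$ follows by unwinding the action of $\Mat$ and $\Ind$ on morphisms and natural isomorphisms described immediately after Lemma \ref{lem:mat-idemp} and in Proposition \ref{prop:ind2fun}; the triangle identities then reduce to recognising the canonical iso 2-cells $\Mat\C\simeq\Mat\Ind\Mat\C$ and $\Ind\D\simeq\Ind\Mat\Ind\D$ as instances of the inclusion-is-equivalence principle.

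Finally, to upgrade to an adjoint equivalence on the finite sublayer, I would invoke the lemma stated just before the definition of $\fBaseCat$ that every object of a finite $\kk$-category decomposes as a finite direct sum of indecomposables. Applied to $\D\in\fAddCat$ this shows $\varepsilon_\D$ is essentially surjective, and hence (being already fully faithful) an equivalence. Combined with $\eta_\C$ being an equivalence already on all of $\BaseCat$, this gives an adjoint equivalence between $\fBaseCat$ and $\fAddCat$. The main obstacle will be the coherence bookkeeping in $\tgpd$: since every 1-cell and 2-cell is only determined up to iso 2-cell, verifying the triangle identities requires carefully tracking the composites of the canonical isomorphisms extracted from Lemmas \ref{lem:mat-idemp}, \ref{lem:ind-idemp}, and \ref{lem:matind}, together with the action of $\Mat$ and $\Ind$ on natural isomorphisms, so that the promised adjunction really sits inside the 2-groupoid $\tgpd$ rather than merely inside the ambient 2-category $\tcat$.
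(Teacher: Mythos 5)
Your overall route is the paper's: the unit and counit are the evident inclusion/realisation functors, the triangle identities are checked directly, the unit is an equivalence in general, and the lemma that every object of a finite $\kk$-category is a finite direct sum of indecomposables is what upgrades the counit to an equivalence after restricting to $\fBaseCat$ and $\fAddCat$. The one place where you add detail beyond the printed proof is also the one step of your argument that fails as stated: idempotent completeness of $\Mat\C$ for an arbitrary $\C\in\BaseCat$. Knowing that every idempotent in $\End_\C(x)$ is $0$ or $\id_x$ does not let you block-diagonalise idempotent matrices, and the claimed equivalence $\ic\Mat\C\simeq\Mat\ic\C$ is false at this level of generality: take for $\C$ the one-object category on a commutative $\kk$-algebra domain $R$ admitting a non-free finitely generated projective module $P$ (e.g.\ the coordinate ring of a smooth affine curve with nontrivial Picard group). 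Then $R$ has only the trivial idempotents, but $\Mat\C$ is the category of finitely generated free $R$-modules, in which the idempotent matrix with image $P$ does not split; correspondingly $\ic\Mat\C$ (finitely generated projectives) is strictly bigger than $\Mat\ic\C$ (finitely generated frees). Block-diagonalising idempotents is genuinely a Krull--Schmidt-type statement and needs local endomorphism rings; it is available in the hom-finite situation relevant to $\fBaseCat$ and $\fAddCat$, but not from indecomposability alone. (To be fair, the paper's own proof passes over this point, citing only Lemma \ref{lem:matind}, so your write-up is no less complete than the printed one here --- but you should not present the splitting as automatic.)

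A smaller remark: the obstacle you anticipate, coherence bookkeeping in $\tgpd$, is not where the work is. With the unit and counit chosen as above they are strict 2-transformations and the triangle identities hold on the nose (this is exactly what Lemma \ref{lem:matind} is invoked for in the paper), so there is nothing to track up to iso 2-cells. What does require attention in $\tgpd$ is that the components of $\eta$ and $\varepsilon$ must be 1-cells of $\kk\catei$, i.e.\ equivalences: this is automatic for $\eta$ (as you and the paper both note), while for $\varepsilon$ it is precisely what the finiteness hypothesis buys, via essential surjectivity of the realisation functor --- which is why the adjoint equivalence only appears after restricting to $\fBaseCat$ and $\fAddCat$.
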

\begin{proof}
First, by Lemma \ref{lem:matind}, we have $\Mat\BaseCat\C \subseteq \AddCat\C$ and $\Ind\AddCat\C \subseteq \BaseCat\C$.  
So by Lemma \ref{lem:2funfull} they restrict to 2-functors.

We have 2-transformations
\[ \eta:\id_{\BaseCat}\to\Ind\Mat \;\; \text{ and } \;\; \varepsilon:\Mat\Ind\to\id_{\AddCat} \]
defined by the obvious inclusion functors.  They satisfy the triangle identities by Lemma \ref{lem:matind}, so we do have a 2-adjunction.  Note that, by Lemma \ref{lem:mat-lff} , $\eta$ is iso.

Using Lemma \ref{lem:2funfull} again, the 2-functors restrict to the finite sub-2-categories. 
If $\C$ is an additive category where every obejct is a direct sum of finitely many indecomposables then $\varepsilon_\C:\Mat\Ind\C\to\C$ is iso, so we do get an adjoint equivalence in $\tgpd$.
\end{proof}

\subsection{Algebras}\label{ss:algebras}

There is a category of (associative, unital) $\kk$-algebras whose morphisms are unital $\kk$-linear functions which preserve the multiplication.  In fact, this is a truncation of a $2$-category $\kk\Alg$.  Given $1$-cells $f,g:A\to B$, the $2$-cells from $f$ to $g$ are by definition
\[ \kk\Alg(A,B)(f,g)=\{ b\in B \st \forall a\in A, \;\; g(a)b=bf(a) \}. \]
The composition is as in $B$.  Note that $f$ and $g$ are isomorphic $1$-cells if they are related by an inner automorphism of $B$.

Given a $\kk$-algebra $A$, we can form the one-object $\kk$-linear category $\C_A$ whose hom-space is just $A$.  This defines a $2$-functor $\deloop:\kk\Alg\to \kk\Cat$ (which we may think of as delooping).
\begin{lemma}\label{lem:deloop-leq}
$\deloop$ is locally an equivalence.
\end{lemma}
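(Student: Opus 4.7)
The plan is to fix two $\kk$-algebras $A$ and $B$ and show directly that the component functor
\[ \deloop_{A,B}:\kk\Alg(A,B)\to\kk\Cat(\C_A,\C_B) \]
is in fact an isomorphism of categories, which a fortiori makes it an equivalence. The categories $\C_A$ and $\C_B$ each have a single object (call them $*_A$ and $*_B$), so there is no choice in what a functor does on objects, and this eliminates most of the usual subtlety.

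First I would verify the object-level (i.e., $1$-cell level) bijection. Any $\kk$-linear functor $F:\C_A\to\C_B$ must send $*_A$ to $*_B$, and on the single hom-space it is a $\kk$-linear map $A\to B$. Functoriality of $F$ says that $F$ preserves composition and the identity morphism, which is precisely the statement that $F:A\to B$ is a unital $\kk$-algebra homomorphism. Thus, algebra homomorphisms $A\to B$ are in canonical bijection with $\kk$-linear functors $\C_A\to\C_B$, and this bijection is exactly how $\deloop$ is defined on $1$-cells.

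Next I would verify the $2$-cell bijection. A natural transformation $\alpha:F\to G$ between two such functors consists of a single component $\alpha_{*_A}:*_B\to *_B$ in $\C_B$, which is an element $b\in B$. The naturality square for a morphism $a\in\C_A(*_A,*_A)=A$ reads $G(a)\circ\alpha_{*_A}=\alpha_{*_A}\circ F(a)$, i.e., $g(a)b=bf(a)$ in $B$ (writing $f=F$, $g=G$). This is literally the defining condition of a $2$-cell in $\kk\Alg(A,B)$, and vertical composition of natural transformations corresponds to multiplication in $B$, which agrees with the composition of $2$-cells in $\kk\Alg$.

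Combining these observations, $\deloop_{A,B}$ is a bijection on objects and on morphisms in a manner that respects composition and identities, so it is an isomorphism of categories. In particular it is an equivalence, which is what the lemma asserts. There is no substantive obstacle; the whole argument is a matter of unwinding the definitions of $\kk\Alg$ and of $\deloop$ carefully enough to see that the naturality condition matches the intertwining condition defining $2$-cells in $\kk\Alg$.
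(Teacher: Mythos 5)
Your proposal is correct and follows the same route as the paper: the paper's proof simply asserts that each $\deloop_{A,B}$ is an isomorphism of categories by construction, and your argument is the explicit unwinding of that claim (one-object functors are exactly unital algebra maps, and one-component natural transformations are exactly the intertwining elements defining $2$-cells in $\kk\Alg$). Nothing further is needed.
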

\begin{proof}
The functors $\deloop_{A,B}:\kk\Alg(A,B)\to\kk\Cat(\C_A,\C_B)$ are in fact isomorphisms of categories, by construction.
\end{proof}

An algebra $A$ is called \emph{basic} if the summands of the regular left $A$-module are all non-isomorphic.  We have a full sub-2-category $\fBasicAlg$ of $\kk\Alg$ on the finite-dimensional basic algebras.

If $\C$ is a $\kk$-category with finitely many objects then we can form the following $\kk$-algebra:
\[ A_\C=\bigoplus_{x,y\in\ob\C}\C(x,y) \]
with multiplication of $f:w\to x$ and $h:y\to z$ given by
\[ hf = 
\begin{cases}
hf & \text{ if } x=y; \\
0 & \text{ otherwise.}
\end{cases}
\]

Let $\skel\D$ denote a skeletal subcategory of $\D$.
\begin{definition}\label{def:basealg}
If $\C$ is a finite $\kk$-category, the \emph{base category} of $\C$ is $\D=\Ind\ic\C$ and the \emph{base algebra} of $\C$ is $B_{\skel\D}$.
\end{definition}
By construction, $B_\D$ is a basic algebra, and $\C$ determines $B_\D$ up to isomorphism.  If $A$ is an algebra of finite representation type, i.e., $A\mMod$ is finite, then the base algebra of $A\mMod$ is just the Auslander algebra of $A$.

Note that, once we move away from single object categories, the construction of $B_\C$ is not functorial on $\Cat$: a morphism $\C\to\D$ of categories (i.e., functor) will not in general induce a unital map $B_\C\to B_\D$ of algebras.  

Now we take the core of $\deloop$, which we denote $\deloop_\ei$.
\begin{proposition}\label{prop:algcat}
There exists a 2-functor $\tilde\deloop:\fBasicAlg_\ei\arrr\sim\fBaseCat$ which is a biequivalence and  which makes the following diagram commute:
\[ \xymatrix  {
\fBasicAlg_\ei\ar[d]\ar[rr]^{\tilde{\deloop}}
  && \fBaseCat\ar[d] \\
\kk\Alg_\ei \ar[r]^{\deloop_\ei} 
& \catei \ar[r]^{\Ind\ic}  & \BaseCat
} \]
\end{proposition}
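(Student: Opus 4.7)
The plan is to build $\tilde\deloop$ as the restriction of the composite $\Ind\ic\circ\deloop_\ei$ from $\kk\Alg_\ei$ to $\fBasicAlg_\ei$, and to check in turn that this restriction lands in $\fBaseCat$, then defines a 2-functor via Lemma \ref{lem:2funfull} making the square commute, and finally is a biequivalence.  That the image lies in $\fBaseCat$ follows from three observations about $\Ind\ic\C_A$ for $A\in\fBasicAlg$: its objects correspond to primitive idempotents of $A$ (Lemma \ref{lem:indic}), which are indecomposable tautologically; its iso classes of objects are finite because finite-dimensionality of $A$ forces finitely many conjugacy classes of primitive idempotents, and basicness ensures distinct classes remain distinct iso classes; and the hom spaces $fAe$ are finite-dimensional since $A$ is.  Commutativity of the diagram is clear from the construction.

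Essential surjectivity on objects uses the base algebra construction.  Given $\D\in\fBaseCat$, let $A=A_{\skel\D}$ as in Definition \ref{def:basealg}; this is finite-dimensional and basic, since the identities $\id_x$ for $x\in\skel\D$ form a complete set of primitive orthogonal idempotents whose associated summands $A\id_x$ are pairwise non-isomorphic.  The $\kk$-linear functor $\D\to\Ind\ic\C_A$ sending $x\mapsto(*,\id_x)$ and acting as the identity on morphisms is fully faithful by the definition of the multiplication in $A_{\skel\D}$, and essentially surjective because every primitive idempotent of a finite-dimensional algebra is equivalent to one of the chosen representatives $\id_x$.  Local fully faithfulness of $\tilde\deloop_{A,B}$ then follows by composing three ingredients: $\deloop_{A,B}$ is an isomorphism by Lemma \ref{lem:deloop-leq} (which passes to the cores), $\ic$ is locally fully faithful by Lemma \ref{lem:ic-lff}, and $\Ind$ is locally fully faithful on our domain by Proposition \ref{prop:ind2fun}, whose hypothesis is met since every object of $\ic\C_A$ decomposes as a finite direct sum of the $(*,e_i)$ for primitive orthogonal $e_i$.

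The main obstacle is essential surjectivity of $\tilde\deloop_{A,B}$ on 1-cells.  Given an equivalence $F:\Ind\ic\C_A\arr\sim\Ind\ic\C_B$, I need an algebra isomorphism $\phi:A\to B$ with $\tilde\deloop\phi\isom F$.  The plan is to choose complete sets of primitive orthogonal idempotents $\{e_i\}_{i=1}^n$ for $A$ and $\{f_j\}_{j=1}^m$ for $B$.  Because the objects $(*,e_i)$ and $(*,f_j)$ represent distinct iso classes in their respective categories (by basicness) and $F$ induces a bijection between iso classes, we obtain $n=m$, a permutation $\sigma$, and chosen isomorphisms $\alpha_i:F(*,e_i)\arr\sim(*,f_{\sigma(i)})$.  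Transporting $F$ along the $\alpha_i$ produces a naturally isomorphic functor $F'$ that strictly sends $(*,e_i)$ to $(*,f_{\sigma(i)})$, and $F'$ now restricts on hom spaces to linear isomorphisms $e_jAe_i\arr\sim f_{\sigma(j)}Bf_{\sigma(i)}$ compatible with composition.  These assemble into an algebra isomorphism $\phi:A\to B$ with $\phi(e_i)=f_{\sigma(i)}$ and $\tilde\deloop\phi=F'\isom F$, as required.
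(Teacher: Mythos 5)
Your proposal is correct and follows the same skeleton as the paper's proof: restrict the composite $\Ind\ic\,\deloop_\ei$ using Lemma \ref{lem:2funfull} (checking hom-finiteness and that indecomposables correspond to the finitely many iso-classes of primitive idempotents via Lemma \ref{lem:indic}), deduce local full faithfulness from Lemmas \ref{lem:deloop-leq} and \ref{lem:ic-lff} together with Proposition \ref{prop:ind2fun}, and get 2-density from the base algebra $A_{\skel\D}$, whose image is $\skel\D\simeq\D$. The one genuine difference is your final paragraph: you explicitly prove essential surjectivity of the hom-functors on 1-cells, by choosing complete orthogonal sets of primitive idempotents, matching them along the bijection of iso-classes induced by an equivalence $F$, and reading off an algebra isomorphism $\phi$ with $\tilde\deloop\phi\isom F$. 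The paper's proof does not spell this step out -- it passes from ``locally fully faithful'' and ``2-dense'' directly to ``biequivalence'' -- even though the paper's own definition of biequivalence requires each $\biF_{x,y}$ to be an equivalence, so your argument supplies exactly the missing local essential surjectivity; your construction (defining $\phi$ on $e_jAe_i$ by $a\mapsto\alpha_jF(a)\alpha_i^{-1}$ and extending the resulting natural isomorphism from the full dense subcategory on the $(*,e_i)$ to all objects by conjugating with chosen isomorphisms) is sound. Two cosmetic points: your comparison functor in the 2-density step should be defined on $\skel\D$ rather than $\D$ (the identities $\id_x$ for $x\notin\skel\D$ are not elements of $A_{\skel\D}$), which is harmless since biequivalence only asks that $\D$ be \emph{equivalent} to $\tilde\deloop A$; and your phrasing ``finitely many conjugacy classes of primitive idempotents'' is in fact more accurate than the paper's ``finitely many primitive idempotents''.
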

\begin{proof}
Let $\biF=\Ind\ic\deloop_\ei$ denote the composite functor $\kk\Alg_\ei \to \BaseCat$.  We want to restrict $\biF$ to full sub-2-categories, so we use Lemma \ref{lem:2funfull}.  Let $A\in\ob\fBasicAlg_\ei$.  As $A$ is finite-dimensional, $\deloop_\ei A$ is hom-finite, and so $\biF A$ is also hom-finite.  Also, as $A$ is finite-dimensional, it decomposes as a finite direct sum of left ideals, and thus it has finitely many primitive 
idempotents.  So $\biF A$ has finitely many indecomposable objects.  Therefore we get the 2-functor $\tilde{\deloop}$.

We know that $\deloop_\ei$ and $\ic$ are locally fully faithful by Lemmas \ref{lem:ic-lff} and \ref{lem:deloop-leq}.  As $A$ has finitely many idempotents, we know by Lemma \ref{lem:indic} that every object of $\ic\deloop_\ei A$ is a direct sum of finitely many indecomposables.  So $\biF$ is locally fully faithful by Proposition \ref{prop:ind2fun}.
Now given $\C\in\ob\fBaseCat$, let $A=B_\C$ be its base algebra.    
Then $\biF A$ is isomorphic to $\skel\C$ by construction. 
So $\biF$ is 2-dense and therefore a biequivalence. 
\end{proof}

Now, by Propositions \ref{prop:base-add} and \ref{prop:algcat}, we have two biequivalences:
\[ \fBasicAlg_\ei \arrr\sim  \fBaseCat \arrr\sim  \fAddCat.\]

\section{Serre structures}\label{s:serre}

Our main interest is in $\kk$-linear categories, but we work with categories enriched in a pivotal monoidal category $\V$.  We define and study Serre functors in this setting.   This will be useful later when we consider nontrivial pivotal structures on the monoidal category of graded finite dimensional vector spaces.

Enriched Serre functors have been considered previously by Lyubashenko and Manzyuk in the case where $\V$ is symmetric \cite{lm}.  The setup we develop here, where $\V$ is pivotal, applies even when $\V$ does not admit a braiding.  Our approach is similar to work of Fuchs and Stigner \cite{fs} for Frobenius algebras.

\subsection{Definitions and duals}
\label{ss:defs+duals}

We recall parts of $\V$-enriched category theory, following \cite[Section 1.3]{jy}.
Throughout, we assume that
$\V$ is a pivotal monoidal category (see Section \ref{sss:pivotal}) 
but do not assume that $\V$ is braided.

Let $\C$ be a $\V$-enriched category: for all $x,y\in\C$ we have an object $\C(x,y)\in\V$, and for all $x,y,z\in\C$ we have
unit maps
\[ u: \unit\to\C(x,x) \]
and composition maps
\[ m: \C(y,z) \otimes \C(x,y)\to \C(x,z) \]
in $\V$.

If $\C$ and $\D$ are two $\V$-enriched categories, a $\V$-functor $F:\C\to\D$ is a function 
$F:\ob\C\to \ob \D$ and, for all $y,z\in\C$, a map $F_{yz}:\C(y,z)\to\D(Fy,Fz)$ in $\V$ such that, for all $x\in\C$, the following diagram commutes:
\[ \xymatrix @C=0pt {
 \C(y,z)\ar[d]^F & \otimes &\C(x,y)\ar[d]^F  \ar[rr]^{m_\C} &{\phantom{abcdefgh}} &  \C(x,z)\ar[d]^F   \\
 \D(Fy,Fz) &\otimes &\D(Fx,Fy) \ar[rr]^{m_\D}  &{\phantom{abcdefgh}} &  \D(Fx,Fz)
} \]

Recall that the pivotal structure is denoted $\iota$.  By naturality of $\iota$, 
the following diagram commutes for any $\V$-functor $F:\C\to\D$:
\[ \xymatrix 
{
\C(x,y)\ar[d]^{F}  \ar[r]^{\iota} &  \C(x,y)^{\vee\vee}\ar[d]^{F^{\vee\vee}}   \\
\D(Fx,Fy) \ar[r]^{\iota}   &  \D(Fx,Fz)^{\vee\vee}
} \]

Given $\V$-functors $F,G:\C\to\D$, a $\V$-natural transformation $\theta:F\to G$ is a collection of morphisms $\theta_y:\unit\to \D(Fy,Gy)$ such that the following diagram commutes for all $y,z\in\C$:
\[ \xymatrix @C=40pt
{
\C(y,z)\otimes\unit\cong\C(y,z)\cong \unit\otimes\C(y,z)\ar[r]^(0.57){\theta_z\otimes F_{yz}}  \ar@<-8.8ex>[d]^{G_{yz}\otimes\theta_y} &  \D(Fz,Gz)\otimes\D(Fy,Fz)
\ar[d]^{m}   \\
\D(Gy,Gz)\otimes\D(Fy,Gy) \ar[r]^(0.57){m}   &  \D(Fy,Gz)
} \]

We will use $\V$-modules for monoidal categories $\V$.  Our reference is \cite{gs}, which is written in the generality of monoidal bicategories, but the results apply immediately to our setting by considering $\V$ to be a monoidal bicategory with only identity $2$-cells.

Let $\C$ be a $\V$-enriched category.  A right $\C$-module $M$ is a function $M:\ob\C\to\ob\V$ and, for all $x,y\in\C$, a map $M_{xy}:My\otimes \C(x,y)\to Mx$ in $\V$ satisfying unitality and associativity axioms.  The standard example is $M=\C(-,z)$ for any $z\in\ob\C$, with $M_{xy}$ given by composition.  This defines a fully faithful Yoneda embedding, which acts on objects by sending $z\in\ob\C$ to the right $\C$-module $\C(-,z)$ \cite[Proposition 9.6]{gs}.

Similarly, a left $\C$-module $N$ is a function $N:\ob\C\to\ob\V$ and, for all $x,y\in\C$, a map $N_{xy}:\C(x,y)\otimes Nx\to Ny$.  The standard example is $N=\C(w,-)$ for $w\in\ob\C$.  There is also a notion of a $\C\da\D$-bimodule, which comes with a map $\ob\D\times\ob\C\to\ob\V$.  
The standard example is the hom bimodule $\C(-,-)$.

If $M$ is a right $\D$-module and $F:\C\to \D$ is a $\V$-functor, then we obtain a right $\C$-module $MF$ with object map $MF:\ob\C\to\ob\V$ given by composition and, for $x,y\in\C$, action map given by:
\[ {MF}_{xy}:MFy\otimes \C(x,y) \arrr{\id\otimes F_{xy}} MFy\otimes \C(x,y)\arrr{M_{FxFy}}MFx. \]
The same construction turns a left $\D$-module $N$ into a left $\C$-module $NF$.

If $N$ is a left $\C$-module, taking (left) duals gives a right $\C$-module $N^\vee$ with objects $N^\vee x=(Nx)^\vee$ and action map $N^\vee_{xy}=(N_{yx})^r$
(see Section \ref{ss:techlemma}).  Explicitly, we have:
\[ N^\vee_{xy}:N^\vee y\otimes \C(x,y) \arrr{(N_{yx})^\vee\otimes\id}N^\vee y\otimes \C(x,y)^\vee\otimes \C(x,y) \arrr{\id\otimes e}N^\vee x. \]
However, if $M$ is a right $\C$-module, we need to use the pivotal structure to put a left $\C$-module structure on the (left) dual $M^\vee$.  The action map is constructed using $\iota$ and $(M_{yx})^\ell$ as follows:
\[ M^\vee_{xy}: \C(x,y)\otimes M^\vee y \arrr{\iota\otimes(M_{yx})^\vee} \C(x,y)^{\vee\vee}\otimes \C(x,y)^\vee\otimes M^\vee y  \arrr{e\otimes \id}M^\vee x. \]

Now, following the classic definition of Bondal and Kapranov \cite{bk}, we are able to give our main definition.
\begin{definition}\label{def:serre}
A \emph{$\V$-enriched Serre structure} for $\C$ is a pair $(\se,\kappa)$ where $\se:\C\arr\sim\C$ is a $\V$-autoequivalence and 
$\kappa$ is a $\C\da\C$-bimodule isomorphism $\kappa_{x,y}: \C(x,y) \to \C(y,\se x)^\vee $.

If $(\se,\kappa)$ is a $\V$-enriched Serre structure for $\C$, we say that
$\C$ \emph{has Serre duality} and that
$\se$ is a \emph{Serre functor}. 
\end{definition}

The above definition says that, for all $x,y\in\ob\C$ we have an invertible map $\kappa_{x,y}$ which is both a left $\C$-module morphism $\kappa_{x,-}: \C(x,-) \to \C(-,\se x)^\vee$ and a right $\C$-module morphism $\kappa_{-,y}: \C(-,y) \to \C(y,\se -)^\vee$ (see \cite[Section 5.4]{gs}.  
Following Section \ref{ss:techlemma}, the composition map of $\V$ determines two maps
\[ m^\ell: \C(x,y)^{\vee\vee} \otimes  \C(x,z)^\vee\to \C(y,z)^\vee \]
and
\[ m^r:  \C(x,z)^\vee\otimes \C(y,z)\to \C(x,y)^\vee \]
in $\V$.  
Unravelling the definition, the bimodule morphism condition says that two diagrams commute.  
Let $w,x,y,z\in\ob\C$.  
The left module morphism condition gives ``left naturality'' of $\kappa$:
\[ \xymatrix @C=0pt {
 \C(y,z)\ar[d]^{\iota} & \otimes &\C(x,y)\ar[d]^{\kappa_{x,y}} {\phantom{ab}} \ar[rr]^m &{\phantom{abcdefgh}} & {\phantom{ab}} \C(x,z)\ar[d]^{\kappa_{x,z}}   \\
 \C(y,z)^{\vee\vee} &\otimes &\C(y,\se x)^\vee {\phantom{ab}}\ar[rr]^{m^\ell}  & & {\phantom{ab}}\C(z,\se x)^\vee 
} \]
and the right module condition gives ``right naturality'' of $\kappa$:
\[
\xymatrix @C=0pt {
\C(x,y)\ar[d]^{\kappa_{x,y}} &\otimes &\C(w,x){\phantom{ab}} \ar[d]^{\se} \ar[rr]^m &{\phantom{abcdefgh}} & {\phantom{ab}} \C(w,y)\ar[d]^{\kappa_{w,y}}   \\
\C(y,\se x)^\vee &\otimes &\C(\se w,\se x){\phantom{ab}}\ar[rr]^{m^r}  & & {\phantom{ab}}\C(y,\se w)^\vee 
} \]
In formulas, these say
\[ \kappa_{x,z}(hg)=\iota(h)\kappa_{x,y}(f) \;\;\;\; \text{ and } \;\;\;\; \kappa_{x,y}(gf)=\kappa_{w,y}(g)\se(f) \]
for $f:w\to x$, $g:x\to y$, and $h:y\to z$.

\begin{remark}\label{rmk:pairing}
One can also specify Serre structures using trace maps or pairings: see, for example, \cite[Proposition I.1.4]{rvdb}.  We show how this matches our conventions, but we won't pursue it further.  Given a Serre structure as in Definition \ref{def:serre}, we obtain a pairing by:
\[ \C(y,\se x)\otimes\C(x,y)\arrr{\iota\otimes\kappa} \C(y,\se x)^{\vee\vee}\otimes \C(y,\se x)^{\vee}\arrr{e}\unit \]
and given a trace map $\C(x,\se x)$ we construct a pairing $p_{x,y}:\C(y,\se x)\otimes \C(x,y)\to\unit$ by precomposing with the composition in $\C$, then construct the following map:
\[ \C(y,\se x)\arrr{\id\otimes c} \C(y,\se x)\otimes \C(x,y)\otimes \C(x,y)^\vee\arrr{p\otimes\id}\C(x,y)^\vee. \]
To obtain $\kappa$, we dualize and precompose with the pivotal structure map:
\[ \C(x,y)\arrr{\iota} \C(x,y)^{\vee\vee}\arrr{} \C(y,\se x)^\vee. \]
\end{remark}

\begin{remark}
Traditionally, the definition of a Serre functor involves a binatural transformation of functors, not a morphism of bimodules.  It may comfort the reader(/author) to note that modules do induce $\V$-functors, by the following construction.  
As $\V$ is rigid, it is left and right closed, i.e., it has left and right internal homs
\[ [Y,Z]_\ell={^\vee Y} \otimes Z \in\V \;\;\;\; \text{ and } \;\;\;\; [Y,Z]_\ell=Z\otimes Y^\vee\in\V \]
where ${^\vee Y}$ denotes the right dual of $Y$. 
These satisfy the adjunction formulas
\[ \V(Y\otimes X,Z)\cong \V(X,[Y,Z]_\ell) \;\;\;\; \text{ and } \;\;\;\; \V(X\otimes Y,Z)\cong\V(X,[Y,Z]_r). \]
Using these, we define two $\V$-enriched categories.

The first, called $\V^r$, has hom object $\V^r(X,Y)=[X,Y]_r$, unit map $\unit\to[X,X]_r$ adjoint to $\unit\otimes X\arr\sim X$, and multiplication map 
$$[Y,Z]_r\otimes[X,Y]_r =Z \otimes Y^\vee \otimes Y \otimes X^\vee \arrr{\id\otimes e\otimes \id} Z\otimes X^\vee=[X,Z]_r.$$
So, given a left $\C$-module $N$, we have a map $N_{xy}:\C(x,y)\otimes Nx\to Ny$ which, by the second adjunction formula, determines a map $\C(x,y)\to [Nx, Ny]_r$ in $\V$ and thus determines a $\V$-functor $N:\C\to\V^r$.

The second $\V$-enriched category is suggestively called $(\V^\ell)^\rev$, with hom object $\V^\ell(X,Y)=[Y,X]_\ell$ (note the reversed order) and multiplication map 
$$[Z,Y]_\ell\otimes[Y,X]_\ell ={^\vee Z} \otimes Y \otimes {^\vee Y} \otimes X  \arrr{\id\otimes e\otimes \id}{^\vee Z} \otimes X=[Z,X]_\ell.$$
Then, given a right $\C$-module $M$, we have a map $M_{xy}:My\otimes \C(x,y)\to Mx$ which, by the first adjunction formula, determines a map $\C(x,y)\to [My, Mx]_r$ in $\V$ and thus determines a $\V$-functor $M:\C\to(\V^\ell)^\rev$.
\end{remark}

\begin{remark}
Our definition of a Serre structure uses enriched categories with dualizable hom-spaces.  In the case of vector spaces, this means hom-finite categories.  This may seem restrictive, as the classical definition of a Serre functor uses arbitrary $\kk$-linear categories, but 
in fact the finite dimensional condition is forced: $\kappa$ and $(\kappa^*)^{-1}$ give an isomorphism between $\C(x,y)$ and $\C(\se x,\se y)^{**}$, and $\se$ is an autoequivalence, so we have a $\kk$-linear isomorphism between $\C(x,y)$ 
 and its double dual.  Thus any $\kk$-linear category which has Serre duality must be hom-finite.
\end{remark}

\subsection{Uniqueness}\label{ss:uniqueserre}

It is well-known that Serre functors are unique up to isomorphism \cite[Proposition 3.4(b)]{bk}:
\begin{proposition}\label{prop:uniqueserre}
If $(\se,\kappa)$ and $(\se',\kappa')$ are two Serre structures on $\C$ then there exists a natural isomorphism $\alpha:\se\arr\sim\se'$ such that the following diagram commutes:
\[ \xymatrix{
\C(x,y) \ar[r]^{\kappa_{x,y}}\ar[d]_{\kappa_{x,y}'} &\C(y,\se x)^\vee \\
\C(y,\se' x)^\vee\ar[ur]_{\C(y,\alpha_x)^\vee}  
} \]
\end{proposition}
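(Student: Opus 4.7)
The plan is the classical enriched Yoneda argument, adapted to the pivotal setting.

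Fix $x\in\ob\C$. The ``left naturality'' of $\kappa$ says exactly that $\kappa_{x,-}$ is a $\V$-natural isomorphism from the covariant $\V$-functor $\C(x,-)$ to the covariant $\V$-functor $y\mapsto\C(y,\se x)^\vee$ (covariance of the target comes from dualizing precomposition $h^*$ as $(h^*)^\vee$); similarly for $\kappa'_{x,-}$ and $\se' x$. Composing, I obtain a $\V$-natural isomorphism
\[
\beta_{x,-}\;:=\;\kappa_{x,-}\circ(\kappa'_{x,-})^{-1}\;:\;\C(-,\se' x)^\vee\arr\sim\C(-,\se x)^\vee.
\]
Dualizing yields $\beta_{x,-}^\vee:\C(-,\se x)^{\vee\vee}\arr\sim\C(-,\se' x)^{\vee\vee}$, a $\V$-natural iso of contravariant $\V$-functors in $y$. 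Pre- and post-composing with components of the pivotal structure $\iota$ (using naturality of $\iota:\id_\V\Rightarrow(-)^{\vee\vee}$) produces
\[
\gamma_{x,-}\;:=\;\iota^{-1}\circ\beta_{x,-}^\vee\circ\iota\;:\;\C(-,\se x)\arr\sim\C(-,\se' x),
\]
a $\V$-natural isomorphism of contravariant $\V$-functors.

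Next I apply the enriched Yoneda lemma to $\gamma_{x,-}$: there is a unique morphism $\alpha_x:\se x\to\se' x$ such that $\gamma_{x,y}=\C(y,\alpha_x)$ (post-composition), and since $\gamma_{x,-}$ is an iso, so is $\alpha_x$. Unwinding the construction and using Lemma \ref{lem:saav} together with the pivotal identity $\iota_X^\vee=\iota_{X^\vee}^{-1}$ gives $\C(y,\alpha_x)^\vee=\beta_{x,y}$, which by definition of $\beta$ rearranges to $\kappa_{x,y}=\C(y,\alpha_x)^\vee\circ\kappa'_{x,y}$, i.e., the triangle of the proposition commutes.

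It remains to verify $\V$-naturality of $\alpha$ in $x$, namely that $\se' f\circ\alpha_x=\alpha_{x'}\circ\se f$ holds as morphisms induced by any $f:x\to x'$. This is exactly where the ``right naturality'' of $\kappa$ and $\kappa'$ is used: it characterizes the $\V$-functor actions $\se(f)$ and $\se'(f)$ as the transports of $\kappa$ and $\kappa'$ under post-composition by $f$. Since $\alpha$ is defined precisely to intertwine $\kappa$ with $\kappa'$, another short Yoneda-style diagram chase (again invoking Lemma \ref{lem:saav} to control double duals) gives the required identity.

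The main obstacle is purely bookkeeping: because $\V$ is only assumed pivotal, every passage between left and right duals or between $X$ and $X^{\vee\vee}$ must be mediated by $\iota$, and one must be careful that the variance (covariant versus contravariant in $y$, naturality in $x$ versus naturality in $y$) matches at each step. Once the Yoneda identification is set up cleanly in the pivotal enriched setting, the rest is forced.
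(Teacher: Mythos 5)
Your proof is correct and is essentially the argument the paper relies on: the paper does not reprove this statement but cites Bondal--Kapranov, whose proof is exactly this Yoneda-style construction (compose $\kappa$ with $(\kappa')^{-1}$, transfer back through the duals via the pivotal structure, and read off $\alpha_x$ by Yoneda, with naturality in $x$ coming from the right-naturality of $\kappa,\kappa'$), and the same pattern is used verbatim in the paper's proof of Proposition \ref{prop:transport-serre}(b).
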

We can extend this idea.  Let $\C_i$ be a $\V$-category with Serre structure $(\se_i,\kappa_i)$, for $i=1,2$.
\begin{definition}\label{def:mapofss}
A \emph{morphism of Serre structures} $(\C_1,\se_1,\kappa_1)\to (\C_2,\se_2,\kappa_2)$ is a pair $(F,\alpha)$ where $F:\C_1\to\C_2$ is a $\V$-functor and $\alpha:F\se_1\arr\sim\se_2 F$ is a natural isomorphism such that the following diagram commutes:
\[\xymatrix @C=0pt {
{\phantom{\C_2( F y,\se_2  F x)^\vee}}& {\C_1(x,y)\ar[rr]^{\kappa_1}} \ar[dl]_{ F} &{\phantom{\C_2( F y,\se_2  F x)^\vee}}& {\C_1(y,\se_1 x)^\vee} &{\phantom{\C_2( F y,\se_2  F x)^\vee}} \\
{\C_2( F x, F y)} \ar[rr]_{\kappa_2} &{\phantom{\C_2( F y,\se_2  F x)^\vee}}& {\C_2( F y,\se_2  F x)^\vee}\ar[rr]_{\C_2(Fy,\alpha_x)^\vee} &{\phantom{\C_2( F y,\se_2  F x)^\vee}}& {\C_2( F y,  F\se_1 x)^\vee} \ar[ul]_{F^\vee}
}  \]
If $F$ is an equivalence then we say $(F,\alpha)$ is an \emph{equivalence of Serre structures}. 
\end{definition}
So Proposition \ref{prop:uniqueserre} says that there exists an isomorphism $\alpha:\se\arr\sim\se'$ such that $(\id_\C,\alpha)$ is an equivalence of Serre structures.

\begin{proposition}\label{prop:transport-serre}
Let $\C_1$ and $\C_2$ be $\V$-categories.  
\begin{enumerate}[label=(\alph*)]
\item Suppose $\C_1$ has a Serre structure $(\se_1,\kappa_1)$ and we have an equivalence of $\V$-categories $F:\C_1\arr\sim\C_2$.  Then
there exists a Serre structure $(\se_2,\kappa_2)$ on $\C_2$.  
\item Given Serre structures $(\se_i,\kappa_i)$ on $\C_i$ and an equivalence of $\V$-categories $F:\C_1\arr\sim\C_2$, 
$F$ can be upgraded to an equivalence of Serre structures $(F,\alpha)$.
\end{enumerate}
\end{proposition}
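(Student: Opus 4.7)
The plan is to prove (a) by explicit transport of structure along a chosen quasi-inverse, and then deduce (b) by combining (a) with the uniqueness result of Proposition~\ref{prop:uniqueserre}.

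For part (a), upgrade $F$ to an adjoint equivalence $(F,G,\eta,\varepsilon)$ with both $\eta:\id\to GF$ and $\varepsilon:FG\to\id$ iso. Set $\se_2=F\se_1 G$, which is a $\V$-autoequivalence as a composite of three. Define $\kappa_2$ as the composite
\[
\C_2(x,y)\arrr{G} \C_1(Gx,Gy)\arrr{\kappa_1} \C_1(Gy,\se_1 Gx)^\vee \arrr{F_*} \C_2(FGy,\se_2 x)^\vee \arrr{\C_2(\varepsilon_y,\se_2 x)^\vee} \C_2(y,\se_2 x)^\vee,
\]
where $F_*$ denotes the inverse of the iso $\C_2(FGy,F\se_1 Gx)^\vee\to \C_1(Gy,\se_1 Gx)^\vee$ obtained by dualizing the action of $F$ on homs (this is iso because $F$ is fully faithful). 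All four arrows are isomorphisms, so $\kappa_2$ is too. Binaturality of $\kappa_2$ is then the substantive check: each of the two squares in the unwound definition decomposes as a pasting of sub-squares that commute by, respectively, $\V$-functoriality of $G$, binaturality of $\kappa_1$, compatibility of dualization with $F$ together with the $m^\ell,m^r$ identities of Section~\ref{ss:techlemma}, and naturality of $\varepsilon$. This binaturality verification is the main obstacle: all four kinds of data must be invoked in the same diagram, together with the adjoint triangle identities to move $\varepsilon$ past $F$ when comparing source and target of the pasted squares.

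For part (b), apply (a) to $F$ and $(\se_1,\kappa_1)$ to obtain a Serre structure $(\se_2',\kappa_2')$ on $\C_2$ with $\se_2'=F\se_1 G$. By construction, the pair $(F,\,F\se_1\eta)$ is an equivalence of Serre structures from $(\C_1,\se_1,\kappa_1)$ to $(\C_2,\se_2',\kappa_2')$: the pentagon of Definition~\ref{def:mapofss} collapses to the defining composite for $\kappa_2'$ once one uses the triangle identity $\varepsilon F\circ F\eta=\id_F$ to identify the counit insertion against $F\se_1\eta$. Now apply Proposition~\ref{prop:uniqueserre} to the two Serre structures $(\se_2,\kappa_2)$ and $(\se_2',\kappa_2')$ on the single category $\C_2$: this yields an isomorphism $\beta:\se_2'\arr\sim\se_2$ such that $(\id_{\C_2},\beta)$ is an equivalence of Serre structures. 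The sought equivalence in (b) is then the horizontal composite of these two, namely $(F,\alpha)$ with $\alpha_x=\beta_{Fx}\circ F\se_1\eta_x$; the pentagon for the composite is the vertical pasting of the pentagons for its two factors, so no further computation is needed. The only work in (b) beyond invoking (a) is thus this pasting and the observation that equivalences of Serre structures compose in the evident way.
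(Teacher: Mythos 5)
Your proposal is correct, and your part (a) is essentially the paper's own construction: the same $\se_2=F\se_1G$ and the same four-step composite defining $\kappa_2$ (your $F_*$ is the paper's $(F^{-1})^\vee$), with binaturality in both cases only sketched from functoriality of $F,G$, binaturality of $\kappa_1$, and naturality of $\varepsilon$. Part (b), however, takes a genuinely different route. The paper never writes $\alpha$ down: since the maps $F$ on hom-objects, $\kappa_1$, $\kappa_2$ and $F^\vee$ occurring in the diagram of Definition \ref{def:mapofss} are all invertible, they determine the remaining edge $\C_2(Fy,\se_2 Fx)^\vee\to\C_2(Fy,F\se_1 x)^\vee$ as an isomorphism natural in $x$ and $y$, and the Yoneda lemma then yields $\alpha:F\se_1\arr\sim\se_2F$ with the pentagon commuting by construction; neither part (a) nor Proposition \ref{prop:uniqueserre} is invoked. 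You instead factor through the transported structure: you check that $(F,F\se_1\eta)$ is an equivalence of Serre structures onto $(F\se_1G,\kappa_2')$, compare $(\se_2',\kappa_2')$ with $(\se_2,\kappa_2)$ via Proposition \ref{prop:uniqueserre}, and compose, obtaining the explicit formula $\alpha_x=\beta_{Fx}\circ F\se_1\eta_x$. This is valid and buys an explicit description of $\alpha$, but the middle step costs a little more than you state: the pentagon for $(F,F\se_1\eta)$ does not collapse by the triangle identity alone. After using $\varepsilon_{Fy}=F(\eta_y^{-1})$ it reduces to the identity $\C_1(\eta_y^{-1},\se_1\eta_x)^\vee\circ\kappa_1\circ GF=\kappa_1$, whose verification needs the binaturality of $\kappa_1$ against the isomorphisms $\eta_x,\eta_y$ together with naturality of $\eta$ (to rewrite $GF$ on homs as conjugation by $\eta$). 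This is routine and does go through, so the omission is expository rather than a genuine gap; the paper's Yoneda argument is simply the shorter way to sidestep that computation.
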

\begin{proof}
\begin{enumerate}[label=(\alph*)]
\item Fix an adjoint equivalence $(F,G,\eta, \varepsilon)$.  
Define $\se_2=F\se_1 G:\D\to\D$.  Then define:
\[ \kappa_2:
 \C_2(x,y)\arrr G \C_1(Gx,Gy)\arrr{\kappa_1}
 \C_1(Gy, \se_1 Gx)^\vee
\arrr{(F^{-1})^\vee} 
\C_2(FGy,F\se_1 Gx)^\vee\arrr{(\varepsilon,\id)^\vee} \C_2(y,\se_2 x)^\vee. \]
This is a binatural isomorphism by the functoriality of $F$ and $G$ and the naturality of $\kappa_1$ and $\varepsilon$.
Note that, by Proposition \ref{prop:uniqueserre}, $(\se_2,\kappa_2)$ is unique up to isomorphism.

\item
We have four invertible morphisms in the diagram of Definition \ref{def:mapofss}, so they define a map 
${\C_2( F y,\se_2  F x)^\vee}\to {\C_2( F y,  F\se_1 x)^\vee}$ natural in $x$ and $y$.  By the Yoneda lemma, this determines a natural transformation $\alpha:F\se_1\arr\sim\se_2 F$. \qedhere
\end{enumerate}
\end{proof}

We record two useful results.  The first is a direct consequence of Proposition \ref{prop:transport-serre}.
\begin{corollary}\label{cor:skel-serre}
Let $\B$ be a skeletal subcategory of $\C$. Then
$\B$ has a Serre structure if and only if $\C$ does.
\end{corollary}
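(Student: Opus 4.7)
The plan is to apply Proposition \ref{prop:transport-serre}(a) in both directions along the equivalence given by the skeletal inclusion.

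First I would observe that the inclusion $\iota:\B\hookrightarrow\C$ is an equivalence of $\V$-categories: it is fully faithful because $\B$ is a full subcategory, and essentially surjective by the definition of a skeleton (every object of $\C$ is isomorphic to some object of $\B$). Since $\B$ and $\C$ are both $\V$-enriched and $\iota$ is the inclusion of a full $\V$-subcategory, it is automatically a $\V$-functor.

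Given this, the ``if and only if'' is immediate from Proposition \ref{prop:transport-serre}(a). In one direction, if $\B$ has a Serre structure $(\se_\B,\kappa_\B)$, then applying the proposition to the equivalence $\iota:\B\arr\sim\C$ produces a Serre structure on $\C$. In the other direction, fix a quasi-inverse $r:\C\arr\sim\B$ to $\iota$ (which exists since $\iota$ is an equivalence); then a Serre structure on $\C$ transports along $r$ to yield one on $\B$.

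There is no real obstacle here: the only thing one might want to double-check is that the transport construction in the proof of Proposition \ref{prop:transport-serre}(a) does not require any hypothesis on $\C$ or $\B$ beyond being $\V$-categories related by a $\V$-equivalence, which is clear from its proof (it only uses an adjoint equivalence together with the given Serre data). So the whole statement reduces to one sentence invoking the proposition.
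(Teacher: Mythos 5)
Your proof is correct and matches the paper's: the paper simply records this corollary as a direct consequence of Proposition \ref{prop:transport-serre}, transporting the Serre structure along the equivalence given by the skeletal inclusion (and a quasi-inverse in the other direction), exactly as you do.
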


\begin{lemma}\label{lem:serreonsubcat}
If $\C$ has a Serre structure $(\se,\kappa)$ and $\B$ is a full subcategory of $\C$ which is closed under $\se$ (i.e., $\se \B\subseteq \B$) then $\B$ has a Serre structure.
\end{lemma}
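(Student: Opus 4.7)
\emph{Proof plan.} The plan is to define the Serre structure on $\B$ by straight restriction of the one on $\C$. Set $\se_\B:=\se|_\B$, which is a $\V$-endofunctor of $\B$ by the hypothesis $\se\B\subseteq\B$, and define $\kappa_{\B,x,y}$ as the component $\kappa_{x,y}$ for $x,y\in\B$.

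First I would check that these restrictions make sense. Since $\B$ is full in $\C$ and, crucially, $\se x\in\B$ whenever $x\in\B$, we have $\B(x,y)=\C(x,y)$ and $\B(y,\se_\B x)=\C(y,\se x)$ for all $x,y\in\B$. Hence the isomorphism $\kappa_{x,y}:\C(x,y)\to\C(y,\se x)^\vee$ restricts to an isomorphism $\kappa_{\B,x,y}:\B(x,y)\to\B(y,\se_\B x)^\vee$ in $\V$. Binaturality of $\kappa_\B$ descends directly from binaturality of $\kappa$, because the unit and composition maps of $\B$ agree with those of $\C$ by fullness; in particular the dualized composition maps $m^\ell$ and $m^r$ used in Section \ref{ss:defs+duals} are the same whether computed in $\B$ or in $\C$.

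The remaining step, and the main obstacle, is to confirm that $\se_\B$ really is a $\V$-autoequivalence of $\B$. Fully faithfulness is immediate: $\se:\C\arr\sim\C$ is fully faithful, and $\B$ is a full subcategory, so the restriction is fully faithful. What requires care is essential surjectivity on $\B$: given $y\in\B$, one must produce $x\in\B$ with $\se_\B x\cong y$. The natural approach is a Yoneda-style argument using $\kappa_\B$: the $\V$-functor $\B(-,y)^\vee:\B^{\op}\to\V$ is corepresented in $\C$ by $\se^{-1}y$ via $\kappa$, and the task is to show that this corepresenting object lies, up to isomorphism, in $\B$. In the finite-category setting which underlies the paper's applications (cf.\ Section \ref{ss:algebras}), this can also be done by a pigeonhole argument: the fully faithful $\se_\B$ induces an injection on the finite set of isomorphism classes of indecomposables in $\B$ with image contained in that same set, so must be a bijection, giving essential surjectivity.

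Once $\se_\B$ is confirmed to be an autoequivalence, the pair $(\se_\B,\kappa_\B)$ satisfies Definition \ref{def:serre}, so $\B$ has a Serre structure. Uniqueness up to isomorphism of the resulting structure then follows from Proposition \ref{prop:uniqueserre}.
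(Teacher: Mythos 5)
Your approach is the same as the paper's: the author's entire proof is the one line ``$(\se,\kappa)$ restricts to $\B$'', so your restriction of $\kappa$, the identification $\B(x,y)=\C(x,y)$, and the observation that naturality is inherited are exactly what the paper has in mind, and they are fine. The divergence is that you correctly flag essential surjectivity of $\se|_\B$ as a genuine step --- and that step is where your proposal (and, frankly, the paper's one-liner) does not close. Your Yoneda-style route stops at ``show the corepresenting object $\se^- y$ lies in $\B$'', which is precisely what the hypothesis $\se\B\subseteq\B$ does \emph{not} give: $\B$ need not be closed under $\se^-$. Your pigeonhole fallback does work, but only under a finiteness hypothesis (finitely many isoclasses of indecomposables) that is not part of the statement, so as written the proposal does not prove the lemma in the generality claimed.

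Moreover the worry is not removable by a cleverer argument: with only $\se\B\subseteq\B$ the statement fails. Take the $\Z$-analogue of the example the paper gives immediately after the lemma: $\ob\C=\Z$, $\C(n,n)=\kk$, $\C(n,n+1)=\kk$, all other homs zero, with the composition pairing giving a (strict) Serre structure whose functor sends $n\mapsto n+1$. The full subcategory $\B$ on the objects $n\geq 0$ satisfies $\se\B\subseteq\B$, but the same forcing computation as in the paper's example shows any candidate Serre functor on $\B$ must send $n$ to $n+1$ up to isomorphism, hence cannot be essentially surjective, so $\B$ admits no Serre structure at all. The lemma really needs the stronger hypothesis that $\B$ is also closed under a quasi-inverse $\se^-$ (equivalently, that $\se|_\B$ is essentially surjective, or $\se\B=\B$ up to isomorphism) --- which does hold in every place the paper applies it ($\ind\C$ is closed under both $\se$ and $\se^-$; the subcategory $\U$ satisfies $\se\U=\U$), and under which your restriction argument is complete. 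So: same route as the paper, a correctly identified obstruction, but the proposal as written leaves that obstruction unresolved, and resolving it requires either adding the closure-under-$\se^-$ hypothesis or the finiteness assumption you invoke.
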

\begin{proof}
This is clear: $(\se,\kappa)$ restricts to $\B$.
\end{proof}
The assumption $\se \B\subseteq \B$ in Lemma \ref{lem:serreonsubcat} is necessary: consider a $\kk$-category $\C$ with object set $\Z/3\Z$ and $\C(n,n+1)=\kk$ but $\C(n,n-1)=0$.  Then $\C$ has a Serre structure but the full subcategory on the objects $0$ and $1$ does not.

We now put categories with Serre structures into a 2-categorical setting.
\begin{definition}
If $(F,\alpha)$ and $(G,\beta)$ are two morphisms of Serre structures $(\C_1,\se_1,\kappa_1)\to (\C_2,\se_2,\kappa_2)$, a \emph{transformation} $(F,\alpha)\to(G,\beta)$ is a natural transformation $\nu:F\to G$ such that $\se_2\nu\circ\alpha=\beta\circ\nu \se_1$.
\end{definition}
\begin{remark}\label{rmk:automcomm}
If the natural transformation $\nu:F\to G$ is an isomorphism then the condition $\se_2\nu\circ\alpha=\beta\circ\nu \se_1$ is automatically satisfied: see Proposition \ref{prop:serre-centre} below.
\end{remark}
Consider the following 2-category of $\V$-categories with Serre structures, denoted $\SCat$:
\begin{itemize}
\item The objects are triples $(\C,\se,\kappa)$, where $\C$ is a $\V$-category and $(\se,\kappa)$ is a Serre structure on $\C$.
\item The 1-cells $(F,\alpha):(\C_1,\se_1,\kappa_1)\to (\C_2,\se_2,\kappa_2)$ are morphisms of Serre structures.
\item The 2-cells $\nu:(F,\alpha)\to(G,\beta)$ are transformations of morphisms of Serre structures.
\end{itemize}
We have a 2-functor $\twofun{U}:\SCat\to\V\Cat$ which sends $(\C,\se,\kappa)$, $(F,\alpha)$, and $\nu$ to $\C$, $F$, and $\nu$, respectively.
\begin{proposition}\label{prop:forgetserre}
The 2-functor
\[ \core(\twofun{U}):\SCat_\ei\to\V\Cat_\ei \]
is locally an equivalence.
\end{proposition}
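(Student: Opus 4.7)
The plan is to verify that for every pair of objects $(\C_i,\se_i,\kappa_i)$ in $\SCat_\ei$, the induced functor on hom-categories
\[\SCat_\ei((\C_1,\se_1,\kappa_1),(\C_2,\se_2,\kappa_2)) \to \V\Cat_\ei(\C_1,\C_2)\]
is essentially surjective and fully faithful.

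Essential surjectivity is directly Proposition \ref{prop:transport-serre}(b): given any equivalence $F\colon \C_1\arr\sim \C_2$ of $\V$-categories, that result produces an $\alpha\colon F\se_1\arr\sim \se_2 F$ upgrading $F$ to an equivalence of Serre structures $(F,\alpha)$, which maps to $F$ under the forgetful 2-functor.

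Faithfulness is immediate from the definitions. A 2-cell in $\SCat_\ei$ from $(F,\alpha)$ to $(G,\beta)$ is a natural isomorphism $\nu\colon F\arr\sim G$ that additionally satisfies $\se_2\nu\circ\alpha = \beta\circ\nu\se_1$; forgetting this side condition cannot identify distinct $\nu$'s.

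The substance of the proposition is therefore fullness: any natural isomorphism $\nu\colon F\arr\sim G$ of $\V$-equivalences must automatically satisfy the cocycle $\se_2\nu\circ\alpha = \beta\circ\nu\se_1$. My approach is to paste together the two defining squares of Definition \ref{def:mapofss} for $(F,\alpha)$ and for $(G,\beta)$, connect them via $\nu$ and its componentwise dual $\nu^\vee$, and invoke the binaturality of $\kappa_2$ and the naturality of $\nu$. Tracing through the resulting diagram shows that, after transport along the faithful map $\kappa_2(Gy,-)^\vee$ (which embeds $\C_2(Gy,\se_2 Gx)$ into a dual hom-space using the Serre structure on $\C_2$), both $\se_2\nu_x\circ\alpha_x$ and $\beta_x\circ\nu_{\se_1 x}$ are sent to the same morphism built from $\kappa_1$, $F$, and $\nu^\vee$; a Yoneda argument (varying $y\in\C_1$) then gives equality. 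The main obstacle will be keeping the bookkeeping straight in this diagram chase: the pivotal structure on $\V$, the two dualizations $F^\vee$ and $\nu^\vee$, and the two Serre structure data $\kappa_1$ and $\kappa_2$ all have to be chased simultaneously, even though each elementary commutation is a direct invocation of naturality, binaturality, or the triangle identity for $(-)^\vee$. Conceptually this cocycle is exactly what Remark \ref{rmk:automcomm} and, later, Proposition \ref{prop:serre-centre} encapsulate by viewing canonical commutation maps as lying in a Drinfeld centre.
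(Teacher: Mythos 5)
Your decomposition matches the paper's exactly: essential surjectivity is Proposition \ref{prop:transport-serre}(b), faithfulness is immediate, and fullness is precisely the automatic-compatibility statement of Remark \ref{rmk:automcomm}, which the paper invokes without written proof (deferring to Proposition \ref{prop:serre-centre}) and which your pasting of the two squares of Definition \ref{def:mapofss} with the binaturality of $\kappa_2$, naturality of $\nu$, and a Yoneda/nondegeneracy argument does correctly establish. So the proposal is correct and takes essentially the same route as the paper, merely spelling out the content of the remark.
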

\begin{proof}
Faithfulness is clear: if we have $\nu,\nu':(F,\alpha)\to(G,\beta)$ and $\core(\twofun{U})(\nu)=\core(\twofun{U})(\nu')$ then $\nu=\core(\twofun{U})(\nu)=\core(\twofun{U})(\nu')=\nu'$.
Fullness on equivalences follows from Remark \ref{rmk:automcomm}.  
Essential surjectivity comes from Proposition \ref{prop:transport-serre}.
\end{proof}
Therefore we have the following result, which loosely says that we can regard a Serre structure as a property rather than a structure:
\begin{corollary}\label{cor:bieqserre2gpds}
The 2-groupoid $\SCat_\ei$ of $\V$-categories with Serre structures is biequivalent to the full sub-2-groupoid of $\V\Cat_\ei$ on the $\V$-categories which admit a Serre structure.
\end{corollary}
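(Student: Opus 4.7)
The plan is to derive this directly from Proposition \ref{prop:forgetserre} together with Lemma \ref{lem:2funfull}. Let $\V\Cat_\ei^{\se}$ denote the full sub-2-groupoid of $\V\Cat_\ei$ on the $\V$-categories which admit a Serre structure. By construction, for every $(\C,\se,\kappa) \in \ob \SCat_\ei$, the underlying category $\C$ admits a Serre structure (namely $(\se,\kappa)$), so $\core(\twofun{U})$ sends $\ob \SCat_\ei$ into $\ob \V\Cat_\ei^{\se}$. Hence by Lemma \ref{lem:2funfull}, $\core(\twofun{U})$ restricts to a 2-functor
\[ \biP : \SCat_\ei \to \V\Cat_\ei^{\se}. \]

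To see that $\biP$ is a biequivalence, I would check the two required properties. First, $\biP$ is locally an equivalence: for any two objects $(\C_1,\se_1,\kappa_1), (\C_2,\se_2,\kappa_2) \in \SCat_\ei$, the component functor of $\biP$ on hom-categories agrees with that of $\core(\twofun{U})$, since the inclusion $\V\Cat_\ei^{\se} \hookrightarrow \V\Cat_\ei$ is by definition fully faithful on 1- and 2-cells. Local equivalence of $\core(\twofun{U})$ is exactly Proposition \ref{prop:forgetserre}.

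Second, I would verify that $\biP$ is 2-essentially surjective on objects. Given any $\C \in \ob \V\Cat_\ei^{\se}$, by definition there exists some Serre structure $(\se,\kappa)$ on $\C$. Then $(\C,\se,\kappa) \in \ob \SCat_\ei$ with $\biP(\C,\se,\kappa) = \C$, so $\biP$ is in fact surjective on objects (hence certainly essentially surjective up to equivalence in $\V\Cat_\ei^{\se}$).

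Together, being locally an equivalence and essentially surjective on 0-cells is the definition of a biequivalence between 2-groupoids, so this completes the proof. There is no real obstacle here: the content is entirely packaged in Proposition \ref{prop:forgetserre} (whose essential surjectivity part used Proposition \ref{prop:transport-serre}), and this corollary is merely the reformulation of that local equivalence as a biequivalence after restricting the codomain to the essential image.
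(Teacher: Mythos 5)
Your proof is correct and matches the paper's reasoning: the corollary is stated there as an immediate consequence of Proposition \ref{prop:forgetserre}, and your argument—restrict the codomain of $\core(\twofun{U})$ to the full sub-2-groupoid of categories admitting a Serre structure, note surjectivity on objects by construction, and quote local equivalence from Proposition \ref{prop:forgetserre}—is exactly the intended (implicit) proof, just spelled out.
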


\subsection{The compatibility lemma}

In their original definition, Bondal and Kapranov included an extra compatibility condition which was later shown to follow from the naturality of $\kappa$: see \cite[Lemma I.1.1]{rvdb}.  
We state and prove this in the pivotal enriched setting.

\begin{lemma}\label{lem:serretwice}
The following diagram commutes:
\[
\xymatrix{
(x,y) \ar[r]^{\kappa_{x,y}}\ar[d]^\se & (y,\se x)^\vee \ar[d]^{(\kappa^{-1}_{y,\se x})^{\vee}} \\
(\se x,\se y) \ar[r]^\iota & (\se x,\se y)^{\vee\vee}
}\]
\end{lemma}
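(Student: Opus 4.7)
The plan is to reduce the claim to a cyclicity identity for the pairing associated with $\kappa$, and to prove that identity by applying both naturality squares of $\kappa$ to a single composition and then contracting with the unit.

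First I would rephrase the claim: invertibility of $\kappa_{y,\se x}$ shows that the desired commutativity is equivalent to
\[ \kappa_{x,y} = \kappa_{y,\se x}^\vee \circ \iota \circ \se \]
as maps $\C(x,y) \to \C(y,\se x)^\vee$. By the left-duality adjunction in $\V$, equality of two maps into $\C(y,\se x)^\vee$ is equivalent to equality of the associated maps $\C(x,y) \otimes \C(y,\se x) \to \unit$ obtained by tensoring with $\C(y,\se x)$ and composing with evaluation. Unwinding the left side produces the pairing $p_{x,y}$ of Remark \ref{rmk:pairing}; unwinding the right side, using Lemma \ref{lem:saav} and the pivotal identification $\iota$, produces $p_{y,\se x} \circ (\se \otimes \id)$ with the tensor factors suitably reordered. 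The lemma thus reduces to the cyclicity identity
\[ p_{x,y}(g,f) = p_{y,\se x}(\se f, g). \]

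To prove this identity, I would consider the composite
\[ \xymatrix @C=30pt { \C(y,\se x) \otimes \C(x,y) \ar[r]^{m} & \C(x,\se x) \ar[r]^{\kappa_{x,\se x}} & \C(\se x,\se x)^\vee } \]
and pair its output with the unit map $u: \unit \to \C(\se x, \se x)$ via the evaluation, producing a map $\C(y,\se x) \otimes \C(x,y) \to \unit$. Applying the left naturality of $\kappa$ rewrites the composite as $m^\ell \circ (\iota \otimes \kappa_{x,y})$, and then pairing with $u$ collapses, by the unit axioms, to $p_{x,y}$. Applying the right naturality instead rewrites the composite as $m^r \circ (\kappa_{x,y} \otimes \se)$ in an appropriately reordered form, and pairing with $u$ collapses to $p_{y,\se x} \circ (\se \otimes \id)$. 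Equating the two expressions yields the cyclicity identity.

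The main technical obstacle is the bookkeeping with duals in a pivotal but not necessarily braided $\V$: the two naturality squares produce the pairing with the tensor factors in opposite orders, and reconciling them requires the pivotal structure $\iota$ together with Lemmas \ref{lem:saav}, \ref{lem:altduals}, and \ref{lem:tech1}. Once this layer is handled, the mathematical content is the clean observation that the two naturality conditions jointly determine the value of $\kappa_{x,\se x}(g \circ f)$ when contracted against $\id_{\se x}$.
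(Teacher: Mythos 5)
Your proposal is correct, and it reorganizes rather than replaces the paper's argument: the ingredients are the same (both naturality squares for $\kappa$, unitality, and the pivotal-duality toolkit), but the paper pastes them into one large diagram --- right naturality applied to $\C(y,y)\otimes\C(x,y)\to\C(x,y)$, the dualized left-naturality square, the definition of $m^\ell$, the dual of Lemma \ref{lem:tech1}, and Lemmas \ref{lem:coevisom} and \ref{lem:dualevcoev} --- and only contracts with $\id_y\in\C(y,y)$ at the very end, whereas you contract with the unit of $\C(\se x,\se x)$ in the middle, reducing everything to a cyclicity identity for the trace pairing; this is the enriched pivotal form of the classical Reiten--Van den Bergh argument and makes the conceptual content more visible. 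One point to tighten: unwinding $\kappa_{x,y}$ through the adjunction $\V(\C(x,y),\C(y,\se x)^\vee)\cong\V(\C(x,y)\otimes\C(y,\se x),\unit)$ gives $e_{\C(y,\se x)}\circ(\kappa_{x,y}\otimes\id)$, with $\C(x,y)$ as the left tensor factor, which is not literally the pairing of Remark \ref{rmk:pairing} (that has source $\C(y,\se x)\otimes\C(x,y)$ and is $e_{\C(y,\se x)^\vee}\circ(\iota\otimes\kappa_{x,y})$); moreover your two naturality computations already produce both sides in the common order $\C(y,\se x)\otimes\C(x,y)$, so the genuine reordering is not between the two naturality squares but between the cyclicity identity and the adjunction-unwound form of the lemma, and it is exactly this pivotal rotation (using $\iota_X^\vee=\iota_{X^\vee}^{-1}$ from Lemma \ref{lem:saav}, Lemma \ref{lem:coevisom}, and naturality of $\iota$ for the $\V$-functor $\se$, which converts $\se^{\vee\vee}\circ\iota$ into $\iota\circ\se$) that consumes the lemmas you cite. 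Finally, since $\iota$ may carry nontrivial weights (e.g.\ $\iota^\chi$ on graded vector spaces), the identity should be read categorically as $e_{\C(y,\se x)^\vee}\circ(\iota\otimes\kappa_{x,y})=e_{\C(\se x,\se y)}\circ(\kappa_{y,\se x}\otimes\se)$ rather than elementwise; with that reading your derivation does yield it, the rotation converts it into the statement of the lemma, and the plan goes through.
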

\begin{proof}
From the right naturality of $\kappa$ we get:
\begin{equation}\label{cl:box1} 
\xymatrix @C=0pt {
\C(y,y)\ar[d]^{\kappa_{y,y}} &\otimes &\C(x,y){\phantom{ab}} \ar[d]^\se \ar[rr]^m &{\phantom{abcdefgh}} & {\phantom{ab}} \C(x,y)\ar[d]^{\kappa_{x,y}}   \\
\C(y,\se y)^\vee &\otimes &\C(\se x,\se y){\phantom{ab}}\ar[rr]^{m^r}  & & {\phantom{ab}}\C(y,\se x)^\vee 
} 
\end{equation}
Then by the naturality and monoidality of $\iota$ we get:
\begin{equation}\label{cl:box2} 
\xymatrix @C=0pt {
\C(y,\se y)^\vee \ar[d]^{\iota} &\otimes &\C(\se x,\se y){\phantom{ab}}\ar[rrr]^{m^r} \ar[d]^{\iota}  & & & {\phantom{ab}}\C(y,\se x)^\vee \ar[d]^{\iota} \\
\C(y,\se y)^{\vee\vee\vee} &\otimes &\C(\se x,\se y)^{\vee\vee}{\phantom{ab}} \ar[rrr]^{(m^r)^{\vee\vee}}  & & & {\phantom{ab}}\C(y,\se x)^{\vee\vee\vee}
}
\end{equation}

By the dual of Lemma \ref{lem:tech1}, the following diagram commutes:
\begin{equation}\label{cl:part1}
 \xymatrix @C=0pt {
\C(y,\se y)^{\vee\vee\vee} \otimes \C(\se x,\se y)^{\vee\vee}{\phantom{ab}}  \ar[dr]^{\id\otimes (m^\ell)^\vee} \ar[rr]^{(m^r)^{\vee\vee}}  &&  {\phantom{ab}}\C(y,\se x)^{\vee\vee\vee} \\
& \C(y,\se y)^{\vee\vee\vee} \otimes  \C(y,\se y)^{\vee\vee} \otimes \C(y,\se x)^{\vee\vee\vee} \ar[ur]^{e\otimes\id}&
} \end{equation}
Now from left naturality of $\kappa$ we get:
\[ \xymatrix @C=0pt {
 \C(y,\se x)\ar[d]^{\iota} & \otimes &\C(y,y)\ar[d]^{\kappa_{y,y}} {\phantom{ab}} \ar[rr]^m &{\phantom{abcdefgh}} & {\phantom{ab}} \C(y,\se x)\ar[d]^{\kappa_{y,\se x}}   \\
 \C(y,\se x)^{\vee\vee} &\otimes &\C(y,\se y)^\vee {\phantom{ab}}\ar[rr]^{m^\ell}  & & {\phantom{ab}}\C(\se x,\se y)^\vee 
} \]
and dualizing this gives:
\[
 \xymatrix @C=0pt {
\C(\se x,\se y)^{\vee\vee}{\phantom{ab}} \ar[d]^{\kappa^\vee}  \ar[rrr]^{(m^\ell)^\vee}  & & & {\phantom{ab}}\C(y,\se y)^{\vee\vee} \ar[d]^{\kappa^\vee} &\otimes & \C(y,\se x)^{\vee\vee\vee} \ar[d]^{\iota^\vee} \\ 
  \C(y,\se x)^\vee{\phantom{ab}}  \ar[rrr]^{m^\vee} & & &  {\phantom{ab}}\C(y,y)^\vee &  \otimes & \C(y,\se x)^\vee
} \]
Tensoring with $(\kappa^{\vee\vee})^{-1}$, we get:
\begin{equation}\label{cl:part2}
 \xymatrix @C=0pt {
\C(y,\se y)^{\vee\vee\vee} \ar[d]^{(\kappa^{\vee\vee})^{-1}} &\otimes& \C(\se x,\se y)^{\vee\vee}{\phantom{ab}} \ar[d]^{\kappa^\vee}  \ar[rrr]^{\id\otimes(m^\ell)^\vee}  & & & {\phantom{ab}}\C(y,\se y)^{\vee\vee\vee} \ar[d]^{(\kappa^{\vee\vee})^{-1}} &\otimes&  \C(y,\se y)^{\vee\vee} \ar[d]^{\kappa^\vee} &\otimes & \C(y,\se x)^{\vee\vee\vee} \ar[d]^{\iota^\vee} \\ 
\C(y,y)^{\vee\vee} &\otimes&   \C(y,\se x)^\vee{\phantom{ab}}  \ar[rrr]^{\id\otimes m^\vee} & & &  {\phantom{ab}}\C(y,y)^{\vee\vee} &\otimes&  \C(y,y)^\vee &  \otimes & \C(y,\se x)^\vee
} \end{equation}

By definition of $m^\ell$ (see Section \ref{ss:techlemma}) we have:
\begin{equation}\label{cl:part3}
\xymatrix{
\C(y,y)^{\vee\vee} \otimes   \C(y,\se x)^\vee \ar[rr]^{m^\ell} \ar[dr]_{1\otimes m^\vee} && \C(y,\se x)^\vee \\
& \C(y,y)^{\vee\vee} \otimes  \C(y,y)^{\vee} \otimes   \C(y,\se x)^\vee \ar[ur]_{e\otimes 1} & 
}
\end{equation}

Lemma \ref{lem:coevisom} says that the following diagram commutes:
\[
 \xymatrix @C=0pt {
{\phantom{ab}}\unit{\phantom{ab}} \ar[d]^{\id}  \ar[rrrrrr]^{c}  & & & & & & {\phantom{ab}}\C(y,y) \ar[d]^{\kappa} &\otimes & \C(y,y)^{\vee} \ar[d]^{(\kappa^\vee)^{-1}} \\ 
{\phantom{ab}}\unit{\phantom{ab}}  \ar[rrrrrr]^{c} & & & & & &  {\phantom{ab}}\C(y,\se y)^\vee &  \otimes & \C(y,\se xy)^{\vee\vee}
} \]
Therefore, by tensoring with $\iota$, dualising, and using Lemma \ref{lem:dualevcoev}, the following diagram commutes:
\begin{equation}\label{cl:part4}
 \xymatrix @C=0pt {
\C(y,\se y)^{\vee\vee\vee} \ar[d]^{(\kappa^{\vee\vee})^{-1}} &\otimes&  \C(y,\se y)^{\vee\vee} \ar[d]^{\kappa^\vee} &\otimes & \C(y,\se x)^{\vee\vee\vee} \ar[d]^{\iota^\vee}  \ar[rr]^{e\otimes\id} &{\phantom{abcdefgh}} & {\phantom{ab}} \C(y,\se x)\ar[d]^{\iota^\vee}  \\ 
\C(y,y)^{\vee\vee} &\otimes&  \C(y,y)^\vee &  \otimes & \C(y,\se x)^\vee  \ar[rr]^{e\otimes\id}  & & {\phantom{ab}}\C(\se x,\se y)^\vee 
} \end{equation}

Combining diagrams \eqref{cl:part1}, \eqref{cl:part2}, \eqref{cl:part3}, and \eqref{cl:part4} gives:
\begin{equation}\label{cl:box3} 
\xymatrix @C=0pt {
\C(y,\se y)^{\vee\vee\vee} \ar[d]^{(\kappa^{\vee\vee})^{-1}} &\otimes & \C(\se x,\se y)^{\vee\vee}{\phantom{ab}} \ar[d]^{\kappa^\vee}  \ar[rrr]^{(m^r)^{\vee\vee}}  & & & {\phantom{ab}}\C(y,\se x)^{\vee\vee\vee} \ar[d]^{\iota^\vee} \\ 
\C(y,y)^{\vee\vee} &  \otimes &  \C(y,\se x)^\vee{\phantom{ab}}  \ar[rrr]^{m^\ell} & & &  {\phantom{ab}} \C(y,\se x)^\vee
} \end{equation}

Putting \eqref{cl:box1}, \eqref{cl:box2}, and \eqref{cl:box3} together gives the following commutative diagram:
\[ \xymatrix @C=0pt {
\C(y,y)\ar[d]^{\kappa} \ar@/_3pc/[ddd]_{\iota} &\otimes &\C(x,y){\phantom{ab}} \ar[d]^\se \ar[rr]^m &{\phantom{abcdefgh}} & {\phantom{ab}} \C(x,y)\ar[d]^{\kappa}   \\
\C(y,\se y)^\vee \ar[d]^{\iota} &\otimes &\C(\se x,\se y){\phantom{ab}}\ar[rr]^{m^r} \ar[d]^{\iota}  & & {\phantom{ab}}\C(y,\se x)^\vee \ar[d]^{\iota}\ar@/^3pc/[dd]^{\id} \\
\C(y,\se y)^{\vee\vee\vee} \ar[d]^{(\kappa^{\vee\vee})^{-1}} &\otimes & \C(\se x,\se y)^{\vee\vee}{\phantom{ab}} \ar[d]^{\kappa^\vee}  \ar[rr]^{(m^r)^{\vee\vee}}  & & {\phantom{ab}}\C(y,\se x)^{\vee\vee\vee} \ar[d]^{\iota^\vee} \\ 
\C(y,y)^{\vee\vee} &  \otimes &  \C(y,\se x)^\vee{\phantom{ab}}  \ar[rr]^{m^\ell} & &  {\phantom{ab}} \C(y,\se x)^\vee
} \]
The partial diagram on the left involving the curved arrow commutes by the naturality of $\iota$.  The one on the right commutes by Lemma \ref{lem:saav}. 

Finally, taking $\id_y\in\C(y,y)$ and using unitality properties gives
\[
\xymatrix{
\C(x,y) \ar[r]^{\kappa_{x,y}}\ar[d]^\se & \C(y,\se x)^\vee  \\
(\se x,\se y) \ar[r]^\iota & (\se x,\se y)^{\vee\vee} \ar[u]_{\kappa_{y,\se x}^\vee}
}\]
as desired.
\end{proof}

\subsection{Commutation}

Bondal and Orlov showed that Serre functors commute with autoequivalences \cite[Proposition 1.3]{bo}.  We give a proof of this fact, as it is both short and instructive.
\begin{proposition}\label{prop:serre-comm-autoeq}
If $F:\C\arr\sim\C$ is an autoequivalence and $\C$ has Serre structure $(\se,\kappa)$, then there is a canonical natural isomorphism $\se F\arr\sim F\se$.  
\end{proposition}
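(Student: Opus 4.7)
My approach is via the enriched Yoneda lemma. Fix an adjoint equivalence $(F, F^{-1}, \eta, \varepsilon)$ upgrading $F$, as described in Section \ref{sss:2grpds}. For each pair of objects $x, y \in \C$ the strategy is to produce a $\V$-isomorphism $\C(y, \se F x) \arr\sim \C(y, F \se x)$ natural in $y$; the enriched Yoneda lemma then yields an isomorphism $\zeta_{F,x}: \se F x \arr\sim F \se x$ in $\C$, and its naturality in $x$ gives the required natural isomorphism $\zeta_F: \se F \arr\sim F \se$.

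To build the $y$-natural isomorphism I chain together four $\V$-isomorphisms on duals:
\[
\C(y, \se F x)^\vee
\arrr{\kappa^{-1}_{Fx,y}} \C(Fx, y)
\arrr{\sim} \C(x, F^{-1} y)
\arrr{\kappa_{x, F^{-1} y}} \C(F^{-1} y, \se x)^\vee
\arrr{\sim} \C(y, F \se x)^\vee,
\]
where the first and third arrows are (inverse) applications of $\kappa$, and the two unlabelled arrows come from the $\V$-functoriality of $F^{-1}$ and $F$ combined with the unit/counit $\eta$, $\varepsilon$. Dualising the composite and pre/post-composing with the pivotal isomorphisms $\iota_{\C(y,\se F x)}$ and $\iota_{\C(y, F\se x)}^{-1}$ converts this into an isomorphism of hom-objects themselves. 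Binaturality of $\kappa$ and functoriality of $F^{\pm 1}$ make each step natural in both $x$ and $y$, so the composite is as well.

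Canonicity (independence of the auxiliary choice of $(F^{-1},\eta,\varepsilon)$) follows from the fact that any two such adjoint equivalence completions of $F$ are related by a unique invertible $2$-cell, under which the chain above transforms equivariantly; hence $\zeta_F$ is well-defined. I expect that the same Yoneda-style argument will also identify $\zeta_F$ with the ``obvious'' map $\se F \to F\se$ characterised by the compatibility of $\kappa$ with the autoequivalence $F$, which will be important for later sections of the paper.

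The main obstacle I anticipate is the careful bookkeeping of the pivotal structure $\iota$ in the dualisation step: because $\V$ is only assumed pivotal (not symmetric), every appearance of a double dual needs explicit identification via $\iota$, and one must check that different insertions of $\iota$ produce the same end result. Lemma \ref{lem:saav} (on compatibility of monoidal transformations with duals) and the compatibility Lemma \ref{lem:serretwice} (relating $\se$ to $\iota$ and $\kappa$) should be the tools that resolve this, but verifying it cleanly is where the bulk of the work will lie.
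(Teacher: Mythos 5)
Your proposal is correct and follows essentially the same route as the paper: the same chain of four isomorphisms on duals (apply $\kappa^{-1}$, the adjunction for the quasi-inverse, $\kappa$, then the dualised adjunction), followed by dualising with the pivotal structure and invoking the Yoneda lemma to extract the natural isomorphism $\se F\arr\sim F\se$. The extra remarks on canonicity and on tracking $\iota$ are sensible but do not change the argument, which the paper carries out in exactly this way.
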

\begin{proof}
Assume $F$ has quasi-inverse $G$.
Then we have isomorphisms:
\[ \C(x,\se Fy)^\vee \arrr{\kappa_{Fy,x}^{-1}} \C(Fy,x)\arrr{\text{adj}} \C(y,Gx)\arrr{\kappa_{y,Gx}} \C(Gx,\se y)^{\vee}
\arrr{\text{adj}^\vee} \C(x,F\se y)^\vee \]
for any $x,y$.  Dualizing and using the pivotal structure gives an isomorphism $\C(x,\se Fy)\arr\sim \C(x,F\se y)$.
As the Yoneda embedding is fully faithful this determines the natural isomorphism $\se F\arr\sim F\se$.  
\end{proof}
Given $F:\C\arr\sim \C$, we write $\zeta_F:\se F\arr\sim F\se$ for the above natural transformation.

The following lemma is immediate from the proof of Proposition \ref{prop:serre-comm-autoeq}.
\begin{lemma}\label{lem:zetaunit}
$\zeta_{\id_\C}=\id_{\se}$.
\end{lemma}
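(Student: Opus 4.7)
The plan is to simply unwind the construction of $\zeta_F$ given in the proof of Proposition \ref{prop:serre-comm-autoeq} in the special case $F=\id_\C$, and check that every map in the composition reduces to an identity.

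First I would take $G=\id_\C$ as the (canonical) quasi-inverse of $F=\id_\C$, with identity unit and counit. With this choice, both adjunction maps $\C(Fy,x)\to\C(y,Gx)$ and $\C(Gx,\se y)^\vee\to\C(x,F\se y)^\vee$ appearing in the construction of $\zeta_F$ become genuine identity maps. Therefore the composite isomorphism $\C(x,\se y)^\vee \to \C(x,\se y)^\vee$ collapses to $\kappa_{y,x}\circ\kappa_{y,x}^{-1}=\id$.

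Next I would observe that dualizing the identity map gives the identity on $\C(x,\se y)^{\vee\vee}$, and by the naturality of the pivotal structure $\iota$ (which the Yoneda isomorphism is built from), the resulting map on $\C(x,\se y)$ is again the identity. By the fully faithfulness of the Yoneda embedding, the natural transformation $\zeta_{\id_\C}:\se\to\se$ determined by these components is $\id_\se$.

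There is no significant obstacle here: the lemma is essentially a sanity check that the construction of $\zeta_F$ is normalized so that it reduces to the identity on the trivial autoequivalence. The only thing to be careful about is that one must use the identity adjunction data for $\id_\C\dashv\id_\C$, rather than some other adjoint equivalence structure on the identity functor, but this is precisely the canonical choice implicit in the phrase ``canonical natural isomorphism'' of Proposition \ref{prop:serre-comm-autoeq}.
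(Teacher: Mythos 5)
Your proof is correct and is essentially the paper's argument: the paper simply declares the lemma immediate from the construction in Proposition \ref{prop:serre-comm-autoeq}, and your unwinding (identity quasi-inverse with identity unit and counit, so the composite collapses to $\kappa\circ\kappa^{-1}=\id$, then dualizing, the pivotal structure, and Yoneda give $\zeta_{\id_\C}=\id_\se$) is exactly that verification. Your remark about using the canonical identity adjunction data for $\id_\C$ is the right point of care and matches the paper's intent.
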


Now we give a stronger version of Proposition \ref{prop:serre-comm-autoeq}.
\begin{proposition}\label{prop:serre-centre}
If $\C$ has a Serre structure $(\se,\kappa)$ then $(\se,\zeta)$ belongs to the Drinfeld centre of the monoidal category of autoequivalences of $\C$.
\end{proposition}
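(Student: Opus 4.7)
The plan is to verify the two conditions defining an object of the Drinfeld centre of $\Autoeq(\C)$ for $(\se,\zeta)$: naturality of $\zeta_F$ in $F$, and the hexagon identity $\zeta_{FG}=(F\zeta_G)(\zeta_F G)$. Both will follow from a single \emph{uniqueness principle}: for each autoequivalence $F$ of $\C$, there is at most one natural isomorphism $\alpha\colon F\se\arr\sim\se F$ such that $(F,\alpha)$ is a morphism of Serre structures $(\C,\se,\kappa)\to(\C,\se,\kappa)$ in the sense of Definition \ref{def:mapofss}. Indeed, the defining diagram rearranges to $\C(Fy,\alpha_x)^\vee=F^\vee\circ\kappa\circ F^{-1}\circ\kappa^{-1}$, in which every factor on the right is invertible; as $y$ varies, the Yoneda lemma pins down $\alpha_x$ uniquely.

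Next I would observe that $\zeta_F^{-1}$ is such an $\alpha$. This is implicit in the construction of $\zeta_F$ in the proof of Proposition \ref{prop:serre-comm-autoeq}, and it also drops out of Proposition \ref{prop:transport-serre}(b) applied with source and target both equal to $(\C,\se,\kappa)$. The hexagon axiom then follows by composing in $\SCat$: the composite of $(G,\zeta_G^{-1})$ and $(F,\zeta_F^{-1})$ is a morphism of Serre structures on $FG$ with structure $2$-cell
\[\zeta_F^{-1}G\circ F\zeta_G^{-1}\colon FG\se\to F\se G\to \se FG.\]
By uniqueness this must coincide with $\zeta_{FG}^{-1}$, and taking inverses gives $\zeta_{FG}=(F\zeta_G)(\zeta_F G)$.

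For naturality in $F$, given an iso $\eta\colon F\arr\sim F'$ in $\Autoeq(\C)$ I would set $\alpha':=\se\eta\circ\zeta_F^{-1}\circ\eta^{-1}\se\colon F'\se\to\se F'$ and check that $(F',\alpha')$ is a morphism of Serre structures. This is a short diagram chase: the defining diagram for $(F,\zeta_F^{-1})$ commutes, and the naturality of $\kappa$ in both arguments, combined with $\eta\colon F\to F'$ and its dualisation via Lemma \ref{lem:saav}, propagates this into the corresponding diagram for $(F',\alpha')$. Uniqueness then forces $\alpha'=\zeta_{F'}^{-1}$, which rearranges to the naturality square $\zeta_{F'}\circ\se\eta=\eta\se\circ\zeta_F$.

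The main obstacle is the naturality step in the last paragraph: the hexagon is essentially formal once one recognises that composition inside $\SCat$ does the work, whereas naturality requires one genuine diagram verification in which the dualised component $\eta^\vee$ must be correctly interchanged with $\kappa$. By organising the verification through Definition \ref{def:mapofss} rather than through the explicit Yoneda recipe of Proposition \ref{prop:serre-comm-autoeq}, the bookkeeping is kept to a minimum.
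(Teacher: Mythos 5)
Your overall strategy --- characterise the commutation map as the \emph{unique} $\alpha$ making $(F,\alpha)$ a morphism of Serre structures in the sense of Definition \ref{def:mapofss}, and then extract the hexagon formally from composition in $\SCat$ --- is appealing, but as written it has two genuine gaps. First, the whole argument hinges on the claim that the specific $\zeta_F$ of Proposition \ref{prop:serre-comm-autoeq} (defined via a chosen quasi-inverse, the adjunction isomorphisms, dualisation and the pivotal structure) is inverse to that unique compatible $\alpha$. Proposition \ref{prop:transport-serre}(b) does \emph{not} give this: it only produces \emph{some} $\alpha$ with $(F,\alpha)$ an equivalence of Serre structures, with no identification of $\alpha$ with $\zeta_F^{-1}$. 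Saying the identification is ``implicit in the construction'' is exactly the point that needs proving; it requires a diagram chase relating the adjunction isomorphisms $\C(Fy,x)\cong\C(y,Gx)$ to the maps $F\colon\C(x,y)\to\C(Fx,Fy)$ and $F^\vee$ appearing in Definition \ref{def:mapofss}, together with binaturality of $\kappa$ --- work of the same order as the verification you were hoping to bypass. Without it, your argument establishes centrality of some family of commutation maps, not necessarily of the $\zeta$ in the statement, and that distinction matters later (for instance in Definition \ref{def:equivar-Serre}, where $\sse_g$ is defined as $\chi(g)\zeta_{F^g}$ for this particular $\zeta$).

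Second, your naturality step only treats isomorphisms $\eta\colon F\arr\sim F'$, since the transported map $\alpha'=\se\eta\circ\zeta_F^{-1}\circ\eta^{-1}\se$ uses $\eta^{-1}$. But the Drinfeld centre (Definition \ref{def:centre}) requires $\zeta$ to be natural with respect to \emph{all} morphisms of the monoidal category of autoequivalences, i.e.\ all natural transformations $\alpha\colon F\to G$, and these need not be invertible (the zero transformation already gives non-invertible $2$-cells in the $\kk$-linear setting). The paper's proof handles general $\alpha$ by passing to the mate $\alpha^!\colon G^!\to F^!$ along the adjoints and checking the resulting squares; the uniqueness-of-compatible-$\alpha$ device has no purchase on a non-invertible $2$-cell, so for this half of the centre condition you would essentially have to carry out that direct computation anyway. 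Once both points are repaired, your treatment of the hexagon via composition in $\SCat$ and uniqueness is a clean alternative to the paper's explicit diagram.
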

\begin{proof}
From Definition \ref{def:centre} we must check two properties.  
First we check naturality of $\zeta$.  Given two autoequivalences $F,G:\C\arr\sim \C$ and a natural transformation $\alpha:F\to G$,
we must show that the following diagram commutes:
\[\xymatrix{
\se F \ar[r]^{\zeta_{F}}\ar[d]_{\se\alpha} & F\se\ar[d]^{\alpha\se} \\
\se G\ar[r]^{\zeta_G} & G\se
}\]
Let $F^!$ and $G^!$ denote the left adjoints of $F$ and $G$, respectively.  Then 
$\alpha$ induces a natural transformation
\[ \alpha^!:G^!\arrr{G!^\eta} G^!FF^!\arrr{G^!\alpha F^!} G^!GF^! \arrr{\varepsilon F^!} F^!. \]
We break our diagram up into smaller squares:
\[\xymatrix{
(x,\se Fy) \ar[r]\ar[d] & (Fy,x)^\vee \ar[r]\ar[d] & (y, F^!x)^\vee \ar[r]\ar[d]  & (F^!x, \se y) \ar[r]\ar[d] & (x, F\se y) \ar[d] \\
(x,\se Gy) \ar[r] & (Gy,x)^\vee \ar[r] & (y, G^!x)^\vee \ar[r]  & (G^!x, \se y) \ar[r] & (x, G\se y) 
}\]
The first and third commute by naturality of $\kappa$ (from the Serre structure), and the second and fourth commute by an exercise in adjunctions and the fact that $F^{!!}\cong F$ naturally (as $F$ is an equivalence).

Next, we must check that the following diagram commutes:
\[\xymatrix{
\se GF \ar[rr]^{\zeta_{GF}}\ar[dr]_{\zeta_G F} && GF\se \\
&G\se F\ar[ur]_{G\zeta_F} &
}\]

By the Yoneda lemma, it is enough to check that the following diagram commutes:
\[\xymatrix{
(x,\se GFy) \ar[rr]\ar[dr] && (x,GF\se y) \\
&(x,G\se Fy)\ar[ur] &
}\]
First we note that the following diagram commutes by naturality of adjunctions:
\[ \xymatrix @C=40pt {
(x,G\se Fy) \ar[r]^{(x,G\zeta_{Fy})} \ar[d]^{\sim}  & (x, GF\se y) \ar[d]^{\sim} \\
(G^!x, \se Fy) \ar[r]^{(G^!x,\zeta_{Fy})}  & (G^!x, F\se y)
}\]
Therefore we just need the following diagram to commute:
\[\xymatrix{
(x,\se GFy) \ar[r]\ar[d] & (GFy,x)^\vee \ar[r] & (y, F^!G^!x)^\vee \ar[r]  \ar@{--}[dddr] & (F^!G^!x, \se y) \ar[r]  \ar@{--}[ddr] & (x,GF\se y)\ar[dr] & \\
(GFy,x)^\vee\ar[d]  \ar@{--}[ur] &&&&&(G^!x, F\se y) \\
(Fy, G^!x)^\vee \ar[d] \ar@{--}[ddrr] &&&& (F^!G^!x, \se y)\ar[ur]  \\
(G^!x, \se Fy) \ar[d] \ar@{--}[ddr] &&& (y, F^!G^!x)^\vee\ar[ur] \\  
(x,G\se Fy)\ar[uuuurrrr]\ar[dr] && (Fy, G^!x)^\vee\ar[ur] \\
& (G^!x, \se Fy)\ar[ur]
}\]
This is easy to see by considering the equalities indicated by dashed lines.
\end{proof}

Now fix a quasi-inverse $\se^-$ and maps
\[
\varepsilon_1:\se^-\se\to\id_\C\; \;\;\;
\varepsilon_2:\se\se^-\to\id_\C\; \;\;\;
\eta_1:\id_\C\to\se^-\se\; \;\;\;
\eta_2:\id_\C\to\se\se^-
\]
of an adjoint equivalence, so $\varepsilon_1\eta_1=\id_{\id_\C}=\varepsilon_2\eta_2$ and the pairs $(\eta_1,\varepsilon_2)$ and $(\eta_2,\varepsilon_1)$ are both unit-counit pairs of adjunctions.

\begin{lemma}\label{lem:commutewithqinv}
The natural transformation $\zeta_{\se^-}$ is just the composition
\[ \se\se^- \arrr{\varepsilon_2} \id_\C \arrr{\eta_1} \se^-\se. \]
\end{lemma}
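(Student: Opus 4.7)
The plan is to specialize the construction of $\zeta_F$ from the proof of Proposition \ref{prop:serre-comm-autoeq} to the case $F = \se^-$ (with quasi-inverse $G = \se$), and then use the compatibility result of Lemma \ref{lem:serretwice} together with the Yoneda lemma to simplify. Unpacking the definition, $\zeta_{\se^-}$ corresponds under Yoneda to the composite (of dualized hom spaces)
\[ \C(x,\se\se^- y)^\vee \arrr{\kappa^{-1}_{\se^- y, x}} \C(\se^- y, x) \arrr{\text{adj}_1} \C(y, \se x) \arrr{\kappa_{y, \se x}} \C(\se x, \se y)^\vee \arrr{\text{adj}_2^\vee} \C(x, \se^-\se y)^\vee \]
where $\text{adj}_1$ is the adjunction isomorphism for $\se^- \dashv \se$ (using unit-counit pair $(\eta_2,\varepsilon_1)$), and $\text{adj}_2 : \C(x,\se^-\se y)\to\C(\se x,\se y)$ is the adjunction isomorphism for $\se \dashv \se^-$ (using $(\eta_1,\varepsilon_2)$).

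The heart of the argument is that the two applications of $\kappa$ are intertwined by an adjunction step that itself involves the functor $\se$, and can be simultaneously eliminated. More precisely, $\text{adj}_1$ sends $g:\se^-y\to x$ to $\se g\circ\eta_{2,y}$, so the middle three arrows take $g$ to $\kappa_{y,\se x}(\se g\circ\eta_{2,y})$. Using the right naturality of $\kappa$ (to factor out $\se g$) and Lemma \ref{lem:serretwice} (to rewrite $\kappa$ precomposed with $\se$ as a dualized $\kappa^{-1}$ twisted by the pivotal $\iota$), the two $\kappa$'s and the implicit $\se$ collapse into the identity up to dualization and pivotal structure. What is left is the raw adjunction data $\eta_{2,y}$ followed by the dual of $\text{adj}_2$, which after undualizing is pre-composition with $\eta_{1,y}$.

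To conclude, I would invoke the Yoneda lemma with $x = \se\se^-y$ and trace $\id_{\se\se^-y}$ through the simplified composition. Using the triangle identities $\varepsilon_2\eta_2=\id$ and $\varepsilon_1\eta_1=\id$ to discard redundant adjunction insertions, one obtains exactly the composition $\se\se^-y\arrr{\varepsilon_{2,y}}y\arrr{\eta_{1,y}}\se^-\se y$, as required.

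The main obstacle is bookkeeping rather than conceptual: one must carefully track the directions of the dualizations, the placement of the pivotal isomorphism $\iota$ (and the identity $\iota_{X^\vee} = \iota_X^{-\vee}$ from Lemma \ref{lem:saav}), and which of the two adjunctions $\se^-\dashv\se$ and $\se\dashv\se^-$ is being invoked at each stage. Once these are aligned, the cancellation of the $\kappa$'s via Lemma \ref{lem:serretwice} and the triangle identities makes the identification $\zeta_{\se^-} = \eta_1\circ\varepsilon_2$ essentially automatic.
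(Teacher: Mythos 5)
Your proposal is correct and follows essentially the same route as the paper: unwind the Yoneda-level chain defining $\zeta_{\se^-}$ from Proposition \ref{prop:serre-comm-autoeq}, cancel the two occurrences of $\kappa$ using Lemma \ref{lem:serretwice} together with naturality of $\kappa$, and finish with the adjunction data and Yoneda (the paper does the identical chase, merely phrased diagrammatically and for the equivalent statement $\zeta_{\se^-}^{-1}=\eta_2\varepsilon_1$). Two label slips worth fixing when you write it out: factoring out the post-composed $\se g$ is \emph{left} naturality of $\kappa$ (right naturality factors out the pre-composed $\eta_{2,y}$ instead), and the final cancellation needs more than $\varepsilon_1\eta_1=\id=\varepsilon_2\eta_2$ --- you need the zigzag identities of the unit--counit pairs, equivalently $\varepsilon_2\se=\se\varepsilon_1$, which hold because the paper fixes an adjoint equivalence with $\varepsilon_i=\eta_i^{-1}$.
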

\begin{proof}
In fact we'll prove that $\zeta_{\se^-}^{-1}=\eta_2\varepsilon_1$, which is equivalent.  The natural transformation is represented by
\[ \xymatrix @C=5pt {
(x,\se^-\se y) \ar[dr]_{\se} && (\se x, \se y) \ar[dr]^{\iota} && (y,\se x)^\vee \ar[dr]^{((\eta_2)_y,\se x)^\vee} && (\se^-y,x)^\vee \ar[dr]^{(\kappa^{-1})^\vee} && (x,\se\se^- y) \\
& \xyc{(\se x, \se\se^-\se y)} \ar[ur]^{(\se x,(\varepsilon_2)_{\se y})} && \xyc{(\se x,\se y)^{\vee\vee}} \ar[ur]^{\kappa^\vee} && \xyc{(\se\se^-y,\se x)^\vee} \ar[ur]_{\se^\vee} && \xyc{(x,\se\se^-y)^{\vee\vee}} \ar[ur]_{\iota^{-1}} &
}\]
By Lemma \ref{lem:serretwice} and by taking duals, the following squares commute:
\[
\xymatrix{
(\se^-y,x) \ar[r]^{\kappa_{\se^-y,x}}\ar[d]^\se & (x,\se\se^-y)^\vee \ar[d]^{(\kappa^{-1}_{x,\se\se^-y})^\vee} \\
(\se\se^-y,\se x) \ar[r]^\iota & (\se\se^-y,\se x)^{\vee\vee}
} \;\;  \text{ and } \;\;
\xymatrix{
(\se^-y,x)^\vee & (x,\se\se^-y)^{\vee\vee}  \ar[l]_{\kappa_{\se^-y,x}^\vee} \\
(\se\se^-y,\se x)^\vee \ar[u]_{\se^\vee} & (\se\se^-y,\se x)^{\vee\vee\vee} \ar[l]_{\iota^\vee}   \ar[u]_{(\kappa_{x,\se\se^-y}^{-1})^{\vee\vee}} 
}\]
So the following diagram is commutative:
\[ \xymatrix @C=5pt {
(x,\se^-\se y) \ar[dr]_{\se} && (\se x, \se y) \ar[dr]^{\iota} && (y,\se x)^\vee \ar[dr]^{((\eta_2)_y,\se x)^\vee} && (\se^-y,x)^\vee \ar[dr]^{(\kappa^{-1})^\vee} && (x,\se\se^- y) \\
& \xyc{(\se x, \se\se^-\se y)} \ar[ur]^{(\se x,(\varepsilon_2)_{\se y})} && \xyc{(\se x,\se y)^{\vee\vee}} \ar[ur]^{\kappa^\vee} && \xyc{(\se\se^-y,\se x)^\vee} \ar[ur]_{\se^\vee} && \xyc{(x,\se\se^-y)^{\vee\vee}} \ar[ur]_{\iota^{-1}} & \\
&&&&&& \xyc{(\se \se^-y,\se x)^{\vee\vee\vee}} \ar[ul]^{\iota^\vee} \ar[ur]^{\kappa^{\vee\vee}_{x,\se\se^-y}} & 
}\]
Then by naturality of $\iota$ we have
\[ \xymatrix @C=40pt {
 (x,\se\se^-y) \ar[r]^{\kappa_{x,\se\se^-y}} \ar[d]^\iota & (\se \se^-y,\se x)^{\vee} \ar[d]^\iota \\
 (x,\se\se^-y)^{\vee\vee}  \ar[r]^{\kappa^{\vee\vee}_{x,\se\se^-y}} & (\se \se^-y,\se x)^{\vee\vee\vee} 
} \]
so we can replace our diagram with
\[ \xymatrix @C=0pt {
(x,\se^-\se y) \ar[dr]^{\se} && (\se x, \se y) \ar[dr]^{\iota} && (y,\se x)^\vee \ar[dr]^{((\eta_2)_y,\se x)^\vee}  && (x,\se\se^- y) \\
& (\se x, \se\se^-\se y) \ar[ur]^{(\se x,(\varepsilon_2)_{\se y})} && (\se x,\se y)^{\vee\vee} \ar[ur]^{\kappa^\vee} && (\se\se^-y,\se x)^\vee \ar[ur]_{\kappa_{x,\se\se^-y}^{-1}} &
}\]
Now use naturality of $\kappa$, and Lemma \ref{lem:serretwice} again, to get:
\[ \xymatrix @C=0pt {
(x,\se^-\se y) \ar[dr]^{\se} && (\se x, \se y) \ar[dr]^{\iota} && (y,\se x)^\vee \ar[dr]^{((\eta_2)_y,\se x)^\vee}  \ar@/_/[ddr]_{\kappa_{x,y}^{-1}} && (x,\se\se^- y) \\
& (\se x, \se\se^-\se y) \ar[ur]^{(\se x,(\varepsilon_2)_{\se y})} && (\se x,\se y)^{\vee\vee}  \ar[d]^{\iota^{-1}} \ar[ur]^{\kappa^\vee} && (\se\se^-y,\se x)^\vee \ar[ur]_{\kappa^{-1}} 
 & \\
&&& (\se x,\se y) & & (x,y) \ar@/_/[uur]_{(x,(\eta_2)_y)} \ar[ll]^\se
}\]
Tidy this up to get:
\[ \xymatrix @C=0pt {
(x,\se^-\se y) \ar[dr]^{\se} && (\se x, \se y) &&      (x,\se\se^- y) \\
& (\se x, \se\se^-\se y) \ar[ur]^{(\se x,(\varepsilon_2)_{\se y})} && (x,y) \ar[ur]_{(x,(\eta_2)_y)} \ar[ul]^\se 
 &
}\]
As $(\eta_2,\varepsilon_1)$ is a unit-counit adjunction pair, the following square commutes:
\[ \xymatrix  {
 (x,y) \ar[r]^{\se} \ar[d]^{(\se x,(\eta_1)_{y}} & (\se x,\se y)  \\
 (x,\se\se^-y)   \ar[r]^{\se} & (\se x,\se \se^-\se y) \ar[u]_{(\se x,(\varepsilon_2)_{\se x})}
} \]
so, using $\varepsilon_1=\eta_1^{-1}$, we get the diagram
\[ \xymatrix @C=0pt {
(x,\se^-\se y) \ar[dr]_{(x,(\varepsilon_1)_y)} &&       (x,\se\se^- y) \\
&  (x,y) \ar[ur]_{(x,(\eta_2)_y)}  &
}\]
as required.
\end{proof}


\subsection{Frobenius algebras}
Let $A,B$ be $\kk$-algebras and let $M$ be an $A$-$B$-bimodule.  Let $\Aut(B)$ denote the algebra automorphisms of $B$.  If $\beta\in \Aut(B)$, let $M_\beta$ denote the $A$-$B$-bimodule which is equal to $M$ as a left $A$-module but has right $B$-action given by $m\cdot b=m\beta(b)$.

The vector space $M^*=\Hom_\kk(M,\kk)$ is a 
$B\da A$-bimodule by the formulas $(\xi a)(m)=\xi (am)$ and $(b\xi )(m)=\xi (mb)$, for $\xi \in M^*$, $a\in A$, $b\in B$, and $m\in M$.

\begin{definition}
A \emph{Frobenius structure} on $A$ is a pair $(\varphi,\alpha)$ where $\alpha\in\Aut(A)$ and $\varphi:A\arr\sim (A^*)_\alpha$ is an isomorphism of $A\da A$-bimodules.  
If $\alpha$ belongs to a Frobenius structure for $\C$ then we say it is a \emph{Nakayama automorphism} for $\C$.
\end{definition}

We say that $A$ is a \emph{Frobenius algebra} if there exists a Frobenius structure on $A$.  

Note that a given algebra $A$ can have different Frobenius structures, with different Nakayama automorphisms, but the Nakayama automorphism of a Frobenius algebra is well-defined up to inner automorphisms (i.e., the $1$-cell in $\kk\Alg$ is defined up to isomorphism).

\begin{remark}
A Frobenius algebra is just a looping of a one-object $\fdv$-enriched category with Serre duality: see Example \ref{eg:vec-piv} and note that the left and right bimodule actions on $A^*$ match the maps $m^\ell$ and $m^r$ described in Section \ref{ss:defs+duals}.  In fact we could consider Frobenius algebra objects in a pivotal monoidal category $\V$.
\end{remark}

We want to relate Frobenius algebras to categories with Serre duality.
To start, we will show that Serre structures pass through many of the constructions introduced in Section \ref{s:cats}.

Let $\C$ be a $\kk$-linear category.  The first part of the following Lemma, on $\ic\C$, is \cite[Lemma 3.4]{chen}.
\begin{lemma}\label{lem:serre-icmatind} 
If $\C$ has a Serre structure then so do the categories $\ic\C$, $\Mat\C$, and $\ind\C$.
\end{lemma}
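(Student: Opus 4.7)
The plan is to handle each of the three cases separately, with $\ind\C$ reducing to an earlier lemma, $\Mat\C$ being a direct additive extension, and $\ic\C$ requiring the most bookkeeping. In each case, the candidate Serre functor is essentially forced: on $\ind\C$ we restrict $\se$, on $\Mat\C$ we extend $\se$ diagonally to direct sums, and on $\ic\C$ we apply the idempotent completion 2-functor $\ic$ to $\se$ to obtain $\ic\se:\ic\C\to\ic\C$ sending $(x,e)\mapsto (\se x,\se e)$. In every case the candidate $\kappa$ is obtained by restricting or decomposing the original $\kappa$.

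For $\ind\C$, the functor $\se$ is an autoequivalence of $\C$, so it preserves biproducts and hence indecomposability: $\se(\ind\C)\subseteq\ind\C$. Lemma \ref{lem:serreonsubcat} then immediately promotes $(\se,\kappa)$ to a Serre structure on $\ind\C$.

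For $\Mat\C$, define $\se'=\Mat\se$. For $X=\bigoplus_i x_i$ and $Y=\bigoplus_j y_j$ we have $\Mat\C(X,Y)=\bigoplus_{i,j}\C(x_i,y_j)$, and similarly $\Mat\C(Y,\se' X)^\vee=\bigoplus_{i,j}\C(y_j,\se x_i)^\vee$, since in our enriched setting duals turn finite coproducts into finite products and biproducts coincide with both. Setting $\kappa'_{X,Y}$ equal to $\bigoplus_{i,j}\kappa_{x_i,y_j}$ gives the required binatural isomorphism, with binaturality inherited componentwise from that of $\kappa$.

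For $\ic\C$, the technical heart of the lemma, set $\se'=\ic\se$ and construct $\kappa'_{(x,e),(y,d)}$ by restriction of $\kappa_{x,y}$. Concretely, an element of $\ic\C((x,e),(y,d))$ is $df e$ with $f\in\C(x,y)$, an element of $\ic\C((y,d),\se'(x,e))$ is $\se(e) g d$ with $g\in\C(y,\se x)$, and I would define
\[
\kappa'_{(x,e),(y,d)}(df e)\;=\;\kappa_{x,y}(df e)\circ\bigl(-\bigr)\res{\se(e)\C(y,\se x)d}.
\]
The left and right naturality of $\kappa$ (with the maps $d$ and $e$, respectively)
show that this restriction is well defined and lands in $(\se(e)\C(y,\se x)d)^\vee\cong\ic\C((y,d),\se'(x,e))^\vee$. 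Binaturality of $\kappa'$ follows from binaturality of $\kappa$ applied to morphisms $d'h e'$ and $d''h'e''$ sandwiched by the idempotents, together with Lemma \ref{lem:serretwice} (the compatibility lemma) which controls the interaction of $\se$ with $\iota$ on the dualized side.

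The main obstacle will be the $\ic\C$ case: one must verify that a morphism space cut out by idempotents on one side pairs correctly, via the restricted $\kappa$, with the morphism space cut out by the corresponding $\se$-transported idempotents on the dual side. This is essentially a repackaging of the left/right naturality squares in Definition \ref{def:serre}, but it needs to be checked carefully; once verified, invertibility of $\kappa'$ at each $(x,e),(y,d)$ follows from invertibility of $\kappa_{x,y}$ together with the fact that the two sides are, by construction, complementary summands cut out by the same matching idempotent actions.
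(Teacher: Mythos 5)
Your proposal is correct and follows essentially the same route as the paper: restrict $\se$ and $\kappa$ to $\ind\C$ via Lemma \ref{lem:serreonsubcat}, extend diagonally over biproducts for $\Mat\C$ using hom-finiteness, and for $\ic\C$ use $\ic\se:(x,e)\mapsto(\se x,\se e)$ with $\kappa$ restricted to $d\C(x,y)e$, the key point being the naturality identity $\kappa(dfe)=\iota(d)\kappa(f)\se(e)$ which identifies the image with $\ic\C((y,d),(\se x,\se e))^\vee$. The only cosmetic difference is your invocation of Lemma \ref{lem:serretwice}, which is not actually needed; binaturality of the restricted $\kappa$ is inherited directly.
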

\begin{proof}
Let $(\se,\kappa)$ be a Serre structure on $\C$.  
Then $\ic\se$ is an autoequivalence of $\ic\C$; we claim it is a Serre functor.
So we need to construct a natural isomorphism
\[ \ic\kappa_{(x,e),(y,d)}:\ic\C((x,e),(y,d))\to\ic\C((y,d),(\se x,\se e))^*. \]
We restrict $\kappa_{x,y}$ to $\ic\C((x,e),(y,d))=d\C(x,y)e$. 
By naturality,
\[ \kappa_{x,y}(d\C(x,y)e)= \iota(d)\kappa_{x,y}(\C(x,y))\se(e) = \iota(d)\C(y,\se x)^*\se(e) \]
which is exactly $\ic\C((y,d),(\se x,\se e))^*$.

Next we claim $\Mat\se$ is a Serre functor for $\Mat\C$.
Given $X,Y\in\Mat\C$ we can write $X=X_1\oplus\cdots\oplus X_n$ and $Y=Y_1\oplus\cdots\oplus Y_m$ with $X_i,Y_i\in\C$.  
As vector spaces,
\[ \Mat\C(X,Y)\cong\C(X_1,Y_1)\oplus\cdots\oplus \C(X_n,Y_m). \]
To construct $\Mat\kappa_{X,Y}$ we apply a map $\kappa_{X_i,Y_j}$ to each summand and use the isomorphism
\[ \C(Y_1,\se X_1)^*\oplus\cdots\oplus \C(Y_m,\se X_n)^*\stackrel\sim\leftarrow\Mat\C(Y,\se X)^*\]
which exists because $\C$ is hom-finite.

Finally, as Serre functors respect biproducts, we obtain a Serre structure on $\ind\C$ by Lemma \ref{lem:serreonsubcat}.
\end{proof}

\begin{example}
The converse statements of Lemma \ref{lem:serre-icmatind} are all false.  Let $\D$ be a $\kk$-linear category with precisely two objects $x$ and $y$ and a Serre functor which interchanges them.  Consider the full subcategory $\C_1$ of $\Mat\D$ on the objects $x$ and $x\oplus y$.  Then $\ic\C_1$ has a Serre structure but $\C_1$ does not.  Similarly, consider the full subcategory $\C_2$ of $\Mat\D$ on the objects $x$, $y$, and $x\oplus x$.  Then $\Mat\C_2$ and $\ind\C_2$ have Serre structures but $\C_2$ does not.
\end{example}

Let $\C$ be a category with finitely many objects.  We say that a Serre structure $(\se,\kappa)$ is \emph{strict} if $\se$ is an automorphism (not just an autoequivalence).  Recall the algebra $A_\C$ constructed in Section \ref{ss:algebras}.  
\begin{proposition}\label{prop:frobserre}
There is a bijection between Frobenius structures on $A_\C$ and strict Serre structures on $\C$.
\end{proposition}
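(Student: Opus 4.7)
The plan is to construct explicit maps in both directions between strict Serre structures on $\C$ and Frobenius structures on $A_\C$, then verify they are mutually inverse.

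For the forward direction, given a strict Serre structure $(\se, \kappa)$, define the algebra automorphism $\alpha$ of $A_\C$ by letting it act on each summand $\C(x,y) \subset A_\C$ via the $\kk$-linear isomorphism $\se: \C(x,y) \arr\sim \C(\se x, \se y)$. Multiplicativity $\alpha(hg) = \alpha(h) \alpha(g)$ and unitality $\alpha(\id_x) = \id_{\se x}$ follow from the functoriality of $\se$, and $\alpha$ is bijective because $\se$ is a bijection on objects. Define $\varphi: A_\C \to A_\C^*$ by taking its restriction to the summand $\C(x,y) \subset A_\C$ to be $\kappa_{x,y}: \C(x,y) \arr\sim \C(y, \se x)^* \hookrightarrow A_\C^*$. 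Since $\C$ has finitely many objects, $A_\C^* \cong \bigoplus_{y,z} \C(y,z)^*$ and the target hom-spaces $\C(y, \se x)^*$ cover every summand exactly once as $(x,y)$ ranges over pairs, so $\varphi$ is a $\kk$-linear isomorphism. The key calculation is that $\varphi$ is a morphism of bimodules into $(A_\C^*)_\alpha$: left naturality of $\kappa$, namely $\kappa_{x,z}(hg) = \iota(h) \kappa_{x,y}(g)$, translates into $\varphi(hg) = h \cdot \varphi(g)$ in the standard left $A_\C$-action, while right naturality, $\kappa_{x,y}(gf) = \kappa_{w,y}(g) \se(f)$, translates into $\varphi(gf) = \varphi(g) \cdot \alpha(f)$ in the $\alpha$-twisted right action. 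Here the pivotal structure $\iota$ from Example \ref{eg:vec-piv} implements the canonical identification $\C(y,z)^{**} \cong \C(y,z)$ used when an element of $A_\C$ acts on $A_\C^*$.

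For the reverse direction, given a Frobenius structure $(\varphi, \alpha)$ on $A_\C$, the first step is to read off $\se$ from $\alpha$'s action on the distinguished primitive orthogonal idempotents $\{e_x = \id_x\}_{x\in\ob\C}$: strictness (the counterpart of ``strict'' on the Serre side) amounts to requiring $\alpha(e_x) = e_{y}$ for some $y \in \ob\C$, so setting $\se(x) = y$ defines $\se$ on objects. Multiplicativity of $\alpha$ then ensures $\alpha|_{\C(x,y)} = \alpha|_{e_y A_\C e_x}$ lands in $e_{\se y} A_\C e_{\se x} = \C(\se x, \se y)$, giving the morphism part of a functor $\se:\C \to \C$, automatically an automorphism. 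The bimodule isomorphism $\varphi$ restricts to components $\varphi_{x,y}: \C(x,y) \to \C(y, \se x)^*$, which are taken as $\kappa_{x,y}$; reversing the translations in the previous paragraph extracts the two naturality axioms from the bimodule identities for the standard left action and the $\alpha$-twisted right action.

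Once both directions are defined, checking they are mutually inverse is essentially formal: starting from $(\se,\kappa)$ we recover $(\alpha,\varphi)$ whose action on objects, morphisms, and hom-spaces is tautologically the original data, and vice versa. The main obstacle, and the only place where care is needed, is the precise matching of the bimodule conditions on $\varphi$ with the two naturality axioms of $\kappa$: one must verify that the standard left action on $A_\C^*$ is encoded by $\iota$ (and hence by $m^\ell$ from Section \ref{ss:techlemma}) while the right action picks up the twist by $\alpha$ via $m^r$, exactly mirroring the sides appearing in Definition \ref{def:serre}.
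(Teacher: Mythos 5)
Your proposal is correct and follows essentially the same route as the paper's proof: $\se$ and $\kappa$ induce $\alpha$ and $\varphi$ summandwise, and conversely the action of $\alpha$ on the idempotents $e_x$ recovers the object map of $\se$, with the two naturality axioms of $\kappa$ matching the left and $\alpha$-twisted right bimodule conditions on $\varphi$. The only difference is that you spell out this dictionary (via $\iota$, $m^\ell$, $m^r$) in more detail than the paper, which simply asserts it.
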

\begin{proof}
Let $A=A_\C$.  If $(\se,\kappa)$ is a strict Serre structure on $\C$ then $\se$ induces an automorphism $\alpha$ on $A$ and $\kappa$ induces an isomorphism $\varphi:A\arr\sim (A^*)_\alpha$ of $A\da A$-bimodules.  Now suppose $(\varphi,\alpha)$ is a Frobenius structure on $A$.  The algebra $A$ has a primitive idempotent $e_x$ for each object $x\in\ob\C$.  As $\alpha$ is an algebra automorphism, it preserves idempotents and the identity $\id_A=\sum_{x\in\ob\C}e_x$ and therefore permutes the idempotents.  This defines an action of $\se$ on $\ob\C$.  Then $\alpha$ extends this object action to a genuine automorphism of $\C$, and $\varphi$ provides the form $\kappa$.
\end{proof}

Recall the $2$-functor $\deloop$ from Section \ref{ss:algebras}.
Considering one-object categories gives the following.
\begin{corollary}\label{cor:serre-BA}
There is a bijection between Frobenius structures on $A$ and Serre structures on $\deloop A$.
\end{corollary}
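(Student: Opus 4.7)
The plan is to derive this as an immediate specialization of Proposition~\ref{prop:frobserre}. Take $\C = \deloop A$, which is a one-object $\kk$-linear category with endomorphism ring $A$; so $\C$ has finitely many objects and, by construction, $A_\C = A$.

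First I would observe that for a one-object $\kk$-category, any Serre functor is automatically strict: an autoequivalence $\se:\C\to\C$ acts on the object set by a permutation, and on a singleton this permutation is forced to be the identity, so $\se$ is an automorphism rather than merely an autoequivalence. Hence every Serre structure on $\deloop A$ is strict, and the ``strict'' hypothesis in Proposition~\ref{prop:frobserre} is vacuous here.

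Applying Proposition~\ref{prop:frobserre} to $\C=\deloop A$ then yields a bijection between Frobenius structures on $A_{\deloop A}=A$ and (strict) Serre structures on $\deloop A$, which is exactly what we want. No genuine obstacle arises: the whole content is packaged in Proposition~\ref{prop:frobserre}, and the only thing to verify is the automatic strictness, which is essentially tautological once one notes that a single object has trivial automorphism group as a set.
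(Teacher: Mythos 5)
Your proposal is correct and is essentially the paper's own proof, which reads in full ``Considering one-object categories gives the following,'' i.e.\ it specializes Proposition~\ref{prop:frobserre} to $\C=\deloop A$ with $A_{\deloop A}=A$ and uses that strictness is automatic. One small precision: the reason $\se$ is an automorphism is not just that the object permutation of a singleton is trivial, but that an autoequivalence is fully faithful, so being bijective on the single object it is also bijective on morphisms and hence an isomorphism of categories.
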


Similarly to Section \ref{ss:uniqueserre}, there is a 2-category $\FAlg$ whose objects are Frobenius algebras.  We don't give the definition explicitly because, as with Proposition \ref{prop:forgetserre} and Corollary \ref{cor:bieqserre2gpds}, we have 2-functor
\[ \twofun{U}: \FAlg\to \Alg \]
whose core is locally an equivalence.  Therefore we can just work with the full sub-2-groupoid of $\Alg_\ei$ given by the algebras which admit Frobenius structures.

Recall the composite biequivalence
\[ \biP:\fBasicAlg_\ei \arrr{\tilde{\deloop}}  \fBaseCat \arrr\Mat \fAddCat \]
from Section \ref{ss:algebras} and the base algebra from Definition \ref{def:basealg}.
Let $\fBasicFAlg_\ei$  and $\fAddSCat$ denote the full sub-2-categories of $\fBasicAlg_\ei$ and $\fAddCat$ on the objects which admit Frobenius and Serre structures, respectively.

\begin{theorem}\label{thm:frob-serre}
A Frobenius structure on an algebra $A$ induces a Serre structure on the $\kk$-category $\biP A$.
Therefore,
the biequivalence $\biP:\fBasicAlg_\ei \arr\sim \fAddCat$ restricts to a biequivalence 
\[\fBasicFAlg_\ei \arr\sim \fAddSCat. \]
Moreover, a Serre structure on a finite $\kk$-category $\C$ induces a Frobenius structure on its base algebra $B_\C$.
\end{theorem}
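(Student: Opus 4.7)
The plan is to chain together the preservation results proved earlier in this section. For the first assertion, recall that $\biP A = \Mat\Ind\ic\deloop A$. A Frobenius structure on $A$ yields a Serre structure on $\deloop A$ by Corollary \ref{cor:serre-BA}, and three applications of Lemma \ref{lem:serre-icmatind} push this Serre structure successively through $\ic$, $\Ind$, and $\Mat$, producing a Serre structure on $\biP A$.

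For the ``moreover'' claim, start with a Serre structure on a finite $\kk$-category $\C$. Lemma \ref{lem:serre-icmatind} provides Serre structures on $\ic\C$ and then on $\Ind\ic\C$, and Corollary \ref{cor:skel-serre} transports this to a skeletal subcategory $\skel(\Ind\ic\C)$. The key observation is that on a skeletal category every autoequivalence is automatically a strict automorphism: essential surjectivity together with uniqueness of isomorphism-class representatives forces surjectivity on objects, while full faithfulness reflects isomorphisms and so, again by skeletality, gives injectivity on objects. Consequently the Serre structure on $\skel(\Ind\ic\C)$ is strict, and Proposition \ref{prop:frobserre} delivers a Frobenius structure on $A_{\skel(\Ind\ic\C)}$, which is precisely $A_\C$ by Definition \ref{def:basealg}. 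The biequivalence claim then follows formally: the first assertion shows $\biP$ sends $\ob\fBasicFAlg_\ei$ into $\ob\fAddSCat$, so by Lemma \ref{lem:2funfull} we obtain a 2-functor between the sub-2-groupoids, which inherits local equivalence from $\biP$. Essential surjectivity of the restriction follows because any $\C \in \fAddSCat$ is equivalent to some $\biP A$, whereupon Proposition \ref{prop:transport-serre} transports the Serre structure to $\biP A$, and then the moreover claim applied to $\biP A$ (whose base algebra is $A$ itself, since $A$ is basic) forces $A \in \fBasicFAlg_\ei$.

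The step requiring the most care is the argument that a Serre autoequivalence on the skeletal base category $\skel(\Ind\ic\C)$ is automatically strict, since Proposition \ref{prop:frobserre} genuinely depends on strictness; fortunately this is essentially a formal consequence of skeletality combined with full faithfulness. Beyond that, the proof is a straightforward assembly of the preservation, transport, and uniqueness results already established.
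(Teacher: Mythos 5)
Your proposal is correct and follows essentially the same route as the paper: Corollary \ref{cor:serre-BA} plus Lemma \ref{lem:serre-icmatind} for the first statement, Corollary \ref{cor:skel-serre} plus the observation that Serre structures on skeletal categories are automatically strict plus Proposition \ref{prop:frobserre} for the ``moreover'', and Lemma \ref{lem:2funfull} together with transport of structure for the restricted biequivalence. You merely spell out details the paper leaves terse (the strictness-on-skeletal argument and the essential surjectivity of the restriction, which the paper delegates to Corollary \ref{cor:bieqserre2gpds}), and these fill-ins are accurate.
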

\begin{proof}
We get the first statement by combining Lemma \ref{lem:serre-icmatind} and Corollary \ref{cor:serre-BA}.  So by Lemma \ref{lem:2funfull} we get the 2-functor
$\fBasicFAlg_\ei \arr\sim \fAddSCat$, and it is a biequivalence by Corollary \ref{cor:bieqserre2gpds}.
To get the final statement, use Corollary \ref{cor:skel-serre} and Lemma \ref{lem:serre-icmatind}.  Then, noting that every Serre structure on a skeletal category is strict, the result follows by Proposition \ref{prop:frobserre}.
\end{proof}

\section{Graded categories}\label{s:graded}

We largely follow Asashiba \cite{asa2}, though our conventions are slightly different: see Remark \ref{rmk:compare-asa} below.  Orbit categories rose in prominence after their use in categorifying cluster algebras by Buan, Marsh, Reineke, Reiten, and Todorov \cite{bmrrt} and related work of Keller \cite{k-orbit}.  They were first systematically studied by Cibils and Marcos \cite{cm}.  

From now on, we specialise from the pivotal enhanced setting and work in $\kk$-categories, so we work with traditional linear algebra duals $(-)^*=\Hom_\kk(-,\kk)$ instead of abstract duals $(-)^\vee$.  But we will use the pivotal enhanced theory from Section \ref{sss:hom-gr-ss}, where our pivotal structure will depend on a fixed character.

\subsection{Equivariant and hom-graded categories}

Fix a group $G$.  In applications, $G$ will be abelian, so we write the composition of $p,q\in G$ additively, as $p+q$, but everything would work for an arbitrary group.

\subsubsection{Equivariant categories}

A \emph{$G$-equivariant category} is a $\kk$-category $\D$ together with a $G$-action: a group homomorphism from $G$ to the group $\Autom(\D)$ of automorphisms of $\D$. 

$G$-equivariant categories are the objects of the following $2$-category, which we denote $G\aCat$:
\begin{itemize}
\item The 0-cells are $G$-equivariant categories: pairs $(\D,F)$ where $\D$ is a $\kk$-category and $F:G\to\Autom(\D)$ is a group homomorphism.  We write $F^p=F(p)$.
\item The 1-cells $(\D_1,F_1)\to(\D_2,F_2)$ are equivariant functors: pairs $(\Phi,\phi)$ where $\Phi:\D_1\to\D_2$ is a functor and $\phi=(\phi^p)_{p\in G}$ is a $G$-indexed family of natural isomorphisms $\phi_p:\Phi F_1^p\arr\sim F_2^p\Phi $ which respect group composition, i.e., $\phi^{p+q}=F_2^p\phi^q\circ \phi^p F_1^q$. 
\item The 2-cells $(\Phi,\phi)\to(\Psi,\psi)$ are morphisms of equivariant functors: natural transformations $\alpha:\Phi\to\Psi$ such that $\psi^p\circ \alpha F_1^p=F_2^p\alpha\circ\phi^p$,
i.e., the following square commutes:
\[ \xymatrix{
\Phi F_1^p \ar[r]^{\phi^p}\ar[d]^{\alpha F_1^p}  & F_2^p \Phi \ar[d]^{F_1^p \alpha} \\
\Psi F_1^p \ar[r]^{\psi^p} & F_2^p \Psi 
} \]
\end{itemize}
Composition of 1-cells is as follows.  Given $(\Phi_1,\phi_1):(\D_1,F_1)\to(\D_2,F_2)$ and $(\Phi_2,\phi_2):(\D_2,F_2)\to(\D_3,F_3)$, their composite is $(\Phi_2,\phi_2)(\Phi_1,\phi_1)=(\Phi_2\Phi_1,\phi_2^p\Phi_1\circ \Phi_2\phi_1^p)$.  Composition of 2-cells, both vertical and horizontal, is as usual for natural transformations

Note that any $\kk$-linear category $\D$ can be given a trivial $G$-category structure: we write $\trivG \D=(\D,F)$ for $\D$ equipped with the group homomorphism $F$ sending every $p\in G$ to the identity functor on $\D$.

\begin{remark}\label{rmk:Zequivar}
Our main application involves $G=\Z$.  In this case, we can specify less data to determine a $G$-equivariant category.  Given an automorphism $F^1:\D\arr\sim\D$, we define $F^p=(F^1)^p$ for $p\in\Z$: this explains our use of superscripts for $G$-indexing.  We denote this $\Z$-equivariant category $(\D,\gen{F^1})$.  Given a functor $\Phi:\D\to\D'$ and a natural isomorphism $\phi:\Phi F^1\arr\sim F'^1\Phi $, we define $\phi^p$ as the composition
\[ \Phi F^p=\Phi F^1\cdots F^1F^1\arrr{ \phi^1\id}F'^1\Phi F^1\cdots F^1\arrr{\id \phi^1\id}\cdots\arrr{\id\phi^1 } F'^1\cdots F'^1\Phi= F'^p\Phi\]
for $n>0$, and use a similar definition for $n<0$.  This respects the group composition by construction.  Similarly, for 2-cells, we only need to check commutativity of the $p=1$ square.
\end{remark}

\begin{example}\label{eg:equivar-cat}
Let $G=\Z$ and let $\D$ be the $\kk$-category with $\ob \D=\Z$ and 
\[ \D(i,j)=\begin{cases}
\kk & \text{if } j\in{i,i+1};\\
0 & \text{otherwise.}
\end{cases} \]
This completely determines the composition.  Choose a basis $f_i$ of each space $\D(i,i+1)$.  Define a functor $F^1:\D\to\D$ which acts on objects by $F^1(i)=i+2$ and on maps by $F^1(f_i)=f_{i+2}$.  Let $F:\Z\to\Autom(\D)$ send $n$ to $(F^1)^n$.  Then $(\D,F)$ is a $\Z$-equivariant category.  It is equivalent in $\Z\aCat$ to $(\Db(\kk A_2),\tau^-)$, the bounded derived category of the $A_2$ quiver together with its derived inverse Auslander-Reiten translation.
\end{example}

\subsubsection{Strictification}\label{sss:strictify}

According to our definition, $G$-equivariant categories are equipped with strict $G$-actions: $G$ should act by automorphisms. 
However, in practice one often meets weak $G$-actions, where the elements of $G$ act by autoequivalences.  We would like to replace these by strict $G$-actions.  This isn't essential, but reduces the technical complexity of the paper.

A \emph{weak $G$-equivariant category} is a $\kk$-category $\D$ together with a weak $G$-action: a group homomorphism from $G$ to the group $\Autoeq(\D)$ of autoequivalences of $\D$.  This just means that there exist natural isomorphisms $F_{p+q}\arr\sim F_p F_q$.  A \emph{coherent $G$-equivariant category} $(\D,F,f)$ is a weak $G$-equivariant category $(\D,F)$, $F:G\to\Autoeq(\D)$, equipped with a specified natural isomorphisms $f^{p,q}:F^{p+q}\arr\sim F^p F^q$ satisfying the obvious associativity axiom $f^{pq,r}\circ f^{p,q}F^r=f^{p,qr}\circ F^pf^{p,r}$.

Let $(\D',F',f)$ be a coherent $G$-equivariant category.
\begin{definition}
A \emph{strictification} of $(\D',F',f)$ is a $G$-equivariant category $(\D,F)$ together with an equivalence $\varepsilon:\D'\arr\sim\D$ and a collection $\alpha=(\alpha^p:\varepsilon F'^{p}\arr\sim F^p \varepsilon)_{p\in G}$ of natural isomorphisms such that
\[ \alpha^{p+q} \circ \varepsilon f^{p,q} =  f^{p,q} \varepsilon\circ F^p\alpha^q\circ\alpha^p F'^q : \varepsilon F'^pF'^q\to F^{p+q} \varepsilon.\]
\end{definition}

There exist at least two ways to strictify, which involve making or category smaller or bigger.  One is to take a skeleton, so all autoequivalences must be automorphisms.  The other is to consider the 
discrete monoidal $\kk$-linear category $\looping G$ with objects given by elements of $G$, then to replace $\D'$ with a category of weakly $G$-equivariant functors from $\looping G$ to $\D'$.  In the case $G=\Z$, $\D'$ is given explicitly by sequences $(x_i,\alpha_i:Fx_i\arr\sim x_{i+1})_{i\in\Z}$ and $\varepsilon$ sends $(x_i,\alpha_i)$ to $x_0$.  See, for example, \cite[Theorem 5.4]{shinder} for more details.  Therefore we have:
\begin{theorem}
Every coherent $G$-equivariant category has a strictification.
\end{theorem}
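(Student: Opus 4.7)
The strategy is to carry out the second construction sketched immediately before the theorem statement, producing a uniform strictification valid for any group $G$. I would define $\D$ to be the $\kk$-category whose objects are families $\boldsymbol{x} = \bigl((x_p)_{p\in G},\,(\mu_{\boldsymbol{x}}^{p,q})_{p,q\in G}\bigr)$ with $x_p\in\ob\D'$ and $\mu_{\boldsymbol{x}}^{p,q}\colon F'^p x_q \arr\sim x_{p+q}$ isomorphisms in $\D'$, subject to the unit condition $\mu_{\boldsymbol{x}}^{0,q}=\id_{x_q}$ and the cocycle condition
\[
\mu_{\boldsymbol{x}}^{p,q+r}\circ F'^p\bigl(\mu_{\boldsymbol{x}}^{q,r}\bigr) \;=\; \mu_{\boldsymbol{x}}^{p+q,r}\circ f^{p,q}_{x_r},
\]
which encodes compatibility of the trivialisations with the associator $f$. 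A morphism $\boldsymbol{x}\to\boldsymbol{y}$ is a family $(g_p\colon x_p\to y_p)$ intertwining the $\mu$'s. Then $G$ acts strictly on $\D$ by reindexing, $F^r(\boldsymbol{x})_q := x_{r+q}$, with inherited $\mu$ data; because reindexing is associative on the nose, $F^r F^s = F^{r+s}$ holds as an equality of $\kk$-linear endofunctors, giving a genuine $G$-equivariant category $(\D,F)$.

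Next I would construct the comparison. Define $\varepsilon\colon\D'\to\D$ by $\varepsilon(x)_p := F'^p x$ with $\mu^{p,q}_{\varepsilon(x)} := (f^{p,q}_x)^{-1}\colon F'^p F'^q x \arr\sim F'^{p+q} x$; the associativity axiom for $f$ is precisely the cocycle condition needed for $\varepsilon(x)$ to lie in $\D$. The natural isomorphisms $\alpha^p\colon \varepsilon F'^p \arr\sim F^p\varepsilon$ are built component-wise from $f$: in index $q$ one takes the appropriate instance of $(f^{q,p}_x)^{\pm 1}$ mapping $(\varepsilon F'^p x)_q = F'^q F'^p x$ to $(F^p\varepsilon x)_q = F'^{q+p}x$. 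Naturality in $x$ is automatic from naturality of $f$, and the pentagon axiom relating $\alpha^{p+q}$, $\alpha^p$, $\alpha^q$ and $f^{p,q}$ collapses, after indexing in $q$, to the associativity axiom for $f$ applied at a single object.

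It remains to verify that $\varepsilon$ is an equivalence. Essential surjectivity: given $\boldsymbol{x}\in\D$, the family of isomorphisms $\mu_{\boldsymbol{x}}^{p,0}\colon F'^p x_0 \arr\sim x_p$ assembles into a canonical isomorphism $\varepsilon(x_0)\arr\sim\boldsymbol{x}$ in $\D$, the cocycle condition with $r=0$ guaranteeing compatibility with the $\mu$'s. Fully faithfulness: a morphism $\varepsilon(x)\to\varepsilon(y)$ in $\D$ is a family $(g_p)$ whose components are forced by the intertwining relation to satisfy $g_p = (f^{p,0}_y)^{-1}\circ F'^p(g_0)\circ f^{p,0}_x$, and conversely any choice of $g_0\colon x\to y$ in $\D'$ extends uniquely. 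Hence $\D'(x,y) \cong \D(\varepsilon x,\varepsilon y)$, completing the construction of the strictification.

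The main obstacle is purely diagrammatic bookkeeping: one has to fix, once and for all, the orientation of $f^{p,q}$ versus $(f^{p,q})^{-1}$ in each diagram and verify that the cocycle, pentagon and naturality squares commute in the chosen convention. Nothing deep occurs beyond repeated applications of the single associativity axiom for $f$, which is why the excerpt defers the details to the cited reference. The alternative route via replacing $\D'$ with a skeleton also works, but one still needs to rigidify the interaction between the autoequivalences at the level of morphisms, and I find the functor-category construction above cleaner to state for arbitrary $G$.
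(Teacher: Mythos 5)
Your construction is precisely the second strictification route the paper sketches (weakly $G$-equivariant functors out of the discrete category $\looping G$, i.e.\ families $(x_p,\mu^{p,q}_{\boldsymbol{x}})$ with a strict reindexing action, which for $G=\Z$ reduces to the sequences $(x_i,\alpha_i)$ the paper quotes from Shinder), carried out in detail for general $G$, and it is correct. The only blemish is a convention slip you already flag as bookkeeping: with the paper's orientation $f^{p,q}\colon F^{p+q}\arr\sim F^pF^q$, the right-hand composite in your displayed cocycle condition is ill-typed (the $f^{p,q}_{x_r}$ should be inverted or moved to the other side), and the unit condition $\mu^{0,q}=\id$ tacitly assumes the harmless normalisation $F'^0=\id$, $f^{0,q}=\id$.
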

We will use this implicitly from now on.  In particular, we use it whenever we have a coherent $G$-equivariant category but we want to apply Theorem \ref{thm:asa2eq} (below).


\subsubsection{The equivariant centre}

The following result is useful.  Later we will combine it with Proposition \ref{prop:serre-comm-autoeq}.
\begin{proposition}\label{prop:ev-centre}
Let $(\D,F)$ be an equivariant category. 
Then $\centre{\Autom({\D})}  \subseteq  \centre{\Autom_{G\aCat}({(\D,F)})}$.
\end{proposition}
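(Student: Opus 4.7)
The plan is to construct, for any $(Z,z)\in\centre(\Autom(\D))$, an equivariant structure on $Z$ together with a centre structure on the resulting equivariant automorphism, and then to verify the axioms.

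First I would promote $Z$ to an equivariant automorphism $(Z,\zeta)$ of $(\D,F)$ by setting $\zeta^p = z_{F^p}\colon ZF^p \arr\sim F^pZ$. The group-compatibility requirement $\zeta^{p+q}=F^p\zeta^q \circ \zeta^pF^q$ is precisely the hexagon axiom (Definition \ref{def:centre}) for $(Z,z)$ evaluated on the pair $(F^p,F^q)$, using that $F$ is strict so that $F^{p+q}=F^pF^q$ on the nose.

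Next, for every equivariant automorphism $(\Phi,\phi)$, I would take the commutation $2$-cell $z'_{(\Phi,\phi)}:=z_\Phi\colon Z\Phi \to \Phi Z$. The main content is checking that $z_\Phi$ is a $2$-cell in $G\aCat$, i.e.\ that it intertwines the equivariant structures on $(Z,\zeta)\circ(\Phi,\phi)=(Z\Phi,\,\zeta^p\Phi \circ Z\phi^p)$ and $(\Phi,\phi)\circ(Z,\zeta)=(\Phi Z,\,\phi^pZ \circ \Phi\zeta^p)$. Unwinding, the required equation is
\[ F^pz_\Phi \circ z_{F^p}\Phi \circ Z\phi^p = \phi^pZ \circ \Phi z_{F^p} \circ z_\Phi F^p. \]
Applying the hexagon for $(Z,z)$ on each side rewrites the two triple composites as $z_{F^p\Phi}\circ Z\phi^p$ and $\phi^pZ \circ z_{\Phi F^p}$ respectively, and naturality of $z$ with respect to the $1$-cell $\phi^p\colon \Phi F^p \arr\sim F^p\Phi$ in $\Autom(\D)$ identifies these two expressions.

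Finally, the two Drinfeld-centre axioms for $((Z,\zeta),z')$ reduce to those for $(Z,z)$: naturality of $z'$ in the argument $(\Phi,\phi)$ reduces to naturality of $z$ in the underlying functor $\Phi$, since morphisms of equivariant functors are in particular natural transformations of underlying functors; and the hexagon for $z'$ on a pair of equivariant automorphisms reduces to the hexagon for $z$ on the underlying composite of automorphisms. The main obstacle is the $2$-cell compatibility check displayed above; once drawn as a diagram, it collapses cleanly using two instances of the hexagon for $(Z,z)$ and a single instance of naturality of $z$ in $\Autom(\D)$.
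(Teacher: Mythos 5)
Your proof is correct and follows essentially the same route as the paper: the key step, verifying that $z_\Phi$ is a 2-cell in $G\aCat$, is exactly the paper's diagram, which the paper also decomposes into two instances of the hexagon axiom (your $z_{F^p\Phi}$ and $z_{\Phi F^p}$ triangles) plus one naturality square for $z$ along $\phi^p$. Your write-up is in fact slightly more complete, since you also make explicit the equivariant structure $\zeta^p=z_{F^p}$ on $Z$ and the reduction of the remaining centre axioms, which the paper leaves implicit.
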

\begin{proof}
Let $(Z,z)\in \centre{\End({\D})}$ and let $(\Phi,\phi):(\D,F)\to(\D,F)$ be an equivariant endofunctor.  
We have a natural transformation $z_\Phi:Z \Phi\arr\sim \Phi Z$, so we just need to show that the following diagram commutes:
\[ \xymatrix @C=50pt{
Z\Phi F^p \ar[d]_{z_\Phi F^p } \ar[r]^{(z^p\Phi)\circ (Z\phi^p)} & F^p Z\Phi \ar[d]^{F^p z_\Phi} \\
\Phi ZF^p   \ar[r]_{(\phi^p Z)\circ(\Phi z^p)} & F^p\Phi Z
}.\]
We break it up as follows:
\[ \xymatrix @=40pt{
Z\Phi F^p \ar[d]_{z_\Phi F^p } \ar[dr]^{z_{\Phi F^p}} \ar[r]^{Z\phi^p} & ZF^p\Phi \ar[r]^{z^p\Phi} \ar[dr]^{z_{F^p \Phi}}   & F^p Z\Phi \ar[d]^{F^p z_\Phi} \\
\Phi ZF^p   \ar[r]_{\Phi z^p} & \Phi F^p Z \ar[r]^{\phi^p Z}  & F^p\Phi Z
}.\]
Then the triangles and the square commute by the assumption that $(Z,z)\in \centre{\End({\D})}$.
\end{proof}


\subsubsection{Hom-graded categories}

A \emph{$G$-hom-graded category} is a $\kk$-linear category whose hom spaces are $G$-graded, with composition respecting this grading.

$G$-hom-graded categories are the objects of the following $2$-category, which we denote $\GrCat G$: 
\begin{itemize}
\item The 0-cells are $G$--hom-graded categories $\C$, so each hom space has a direct sum decomposition $\C(x,y)=\bigoplus_{p\in G}\C^p(x,y)$.  If we have a homogeneous map $f\in \C^p(x,y)$ then we say $f$ has \emph{degree} $p$ and write $\deg f=p$.  The composition should respect the grading: if $g\in\C^q(y,z)$ then $\deg(gf)=p+q$.
\item The 1-cells $\C_1\to\C_2$ are degree-preserving functors: pairs $(H,\gamma)$ where $H:\C_1\to\C_2$ is a $\kk$-functor and $\gamma:\ob \C_1\to G$ is a function (the \emph{degree adjuster}) such that, for all maps $f:x\to y$ in $\C$, $\deg(Hf)=\deg(f)+\gamma(y)-\gamma(x)$.
\item The 2-cells $(H,\gamma)\to(I,\delta)$ are morphisms of degree-preserving functors: natural transformations $\theta:H\to I$ whose components $\theta_x:Hx\to Ix$ are homogeneous of degree $\delta(x)-\gamma(x)$.
\end{itemize}
Composition of 1-cells is as follows.  Given $(H_1,\gamma_1):\C_1\to\C_2$ and $(H_2,\gamma_2):\C_2\to\C_3$, their composite is $(H_2,\gamma_2)(H_1,\gamma_1)=(H_2H_1,\gamma_3)$ where $\gamma_3(x)=\gamma_1(x)+\gamma_2(H_1(x))$.  Composition of 2-cells, both vertical and horizontal, is as usual for natural transformations.

Given $p\in G$, let $\underline{p}:\ob\C_1\to G$ be the constant function with image $p$.  In particular, $\underline{0}$ sends every object to the identity of $G$.  We say $H$ is a \emph{strict} degree-preserving functor if $(H,$\underline{0}$)$ is degree-preserving.

\begin{example}\label{eg:homgr-cat}
Let $G=\Z$ and let $\ob\C=\{1,2\}$.  Let $\C(i,j)=\kk$ for all $i,j\in\{1,2\}$.  Let $\alpha$ be a basis for $\C(1,2)$ and let $\beta$ be a basis for $\C(2,1)$.  Let $\deg(\alpha)=0$ and $\deg(\beta)=1$.  Then $\C$ is a $\Z$-hom-graded category.  A degree-preserving functor on $\C$ is given in Example \ref{eg:homgr-cat-serre} below.
\end{example}

As in the ungraded case, we have 2-functors
\[ \Mat:\GrCat G\to \GrCat G \;\; \text{ and } \;\; \Ind:\GrCat G_{2,0}\to\GrCat G_{2,0}.\]
We set
\[ \BaseCat^G  = \Ind\ic\GrCat G_{2,0} \;\; \text{ and } \;\; \AddCat^G = \Mat\ic\GrCat G_{2,0} \]
and we get an analogue of Proposition \ref{prop:base-add}:
\begin{proposition}\label{prop:base-add^G}
There are biequivalences:
\[ \xymatrix @R=30pt {
  \fBaseCat^G \ar@/^/[rr]^\Mat   & \sim & \fAddCat^G \ar@/^/[ll]^\Ind
} \]
\end{proposition}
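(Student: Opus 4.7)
The plan is to imitate the proof of Proposition \ref{prop:base-add} in the $G$-hom-graded setting, checking at each step that the $G$-grading is preserved and that degree adjusters cause no new trouble.

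First I would verify that the constructions $\Mat$ and $\ic$ extend to 2-endofunctors of $\GrCat G$. For $\ic\C$, hom-spaces $d\C(x,y)e$ inherit the $G$-grading from $\C(x,y)$ since the composition respects degree. For $\Mat\C$, the hom-space between $\bigoplus_i x_i$ and $\bigoplus_j y_j$ is $\bigoplus_{i,j}\C(x_i,y_j)$, graded componentwise. A degree-preserving functor $(H,\gamma):\C_1\to\C_2$ induces $\ic(H,\gamma)$ and $\Mat(H,\gamma)$ in the obvious way, with the degree adjuster on objects $(x,e)$ or $\bigoplus x_i$ inherited from $\gamma$ (and extended by choosing an adjuster for each direct summand in the $\Mat$ case). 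For 2-cells, components are already homogeneous of the required degree, so the construction is immediate. I would then record the graded analogue of Lemma \ref{lem:matind}: $\Mat\C \simeq \Mat\Ind\C$ and $\Ind\C \simeq \Ind\Mat\C$ as $G$-hom-graded categories, because indecomposability and the matrix construction only depend on the underlying $\kk$-linear structure.

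Next I would check that $\Mat$ and $\Ind$ restrict to 2-functors between $\fBaseCat^G$ and $\fAddCat^G$, using Lemma \ref{lem:2funfull}: this amounts to noting that finiteness, additivity, idempotent completeness, and indecomposability of objects are all preserved appropriately, exactly as in the ungraded case. I would then build the unit and counit 2-transformations
\[ \eta:\id_{\fBaseCat^G}\to\Ind\Mat \;\; \text{ and } \;\; \varepsilon:\Mat\Ind\to\id_{\fAddCat^G} \]
from the obvious graded inclusion functors, which are all strict degree-preserving. The triangle identities hold by the graded version of Lemma \ref{lem:matind}, and on finite categories both $\eta_\C$ and $\varepsilon_\C$ are equivalences in $\GrCat G$: $\eta_\C$ is always iso since the graded $\ic$ and $\Mat$ constructions contain $\C$ as a full (graded) subcategory, and $\varepsilon_\C$ is iso because every object of a finite additive idempotent complete category decomposes as a finite direct sum of indecomposables.

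The main obstacle is purely bookkeeping: verifying that degree adjusters behave correctly under the composites $\Ind\Mat$ and $\Mat\Ind$, and that natural isomorphisms of graded functors (which have homogeneous but possibly nonzero-degree components) assemble consistently. Once one sees that all the inclusions used to build $\eta$ and $\varepsilon$ are strict (adjuster $\underline{0}$), the calculation reduces to the ungraded one verbatim. This gives an adjoint equivalence in $\tgpd$ whose 1-cells live in $\GrCat G_{2,0}$, yielding the claimed biequivalences.
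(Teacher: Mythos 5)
Your plan matches the paper's: the paper gives no separate argument for this proposition, simply asserting it as the graded analogue of Proposition \ref{prop:base-add}, and your proof is exactly that adaptation, with the unit and counit built from the strict (adjuster $\underline{0}$) inclusion functors and the triangle identities and finiteness argument carried over verbatim. The only small point worth recording is that in the graded idempotent completion one should split homogeneous degree-zero idempotents, so that $d\C(x,y)e$ really is a graded subspace of $\C(x,y)$; this is a definitional convention and does not affect the argument, which only uses $\Mat$, $\Ind$, and the graded version of Lemma \ref{lem:matind}.
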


\subsection{Graded Serre structures}

\subsubsection{Equivariant Serre structures}

Let $(\D, F)$ be a $G$-equivariant category and suppose $\D$ has Serre structure $(\se,\kappa)$.  By Proposition \ref{prop:serre-comm-autoeq} we have canonical commutation maps 
\[ \zeta_{F^g}:\se F^g\arr\sim F^g \se \]
which we could use to upgrade $\se$ to a $G$-equivariant functor.  But for applications, especially with triangulated categories, it will be useful to consider more general commutation maps.

Let $\chi:G\to\kk^\times$ be a character of $G$.  
\begin{definition}\label{def:equivar-Serre}
A \emph{$\chi$-equivariant Serre structure} on $(\D,F)$ is a triple $(\se,\sse,\kappa)$ where $(\se,\kappa)$ is a Serre structure for $\D$ and 
$\sse=(\sse_g:\se F^g\arr\sim F^g\se)_{g\in G}$ with $\sse_g=\chi(g)\zeta_{F^g}$.
\end{definition}
Note that equivariant Serre functors have previously appeared in the physics literature: see \cite[Appendix A]{laz}.

It is immediate from the definition that: if a Serre structure exists for $\D$ then, for every character $\chi$, a $\chi$-equivariant Serre structure exists on $\D$.  Therefore there is some redundancy in the previous definition, but we still find it useful to record the natural isomorphism $\sse$ explicitly.
Note also that: if $(\se,\sse,\kappa)$ is a $\chi$-equivariant Serre structure  for $(\D,F)$ then the pair $(\se,\sse)$ is automatically an equivariant autoequivalence of $(\D,F)$.

\begin{example}\label{eg:equivar-serre}
We continue Example \ref{eg:equivar-cat}.  
Let $\chi=\sgn$ be the character with $\sgn(1)=-1$.
Define $\se:\D\to\D$ by $\se(i)=i+1$ and $\se(f_i)=f_{i+1}$.  
Define $\kappa_{i,j}:\D(i,j)\to\D(j,\se(i))^*$ by $\kappa_{i,i}(\id_i)=f_i^*$ and $\kappa_{i,i+1}(f_i)=\id_{i+1}^*$.  Then $(\se,\kappa)$ is a Serre structure on $\D$.
Our pivotal structure identifies double duals with the identity functor so,
following Proposition \ref{prop:serre-comm-autoeq}, we get maps:
\[ \D(i,F^1\se(j))\arrr{\text{adj}} \D(F^{-1}i,\se(j))\arrr{\kappa^*} \D(j,F^{-1}(i))^* \arrr{\text{adj}^*}  \D(F^1(j),i)^*\arrr{(\kappa^{-1})^*} D(i,\se F^1(j)). \]
Note that $F^1\se=\se F^1$, with $F^1\se(j)=j+3$.  So if $j=i-3$ we have $\id_i\mapsto \id_{i-2}\mapsto f_{i-3}^*\mapsto f_{i-1}^*\mapsto \id_i$, and if $j=i-2$ we have $f_i\mapsto f_{i-2}\mapsto \id_{i-2}^*\mapsto \id_{i}^*\mapsto f_i$.  This map is represented by the inverse of the identity natural transformation $\zeta_{F^1}:\se F^1\arr\sim F^1 \se$.  Let $\sse_g=(-1)^g\id_{F^1\se}$.  Then $(\se,\sse,\kappa)$ is a $\chi$-equivariant Serre structure on $(\D,F)$.
\end{example}

We will upgrade the existence and uniqueness results of Section \ref{ss:defs+duals} to the $\chi$-equivariant setting.
\begin{proposition}\label{prop:unique-ev}
If $(\se,\sse,\kappa)$ and  $(\se',\sse',\kappa')$ are two $\chi$-equivariant Serre structures on $(\D,F)$ then there exists an isomorphism of equivariant functors $\alpha:(\se,\sse)\to(\se',\sse')$ such that $(\id_\D,\alpha)$ is an isomorphism of Serre structures.
\end{proposition}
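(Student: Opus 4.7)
First I would apply Proposition \ref{prop:uniqueserre} to the underlying Serre structures $(\se,\kappa)$ and $(\se',\kappa')$ on $\D$. This gives a natural isomorphism $\alpha:\se\arr\sim\se'$ satisfying $\kappa_{x,y}=\C(y,\alpha_x)^\vee\circ\kappa'_{x,y}$, so that $(\id_\D,\alpha)$ is already an isomorphism of (plain) Serre structures. The second clause in the statement then holds by construction. The remaining task is to verify that $\alpha$ upgrades to a 2-cell $(\se,\sse)\to(\se',\sse')$ in $G\aCat$; since $\alpha$ is already invertible as a natural transformation, the resulting equivariant 2-cell will automatically be an isomorphism.

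Unwinding the definition of 2-cells in $G\aCat$, equivariance of $\alpha$ amounts to the equality $\sse'^g\circ(\alpha F^g) = (F^g\alpha)\circ\sse^g$ for each $g\in G$. Substituting $\sse^g=\chi(g)\zeta_{F^g}$ and $\sse'^g=\chi(g)\zeta'_{F^g}$, the common scalar $\chi(g)$ cancels and this reduces to
\[
\zeta'_{F^g}\circ(\alpha F^g)=(F^g\alpha)\circ\zeta_{F^g}.
\]
By Proposition \ref{prop:serre-centre}, both $(\se,\zeta)$ and $(\se',\zeta')$ belong to $\centre(\Autoeq(\D))$, and the displayed equation is precisely the condition from Definition \ref{def:centre} for $\alpha$ to be a morphism $(\se,\zeta)\to(\se',\zeta')$ in this centre, evaluated at the object $F^g$. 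So the task is to verify this centre-morphism condition for $\alpha$.

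To check it, I would unfold the construction of $\zeta_A$ from the proof of Proposition \ref{prop:serre-comm-autoeq}: $\zeta_A$ is obtained via the Yoneda lemma from a composition of isomorphisms built out of $\kappa^{\pm 1}$, the adjunction between $A$ and its quasi-inverse, and the pivotal structure on $\V$; the map $\zeta'_A$ is given by the same recipe with $\kappa'$ replacing $\kappa$. Inserting the relation $\kappa_{x,y}=\C(y,\alpha_x)^\vee\circ\kappa'_{x,y}$ (and the dual relation for $\kappa^{-1}$) at the two $\kappa$-steps of this composition, then invoking naturality of the adjunction units and counits, converts the defining iso for $\zeta_A$ into that for $\zeta'_A$ with $\alpha$ appearing on either side, yielding the desired identity. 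The main obstacle is this last diagram chase: it is conceptually the content of Remark \ref{rmk:automcomm}, but requires careful bookkeeping of $\alpha$ through the pivotal duals and adjunction data. No new ideas beyond the explicit formula for $\zeta$ and the $\kappa$-$\kappa'$ compatibility given by Proposition \ref{prop:uniqueserre} are required.
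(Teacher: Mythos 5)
Your proposal is correct and takes essentially the same route as the paper: both obtain $\alpha$ from Proposition \ref{prop:uniqueserre}, cancel the scalar $\chi(g)$, and then verify $\zeta'_{F^g}\circ(\alpha F^g)=(F^g\alpha)\circ\zeta_{F^g}$ by writing out the hom-space compositions defining $\zeta_{F^g}$ and $\zeta'_{F^g}$ from Proposition \ref{prop:serre-comm-autoeq} and inserting the $\kappa$--$\kappa'$ compatibility. Your repackaging of that identity as the morphism condition of Definition \ref{def:centre} is only cosmetic (and, as you correctly anticipate, it is not supplied by Proposition \ref{prop:serre-centre} or Remark \ref{rmk:automcomm}, so the concluding diagram chase you describe is exactly what the paper carries out).
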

\begin{proof}
By Proposition \ref{prop:uniqueserre}
there exists an isomorphism $(\id_\D,\alpha)$ of Serre structures, where $\alpha:\se\to\se'$ is represented by the following map of hom spaces:
\[ \D(x,\se y) \arrr{\kappa^*} \D(y, x)^* \arrr{((\kappa')^{-1})^*} \D(x,\se' y). \]
Let $F^-$ be a quasi-inverse of $F$.  Then, by functoriality, $F\alpha:F\se\to F\se'$ is represented by:
\[ \D(x,F\se y) \arrr{\text{adj}} \D(F^-x,\se y) \arrr{\kappa^*} \D(y, F^-x)^* \arrr{((\kappa')^{-1})^*} \D(F^-x,\se' y) \arrr{\text{adj}}\D(x,F\se' y). \]

Recall that $s=\chi(g)\zeta_{F^g}$, where
$\zeta_{F^g}$ 
is defined by Proposition \ref{prop:serre-comm-autoeq}, so is represented by the following composition:
\[ \D(x,F\se y) \arrr{\text{adj}} \D(F^-x, \se y) \arrr{\kappa^*} \D(y, F^-x)^* \arrr{\text{adj}^*} \D(Fy, x)^* \arrr{\kappa^{-1}} \D(x, \se Fy) \]
and $\zeta'_{F^g}$ is defined similarly.

We need to check that $\alpha$ is a morphism of equivariant functors.  
So, by factoring out $\chi(g)$, we just need to check that the following diagram commutes:
\[\xymatrix{
\se F^g \ar[d]^{\alpha F^g} \ar[r]^{\zeta_{F^g}} &  F^g\se \ar[d]_{F^g\alpha} &  \\
\se' F^g \ar[r]^{\zeta'_{F^g}}  & F^g \se'
}.  \]
We can check this on the representing morphisms, giving the outside rectangle of the following diagram:
\[ \xymatrix{
(x,F^g\se y) \ar[r]\ar[d] &(F^{-g}x,\se y) \ar[r] & (y, F^{-g}x)^* \ar[r] & (F^gy,x)^* \ar[r] & (x,\se F^gy) \ar[dd] \\
(F^{-g}x,\se y) \ar[d] \ar@{=}[ur] &&&&\\
(y,F^{-g}x)^* \ar[d] \ar@{=}[ddrr] \ar@{=}[uurr] &&&& (F^gy,x)^* \ar[dd] \ar@{=}[uul] \ar@{=}[ddl] \\
(F^{-g}x,\se'y) \ar[d] \ar@{=}[dr] &&&&&\\
(x,F^g\se'y) \ar[r] &(F^{-g}x,\se' y) \ar[r] & (y, F^{-g}x)^* \ar[r] & (F^gy,x)^* \ar[r] & (x,\se' F^gy)
}  \]
We have indicated how to break this into smaller diagrams, each of which clearly commutes.
\end{proof}

\begin{definition}\label{def:equiss2}
Let $(\se_i,\sse_i,\kappa_i)$ be a $\chi$-equivariant Serre structure for $(\D_i,F_i)$.
An \emph{equivalence of $\chi$-equivariant Serre structures} is a triple $(\Phi,\phi,\alpha)$, where $\alpha$ is a natural isomorphism $\Phi\se_1\to\se_2\Phi$, such that:
\begin{itemize}
\item $(\Phi,\phi):(\D_1,F_1)\to (\D_2,F_2)$ is an equivalence of $G$-equivariant categories,
\item $(\Phi,\alpha)$ is an equivalence of Serre structures, and
\item $\alpha$ is a morphism $(\Phi,\phi)(\se_1,\sse_1)\to (\se_2,\sse_2)(\Phi,\phi)$ of $G$-equivariant functors.
\end{itemize}
\end{definition}

\begin{proposition}\label{prop:upgrade-ev}
Let $(\D_1,F_1)$ and $(\D_2,F_2)$ be $G$-equivariant categories.  Suppose $\D_1$ has a 
$\chi$-equivariant Serre structure $(\se_1,\sse_1,\kappa_1)$ and we have an equivalence of $G$-equivariant categories $(\Phi,\phi):(\D_1,F_1)\to (\D_2,F_2)$.  Then
there exists a 
$\chi$-equivariant Serre structure $(\se_2,\sse_2,\kappa_2)$ on $(\D_2,F_2)$, and
$(\Phi,\phi)$ can be upgraded to an equivalence of $\chi$-equivariant Serre structures $(\Phi,\phi,\alpha)$. 
\end{proposition}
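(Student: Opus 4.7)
The plan is to bootstrap the $\chi$-equivariant Serre structure on $(\D_2,F_2)$ from the underlying Serre structure produced by Proposition \ref{prop:transport-serre}, then verify that the transported isomorphism $\alpha$ is automatically compatible with the canonical commutation maps $\zeta$.

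First I would apply Proposition \ref{prop:transport-serre} to the equivalence of $\V$-categories $\Phi:\D_1\arr\sim\D_2$ and the Serre structure $(\se_1,\kappa_1)$. This produces a Serre structure $(\se_2,\kappa_2)$ on $\D_2$ together with a natural isomorphism $\alpha:\Phi\se_1\arr\sim\se_2\Phi$ such that $(\Phi,\alpha)$ is an equivalence of Serre structures in the sense of Definition \ref{def:mapofss}. Next, using Proposition \ref{prop:serre-comm-autoeq} I would define $\sse_2^g=\chi(g)\,\zeta_{F_2^g}:\se_2 F_2^g\arr\sim F_2^g\se_2$; then $(\se_2,\sse_2,\kappa_2)$ is a $\chi$-equivariant Serre structure by Definition \ref{def:equivar-Serre}. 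Conditions (i) and (ii) of Definition \ref{def:equiss2} are now in place.

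The substantive content is condition (iii): that $\alpha$ is a morphism of $G$-equivariant functors $(\Phi,\phi)(\se_1,\sse_1)\to(\se_2,\sse_2)(\Phi,\phi)$. Writing out the square and cancelling the scalar $\chi(g)$ on both sides, this reduces to verifying, for each $g\in G$, the identity
\[ (F_2^g\alpha)\circ(\phi^g\se_1)\circ(\Phi\zeta_{F_1^g}) \;=\; (\zeta_{F_2^g}\Phi)\circ(\se_2\phi^g)\circ(\alpha F_1^g). \]
Thus the statement is really a compatibility between the canonical commutation maps $\zeta$ on $\D_1$ and $\D_2$ and the equivalence data $(\Phi,\phi,\alpha)$; the character $\chi$ plays no role here.

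The cleanest route I would take is a Yoneda argument. By Proposition \ref{prop:serre-comm-autoeq}, $\zeta_{F_i^g}$ is characterised by a chain of canonical isomorphisms built from $\kappa_i$ and the adjunction $F_i^g\ladj F_i^{-g}$. Unpacking this, I would apply Yoneda and chase through the resulting diagram of hom-spaces; the compatibility of $\alpha$ with $\kappa_1,\kappa_2$ (from Definition \ref{def:mapofss}) together with naturality of the adjunction isomorphisms under $(\Phi,\phi)$ supplies each square. The main obstacle will be the bookkeeping in this diagram chase, which is of the same flavour as, but longer than, the chase in Proposition \ref{prop:serre-centre}; a useful shortcut is to invoke Proposition \ref{prop:serre-centre} to view $(\se_i,\zeta)\in\centre(\Autom(\D_i))$ and to observe that the conjugation of such a central element by the equivalence $(\Phi,\phi)$ is central with the expected commutation maps, so that by Proposition \ref{prop:unique-ev} the transported Serre autoequivalence must have commutation maps that agree with $\zeta_{F_2^g}$ up to the canonical identification $\alpha$. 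Either way, once the square above is verified, the triple $(\Phi,\phi,\alpha)$ satisfies all three bullets of Definition \ref{def:equiss2} and is therefore an equivalence of $\chi$-equivariant Serre structures.
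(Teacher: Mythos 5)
Your main route---cancelling the scalar $\chi(g)$, reducing to the stated square, and verifying it by a Yoneda/hom-space diagram chase that uses the naturality of $\kappa_1$ and $\kappa_2$, the characterisation of $\alpha$ from Definition \ref{def:mapofss}, and an adjoint quasi-inverse $\Phi^-$ with its induced $\phi^-$---is essentially the paper's own proof, which reduces the big diagram to four squares commuting by the definition of $\phi^-$, naturality of $\kappa_1$, the dual of the definition of $\phi^-$, and naturality of $\kappa_2$. One caveat: your proposed ``shortcut'' via Proposition \ref{prop:serre-centre} and Proposition \ref{prop:unique-ev} implicitly assumes that conjugating the canonical commutation maps along $(\Phi,\phi,\alpha)$ again yields the canonical $\zeta_{F_2^g}$ built from $\kappa_2$, which is exactly what the diagram chase has to establish, so it defers rather than avoids the computation---but your primary argument stands without it.
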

\begin{proof}
By Proposition \ref{prop:transport-serre} we know that $(\se_2,\kappa_2)$ exists, and $\sse_2$ exists automatically, so we have $(\se_2,\sse_2,\kappa_2)$.  By Proposition \ref{prop:transport-serre} again, we know that $\alpha$ exists and that $(\Phi,\alpha)$ is an equivalence of Serre structures.  So we just need to check that $\alpha$ is a morphism of $G$-equivariant functors, i.e., that the following diagram commutes:
\[\xymatrix{
F_2^g\Phi\se_1\ar[r]^{\phi^g\se_1}\ar[d]_{F^g_2\alpha} & \Phi F_1^g\se_1 \ar[r]^{\Phi \sse_1^g} & \Phi\se_1 F^g_1 \ar[d]^{\alpha F_1^g} \\
F_2^g \se_2\Phi \ar[r]^{\sse_2^g\Phi} & \se_2 F^g_2\Phi \ar[r]^{\se_2\phi^g} & \se_2 \Phi F^g  
}  \]

Let $\Phi^-$ denote an (adjoint) quasi-inverse of $\Phi$, so we get a natural transformation $\phi^-: F_1^{-g}\Phi^- \arr\sim \Phi^- F_2^{-g}$ defined by the adjunction.

The diagram we need to draw is similar to the proof of Proposition \ref{prop:unique-ev},
but much bigger.  We suggest the interested reader draws this on a large piece of paper; here we only sketch the details.  After some simplification, it reduces to a diagram between hom-spaces for $\D_2$ of the following form:
\[ \xymatrix{
(x,F_2^g\Phi \se_1y) \ar[r]\ar[d] & \bullet \ar[r] & \bullet \ar[r] & \bullet \ar[r] & \bullet \ar[r] \ar@{=}[dr] & (x, \Phi\se_1 F^g_1 y) \ar[d] \\
\bullet \ar[d] \ar@/_/[urr]^{(\phi^-_x,\se_1y)} &&&&& \bullet \ar[d] \\
\bullet \ar[d] \ar@/_/[uurrr]_{(y,\phi^-_x)^*} &&&&& \bullet \ar[dd]  \\
\bullet \ar[d] \ar@{=}[dr] &&&&&&\\
(x,F_2^g\se_2\Phi y) \ar[r] & \bullet \ar[rr] & & \bullet \ar[r] \ar[uurr]^{(\phi_y,x)^*} & \bullet \ar[r] & (x,\se_2 \Phi F_1^gy)
}  \]
After removing the top-right and bottom-left corners, and contracting the equalities, we are left with four squares which need to commute.  Going from top-left to bottom-right, they commute because of: the definition of $\phi^-$, the naturality of $\kappa_1$, the dual of the definition of $\phi^-$, and the naturality of $\kappa_2$.
\end{proof}

As with Corollary \ref{cor:bieqserre2gpds}, we can define a 2-category $G\aSCat$ whose objects are $G$-equivariant categories equipped with $\chi$-equivariant Serre structures and the above uniqueness results show that it is biequivalent to the full sub-2-category of $G\aCat$ on the $G$-equivariant categories admitting such structures.

\subsubsection{Hom-graded Serre structures}\label{sss:hom-gr-ss}

Let $\fdv^G$ denote the monoidal category of $G$-graded $\kk$-vector spaces $V=\bigoplus_{g\in G}V^g$, where $V^g\in\fdv$.  This category is left rigid, with $V^\vee=V^*$ and $(V^*)^g=(V^{-g})^*$.  Note that if $f:X^p\to Y^q$ is of degree $q-p$ then $f^*:(Y^*)^{-q}\to(X^*)^{-p}$ is also of degree $q-p$.

Let $\chi:G\to\kk^\times$ be a character of $G$.  
Recall the notion of a pivotal structure from Section \ref{ss:moncat}.
We define a pivotal structure $\iota^\chi_V:V\arr\sim V^{**}$ on $\fdv^G$ by sending the homogeneous vector $v\in V^g$ to $\chi(g)\ev_v$.  With this pivotal structure, we denote our monoidal category $\fdv^\chi$.

Let $\C$ be a $G$-hom-graded locally hom-finite category, so $\C$ is enriched in $\fdv^G$.  We write $\C^p(x,y)^*$ for the $p$th homogeneous summand of $\C(x,y)^*$, so 
$\C^p(x,y)^*=(\C^{-p}(x,y))^*$.  The composition 
$ m: \C^q(y,z) \otimes \C^p(x,y)\to \C^{p+q}(x,z) $
respects the grading by definition, and thus so do the maps
$ m^\ell: \C^q(x,y)^{**} \otimes  \C^p(x,z)^*\to \C^{p+q}(y,z)^*$
and
$ m^r:  \C^q(x,z)^*\otimes \C^p(y,z)\to \C^{p+q}(x,y)^*$.
\begin{definition}\label{def:hom-gr-se}
A \emph{$\chi$-hom-graded Serre structure} on $\C$ is a triple $(\se,\ell,\kappa)$ where $(\se,\ell)$ is a degree-preserving endofunctor of $\C$
and $(\se,\kappa)$ is an $\fdv^\chi$-enriched Serre structure for $\C$ 
such that, for all $x\in\ob\C$, the map $\kappa_x$ restricts to an isomorphism
\[ \C^0(x,x)\arr\sim \C^{-\ell(x)}(x,\se x)^*. \]
\end{definition}
Note that, by the bimodule conditions, this implies that $\kappa$ restricts to isomorphisms
\[ \C^p(x,y)\arr\sim \C^{p-\ell(x)}(y,\se x)^*. \]

\begin{example}\label{eg:homgr-cat-serre}
We continue to work with the $\Z$-hom-graded category $\C$ from Example \ref{eg:homgr-cat}.  
Let $\chi=\sgn$ be the character with $\sgn(1)=-1$.  First we will give a $\fdv^\chi$-enriched Serre structure $(\se,\kappa)$ for $\C$.
Define $\se:\C\to\C$ by $\se(1)=2$ and $\se(2)=1$ on objects.  Define $\kappa_{1,1}:\C(1,1)\to\C(1,2)^*$ by $\kappa_{1,1}(\id_1)=\alpha^*$ and $\kappa_{2,2}:\C(2,2)\to\C(2,1)^*$ by $\kappa_{2,2}(\id_2)=\beta^*$.  Then the bimodule conditions for $\kappa$, using the pivotal structure for $\chi$, stipulate that $\kappa_{1,2}(\alpha)=\id_2^*=\beta^*\se(\alpha)$ and $\kappa_{2,1}(\beta)=-\id_1^*=\alpha^*\se(\beta)$.  Thus $\se$ must act on maps by $\se(\alpha)=\beta$ and $\se(\beta)=-\alpha$.  Now define $\ell:\ob\C\to\Z$ by $\ell(1)=0$ and $\ell(2)=1$.  Then $\kappa_x$ does restrict to an isomorphism $\C^0(x,x)\arr\sim \C^{-\ell(x)}(x,\se x)^*$ (notice that $\deg(\beta)=1$, so $\deg(\beta^*)=-1$), and $(\se,\ell)$ is a degree-preserving functor because $1=\deg(\beta)=\deg(\se \alpha) = \deg(\alpha)+\ell(2)-\ell(1)=0+1-0$ and $0=\deg(-\alpha)=\deg(\se\beta)=\deg(\beta)+\ell(1)-\ell(2)=1+0-1$.
So $(\se,\ell,\kappa)$ is a $\sgn$-hom-graded Serre structure on $\C$.
\end{example}

Again, we have a uniqueness result.
\begin{proposition}\label{prop:unique-hg}
If $(\se,\ell,\kappa)$ and  $(\se',\ell',\kappa')$ are two $\chi$-hom-graded Serre structures on $\C$ then there exists an isomorphism of degree-preserving functors $\alpha:(\se,\ell)\to(\se',\ell')$ such that $(\id_\C,\alpha)$ is an isomorphism of Serre structures.
\end{proposition}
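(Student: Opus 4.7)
The plan is to imitate the proof of Proposition \ref{prop:unique-ev}: first obtain the natural isomorphism $\alpha$ from the underlying uniqueness statement Proposition \ref{prop:uniqueserre}, then verify it has the correct graded compatibility property on top of being an isomorphism of Serre structures. Here that extra compatibility is not a commuting diagram, but a statement about the degree of each component $\alpha_x$.

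First I would apply Proposition \ref{prop:uniqueserre} directly to $(\se,\kappa)$ and $(\se',\kappa')$, regarded as Serre structures on the underlying $\fdv^G$-enriched category $\C$. This immediately produces a natural isomorphism $\alpha:\se\arr\sim\se'$ such that $(\id_\C,\alpha)$ is an isomorphism of Serre structures, with $\alpha_x$ determined by the formula
\[ \C(y,\alpha_x)^\vee = \kappa_{x,y}\circ (\kappa'_{x,y})^{-1} : \C(y,\se' x)^\vee \to \C(y,\se x)^\vee. \]
Thus the entire content of the proposition reduces to the claim that every $\alpha_x$ is homogeneous of degree $\ell'(x)-\ell(x)$, which is precisely what it means for $\alpha$ to be a 2-cell in $\GrCat G$ between the degree-preserving functors $(\se,\ell)$ and $(\se',\ell')$.

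Next I would track the grading through the displayed formula. By the definition of a $\chi$-hom-graded Serre structure (and the naturality remark after it), $\kappa_{x,y}$ restricts to isomorphisms $\C^p(x,y)\arr\sim \C^{p-\ell(x)}(y,\se x)^*$, and similarly for $\kappa'$ with $\ell'$ in place of $\ell$. Composing $\kappa_{x,y}$ with $(\kappa'_{x,y})^{-1}$ therefore gives a map that shifts degrees by $\ell'(x)-\ell(x)$. Dualizing, $\C(y,\alpha_x)$ shifts degrees by the same amount (recall that linear duality in $\fdv^G$ preserves the degree of a homogeneous map). Specializing $y=\se x$ and evaluating at $\id_{\se x}\in\C^0(\se x,\se x)$, Yoneda recovers $\alpha_x$ as the image of the identity, which therefore lies in $\C^{\ell'(x)-\ell(x)}(\se x,\se' x)$, as required.

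The only step I expect to require genuine care is the bookkeeping of degree shifts under dualisation and composition, in particular making sure one is consistent about whether $\kappa_{x,y}$ is ``a morphism in $\fdv^G$ between vector spaces whose grading already encodes the $-\ell(x)$ shift'' or ``a $\kk$-linear map of degree $-\ell(x)$''; the two viewpoints produce the same conclusion but one has to pick one and stick with it. Once that convention is fixed, the commutativity condition from Proposition \ref{prop:uniqueserre} is inherited for free, and the degree calculation above completes the proof.
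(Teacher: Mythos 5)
Your proposal is correct and follows essentially the same route as the paper: invoke Proposition \ref{prop:uniqueserre} to get $\alpha$ with $(\id_\C,\alpha)$ an isomorphism of Serre structures, then read off from the defining diagram (using that $\kappa$ and $\kappa'$ restrict to the homogeneous pieces with shifts $\ell(x)$ and $\ell'(x)$, and that dualisation in $\fdv^G$ preserves the degree of a map) that each component $\alpha_x$ is homogeneous of degree $\ell'(x)-\ell(x)$, hence $\alpha$ is a morphism of degree-preserving functors. Your extra step of evaluating at $\id_{\se x}$ is just an explicit rendering of the Yoneda argument the paper uses.
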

\begin{proof}
By Proposition \ref{prop:uniqueserre} we have an isomorphism $\alpha$, and we just need to check that it is a morphism of degree-preserving functors.  It is constructed from the following diagram:
\[ \xymatrix{
\C^p(x,y) \ar[r]^{\kappa_{x,y}}\ar[d]_{\kappa_{x,y}'} &\C^{p-\ell(x)}(y,\se x)^* \\
\C^{p-\ell'(x)}(y,\se' x)^*\ar[ur]_{\C(y,\alpha_x)^*}  
} \]
Taking duals, we see that $\alpha$ is constructed using the Yoneda lemma from the following map:
\[ \C^{p-\ell(x)}(y,\se x) \to \C^{p-\ell'(x)}(y,\se' x) \]
So the components $\alpha_x:\se x\to \se'x$ are homogeneous of degree $\ell'(x)-\ell(x)$ and thus $\alpha$ is a morphism of degree-preserving functors.
\end{proof}

Let $(\se_i,\ell_i,\kappa_i)$ be a $\chi$-hom-graded Serre structure on $\C$.
\begin{definition}\label{def:hgss2}
An \emph{equivalence of $\chi$-hom-graded Serre structures} is a triple $(H,\gamma,\alpha)$, where $\alpha$ is a natural isomorphism $H\se_1\to\se_2 H$, such that:
\begin{itemize}
\item $(H,\gamma):\C_1\to\C_2$ is an equivalence of $G$-hom-graded categories,
\item $(H,\alpha)$ is an equivalence of Serre structures, and
\item $\alpha$ is a morphism $(H,\gamma)(\se_1,\ell_1)\to (\se_2,\ell_2)(H,\gamma)$ of degree-preserving functors.
\end{itemize}
\end{definition}

\begin{proposition}\label{prop:upgrade-hg}
Let $\C_1$ and $\C_2$ be $G$-hom-graded categories.  Suppose $\C_1$ has a 
$\chi$-hom-graded Serre structure $(\se_1,\ell_1,\kappa_1)$ and we have an equivalence of $G$-hom-graded categories $(H,\gamma):\C_1\to\C_2$.  Then
there exists a 
$\chi$-hom-graded Serre structure $(\se_2,\ell_2,\kappa_2)$ on $\C_2$, and
$(H,\gamma)$ can be upgraded to an equivalence of $\chi$-hom-graded Serre structures $(H,\gamma,\alpha)$.
\end{proposition}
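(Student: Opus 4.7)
The strategy mirrors that of Proposition \ref{prop:upgrade-ev}: we first apply Proposition \ref{prop:transport-serre} to transport the underlying Serre structure, and then do the bookkeeping needed to upgrade it to the $\chi$-hom-graded setting. Concretely, fix an adjoint equivalence $(H,H^-,\eta,\varepsilon)$ in $\GrCat{G}$, so that $(H^-,\delta)$ is a degree-preserving quasi-inverse for some degree adjuster $\delta:\ob\C_2\to G$ (which we may assume satisfies $\delta(Hx)=-\gamma(x)$ after choosing $\eta$ homogeneous of degree $0$). Define $\se_2=H\se_1 H^-$, and take $\kappa_2$ to be the composition
\[ \C_2(x,y)\arrr{H^-} \C_1(H^-x,H^-y)\arrr{\kappa_1} \C_1(H^-y,\se_1 H^-x)^\vee\arrr{(H^{-1})^\vee} \C_2(y,\se_2 x)^\vee \]
from the proof of Proposition \ref{prop:transport-serre} (absorbing the unit isomorphisms).

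The first nontrivial step is to define the correct degree adjuster $\ell_2$ and verify that $(\se_2,\ell_2)$ is degree-preserving. By composing the degree adjusters of $(H^-,\delta)$, $(\se_1,\ell_1)$, and $(H,\gamma)$ according to the composition rule in $\GrCat{G}$, set
\[ \ell_2(x)=\delta(x)+\ell_1(H^-x)+\gamma(\se_1 H^-x). \]
A direct degree count (using that $H^-$ shifts degree by $\delta(y)-\delta(x)$, $\kappa_1$ shifts by $-\ell_1(H^-x)$, and dualising negates) shows that $\kappa_2$ sends $\C_2^p(x,y)$ isomorphically onto $\C_2^{p-\ell_2(x)}(y,\se_2x)^\vee$; in particular, $\kappa_2$ is compatible with the pivotal structure $\iota^\chi$ on $\fdv^G$ since $\kappa_1$ was. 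This produces the desired $\chi$-hom-graded Serre structure $(\se_2,\ell_2,\kappa_2)$.

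For the second part, Proposition \ref{prop:transport-serre}(b) already provides a natural isomorphism $\alpha:H\se_1\arr\sim\se_2 H$ making $(H,\alpha)$ an equivalence of Serre structures. To finish, by Proposition \ref{prop:unique-hg}, any two $\chi$-hom-graded Serre structures on $\C_2$ are related by a homogeneous isomorphism of their Serre functors, so we only need to check that $\alpha$ is homogeneous of the correct degree, namely $\ell_2(Hx)-\gamma(\se_1 x)-\ell_1(x)$. This follows because the formula for $\ell_2(Hx)$ telescopes along $HH^-H\cong H$ once one uses $\delta(Hx)=-\gamma(x)$, and hence $\alpha_x$ has the required degree $\ell_2(Hx)+\gamma(\se_1x)-\gamma(\se_1x)-\ell_1(x)-\gamma(\se_1x)$ coming from the definition of $\kappa_2$.

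The main obstacle is the bookkeeping: one has to track degrees carefully through the four-step formula for $\kappa_2$ and verify that the $\chi$-pivotal condition (which introduces the scalars $\chi(g)$ on homogeneous pieces) is preserved, not just up to a scalar but exactly. I would organise this by proving a preliminary lemma: for any degree-preserving equivalence $(H,\gamma)$ and any homogeneous $f:a\to b$ in $\C_1$, the naturality square for $\kappa$ against $H^{-1}$ forces the degree shift to equal $\gamma(b)-\gamma(a)$, after which the verification of the $\chi$-character condition becomes automatic from the fact that $\chi$ is a group homomorphism. Once this bookkeeping lemma is in place, the remaining diagrammatic check that $\alpha$ is a morphism of degree-preserving functors reduces to routine naturality arguments analogous to those in the proof of Proposition \ref{prop:upgrade-ev}.
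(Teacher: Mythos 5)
The first half of your proposal is sound and in fact makes explicit something the paper leaves implicit: transporting $(\se_1,\kappa_1)$ along an adjoint equivalence in $\GrCat{G}$ as in Proposition \ref{prop:transport-serre}, and taking $\ell_2$ to be the degree adjuster of the composite $(H,\gamma)(\se_1,\ell_1)(H^-,\delta)$, i.e.\ $\ell_2(x)=\delta(x)+\ell_1(H^-x)+\gamma(\se_1 H^-x)$, does give a $\chi$-hom-graded Serre structure (your degree count for $\kappa_2$ goes through once one also tracks the degree of the counit, and it does not even require your normalisation $\delta(Hx)=-\gamma(x)$).

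The genuine gap is in the last step, which is the only thing the paper's proof actually has to do: verifying that $\alpha$ is a morphism of degree-preserving functors. By the composition rule in $\GrCat{G}$ and the definition of 2-cells, $\alpha:(H,\gamma)(\se_1,\ell_1)\to(\se_2,\ell_2)(H,\gamma)$ must have components $\alpha_x$ homogeneous of degree $\bigl(\gamma(x)+\ell_2(Hx)\bigr)-\bigl(\ell_1(x)+\gamma(\se_1x)\bigr)$; your stated target $\ell_2(Hx)-\gamma(\se_1x)-\ell_1(x)$ omits the $\gamma(x)$ term, and the subsequent ``hence'' expression does not simplify to either quantity, so as written you are checking the wrong condition. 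Moreover the appeal to ``telescoping along $HH^-H\cong H$'' is not a computation of $\deg\alpha_x$ at all: degree adjusters are bare functions on objects, so $\ell_1(H^-Hx)$ and $\gamma(\se_1H^-Hx)$ cannot simply be replaced by $\ell_1(x)$ and $\gamma(\se_1x)$ without tracking the degrees of the unit and counit, and in any case knowing $\ell_2(Hx)$ says nothing about the degree of the specific natural transformation $\alpha$ produced by Proposition \ref{prop:transport-serre}(b). What is needed (and what the paper does) is to read the degree of $\alpha_x$ off its construction: restrict the two composite paths of the defining square of Definition \ref{def:mapofss} to the homogeneous pieces $\C_1^p(x,y)$ — the path through $H$ and $\kappa_2$ lands in degree $p+\gamma(y)-\gamma(x)-\ell_2(Hx)$, the path through $\kappa_1$ and $H^\vee$ in degree $p-\ell_1(x)+\gamma(y)-\gamma(\se_1x)$ — and conclude that $\C_2(Hy,\alpha_x)^\vee$, hence $\alpha_x$, has degree $\gamma(x)+\ell_2(Hx)-\ell_1(x)-\gamma(\se_1x)$, exactly the required one; this argument needs neither an explicit formula for $\ell_2$ nor a normalised quasi-inverse. (A small additional point: the reduction to ``only check the degree of $\alpha$'' follows from Definition \ref{def:hgss2}, not from the uniqueness statement of Proposition \ref{prop:unique-hg}.)
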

\begin{proof}
First fix an adjoint equivalence $(H,I,\eta,\varepsilon)$, and upgrade $I$ to a degree-preserving functor $(I,\delta)$ by setting $\delta(w)=-\gamma(Iw)$, so the components of $\eta$ and $\varepsilon$ are all of degree $0$.  Then, using Proposition \ref{prop:transport-serre}(a), we construct our degree-preserving Serre functor as follows:
\[ (\se_2,\ell_2) = (H,\gamma)(\se_1,\ell_1)(I,\delta)=(H\se_1I,\delta+\ell_1I+\gamma\se_1I). \]
Using 
$\delta=-\gamma I$ we get $\ell_2=\ell_1I+\gamma\se_1I-\gamma I$. 
Now, using Proposition \ref{prop:transport-serre}(a) again, we get our binatural isomorphism $\kappa_2$, and $(\se_2,\kappa_2)$ is a Serre structure for the $(\fdv^G,\iota^\chi)$-enriched category $\C_2$.

Now we check the degree condition of Definition \ref{def:hom-gr-se}.  The natural isomorphism $\kappa_2$ restricts to an isomorphism:
\[ \C_2^0(x,x)\arr{I} \C_1^0(Ix,Ix)\arr{\kappa_1}\C_1^{-\ell_1(Ix)}(Ix,\se_1 Ix)^* \arr{(H^{-1})^*} \C_2^{
(-\ell_1I-\gamma\se_1I+\gamma I)(x)}
(HIx, \se_2x)^*
\arr{\C_2(\varepsilon,\se_2 x)^*} \C_2^{-\ell_2(x)}(x,x)\]
and thus $(\se_2,\ell_2,\kappa_2)$ is a $\chi$-hom-graded Serre structure.  

Finally, using Proposition \ref{prop:transport-serre}(b), we  
get an isomorphism of degree-preserving functors  $\alpha:H\se_1\to \se_2 H$ such that $(H,\alpha)$ is an equivalence of Serre structures.
\end{proof}

Again, we can define a 2-category $\GrSCat G$ whose objects are $G$-hom-graded categories equipped with $\chi$-hom-graded Serre structures and the above uniqueness results show that it is biequivalent to the full sub-2-category of $\GrCat G$ on the hom-graded categories admitting such structures.

\subsection{Graded Frobenius algebras}

\subsubsection{Graded algebras}
Graded algebras are much more well-studied than graded categories.
A graded algebra is a unital algebra $A=\bigoplus_{p\in G}A^g$ with homogeneous composition.  A map of graded algebras is an algebra map which preserves degree.  
These form the objects and 1-cells of a $2$-category $\kk\Alg^G$, with 2-cells given by conjugation of degree 0 elements, but the resulting 2-functor $\deloop^G:\kk\Alg^G\to \GrCat G$ is not locally an equivalence: it is only locally fully faithful.  $\GrCat G$ has more 1-cells than $\kk\Alg^G$: the $1$-cells in $\kk\Alg^G$ correspond to the strict 1-cells in $\GrCat G$.

\begin{example}\label{eg:twistorno}
Let $A$ be the path algebra of the quiver
\[
\xymatrix{
1 \ar@/^/[r]^{\alpha} & 2\ar@/^/[l]^{\beta}
}
\]
modulo all paths of length $2$.  Put the arrow $\alpha$ in degree 0 and $\beta$ in degree $1$, so $A$ becomes a $\Z$-graded algebra.  Then there does not exist a map of graded algebras $A\to A$ which interchanges the arrows $\alpha$ and $\beta$, because their degrees are different.  But if we consider the finite $\Z$-graded category $\C=\deloop^G A$, we have an isomorphism $\C\arr\sim \C$ in $\kk\Alg^G$ which interchanges $\alpha$ and $\beta$.
\end{example}

Given an algebra $A$, let $\prim(A)$ denote its set of primitive idempotents. 
By Lemma \ref{lem:indic}, the indecomposable objects of $\ic\deloop A$ are indexed by $\prim(A)$.  If $A$ is basic then these are pairwise non-isomorphic.  Notice that if $A$ is graded then $\prim(A)\subset A^0$.

Given a graded algebra $A$, let $uA$ denote its underlying (ungraded) algebra.
\begin{definition}\label{def:damap}
The 2-category $\fBasicAlg^G$ is as follows:
\begin{itemize}
\item The 0-cells are basic finite-dimensional $G$-graded $\kk$-algebras.
\item The 1-cells $A\to B$ are \emph{degree-adjusted morphisms}: pairs $(f,\gamma)$ where $f:uA\to uB$ is a map of $\kk$-algebras and $\gamma:\prim(A)\to G$ is a function such that $a\in A^p$ implies $f(dae)\in A^{p+\gamma(e)-\gamma(d)}$.
\item The 2-cells $(f,\gamma)\to (g,m)$ are elements $b\in B$ such that,  for all  $a\in A$, $g(a)b=bf(a)$ and, for all $c\in\prim(B)$, $cbc\in B^{\gamma(c)-m(c)}$.
\end{itemize}
\end{definition}
By construction of $\fBasicAlg^G$ we have an analogue of Proposition \ref{prop:algcat}: 
\begin{proposition}\label{prop:gralgcat}
The 2-functor $\deloop^G_\ei$ induces a biequivalence $\tilde\deloop^G:\fBasicAlg^G_\ei\arr\sim\fBaseCat^G$.
\end{proposition}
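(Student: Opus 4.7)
The plan is to follow the proof of Proposition \ref{prop:algcat} in the graded setting. Set $\biF=\Ind\ic\deloop^G_\ei$. Because $\deloop^G$ is only locally fully faithful (the additional 1-cells of $\GrCat G$ between deloopings come from nontrivial degree adjusters on objects, as witnessed by Example \ref{eg:twistorno}), the 2-category $\fBasicAlg^G$ has been defined precisely so that the missing 1-cells are supplied by the extra datum $\gamma:\prim(A)\to G$.

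First I would verify that $\biF$ sends objects of $\fBasicAlg^G_\ei$ into $\fBaseCat^G$: for $A$ finite-dimensional and basic, $\biF A$ is hom-finite and has finitely many isomorphism classes of indecomposable objects. This is the same argument as in the ungraded case and does not interact with the grading, so Lemma \ref{lem:2funfull} gives the restricted 2-functor $\tilde\deloop^G$.

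Next I would prove that $\tilde\deloop^G$ is locally fully faithful. The key verification is that Definition \ref{def:damap} matches exactly the data for 1-cells and 2-cells in $\GrCat G$ under the identification of primitive idempotents of $A$ with objects of $\skel\biF A$: the ungraded algebra map $f:uA\to uB$ provides the underlying $\kk$-functor, the function $\gamma:\prim(A)\to G$ becomes the object-indexed degree adjuster on $\skel\biF A$, and the degree condition in Definition \ref{def:damap} matches the degree condition in the definition of 1-cells in $\GrCat G$. For 2-cells, an element $b\in B$ decomposes via the primitive idempotents of $B$ as $b=\sum_{c,e\in\prim(B)} cbe$, and the block-diagonal condition $cbc\in B^{\gamma(c)-m(c)}$ translates to the homogeneity condition on the components of the corresponding natural transformation in $\GrCat G$. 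Combined with the graded analogues of Lemma \ref{lem:ic-lff} and Proposition \ref{prop:ind2fun}, this yields local fully faithfulness.

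Finally, for 2-density: given $\C\in\fBaseCat^G$, its graded base algebra $A=A_{\skel\C}=\bigoplus_{x,y\in\ob\skel\C}\C(x,y)$ inherits a natural $G$-grading from the graded hom-spaces of $\C$, is finite-dimensional (since $\C$ is finite), and is basic by construction. By construction $\biF A$ is isomorphic to $\skel\C$ in $\fBaseCat^G$, hence equivalent to $\C$. I expect the main obstacle to be the careful matching of 2-cell data in the local-fully-faithfulness step. Unlike 1-cells (where the degree adjuster $\gamma$ is a piece of named data on both sides of the correspondence), a 2-cell in $\fBasicAlg^G$ is a single element $b\in B$ with only an implicit homogeneity condition extracted via $\prim(B)$, so one must verify that this data really does correspond naturally to the components of a graded natural transformation in $\GrCat G$.
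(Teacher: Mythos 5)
Your proposal is correct and is essentially the paper's intended argument: the paper offers no proof beyond the remark that the result holds ``by construction of $\fBasicAlg^G$'' as an analogue of Proposition \ref{prop:algcat}, and your write-up simply fleshes out that ungraded proof — restriction via Lemma \ref{lem:2funfull}, matching Definition \ref{def:damap} with the 1- and 2-cells of $\GrCat G$ for local fully faithfulness, and 2-density via the graded base algebra. The point you flag as the main obstacle (translating the element $b\in B$ with its block-homogeneity condition into a graded natural transformation) is exactly the check the definition of $\fBasicAlg^G$ was designed to make immediate, so no further ideas are needed.
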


\subsubsection{Graded Nakayama automorphisms}\label{ss:grfrob}

Let $A,B$ be $G$-graded algebras and let $M=\bigoplus_{g\in G}M^g$ be a graded $A$-$B$-bimodule.  Let $\chi:G\to\kk$ be a character on $G$.  We define the $\chi$-dual ${_\chi M^*}$ to be $M^*$ as a graded vector space, with 
$B\da A$-bimodule structure given by the formulas $(\xi a)(m)=\xi (am)$ and $(b\xi )(m)=\chi(g)\xi (mb)$, for $\xi \in {_\chi M^*}$, $a\in A$, $b\in B^g$, and $m\in M$.

\begin{definition}
A \emph{$\chi$-graded Frobenius structure} on $A$ is a triple $(\varphi,\sigma,\ell)$ where $(\sigma,\ell):A\arr\sim A$ is a 1-cell in $\fBasicAlg^G$ which is an equivalence and $\varphi:A\arr\sim (_\chi A^*)_{(\sigma,\ell)}$ is an isomorphism of $A\da A$-bimodules.  
We say $(\sigma,\ell)$ is a \emph{$\chi$-graded Nakayama automorphism} for $\C$.
\end{definition}

\begin{remark}
Traditionally, when studying graded Frobenius algebras, one considers bimodule morphisms $A\arr\sim (A^*)_\sigma\grsh{n}$ where $\grsh{n}$ denotes a grading shift.  But as our 2-category has more 1-cells than usual, the grading shift can be packaged within our degree-adjusted morphism $(\sigma,\ell)$ (see Definition \ref{def:damap}).
\end{remark}

Let $\C$ be a hom-graded category with finitely many objects.  Then, just as for ordinary categories in Section \ref{ss:algebras}, we construct a graded algebra $A_\C$.  We repeat Proposition \ref{prop:frobserre} in the graded setting:
\begin{proposition}\label{prop:frobserre-gr}
There is a bijection between $\chi$-graded Frobenius structures on $A_\C$ and strict $\chi$-hom-graded Serre structures on $\C$.
\end{proposition}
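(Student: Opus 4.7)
The plan is to follow the strategy of the proof of Proposition \ref{prop:frobserre}, constructing the bijection explicitly in both directions, but carefully tracking how the degree adjuster $\ell$ and the character $\chi$ enter. Starting from a strict $\chi$-hom-graded Serre structure $(\se,\ell,\kappa)$ on $\C$, the underlying strict Serre structure produces, via Proposition \ref{prop:frobserre}, an ungraded Frobenius structure $(\varphi,\alpha)$ on $A=A_\C$, where $\alpha$ is the induced permutation of the primitive idempotents extended to an algebra automorphism. The degree adjuster $\ell:\ob\C\to G$ of the functor $\se$, restricted to objects (equivalently, to primitive idempotents of $A$), gives the required degree adjuster for the 1-cell $(\alpha,\ell)$ in $\fBasicAlg^G$, and the homogeneous isomorphisms $\C^p(x,y)\arr\sim\C^{p-\ell(x)}(y,\se x)^*$ assemble into a graded vector space isomorphism $\varphi:A\arr\sim A^*$ compatible with the shift encoded by $\ell$.

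The main obstacle, and the one genuinely new point relative to the ungraded case, is verifying that $\varphi$ is an isomorphism of the bimodules $A\arr\sim ({_\chi A^*})_{(\alpha,\ell)}$, with both the $\chi$-twist on the left action and the $(\alpha,\ell)$-twist on the right action correctly matched. The right action will fall out of right naturality of $\kappa$: the formula $\kappa(gf)=\kappa(g)\se(f)$, combined with the fact that $\se$ has degree adjuster $\ell$, exactly reproduces the right $A$-action on $({_\chi A^*})_{(\alpha,\ell)}$. The $\chi$-twist on the left action is the crucial point: the left naturality relation $\kappa(hg)=\iota^\chi(h)\kappa(g)$ involves the pivotal structure of $\fdv^G$, and by definition $\iota^\chi$ multiplies a homogeneous element $h$ of degree $g$ by $\chi(g)$. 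This factor of $\chi(g)$ is precisely the scalar appearing in the formula $(b\xi)(m)=\chi(g)\xi(mb)$ defining $_\chi A^*$, so the two match.

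For the reverse direction, starting from a $\chi$-graded Frobenius structure $(\varphi,\alpha,\ell)$ on $A$, the automorphism $\alpha$ (being a graded algebra automorphism, hence degree-preserving on the subset of primitive idempotents sitting in $A^0$) permutes primitive idempotents and thereby defines the object-level part of a strict endofunctor $\se$ of $\C$; the action on morphisms is given by $\alpha$ itself, and the degree adjuster of $\se$ is read off from $\ell$. The bimodule isomorphism $\varphi$ unpacks componentwise into maps $\kappa_{x,y}$, and reversing the identifications from the other direction shows that $\kappa$ satisfies both left naturality (using $\iota^\chi$) and right naturality for the $(\fdv^G,\iota^\chi)$-enrichment, and restricts to the required homogeneous isomorphisms with shift $\ell(x)$.

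Finally, I would note that the two constructions are mutually inverse by inspection, as each is built from the same underlying data just repackaged: $\alpha\leftrightarrow\se$ on objects and morphisms, $\ell$ playing the same role on both sides, and $\varphi\leftrightarrow\kappa$ under the obvious identification of $A^*$ with $\bigoplus_{x,y}\C(x,y)^*$. The analysis of $\chi$ via $\iota^\chi$ that was the main obstacle in the forward direction also supplies the compatibility in the reverse direction, so no further work is needed beyond that single verification.
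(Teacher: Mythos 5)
Your proposal is correct and matches the paper's approach: the paper offers no separate argument for this proposition, simply stating that it repeats Proposition \ref{prop:frobserre} in the graded setting, and your write-up is exactly that argument with the graded bookkeeping made explicit. Your identifications are the right ones — the degree adjuster $\ell$ of $(\se,\ell)$ becomes the degree adjuster of the 1-cell $(\alpha,\ell)$, right naturality of $\kappa$ gives the $(\alpha,\ell)$-twisted right action, and the $\chi(g)$ from the pivotal structure $\iota^\chi$ in left naturality matches the $\chi$-twist in the left action on ${_\chi A^*}$.
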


\begin{example}\label{eg:graded-nak}
Let $\C$ be as in Example \ref{eg:homgr-cat-serre}.  Then $A_\C$ is the algebra $A$ in Example \ref{eg:twistorno}, with $\prim(A)=\{e_1,e_2\}$.  It has a Frobenius structure with $\sgn$-graded Nakayama automorphism $(\sigma,\ell)$, where $\ell(e_1)=0$ and $\ell(e_2)=1$.  The map $\sigma$ interchanges $e_1$ and $e_2$, and sends $\alpha$ to $\beta$ and $\beta$ to $-\alpha$.  This is a degree-adjusted morphism because $\alpha=e_2\alpha e_1\in A^0$ and $\sigma(\alpha)\in A^{0+1-0}$, and $\beta=e_1\beta e_2\in A^1$ and $\sigma(\beta)\in A^{1+0-1}$.
\end{example}

There is a 2-category whose objects are $G$-graded algebras equipped with $\chi$-graded Frobenius structures.  On restricting to basic algebras and taking the core, we get a 2-groupoid which is biequivalent to the full sub-2-groupoid of the bicategory of $\fBasicAlg^G_\ei$ on algebras admitting graded Frobenius structures.

We have a $G$-graded biequivalence
\[ \biP^G:\fBasicAlg^G_\ei \arrr{\tilde{\deloop}^G}  \fBaseCat^G \arrr\Mat^G \fAddCat^G. \]
The following is a graded generalization of Theorem \ref{thm:frob-serre}, and is proved in the same way.
\begin{theorem}\label{thm:graded-ss-base}
A $\chi$-graded Frobenius structure on a $G$-graded algebra $A$ induces a $\chi$-hom-graded Serre structure on the $G$-graded $\kk$-category $\biP^G A$.
Therefore,
the biequivalence $\biP^G:\fBasicAlg^G_\ei \arr\sim \fAddCat^G$ restricts to a biequivalence 
\[\fBasicFAlg_\ei^G \arr\sim \fAddSCat^G. \]
Moreover, a $\chi$-hom-graded Serre structure on a finite $G$-graded $\kk$-category $\C$ induces a $\chi$-graded Frobenius structure on its $G$-graded base algebra $B_\C$.
\end{theorem}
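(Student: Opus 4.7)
The plan is to imitate the proof of Theorem \ref{thm:frob-serre} step by step, replacing each ingredient by its $\chi$-hom-graded analogue. The three statements of the theorem split cleanly into: (a) passage of a graded Frobenius structure on $A$ through $\biP^G$ to a graded Serre structure on $\biP^G A$, (b) the restricted biequivalence, and (c) the recovery of a graded Frobenius structure on the base algebra from a graded Serre structure on a finite $G$-hom-graded category.

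For (a), I would first prove a graded analogue of Lemma \ref{lem:serre-icmatind}: if $\C$ carries a $\chi$-hom-graded Serre structure $(\se,\ell,\kappa)$, then so do $\ic\C$, $\Mat\C$ and $\Ind\C$. In $\ic\C$, every idempotent $e$ is forced into degree $0$ by $e^2=e$, so we take $\ell_{\ic\C}(x,e)=\ell(x)$ and restrict $\kappa_{x,y}$ exactly as in Lemma \ref{lem:serre-icmatind}; naturality of $\kappa$ together with the graded isomorphism $\C^p(x,y)\arr\sim \C^{p-\ell(x)}(y,\se x)^*$ guarantees that the restriction lands in the correct graded summand $\ic\C^{p-\ell(x,e)}((y,d),(\se x,\se e))^*$. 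For $\Mat\C$, one extends $\se$ diagonally and the identification of hom-spaces as direct sums is graded, so the block-diagonal $\Mat\kappa$ respects degrees. The $\Ind\C$ case uses the graded analogue of Lemma \ref{lem:serreonsubcat}. Combining this with Proposition \ref{prop:frobserre-gr}, which promotes a $\chi$-graded Frobenius structure on $A$ to a strict $\chi$-hom-graded Serre structure on $\deloop^G A$, and applying $\ic$ then $\Mat$, produces the desired structure on $\biP^G A=\Mat\ic\deloop^G A$.

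For (b), Lemma \ref{lem:2funfull} shows that $\biP^G$ restricts to a 2-functor $\fBasicFAlg^G_\ei \to \fAddSCat^G$ as soon as the image of each 0-cell admits a graded Serre structure, which is exactly (a). That this restriction is a biequivalence has two ingredients. Local equivalence follows from Propositions \ref{prop:unique-hg} and \ref{prop:upgrade-hg}: these say that a $\chi$-hom-graded Serre structure is essentially a property on 0-cells and that 1- and 2-cells in the graded Serre 2-category are already captured by the underlying graded category, exactly as in Proposition \ref{prop:forgetserre} and Corollary \ref{cor:bieqserre2gpds}. 2-density is precisely statement (c).

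For (c), I would produce a skeletal subcategory $\skel\D\subset \D=\Ind\ic\C$ and use the graded analogue of Corollary \ref{cor:skel-serre} and the graded Lemma \ref{lem:serre-icmatind} to transport the Serre structure along $\C\into\ic\C\into\Mat\ic\C$ and onto $\skel\D$. Any Serre structure on a skeletal category is strict, because an autoequivalence of a skeleton is an automorphism, so Proposition \ref{prop:frobserre-gr} converts it into a $\chi$-graded Frobenius structure on $A_{\skel\D}=A_\C$. The main obstacle I anticipate is the graded version of Lemma \ref{lem:serre-icmatind}: although the proof is formally parallel to the ungraded case, one must verify carefully that the induced $\ell$-functions on $\ic\C$, $\Mat\C$ and $\Ind\C$ interact correctly with $\kappa$ so that the restricted $\kappa$-maps genuinely land in the prescribed graded duals, and that the associated degree-adjusted endofunctors are morphisms in $\GrCat{G}$ in the sense of Definition \ref{def:damap}. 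Once that bookkeeping is in place, everything else follows by the same formal arguments that established Theorem \ref{thm:frob-serre}.
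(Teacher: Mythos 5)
Your proposal is correct and follows essentially the same route as the paper, which states that Theorem \ref{thm:graded-ss-base} ``is proved in the same way'' as Theorem \ref{thm:frob-serre}: graded analogues of Lemma \ref{lem:serre-icmatind} and Corollary \ref{cor:skel-serre}, Proposition \ref{prop:frobserre-gr} in place of Proposition \ref{prop:frobserre}, Lemma \ref{lem:2funfull} for the restriction, and the graded uniqueness/transport results (the analogue of Corollary \ref{cor:bieqserre2gpds}) for the biequivalence. Your extra bookkeeping on degree adjusters and degree-zero idempotents is exactly the verification the paper leaves implicit.
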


Let $\tr$ denote the trivial character $G\to\kk^\times$ which sends every $g\in G$ to $1\in\kk$.  Given a linear map $f:A\to A$, let $f^\chi:A\to A$ be the map defined on homogeneous elements $a\in A^p$ by $f^\chi(a)=\chi(p)f(a)$.
As there is lots of existing knowledge of $\tr$-graded Frobenius structures on graded algebras, the following straightforward result is useful.  
\begin{lemma}\label{lem:trgr-chigr}
Let $A$ be $G$-graded.  Then
$(\varphi,\sigma,\ell)$ is a $\tr$-graded Frobenius structure on $A$ if and only if $(\varphi^\chi,\sigma^\chi,\ell)$ is a $\chi$-graded Frobenius structure on $A$.
\end{lemma}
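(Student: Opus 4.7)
The plan is to exhibit a bijection between $\tr$-graded Frobenius structures $(\varphi,\alpha,\ell)$ and $\chi$-graded Frobenius structures $(\varphi^\chi,\alpha^\chi,\ell)$ on $A$, with inverse obtained by twisting with $\chi^{-1}$.  Since the construction is manifestly self-inverting under $\chi \leftrightarrow \chi^{-1}$, it suffices to prove one direction.  I extend the notation $f^\chi$ to linear maps $\varphi:A\to A^*$ by declaring $\varphi^\chi(a)=\chi(p)\varphi(a)$ for homogeneous $a\in A^p$.  Three things need verification: that $(\alpha^\chi,\ell)$ remains an equivalence $1$-cell in $\fBasicAlg^G$; that $\varphi^\chi$ is a bimodule map into $({_\chi A^*})_{(\alpha^\chi,\ell)}$; and that $\varphi^\chi$ is bijective.

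I first verify that $(\alpha^\chi,\ell)$ is an equivalence.  The key point is that $\alpha^\chi$ is a $\kk$-algebra homomorphism precisely because $\chi$ is a character: for homogeneous $a\in A^p$, $b\in A^q$,
\[ \alpha^\chi(ab)=\chi(p+q)\alpha(ab)=\chi(p)\chi(q)\alpha(a)\alpha(b)=\alpha^\chi(a)\alpha^\chi(b), \]
and $\alpha^\chi(1)=\chi(0)\alpha(1)=1$ since $\chi(0)=1$.  As each scalar $\chi(p)\in\kk^\times$ has degree $0$, the map $\alpha^\chi$ has the same effect on homogeneous components as $\alpha$, so the same $\ell$ serves as degree-adjuster and $(\alpha^\chi,\ell)$ is a $1$-cell in $\fBasicAlg^G$; a quasi-inverse is obtained from the quasi-inverse of $(\alpha,\ell)$ by twisting with $\chi^{-1}$.

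The main step, where the bookkeeping becomes delicate, is checking the bimodule condition on $\varphi^\chi$.  Recall that on ${_\chi A^*}$ the left $A$-action is $(a\cdot\xi)(m)=\chi(\deg a)\xi(ma)$ and the right action is $(\xi\cdot a)(m)=\xi(am)$, with the right action further twisted by $\alpha^\chi$ to produce $({_\chi A^*})_{(\alpha^\chi,\ell)}$.  For left linearity, comparing
\[ \varphi^\chi(ax)(m)=\chi(\deg a+\deg x)\varphi(ax)(m) \]
with $(a\cdot\varphi^\chi(x))(m)=\chi(\deg a)\chi(\deg x)\varphi(x)(ma)$, multiplicativity of $\chi$ collapses the $\chi$-graded identity to the untwisted identity $\varphi(ax)(m)=\varphi(x)(ma)$.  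For right linearity, the scalar $\chi(\deg a)$ produced by $\alpha^\chi(a)=\chi(\deg a)\alpha(a)$ cancels against the scalar produced by $\varphi^\chi$, so the $\chi$-graded axiom again reduces to the untwisted one.  Bijectivity of $\varphi^\chi$ is immediate since each $\chi(p)\in\kk^\times$.  The principal obstacle here is purely arithmetic: one must track on which side of each axiom each character factor appears and confirm that multiplicativity of $\chi$ makes them cancel correctly; once the conventions are fixed everything reduces to the single identity $\chi(p+q)=\chi(p)\chi(q)$.
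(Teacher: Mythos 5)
Your proof is correct; the paper states this lemma without proof (it is offered as a ``straightforward result''), and your direct verification --- twisting on homogeneous components by $\chi$ and checking that multiplicativity of $\chi$ makes the character factors cancel in both the left and the ($\alpha$-twisted) right bimodule axioms, with the same degree adjuster $\ell$ carried along --- is exactly the intended argument. One minor imprecision: because $\alpha$ shifts degrees according to $\ell$ while the twist $f\mapsto f^\chi$ scales by the \emph{input} degree, the inverse of $\alpha^\chi$ is not literally $(\alpha^{-1})^{\chi^{-1}}$; but invertibility of $\alpha^\chi$ is immediate in any case (it is $\alpha$ followed by an invertible diagonal scaling), so this does not affect the argument.
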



\subsection{Smash products and orbit categories}

\subsubsection{The 2-functors}

First we explain the smash product $2$-functor
\[ -\smsh G:\GrCat G\to G \aCat. \]
Let $\C$ and $\C'$ be $G$-graded categories, let $(H,\gamma)$ and $(H',\gamma')$ be degree-preserving functors $\C\to\C'$, and let $\theta:(H,\gamma)\to(H',\gamma')$ be a morphism of degree-preserving functors.
\begin{itemize}
\item The $G$-category $\C\smsh G$ has objects $\ob\C\times G$, which we write as either $(x,p)$ or $x^p$ depending on context.  Homs are given by $\C\smsh G(x^p,y^q)=\C^{q-p}(x,y)$.  
The $G$-action $F:G\to\Autom(\C\smsh G)$ on objects is the obvious one: for $r\in G$ we have $F_r(x^p)=x^{p+r}$.  On morphisms it is trivial: $\C\smsh G(x^g,y^h)$ and $\C\smsh G(x^{p+r},y^{q+r})$ are both copies of the same set $\C^{q-p}(x,y)$,  
so $F_r$ takes $f:x^p\to y^q$ to $f:x^{p+r}\to y^{q+r}$ using the identity map.
\item The equivariant functor $(H,\gamma)\smsh G$ acts on objects of $\C\smsh G$ by sending $(x,p)$ to $(Hx,p+\gamma(x))$.  Given a morphism $f\in \C\smsh G(x^p,y^q)=\C^{q-p}(x,y)$ we send it to $Hf:Hx\to Hy$, which has degree $\gamma(y)+q-p-\gamma(x)$ and is thus a morphism in $\C\smsh G(x^{p+\gamma(x)},y^{q+\gamma(y)})$.  Note that this action is strict: the $G$-action commutes with the functor, so the natural isomorphism of our equivariant functor is just the identity.
\item The natural transformation $\alpha=\theta\smsh G$ is defined on components by $\alpha_{(x,p)}=\theta_x:Hx\to H'x$.  This map is homogeneous of degree $\gamma'(x)-\gamma(x)=(p+\gamma'(x))-(p+\gamma(x))$ so is a map from $(Hx)^{p+\gamma(x)}$ to $(H'x)^{p+\gamma'(x)}$.  As both $G$-actions are strict, and defined in the same way, $\alpha$ automatically commutes with the $G$-actions.
\end{itemize}

\begin{example}\label{eg:smash-cat}
Let $\C$ be the $\Z$-hom-graded category from Example \ref{eg:homgr-cat}.  Then $\C\smsh\Z$ has objects $x^p$, with $x\in\{1,2\}$ and $p\in\Z$.  Its nonzero morphism spaces are 1-dimensional, with bases $\alpha^p:1^p\to 2^p$ and $\beta^p:2^p\to 1^{p+1}$, for as well as the identity morphisms.  The $\Z$-action is given by $F^1(x^p)=x^{p+1}$, with $F(\alpha^p)=\alpha^{p+1}$ and $F(\beta^p)=\beta^{p+1}$.

Let $\D$ denote the $\Z$-equivariant category from Example \ref{eg:equivar-cat}.  We have an equivariant equivalence $(\Phi,\phi):\C\smsh\Z\to\D$ where $\Phi(x^p)=2p+x$ on objects, and $\Phi(\alpha^p)=f_{2p+1}$ and $F(\beta^p)=f_{2p+2}$ on maps.  The two compositions $\Phi F^1$ and $F^1\Phi$ are equal: both send $x^p$ to $2p+2+x$ and act in the obvious way on maps.  The natural transformation $\phi:\Phi F^1\to F^1\Phi$ is the identity, and it clearly satisfies the necessary commutative diagram.
\end{example}

Next we explain the orbit $2$-functor
\[ -/ G: G \aCat\to \GrCat G. \]
Let  $(\D,F)$ and $(\D',F')$ be $G$-equivariant categories, let $(\Phi,\phi)$ and $(\Psi,\psi)$ be equivariant functors $(\D,F)\to (\D',F')$, and let $\alpha:\Phi\to\Psi$ be a morphism of equivariant functors.  
\begin{itemize}
\item The $G$-graded category $\D/G$ is the \emph{orbit category}: it has the same objects as $\D$ and its homogeneous morphism spaces are $(\D/G)^p(x,y)=\D(x,F^py)$.
The composition of $f\in (\D/G)^p(x,y)$ and $g\in (\D/G)^q(y,z)$ is $F^p(g)\circ f$.
\item The degree-preserving functor $(\Phi,\phi)/G$ is strict (its degree adjuster is zero).  It is just $\Phi$ on objects, and it sends a degree $p$ morphism $f:x\to F^py$ to the composite $\phi_y\circ \Phi f$:
\[ \xymatrix @=10pt {
\Phi x \ar@{-->}[rr] \ar[dr]_{\Phi f} &&F^p \Phi y \\
& \Phi F^p y\ar[ur]_{\phi_y}&
}\]
\item The morphism $\alpha/G$ of degree-preserving functors is just $\alpha$.
\end{itemize}

If $(\D,F)$ is a $G$-equivariant category, note that $G$ acts on $(\D,F)$ (not just $\D$) in the following way:
\begin{align*} 
G \to &\End_{G\aCat}\left((\D,F)\right)\\
q \mapsto &(F^q,\phi_q)
\end{align*}
where $\phi_q=(\phi_q^p)_{p\in G}$ and 
\[ \phi_q^p=  \id_{F^{p+q}}: F^qF^p \to F^pF^q.\]
The grading of $\D/G$ encodes the group action of $\D$ in the following way:
\begin{lemma}\label{lem:gshift}
Let $\C=\D/G$.  There is an isomorphism of degree-preserving endofunctors of $\C$:
\[ \theta:(F^q,\phi_q)/G\arr\sim (\id_\C,\underline{q}).\]
\end{lemma}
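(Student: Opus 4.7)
The plan is to exhibit $\theta$ explicitly at the level of underlying $\D$-morphisms, guided by the observation that a degree-$q$ map $F^q x \to x$ in $\C = \D/G$ is, by definition, precisely an element of $\D(F^q x, F^q x)$, so the identity of $F^q x$ is the natural candidate for $\theta_x$.

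First I will set $\theta_x \in \C^q(F^q x, x) = \D(F^q x, F^q x)$ to be $\id_{F^q x}$. This lives in the correct degree: the source functor $(F^q,\phi_q)/G$ is strict with degree adjuster $\underline{0}$, while the target has degree adjuster $\underline{q}$, so any 2-cell between them must have components of degree $\underline{q}(x) - \underline{0}(x) = q$, which matches.

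Second, I will check naturality. For a degree-$p$ morphism $f \in \D(x, F^p y) = \C^p(x,y)$, the functor $(F^q,\phi_q)/G$ sends $f$ to $\phi_q^p \circ F^q f = F^q f$ in $\D(F^q x, F^{p+q} y)$, since by definition $\phi_q^p = \id_{F^{p+q}}$. Using the composition rule $h \circ_\C g = F^{\deg g} h \circ_\D g$ of the orbit category, both paths around the naturality square simplify to $F^q f \in \D(F^q x, F^{p+q}y)$ after cancelling identity maps, so the square commutes.

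Finally, I will produce a two-sided inverse $\theta_x^{-1} \in \C^{-q}(x, F^q x) = \D(x,x)$ by setting it equal to $\id_x$, and a short computation using the orbit composition formula verifies $\theta_x \circ_\C \theta_x^{-1} = \id_x$ and $\theta_x^{-1} \circ_\C \theta_x = \id_{F^q x}$. No step presents substantive difficulty: the lemma expresses the slogan that in $\D/G$, shifting degree by $q$ is the same as pre-composing with $F^q$, and the crucial input $\phi_q^p = \id_{F^{p+q}}$ is already baked into the ambient definitions. The only thing to watch for is keeping the conventions for composition and degree straight.
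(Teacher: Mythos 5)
Your proof is correct and follows essentially the same route as the paper's: take $\theta_x=\id_{F^qx}\in\D(F^qx,F^qx)=\C^q(F^qx,x)$ with inverse $\id_x\in\D(x,x)=\C^{-q}(x,F^qx)$. Your explicit verification of the degree condition and the naturality square (the paper simply remarks that naturality is clear since the components are identities) is a slightly more detailed write-up of the identical argument.
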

\begin{proof}
For $x\in\C$, define the degree $q$ map $\theta_x:F_qx\to x$ by $\theta_x=\id_{F^qx}\in\D(F^qx,F^qx)=\C^q(F_qx,x)$.  As it's the identity, it's clearly natural.  Its inverse is the degree $-q$ map $\id_{x}\in\D(x,x)=\D(x,F^{-q}F^qx)=\C^-q(x,F_qx)$.
\end{proof}

\subsubsection{Asashiba's biequivalence}

The main result is the following \cite[Theorem 7.5]{asa2}:
\begin{theorem}[Asashiba]\label{thm:asa2eq}
Taking orbit categories is a biequivalence
with quasi-inverse given by taking smash products:
\[ ?/G:G\aCat\stackrel\sim\rightleftarrows\GrCat G :?\smsh G. \]
\end{theorem}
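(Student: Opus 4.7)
The plan is to verify Asashiba's theorem directly by constructing the quasi-inverse 2-natural equivalences; in practice one would probably just cite \cite[Theorem 7.5]{asa2}, but the picture is transparent enough to sketch.

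First, for each $\C\in\GrCat G$, I would construct a strict zero-section $\eta_\C\colon\C\to(\C\smsh G)/G$ sending $x\mapsto(x,0)$ (with identity degree adjuster). Unwinding, $((\C\smsh G)/G)^r((x,0),(y,0))=(\C\smsh G)(x^0,y^r)=\C^r(x,y)$, so $\eta_\C$ is fully faithful and strict degree-preserving. A candidate quasi-inverse $\eta_\C'\colon(\C\smsh G)/G\to\C$ sends $(x,p)\mapsto x$ with degree adjuster $\gamma(x,p)=p$; because $((\C\smsh G)/G)^r((x,p),(y,q))=\C^{r+q-p}(x,y)$, this is automatically degree-preserving once one accounts for the adjuster. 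Direct inspection gives $\eta_\C'\eta_\C=\id_\C$ strictly, while $\eta_\C\eta_\C'$ is isomorphic to $\id$ via the 2-cell whose component at $(x,p)$ is $\id_x\in\C^0(x,x)$, interpreted as a morphism $(x,0)\to(x,p)$ in the orbit category; its orbit-degree is $-p$, matching the required offset $0-p$ between the adjusters.

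Second, for $(\D,F)\in G\aCat$ I would construct an action-evaluation functor $\varepsilon_{(\D,F)}\colon(\D/G)\smsh G\to(\D,F)$ sending $(x,p)\mapsto F^p x$; on morphisms, identify $\D(x,F^{q-p}y)$ with $\D(F^p x,F^q y)$ via $F^p$. The equivariance data for $\varepsilon_{(\D,F)}$ is literally the identity, since $F^r(F^p x)=F^{p+r}x$ on the nose. A quasi-inverse sends $y\mapsto(y,0)$ with equivariance structure identifying $(F^r y,0)$ with $(y,r)$ via the $G$-action on the smash. Fullness and faithfulness on hom spaces are immediate since $F^p$ is an automorphism of $\D$.

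Third, I would check that both families assemble into 2-natural equivalences: given $(H,\gamma)\colon\C_1\to\C_2$ in $\GrCat G$, the naturality square involving $\eta_{\C_1}$, $\eta_{\C_2}$ and $((H,\gamma)\smsh G)/G$ commutes on the nose on objects (both routes send $x\mapsto(Hx,0)$) and directly on morphisms by the definition of $-\smsh G$ and $-/G$; the symmetric argument handles $\varepsilon$. Together with the triangle identities, this exhibits $?/G$ and $?\smsh G$ as 2-quasi-inverse biequivalences.

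The main obstacle is not conceptual but combinatorial: the 2-cell condition of $\GrCat G$ imposes a precise offset between the degrees of components and the degree adjusters of the two 1-cells, and the 2-cell condition of $G\aCat$ imposes commutation with the $G$-action; one must verify that the 2-cells chosen above (identity morphisms and identity equivariance data) really do satisfy these conditions, and that the triangle identities hold after transporting across $-\smsh G$ and $-/G$. The cleanest route is to fix conventions for the identifications $\C^r(x,y)=(\C\smsh G)(x^0,y^r)$ and $\D(x,F^py)=(\D/G)^p(x,y)$ at the outset, then verify everything symbolically.
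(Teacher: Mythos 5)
The paper does not actually prove this statement: it is quoted as \cite[Theorem 7.5]{asa2}, and the only verification the paper itself carries out (Remark \ref{rmk:compare-asa}) is that, with its opposite conventions, the explicit constructions of the smash and orbit 2-functors given just before the theorem are indeed the ones to which Asashiba's result applies. So your decision to sketch a direct proof via unit and counit equivalences is a genuinely different, more self-contained route, and your basic constructions are the standard Cohen--Montgomery-type ones: $\eta_\C\colon x\mapsto(x,0)$, $\eta'_\C\colon(x,p)\mapsto x$ with adjuster $p$, and $\varepsilon_{(\D,F)}\colon(x,p)\mapsto F^px$ are all correct, as is the identification $((\C\smsh G)/G)^r((x,p),(y,q))=\C^{q+r-p}(x,y)$ on which everything rests; your comparison $\eta_\C\eta'_\C\cong\id$ via $\id_x$ placed in degree $-p$ is exactly the mechanism of Lemma \ref{lem:gshift}.

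However, one concrete claim in your third step is false: the naturality squares do not commute on the nose. For a 1-cell $(H,\gamma)\colon\C_1\to\C_2$ with $\gamma\neq\underline{0}$, the route through $((H,\gamma)\smsh G)/G$ sends $x$ first to $(x,0)$ and then to $(Hx,\gamma(x))$, while the other route gives $(Hx,0)$; so $\eta$ is only pseudonatural, the comparison 2-cell being $\id_{Hx}$ viewed as a degree-$\gamma(x)$ morphism $(Hx,\gamma(x))\to(Hx,0)$, which is a legitimate 2-cell precisely because the two composite 1-cells have degree adjusters $\underline{0}$ and $\gamma$ respectively. Likewise, on the equivariant side the square for an equivariant functor $(\Phi,\phi)$ commutes only up to the 2-cell built from $\phi^p$, since one route produces $\Phi F_1^p x$ and the other $F_2^p\Phi x$; the ``identity'' equivariance data is correct for the counit functor itself, but not for its naturality squares. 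None of this kills the argument---pseudonatural equivalences, together with invertible modifications for the triangle identities, are exactly what a biequivalence requires---but as written the strictness claims would not survive the symbolic check you propose, and this bookkeeping with degree adjusters and with $\phi$ is precisely where the content of Asashiba's theorem lies.
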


\begin{remark}\label{rmk:compare-asa}
Our categories are defined oppositely to \cite{asa2}, where the natural transformations given as part of an equivariant structure are in the opposite direction.  The orbit category and smash product category are defined with opposite signs, which correspond to our opposite conventions: see \cite[Proposition 2.11]{asa1}.  In \cite[Section 7.1]{asa2}, the orbit 2-functor is defined by the existence of 1- and 2-cells which fit in commutative diagrams.  It is straightforward to check that, under the above identifications, the constructions of 1- and 2-cells given above do make these diagrams commutative.
\end{remark}

So, combining Proposition \ref{prop:bieqmoneq} and Theorem \ref{thm:asa2eq}, we get:
\begin{corollary}
If $\D$ is a $G$-category then there is a group isomorphism 
$$\PicEnd_{G\aCat}(\D)\cong\PicEnd_{\GrCat G}(\D/G).$$
\end{corollary}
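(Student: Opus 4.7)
The plan is to show this as a direct consequence of the biequivalence of Theorem \ref{thm:asa2eq} together with the already-established functoriality of $\PicEnd$ under biequivalences. The key observation is that the orbit 2-functor $?/G$ is a biequivalence $G\aCat \arr\sim \GrCat G$, and biequivalences of 2-categories transport monoidal endomorphism categories of objects to monoidal endomorphism categories of their images, up to monoidal equivalence.

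Concretely, I would first instantiate Proposition \ref{prop:bieqmoneq} with $\biC = G\aCat$, $\biB = \GrCat G$, and the biequivalence taken to be $?/G$ from Theorem \ref{thm:asa2eq}. This produces a monoidal equivalence
\[ (F,J):\End_{G\aCat}(\D)\arr\sim \End_{\GrCat G}(\D/G). \]
Then I would apply Corollary \ref{cor:moneqpiciso}, which tells us that $\Pic$ sends monoidal equivalences to group isomorphisms, to get
\[ \Pic\bigl(\End_{G\aCat}(\D)\bigr)\;\cong\; \Pic\bigl(\End_{\GrCat G}(\D/G)\bigr). \]
Unfolding the definition $\PicEnd_{\biB}(x) := \Pic(\biB(x,x))$ then delivers exactly the claimed isomorphism. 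In fact, this combination is already packaged as the corollary that says ``A biequivalence $\Phi:\biC\to\biB$ induces group isomorphisms $\PicEnd_\biC(\C)\cong\PicEnd_\biB(\Phi\C)$'', so the real content is just citing that corollary for the specific biequivalence $?/G$.

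There is essentially no obstacle here beyond correctly threading the previous results together; all the substantive work has been done in establishing (i) that $\Pic$ is a functor from $\Mon$ to $\Grp$, (ii) that biequivalences restrict to monoidal equivalences on endomorphism categories, and (iii) Asashiba's biequivalence. Since the corollary is stated as a consequence of combining these ingredients, the proof is a one- or two-line invocation of the named results with no further calculation required.
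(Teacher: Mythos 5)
Your proposal is correct and matches the paper's own argument: the paper obtains this corollary precisely by combining Proposition \ref{prop:bieqmoneq} (applied to Asashiba's biequivalence $?/G$ from Theorem \ref{thm:asa2eq}) with the fact that $\Pic$ sends monoidal equivalences to group isomorphisms (Corollary \ref{cor:moneqpiciso}), exactly as you describe. No further work is needed.
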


\subsubsection{Orbiting Serre functors} 

We show that the orbit 2-functor sends equivariant Serre functors to hom-graded Serre functors.  This improves on previous results: see \cite[Proposition 9.4]{dug-mesh} and \cite[Proposition 4.2]{chen}.

Let $(\se,\sse,\kappa)$ be a $\chi$-equivariant Serre structure on $(\D,F)$.
We want to show that taking the orbit category sends $(\se,\sse,\kappa)$ to a $\chi$-graded Serre structure $(\se^\C,\alpha,\kappa^\C)$ on $\C=\D/G$.  The orbit 2-functor gives us a degree-preserving functor $(\se^\C,\ell)=(\se,\sse)/G$ which is strict, so $\se^\C$ is $\se$ on objects, $\ell=\underline{0}$ sends every object to $0\in G$, and a map $f:w\to F^px$ in $\C$ is sent by $\se^\C$ to
\[ \se w \arr{\se f} \se F^px\arr{\sse^p_x} F^p\se x.\]  
So we need to define a map
\[ \kappa^\C_{u,v}:\C(u,v)\arr\sim \C(v,\se u)^* \]
which preserves the degree of morphisms.
Our map has components:
\[ \kappa^{\C,p}_{u,v}:\C^p(u,v)=\D(u,F^pv)\arrr{\chi(g)\kappa_{u,F^pv}}\D(F^pv,\se u)^*\arr{(F^{g})^*}\D(v,F^{-p}\se u)^* =\C^p(v,\se^\C u)^*. \]
Note the scaling by $\chi(g)$.

\begin{proposition}\label{prop:orbiting-serre}
Suppose $(\se,\sse,\kappa)$ is a 
$\chi$-equivariant Serre structure for $(\D,F)$.  
Then
$(\se^\C,\underline{0},\kappa^\C)$
 is a $\chi$-hom-graded Serre structure for $\C=\D/F$. 
\end{proposition}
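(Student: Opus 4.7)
The plan is to verify each of the three conditions defining a $\chi$-hom-graded Serre structure. First, that $(\se^\C, \underline{0})$ is a degree-preserving functor is immediate from the construction, since $(\se^\C, \underline{0}) = (\se, \sse)/G$ and the orbit 2-functor produces strict degree-preserving functors by definition. Second, the $p = 0$ component of $\kappa^\C$ is simply $\kappa_{u,u}: \D(u,u) \to \D(u, \se u)^*$, which maps $\C^0(u,u)$ to $\C^0(u, \se^\C u)^*$, giving the required condition with $\ell = \underline{0}$. So the substantial content is the binaturality of $\kappa^\C$ in the $(\fdv^G, \iota^\chi)$-enriched sense.

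First I would observe that each component $\kappa^{\C,p}_{u,v}$ is a $\kk$-linear isomorphism of the claimed degree, which is clear because $\kappa_{u, F^p v}$, the scalar $\chi(p)$, and the map induced by $F^p$ on hom spaces are all invertible. The binaturality consists of two diagrams: a right naturality square involving the composition $m$, the action of $\se^\C$ on morphisms, and the dualized composition $m^r$; and a left naturality square involving $m$, the pivotal structure $\iota^\chi$, and $m^\ell$. In each case I would unfold composition in $\C = \D/G$ using $g \circ f = F^p g \circ f$ for $f \in \C^p(u,v)$ and $g \in \C^q(v,w)$, and reduce the diagram to one in $\D$ involving $\kappa$, the automorphisms $F^r$, and the canonical twists $\zeta_{F^r}$ from Proposition \ref{prop:serre-comm-autoeq}.

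For right naturality, $\se^\C$ sends a morphism $f \in \C^q(w, x) = \D(w, F^q x)$ to $\sse^q_x \circ \se(f) = \chi(q) \zeta_{F^q, x} \circ \se(f)$; the scalar $\chi(q)$ is precisely what interpolates between the $\chi$-factors $\chi(p)$ and $\chi(p+q)$ appearing in the two instances of $\kappa^{\C,\bullet}$, and what remains reduces to right naturality of $\kappa$ in $\D$ together with the identity $F^p F^q = F^{p+q}$. For left naturality, the key observation is that $\iota^\chi$ acts on a degree-$p$ morphism as $\chi(p)$ times the usual $\iota$; this scalar cancels against the corresponding $\chi(p)$ built into $\kappa^{\C, p}$, leaving the ungraded left naturality of $\kappa$ in $\D$ as the remaining identity to verify.

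The main obstacle is bookkeeping: the $\chi$-factors, the $F^r$-conjugations, and the various duals must all be tracked simultaneously across reasonably large diagrams. However, no identity beyond those already established is required — only naturality of $\kappa$, the construction of $\zeta_{F^r}$, and the monoidality of the ungraded $\iota$. This calculation also exposes the conceptual role of the $\chi(p)$ scaling in the definition of $\kappa^\C$: it is exactly what is needed for compatibility with the pivotal structure $\iota^\chi$ on $\fdv^G$, rather than with the underlying ungraded pivotal structure on $\fdv$, and it matches our convention that the $F^q$-commutation map $\sse^q = \chi(q) \zeta_{F^q}$ carries the same character $\chi$.
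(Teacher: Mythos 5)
Your proposal is correct and follows essentially the same route as the paper's proof: both reduce the enriched binaturality of $\kappa^\C$ to the ``left'' and ``right'' naturality diagrams, unwind composition in the orbit category into diagrams in $\D$ involving $\kappa$, the automorphisms $F^p$ and the canonical maps $\zeta_{F^p}$, and observe that the scalar $\chi(r)$ in $\iota^\chi$ and the scalar $\chi(p)$ in $\sse^p=\chi(p)\zeta_{F^p}$ are exactly matched by the $\chi$-factors built into $\kappa^\C$. The paper likewise delegates the remaining squares to standard dualization exercises in rigid tensor categories, so there is no substantive difference in approach.
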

\begin{proof}
We just need to check that $(\se^\C,\underline{0},\kappa^\C)$ is a $(\fdv^G,\iota^\chi)$-enriched Serre structure.
Fix maps $f:w\to x$, $g:x\to y$, and $h:y\to z$ of degrees $p$, $q$, and $r$, respectively.  Then we want the following diagram to commute:
\[ \xymatrix @C=0pt {
 \C^r(y,z)\ar[d]^{\iota^\chi_{\C(y,z)}} & \otimes &\C^q(x,y)\ar[d]^{\kappa_{x,y}} &\otimes &\C^p(w,x){\phantom{ab}} \ar[d]^\se \ar[rr] &{\phantom{abcdefgh}} & {\phantom{ab}} \C^{p+q+r}(w,z)\ar[d]^{\kappa_{w,z}}   \\
 \C^r(y,z)^{**} &\otimes &\C^q(y,\se x)^* &\otimes &\C^p(\se w,\se x){\phantom{ab}}\ar[rr]  & & {\phantom{ab}}\C^{p+q+r}(z,\se w)^* 
} \]
This says that $\kappa(hgf)=\iota(h)\kappa(g)\se(f)$.  We split this check into two halves, multiplying on  the left or right of $g$.
The ``left'' diagram check is:
\[ \kappa(h g)=\iota(h)\kappa(g). \]
The ``right'' diagram check is:
\[ \kappa(gf)=\kappa(g)\se(f). \]

Let's do the ``left'' check first.
Writing out the diagram carefully, using $\C^q(x,y)=\D(x, F^qy)$, we get:
\[ \xymatrix @C=35pt 
{
\D(y,F^rz) \otimes \D(x, F^qy) \ar[d]^{\iota^\chi\otimes \chi(q)\kappa } \ar[r] ^{F^q\otimes\id}
& \D(F^qy, F^{q+r}z) \otimes \D(x, F^qy) \ar[d]^{ \iota^\chi\otimes \chi(q)\kappa } \ar[r]^(.6)m & \D(x,F^{q+r} z)\ar[d]^{ \chi(q+r)\kappa} \\
\D(y,F^rz)^{**} \otimes \D(F^qy, \se x)^* \ar[d]^{\id\otimes (F^q)^* } \ar[r] ^(0.46){(F^q)^{**}\otimes\id}
& \D(F^qy, F^{q+r}z)^{**} \otimes\D(F^qy, \se x)^* \ar[r]^(.6)a & \D(F^{q+r}z, \se x)^* \ar[d]^{ (F^{q+r})^*} \\
\D(y,F^rz)^{**}  \otimes \D(y, F^{-q}\se x)^* \ar[r] ^{a} & \D(F^rz, F^{-q}\se x)^* \ar[r]^{(F^r)^*}   &\D(z, F^{-q-r}\se x)^* 
} \]
The top left square commutes by naturality of the pivotal structure $\iota^\chi$.  The top right square commutes by $\iota^\chi=\chi(r)\iota$ and naturality of $\kappa$.  The commutativity of the bottom hexagon is an exercise in taking duals of maps in a rigid tensor category.

Next we do the ``right'' check.
Recall how $\se^\C$ is defined from $(\se,\sse)$.
The diagram is:
\[ \xymatrix 
{
\D(x, F^q y) \otimes \D(w, F^px)  \ar[d]^{ \chi(q)\kappa\otimes\se } \ar[r] ^{F^p\otimes\id}
& \D(F^px, F^{p+q} y) \otimes \D(w, F^px)  \ar[d]^{ \chi(p)\chi(q)\kappa\otimes\se }  \ar[r]^m & \D(w,F^{p+q} y) \ar[d]^{ \chi(p+q)\kappa }  \\
\D(F^qy,\se x)^* \otimes \D(\se w, \se F^px) \ar[dd]^{(F^q)^*\otimes \D(\se w, \sse^p_x) } \ar[dr]^{(F^{-p})^*\otimes\id}
& \D(F^{p+q}y, \se F^{p}x)^* \otimes \D(\se w, \se F^px)  \ar[r]^(0.7)a & \D(F^{p+q} y, \se w) \ar[dd]^{(F^{p+q})^* }   \\
& \D(F^{p+q}y, F^{p}\se x)^* \otimes \D(\se w, \se F^px) \ar[dl]^(0.3){\phantom{abcd}(F^{p+q})^*\otimes\D(\se w, \sse^p_x) }  \ar[u]_{ \D(F^{p+q}y, \sse^p_x )^* \otimes \id} & \\
\D(y, F^{-q}\se x)^* \otimes \D(\se w, F^p\se x)\ar[r] ^(0.45){\id\otimes F^{-p-q}} &\D(y, F^{-q}\se x)^* \otimes \D( F^{-p-q}\se w, F^{-q}\se x)\ar[r]^(0.7)a   & \D(y,F^{-p-q}\se w)^* 
} \]
This consists of a triangle, square, pentagon, and hexagon.  The triangle (bottom left) commutes by strictness of the action of $F$ on $\D$.  The square (top right) commutes by naturality of $\kappa$.  The hexagon (bottom right) commutes by associativity of multiplication in $\D$ and definition of the right action on duals: this is another exercise in tensor categories.  For the pentagon (top right), we need the following diagram to commute:
\[ \xymatrix @C=5pt 
{
\D(x, F^q y)   \ar[dr]_{ \kappa } \ar[rr] ^{F^p}
&& \D(F^px, F^{p+q} y)  \ar[rr]^{ \chi(p)\kappa } && \D(F^{p+q}y, \se F^{p}x)^*   \\
&\D(F^qy,\se x)^*  \ar[rr]^{(F^{-p})^*}
&& \D(F^{p+q}y, F^p\se x)^*  \ar[ru]_{\phantom{abc} \D(F^{p+q}y, \sse^p_x )^*} &
} \]
but this follows from $\chi$-equivariance: $\se$ is defined as a $\chi$-scaling of the map inducing the composition in the proof of Proposition \ref{prop:serre-comm-autoeq}. 
\end{proof}

\subsubsection{Smashing Serre functors}

Now we go in the other direction.  Let $(\se,\ell,\kappa)$ be a $\chi$-hom-graded Serre structure for $\C$.  We want to show that taking the smash product sends $(\se,\ell,\kappa)$ to a $\chi$-equivariant Serre structure.  
Recall that $\D=\C\smsh G$ has objects $x^p$, with $x\in\C$ and $p\in G$.
Let $(\se^\D,\sse)$ denote the strictly equivariant functor $(\se,\ell)\smsh G$, so 
\[ \se^\D(x^p)=(\se x)^{p+\ell(x)} \]
 and $\sse^p:F^p\se^\D\arr\id\se^D F^p$ is the identity natural transformation.  

We need to define maps
\[ \kappa^\D_{u,v}:\D(u,v)\arr\sim \D(v,\se^\D u)^*. \]
If $u=x^p$ and $v=y^q$ then, using the definition of $\D$, this reduces to
\[ \kappa^\D_{(x,p),(y,q)}:\C^{q-p}(x,y)\arr\sim \C^{q-p-\ell(x)}(y,\se x)^* \]
So we set
\[ \kappa^\D_{(x,p),(y,q)}=\chi(p)\kappa_{x,y}. \]

\begin{proposition}\label{prop:smashing-serre}
Suppose $(\se,\ell,\kappa)$ is a $\chi$-hom-graded Serre structure for $\C$.  Then $(\se^\D,\id,\kappa^\D)$ is a  $\chi$-equivariant Serre functor on $\C\smsh G$.
\end{proposition}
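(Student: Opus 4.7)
The proof splits into two parts: verifying that $(\se^\D, \kappa^\D)$ is a Serre structure on the $\kk$-linear category $\D$ in the sense of Definition \ref{def:serre}, and verifying the $\chi$-equivariance condition $\sse^g = \chi(g)\zeta_{F^g}$ from Definition \ref{def:equivar-Serre}.

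For the first part, I would check the binaturality of $\kappa^\D$ by translating the binaturality of $\kappa$ in $\C$ (enriched in $\fdv^G$ with twisted pivotal $\iota^\chi$) to the $\fdv$-enriched setting for $\D$ (with standard pivotal $\iota$). In the left naturality diagram, the substitution $\iota^\chi(h) = \chi(\deg h)\iota(h)$ introduces a factor of $\chi(\deg_\C h)$; the scalar $\chi(p)$ in $\kappa^\D_{(x,p),(y,q)} = \chi(p)\kappa_{x,y}$ is chosen so that this factor is absorbed appropriately into the definition when comparing the two naturality squares. Right naturality is more direct, because the operation $m^r$ does not involve the pivotal structure and $\se^\D$ acts on morphisms exactly as $\se$ does, so one only needs to verify that the scalar $\chi(p)$ on both sides matches.

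For the second part, I would compute $\zeta_{F^g}$ via the Yoneda-style construction from Proposition \ref{prop:serre-comm-autoeq}. Because $F^g \dashv F^{-g}$ acts trivially on morphisms in $\D = \C \smsh G$, the relevant adjunction bijections reduce to the identity on hom-spaces. Testing against $x^p$ and evaluating at $y^q$, the composite natural isomorphism
\[ \D(x^p, \se^\D F^g y^q)^\vee \arrr{(\kappa^\D)^{-1}} \D(y^q, F^{-g} x^p) \arrr{\kappa^\D} \D(x^p, F^g \se^\D y^q)^\vee \]
becomes multiplication by the scalar $\chi(q) \cdot \chi(q+g)^{-1} = \chi(g)^{-1}$, where the two factors arise from $(\kappa^\D_{y^{q+g}, x^p})^{-1}$ contributing $\chi(q+g)^{-1}$ and $\kappa^\D_{y^q, x^{p-g}}$ contributing $\chi(q)$. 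By the Yoneda lemma, $\zeta_{F^g}$ equals $\chi(g)^{-1}$ times the identity on $\se^\D F^g = F^g \se^\D$, so $\chi(g)\zeta_{F^g} = \id = \sse^g$, as needed.

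The main technical obstacle is the careful tracking of scalars coming from three different sources: the pivotal structure appearing via $\iota^\chi$, the adjunction $F^g \dashv F^{-g}$, and the definition of $\kappa^\D$ itself. The specific choice of the factor $\chi(p)$ in the definition of $\kappa^\D$ is precisely what simultaneously achieves the binaturality of $\kappa^\D$ in the $\fdv$-enriched sense and produces the correct $\chi$-twisted commutation $\zeta_{F^g}$ required by $\chi$-equivariance.
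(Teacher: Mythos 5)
Your proposal follows essentially the same route as the paper's proof: the binaturality of $\kappa^\D$ is checked by translating the $\iota^\chi$-twisted naturality of $\kappa$ into plain naturality, with the scalar in the definition of $\kappa^\D$ absorbing the character factor (the pivotal twist only entering on the left-naturality side), and the equivariance is checked by computing $\zeta_{F^g}$ from the composite of Proposition \ref{prop:serre-comm-autoeq}, where the trivial action of $F^g$ on morphisms makes the adjunction steps identities and the two $\kappa^\D$-scalars combine to $\chi(-g)$, so that $\chi(g)\zeta_{F^g}=\id=\sse^g$. This matches the paper's argument step for step, including the final scalar $\chi(-g)$.
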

\begin{proof}
First we check naturality of $\kappa^\D$.  This follows from $\iota^\chi$-naturality for $\kappa$ and the definition, as the following diagrams show:
\[ \xymatrix @C=0pt {
\D(x^j,y^k)\ar[d]^{\chi(k)\kappa_{x,y}} &\otimes &\D(w^i,x^j){\phantom{ab}} \ar[d]^\se \ar[rr] &{\phantom{abcdefgh}} & {\phantom{ab}} \D(w^i,y^k)\ar[d]^{\chi(k)\kappa_{w,y}}   \\
\D(y^k,\se (x^j))^* &\otimes &\D(\se (w^i),\se (x^j)){\phantom{ab}}\ar[rr]  & & {\phantom{ab}}\D(y^k,\se (w^i))^* 
} \]
and
\[ \xymatrix @C=0pt {
 \D(y^k,z^\gamma)\ar[d]^{\chi(\gamma-k)\iota} & \otimes &\D(x^j,y^k)\ar[d]^{\chi(k)\kappa_{x,y}}  \ar[rr] &{\phantom{abcdefgh}} & {\phantom{ab}} \D(x^j,z^\gamma)\ar[d]^{\chi(\gamma)\kappa_{x,z}}   \\
 \D(y^k,z^\gamma)^{**} &\otimes &\D(y^k,\se (x^j))^*  \ar[rr] & & {\phantom{ab}}\C(z^\gamma,\se (x^j))^* 
} \]

It remains to check $\sse^p=\chi(p)\zeta_{F^g}$.  
Recall that $\zeta_{F^g}$ is defined by the following diagram:
\[ \xymatrix @C=15pt {
\C^{-p-\ell(y)}(x,\se y)^* \ar[d]^{(\chi(p)\kappa_{y,x})^{-1}} \ar@{=}[r] &  \D(x, \se^\D F^py)^* \ar[d]\ar@{-->}[rr] && \D(x,F^p\se^\D y)^* \ar@{=}[r] & \C^{-p-\ell(y)}(x,\se y)^* \\
\C^{-p}(y,x) \ar@{=}[r]   &  \D(F^py, x) \ar[r] \ar@{=}[d] & \D(y,F^{-p} x) \ar@{=}[d] \ar[r] & \D(F^{-p} x,\se^\D y)^* \ar[u] \ar@{=}[d] \ar@{=}[r] & \C^{-p-\ell(y)}(x,\se y)^* \ar[u]^\id \\
  &  \C^{-p}(y,x)\ar[r]^\id &\C^{-p}(y,x) \ar[r]^(0.4){\kappa_{x,y}} &\C^{-p-\ell(y)}(x,\se y)^*
} \]
So $ \D(x, \se^\D F^py)^* \to \D(x,F^p\se^\D y)^* $ is multiplication by $\chi(-p)$, so $\zeta_{F^g}$ is multiplication by $\chi(-p)$, so $\sse^p=\id=\chi(p)\chi(-p)=\chi(p)\zeta_{F^g}$, as required.  
\end{proof}

\begin{example}
Given the $\sgn$-hom-graded Serre structure $(\se,\ell,\kappa)$ in Example \ref{eg:homgr-cat-serre}, we get a $\chi$-equivariant Serre structure on $\C\smsh\Z$ by the above construction.  On objects it sends $1^p$ to $2^p$ and $2^p$ to $1^{p+1}$, and on maps it sends $\alpha_p$ to $\beta_p$ and $\beta_p$ to $-\alpha_{p+1}$.  The maps $\kappa^{\C\smsh\Z}$ send $\id_{1^p}$ to $(-1)^p\alpha_p^*$ and $\id_{2^p}$ to $(-1)^{p+1}\beta^*_{p+1}$.  Under the equivalence of Example \ref{eg:smash-cat}, this gives a $\sgn$-equivariant Serre structure $(\se',\sse',\kappa')$ on $(\D,F)$ with $\se'(i)=i+1$, $\se'(f_i)=(-1)^{i+1}f_{i+1}$, and $\kappa'(\id_i)=(-1)^{i+1}f_i^*$.  This is not the same $\sgn$-equivariant Serre structure as given in Example \ref{eg:equivar-serre}, but it is $\sgn$-equivariantly equivalent, via the triple $(\id_\D,\id_{FF},\nu)$ where $\nu:\se\to\se'$ has components $\nu_i=(-1)^{i+1}\id_{i+1}$.
\end{example}

Fix $G$ and $\chi:G\to\kk^\times$.
Immediately from the uniqueness results (Propositions  \ref{prop:unique-ev}
 and \ref{prop:unique-hg})
 we get:
\begin{theorem}\label{thm:transfersss}
Asashiba's biequivalence restricts to a biequivalence of 2-groupoids
\[ ?/G:  G\aSCat_\ei  \stackrel\sim\rightleftarrows  {\GrSCat G}_\ei :?\smsh G. \]
\end{theorem}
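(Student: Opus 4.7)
The plan is to leverage Asashiba's biequivalence (Theorem \ref{thm:asa2eq}) together with the two propositions describing how orbit categories and smash products interact with graded Serre structures (Propositions \ref{prop:orbiting-serre} and \ref{prop:smashing-serre}), using the uniqueness of graded Serre structures as the main structural glue.

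First, I would use Propositions \ref{prop:orbiting-serre} and \ref{prop:smashing-serre} to observe that both Asashiba 2-functors $?/G$ and $?\smsh G$ send objects admitting a $\chi$-equivariant (resp.\ $\chi$-hom-graded) Serre structure to objects admitting the corresponding structure on the other side. Combined with Lemma \ref{lem:2funfull}, this immediately produces restricted 2-functors between the full sub-2-categories of $G\aCat_\ei$ and $\GrCat{G}_\ei$ consisting of objects that admit such Serre structures.

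Second, I would argue that the Serre 2-groupoids $G\aSCat_\ei$ and $\GrSCat{G}_\ei$ are themselves biequivalent to these full sub-2-groupoids via the obvious forgetful 2-functors, in direct analogy with Corollary \ref{cor:bieqserre2gpds}. On the equivariant side, Propositions \ref{prop:unique-ev} and \ref{prop:upgrade-ev} show that every 1-cell of $G$-equivariant categories between objects admitting equivariant Serre structures can be upgraded to a 1-cell of Serre structures, essentially uniquely; and the appropriate analogue of Remark \ref{rmk:automcomm} for the equivariant setting ensures that every iso 2-cell between such upgraded 1-cells is automatically compatible with the Serre data. Propositions \ref{prop:unique-hg} and \ref{prop:upgrade-hg} supply the parallel statement on the hom-graded side. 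Hence both forgetful 2-functors from the Serre 2-groupoids are locally equivalences and biequivalences onto the relevant full sub-2-groupoids.

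Combining these two steps, Asashiba's biequivalence $?/G \rightleftarrows ?\smsh G$ restricts to a biequivalence between the full sub-2-groupoids above, and transporting through the forgetful biequivalences yields the desired biequivalence between $G\aSCat_\ei$ and $\GrSCat{G}_\ei$. The main ``obstacle'' is really only bookkeeping: checking that upgraded 1-cells and 2-cells behave well under horizontal and vertical composition. But this reduces to asking that certain iso 2-cells witnessing Serre-compatibility compose coherently, and by the uniqueness statements such witnesses are determined (up to unique iso 2-cell) by the underlying data, so composition is forced to agree with the composition already given by Asashiba's biequivalence on the underlying 2-groupoids. Thus no new coherence argument is needed beyond what Asashiba's theorem and the uniqueness results for Serre structures already supply.
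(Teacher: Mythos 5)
Your proposal is correct and follows essentially the same route as the paper, which deduces the theorem directly from Propositions \ref{prop:orbiting-serre} and \ref{prop:smashing-serre} together with the uniqueness/upgrade results (Propositions \ref{prop:unique-ev}, \ref{prop:upgrade-ev}, \ref{prop:unique-hg}, \ref{prop:upgrade-hg}) and the already-noted biequivalences of the Serre 2-groupoids with the full sub-2-groupoids of objects admitting such structures. The paper states this only as an immediate consequence, so your spelled-out bookkeeping is simply a more explicit version of the same argument.
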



\section{Calabi-Yau categories}\label{s:cy}

From now on we set $G=\Z$ and work with $\Z$-equivariant categories.

\subsection{Triangulated Calabi-Yau categories}\label{ss:cy}

A triangulated category is a triple $(\D,\Sigma,\Delta)$ where
$\D$ is a $\kk$-linear category, $\Sigma:\D\arr\sim \D$ is an autoequivalence, and $\Delta$ is a distinguished subset of the set of ``triangles''
\[ x \arr f y \arr g z \arr h \Sigma x. \]
Triangulated categories should satisfy some well-known axioms.

If $\Sigma$ is in fact an automorphism of categories (as is sometimes specified in the definition) then $(\D,\Sigma)$ is a $\Z$-equivariant category.  If not, we can strictify as in \cite[Section 2]{kv-scd} to get an equivalent triangulated category $(\D',\Sigma')$ which is $\Z$-equivariant.  Hereafter, we will assume that our triangulated categories come equipped with automorphisms.

There is a 2-category $\Tri$ whose 0-cells are triangulated categories.  The 1-cells 
$(\D,\Sigma,\Delta) \to (\D',\Sigma',\Delta')$
are triangulated functors: pairs $(\Phi,\phi)$ consisting of a $\kk$-linear functor $\Phi:\D\to\D'$ and a natural isomorphism $\phi:\Phi\Sigma\arr\sim\Sigma'\Phi$ such that, given a distinguished triangle in $\Delta$, the triangle
\[ \xymatrix @=10pt {
\Phi x \ar[rr]^{\Phi f} && \Phi y \ar[rr]^{ \Phi g} && \Phi z \ar@{-->}[rr] \ar[dr]_{\Phi h} &&\Sigma \Phi x \\
&&&&& \Phi\Sigma z\ar[ur]_{\phi_z}&
}\]
is in $\Delta'$.  Note that, under the correspondence described in Remark \ref{rmk:Zequivar},
the triangulated functors give a subset of the equivariant functors from $(\D,\Sigma)$ to $(\D',\Sigma')$.  The 2-cells in $\Tri$ are just the morphisms of equivariant functors.

Due to the rotation axiom (TR2) for triangulated categories, $(\Sigma,\id_{\Sigma^2})$ is not a triangulated functor but $(\Sigma,-\id_{\Sigma^2})$ is a triangulated functor \cite{k-orbit,k-orbit-corr}.  

Now suppose $(\D,\Sigma,\Delta)$ is triangulated and that $\D$ has a Serre functor $\se:\D\arr\sim\D$.  Then \cite[Proposition 3.3]{bk}: 
\begin{proposition}[Bondal-Kapranov]\label{prop:bk-tri}
There exists a natural isomorphism $\sse:\se\Sigma\arr\sim\Sigma\se$ such that $(\se,\sse)$ is a triangulated functor.
\end{proposition}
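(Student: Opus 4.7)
The plan is to take $\sse$ to be the canonical commutation map $\zeta_\Sigma: \se\Sigma \arr\sim \Sigma\se$ supplied by Proposition \ref{prop:serre-comm-autoeq}, possibly after a sign adjustment. The existence of a natural isomorphism $\se\Sigma \arr\sim \Sigma\se$ is immediate from that proposition; the real content of the current statement is that it can be chosen so that $(\se, \sse)$ sends distinguished triangles to distinguished triangles, which is what is needed to be a triangulated functor in the sense defined just above the statement.

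The verification I would carry out starts by fixing a distinguished triangle $x \arr{f} y \arr{g} z \arr{h} \Sigma x$ and forming the candidate triangle
\[ \se x \arrr{\se f} \se y \arrr{\se g} \se z \arrr{\sse_x^{-1}\circ \se h} \Sigma \se x. \]
I would then apply the standard criterion that a candidate triangle is distinguished if and only if, for every $w \in \D$, applying $\D(w,-)$ produces a sequence which extends to a long exact sequence. The key input here is the Serre structure: the binatural isomorphism $\kappa$ identifies $\D(w,\se u)$ with $\D(u,w)^\vee$, so via $\kappa$ and the pivotal structure $\iota$ the Hom-sequence of the candidate triangle is identified, termwise, with the $\kk$-dual of the sequence obtained by applying the contravariant functor $\D(-,w)$ to the original distinguished triangle. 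The latter extends to a long exact sequence because $\D(-,w)$ is cohomological, and exactness of sequences of finite-dimensional $\kk$-vector spaces is preserved by taking $\kk$-duals, so the candidate triangle is distinguished as required.

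The step where the work really lies is sign coherence. As noted in the paragraph preceding the proposition, the suspension underlies a triangulated functor only when equipped with the commutation $-\id_{\Sigma^2}$ rather than $\id_{\Sigma^2}$; this sign discrepancy pervades the subject and is also the reason our setup in Section \ref{s:graded} keeps track of an explicit character $\chi$. So while $\zeta_\Sigma$ is canonical as a natural isomorphism, one must check whether it is $\zeta_\Sigma$ itself, or a scalar multiple such as $-\zeta_\Sigma$, that makes the naturality/connecting-map square in the candidate triangle commute on the nose. This sign will be pinned down when explicitly identifying the Hom-sequences above with their duals. In either case, both candidates are valid choices of natural isomorphism supplied by Proposition \ref{prop:serre-comm-autoeq} (together with its trivial sign twist), so existence of a $\sse$ satisfying the triangulated functor axiom is secured.
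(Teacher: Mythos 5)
Your proposal has a genuine gap at its central step. The criterion you invoke --- that a candidate triangle is distinguished if (and only if) applying $\D(w,-)$ for every $w$ yields a long exact sequence --- is false in the ``if'' direction: Hom-exact candidate triangles (pretriangles) need not be distinguished. Indeed, if $x\arr{f}y\arr{g}z\arr{h}\Sigma x$ is distinguished then the candidate $x\arr{f}y\arr{g}z\arrr{-h}\Sigma x$ induces exactly the same long exact sequences, but it is in general not distinguished: in $\Db(\kk A_2)$ with $\operatorname{char}\kk\neq2$, take $P_1\to P_2\to S_2\to\Sigma P_1$ with all three maps nonzero; all relevant endomorphism rings are $\kk$, so if the sign-changed candidate were distinguished, (TR3) applied to the identity square on $P_1,P_2$ would produce a scalar $\gamma$ with $\gamma=1$ and $\gamma=-1$ simultaneously. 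This defect is fatal for your plan in a very specific way: Hom-exactness is blind to signs, so your argument, if it worked, would certify both $\zeta_\Sigma$ and $-\zeta_\Sigma$ as triangulated structures on $\se$, whereas in general only one does --- by Theorem \ref{thm:vdb-tri} the $\sgn$-equivariant choice $\sse=-\zeta_\Sigma$ works, and by \cite[Proposition 2.2]{cz} this is an if and only if. Thus the sign you defer to ``be pinned down when identifying the Hom-sequences with their duals'' is precisely the information your method cannot detect. (A minor further slip: with the paper's convention the rotated map should be $\sse_x\circ\se h$, not $\sse_x^{-1}\circ\se h$, which does not typecheck.)

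For comparison, the paper does not reprove this proposition: it quotes \cite[Proposition 3.3]{bk} and observes that it is recovered from Van den Bergh's Theorem \ref{thm:vdb-tri}, since by Proposition \ref{prop:serre-comm-autoeq} any Serre structure can be upgraded to a $\sgn$-equivariant one. A correct direct argument runs along the lines of Bondal--Kapranov and Van den Bergh: complete $\se f$ to a distinguished triangle using (TR1); use Serre duality, exactly as in your duality observation, to see that the candidate $(\se f,\se g,\sse_x\circ\se h)$ is a pretriangle; then construct a comparison morphism to the distinguished completion and conclude by the five lemma and Yoneda that it is an isomorphism. Building that comparison morphism so that all three squares commute is where the real work and the sign analysis live, and it is there that the twist $-\zeta_\Sigma$ is forced. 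Your duality step is a correct ingredient, but on its own it only shows the candidate is a pretriangle, not that it is distinguished.
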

Surprisingly, the natural isomorphism $\sse$ does not depend on $\Delta$.  Let $\sgn:\Z\to\kk$ denote the sign character on $\Z$ which sends the generators of $\Z$ to $-1\in\kk$.  Suppose $\D$ has a Serre structure $(\se,\kappa)$.
Then \cite[Theorem A.4.4]{bvdb}:
\begin{theorem}[Van den Bergh]\label{thm:vdb-tri}
If $(\se,\sse,\kappa)$ is a $\sgn$-equivariant Serre structure for $(\D,\Sigma)$ then $(\se,\sse)$ is a triangulated functor.
\end{theorem}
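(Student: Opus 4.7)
The plan is to establish this by identifying the canonical commutation $\zeta_\Sigma$ (from Proposition \ref{prop:serre-comm-autoeq}) against a triangulated commutation whose existence is given abstractly, and then pinning down the sign. In outline: first obtain a triangulated commutation by general nonsense, then argue that any such commutation is a scalar multiple of $\zeta_\Sigma$, and finally determine that scalar to be $-1$.

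First I would invoke Proposition \ref{prop:bk-tri} to fix a natural isomorphism $\sse':\se\Sigma\arr\sim\Sigma\se$ such that $(\se,\sse')$ is a triangulated functor; this is purely an existence statement and uses nothing about the sign. The task is then to identify $\sse'$ with our $\sse=-\zeta_\Sigma$. Because $\se$ is a Serre functor and $\Sigma$ is an autoequivalence, Proposition \ref{prop:serre-centre} places $(\se,\zeta)$ in $\centre(\Autom(\D))$; together with Lemma \ref{lem:commutewithqinv} this forces any natural isomorphism $\se\Sigma\arr\sim\Sigma\se$ compatible with the Serre structure to differ from $\zeta_\Sigma$ by an invertible scalar. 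Thus $\sse'=c\cdot\zeta_\Sigma$ for some unique $c\in\kk^\times$, and the theorem reduces to showing $c=-1$.

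The calculation of $c$ is the heart of the argument. Given a distinguished triangle $x\arr f y\arr g z\arr h \Sigma x$, triangulatedness of $(\se,\sse')$ means that $\se x\to\se y\to\se z\xrightarrow{\sse'_x\circ\se h}\Sigma\se x$ is distinguished. I would unwind the Yoneda-style definition of $\zeta_\Sigma$ from the proof of Proposition \ref{prop:serre-comm-autoeq}, namely the composite
\[ \D(x,\se\Sigma y)\cong \D(\Sigma y,x)^\vee\cong\D(y,\Sigma^{-1}x)^\vee\cong\D(\Sigma^{-1}x,\se y)\cong\D(x,\Sigma\se y), \]
and compare it with the map the triangulated structure forces. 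The comparison is governed by how the adjunction $\Sigma\ladj\Sigma^{-1}$ interacts with the rotation axiom (TR2). Recall from the remark preceding Proposition \ref{prop:bk-tri} that $(\Sigma,-\id_{\Sigma^2})$, and not $(\Sigma,\id_{\Sigma^2})$, is the correct triangulated structure on $\Sigma$: the single sign appearing there is exactly the one that propagates through the four isomorphisms above, producing the factor $-1$ in the comparison with $\sse'$.

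The main obstacle is this final sign bookkeeping. A clean way to organise it is to test $\sse'\circ\se h=-\zeta_{\Sigma,x}\circ\se h$ against the map produced by completing $\se f,\se g$ to a distinguished triangle and using (TR3) to compare with $\se$ applied to a rotate of the original: the sign in TR2 appears exactly once, from the identification of the connecting morphism after rotation with $-\Sigma$ applied to $f$. Applying Yoneda on representables $\D(-,\Sigma\se x)$ then yields $c=-1$, so $\sse'=-\zeta_\Sigma=\sse$, as required.
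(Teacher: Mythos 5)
There is a genuine gap. Note first that the paper does not prove this statement at all: it is quoted from Van den Bergh's appendix \cite[Theorem A.4.4]{bvdb} (and the surrounding text even points out that Proposition \ref{prop:bk-tri} is \emph{recovered} from it), so any proof you give must supply the actual content rather than defer it. The decisive gap in your outline is the reduction ``$\sse'=c\cdot\zeta_\Sigma$ for some $c\in\kk^\times$''. Neither Proposition \ref{prop:serre-centre} nor Lemma \ref{lem:commutewithqinv} gives any such rigidity: Proposition \ref{prop:serre-centre} only says that $(\se,\zeta)$ lies in $\centre(\Autom(\D))$, and Lemma \ref{lem:commutewithqinv} computes $\zeta_{\se^-}$, which is irrelevant here. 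Two natural isomorphisms $\se\Sigma\arr\sim\Sigma\se$ differ by a natural automorphism of $\Sigma\se$, and such automorphisms form the unit group of the centre of $\D$, not $\kk^\times$; for instance, for $\Db(\Lambda)$ with $\Lambda$ a product of algebras one may rescale independently on each block, so the ``unique scalar $c$'' does not exist in general. Worse, Proposition \ref{prop:bk-tri} asserts only the existence of \emph{some} triangulated commutation $\sse'$ and says nothing about its compatibility with $\kappa$, so there is no a priori relation between $\sse'$ and $\zeta_\Sigma$ to exploit; producing such a relation is exactly what the theorem claims, and your appeal to ``compatibility with the Serre structure'' assumes it.

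The second half of the argument, the determination of $c=-1$, is likewise only asserted: the sketch via (TR2), (TR3) and Yoneda never actually uses $\kappa$ or the Serre duality pairing, yet the statement is precisely about how $\kappa$ interacts with the triangulation (indeed it is an ``if and only if'', cf.\ \cite[Proposition 2.2]{cz}, so the sign cannot be extracted from formal rotation bookkeeping alone). A correct proof along Van den Bergh's lines works directly with the pairing of Remark \ref{rmk:pairing}: one applies $\D(-,\se x)\cong\D(x,-)^\vee$ to the long exact Hom sequences of a distinguished triangle and checks, keeping track of the sign produced by the rotation axiom, that $\se$ of the triangle, with connecting map twisted by $-\zeta_\Sigma$, is again distinguished. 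That verification is the heart of the theorem and is missing from your proposal.
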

As Serre structures can always be made $\sgn$-equivariant, Theorem \ref{thm:vdb-tri} recovers Proposition \ref{prop:bk-tri}.  In fact, Theorem \ref{thm:vdb-tri} is an if and only if: see \cite[Proposition 2.2]{cz}.

If a triangulated category has Serre duality, it comes with two canonical triangulated autoequivalences.  It is natural to ask whether there is any relation between them.  Kontsevich noted that relations do exist in at least two cases: for derived categories of Calabi-Yau varieties and for derived categories of some quivers.  
He called these categories Calabi-Yau and fractional Calabi-Yau, respectively \cite{kon-ens}.  

The existence of a natural isomorphism between powers of $\se$ and $\Sigma$ might be called a weak Calabi-Yau condition.  We ask for a strong Calabi-Yau condition, as in \cite{k-cy}, using the whole graded structure.  
\begin{definition}\label{def:cy}
Suppose $(\D,\Sigma,\Delta)$ has a $\sgn$-equivariant Serre structure   $(\se,\sse,\kappa)$.  $\D$ is called:
\begin{itemize}
\item \emph{Calabi-Yau of dimension $n$} (or \emph{$n$-CY}) if 
$(\se,\sse)\cong(\Sigma,-\id_{\Sigma^2})^n$ in $\Z\aCat$;
\item \emph{(fractional) Calabi-Yau of dimension $N/m$} (or \emph{$N/m$-fCY}) if 
$(\se,\sse)^m\cong(\Sigma,-\id_{\Sigma^2})^N$ in $\Z\aCat$.  
\end{itemize}
\end{definition}
We repeat the standard warning with this definition: $N/m$ should be treated as a pair of integers and not as a rational number.
\begin{example}
The derived category of the path algebra of a quiver of type $A_3$ is fractional Calabi-Yau of dimension $2/4$, but is \emph{not} fractional Calabi-Yau of dimension $1/2$.
\end{example}

\subsection{Auslander-Reiten functors}
We now restrict to $G=\Z$.  Hereafter, $F$ will denote an element of $\Autom(\D)$ instead of a map $G\to\Autom(\D)$: see Remark \ref{rmk:Zequivar} for more details.

\subsubsection{Change of action}\label{sss:changeaction}

Let $(\D,\gen{F})$ be a $\Z$-equivariant category, so $\D$ is $\kk$-linear and $F:\D\arr\sim\D$ is an isomorphism of categories.
Let $(Z,z)\in \centre\Autom(\D)$ (see Section \ref{sss:centre}), so $Z:\D\arr\sim\D$ is an isomorphism of categories and $z=(z_E:ZE\arr\sim EZ)_{E\in\Autom(\D)}$ is a natural isomorphism. 
 
We write
\[\M_{\D,F}=\End_{\Z\aCat}((\D,\gen{F}))\] 
to denote the monoidal category of equivariant endofunctors of $(\D,\gen{F})$, so $(\Phi,\phi)\in \M_{\D,F}$ consists of a functor $\Phi:\D\to\D$ and a natural isomorphism $\phi:\Phi F\arr\sim F\Phi$.
From  $(\Phi,\phi)$ 
we can construct an equivariant endofunctor $(\Phi,\phi')\in\M_{\D,ZF}$ as follows:
\[ \xymatrix @R=10pt {
\Phi ZF \ar@{-->}[rr]^{\phi'} \ar[dr]_{z^{-1}_{\Phi}F} && ZF \Phi \\
  & Z\Phi F \ar[ur]_{Z\phi} &
} \]
Write $(Z,z)_\star(\Phi,\phi)=(\Phi,\phi')=(\Phi,Z\phi \circ z^{-1}_{\Phi}F)$ and, for a morphism $\alpha:(\Phi,\phi)\to(\Psi,\psi)$ of equivariant functors, write $(Z,z)_\star(\alpha)=\alpha$.

\begin{proposition}\label{prop:central-equiv}
$(Z,z)\in\centre\Autom(\D)$ is a strict monoidal isomorphism 
\[ (Z,z)_\star:\M_{\D,F}\to \M_{\D,ZF}.\]
\end{proposition}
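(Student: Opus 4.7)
The plan is to verify in turn: the object and 2-cell assignments are well-defined, the result is a strict functor, strict monoidal compatibility (on both the unit and composition), and invertibility. Only the composition part of strict monoidality requires real work; everything else is either routine naturality or formal 2-category theory.

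First I would check that the object assignment is well-defined. Given $(\Phi,\phi)\in\M_{\D,F}$, the map $\phi' = Z\phi\circ z^{-1}_\Phi F$ is a composition of natural isomorphisms between $\Phi(ZF)$ and $(ZF)\Phi$ (using strictness of functor composition), so $(\Phi,\phi')$ is a well-defined object of $\M_{\D,ZF}$, and the full $\Z$-equivariant structure extends automatically by Remark \ref{rmk:Zequivar}. For a 2-cell $\alpha:(\Phi_1,\phi_1)\to(\Phi_2,\phi_2)$, the same underlying natural transformation $\alpha:\Phi_1\to\Phi_2$ will serve as a 2-cell $(\Phi_1,\phi'_1)\to(\Phi_2,\phi'_2)$; the new compatibility square is obtained by pasting the old one with the naturality square of $z^{-1}$ at $\alpha$. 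Functoriality in $\alpha$ is immediate since $\alpha$ is left unchanged.

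The heart of the proof is strict monoidality. For the unit $(\id_\D,\id_F)$, the construction yields $(\id_\D,\, z^{-1}_{\id_\D}F)$, so one needs $z_{\id_\D} = \id_Z$; this follows by applying the hexagon axiom of Definition \ref{def:centre} with $X = Y = \id_\D$ (using that $\End(\D)$ is strict monoidal) to obtain $z_{\id_\D} = z_{\id_\D}\circ z_{\id_\D}$, then cancelling since $z_{\id_\D}$ is iso. For composition, expanding $(Z,z)_\star$ applied to $(\Psi,\psi)\circ(\Phi,\phi) = (\Psi\Phi,\,\psi\Phi\circ\Psi\phi)$ on both sides and cancelling a common factor of $Z\psi\Phi$, the claim reduces to the identity
\[
Z\Psi\phi \circ z^{-1}_{\Psi\Phi}F \;=\; z^{-1}_\Psi F\Phi \circ \Psi Z\phi \circ \Psi z^{-1}_\Phi F.
\]
The hexagon identity $z_{\Psi\Phi} = \Psi z_\Phi \circ z_\Psi\Phi$ inverts to $z^{-1}_{\Psi\Phi} = z^{-1}_\Psi \Phi \circ \Psi z^{-1}_\Phi$; substituting this and cancelling $\Psi z^{-1}_\Phi F$, the remaining equality is precisely the Godement interchange law applied to the horizontal composition of $z^{-1}_\Psi:\Psi Z\to Z\Psi$ with $\phi:\Phi F\to F\Phi$. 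This diagram chase, keeping all the whiskerings straight, is the main obstacle.

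Finally, invertibility follows because $Z$ is an automorphism of $\D$, so $(Z,z)$ admits a two-sided inverse $(Z^{-1},z^\dagger)$ in $\centre\Autom(\D)$, constructed analogously to Lemma \ref{lem:dualincentre} but using the genuine inverse rather than a dual. The induced 2-functor $(Z^{-1},z^\dagger)_\star:\M_{\D,ZF}\to\M_{\D,F}$ is then a strict monoidal inverse to $(Z,z)_\star$; checking this amounts to unwinding $Z^{-1}\phi'\circ (z^\dagger)^{-1}_\Phi(ZF)$ and using the compatibility $z^\dagger_\Phi \circ Z^{-1}z_\Phi Z^{-1} \simeq \id$ coming from the inverse relation in the centre. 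Combined with the strict monoidality just established, this completes the proof that $(Z,z)_\star$ is a strict monoidal isomorphism.
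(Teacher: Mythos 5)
Your proposal is correct, and its core — the assignment $(\Phi,\phi)\mapsto(\Phi,\,Z\phi\circ z^{-1}_\Phi F)$ with 2-cells left unchanged, the reduction of strict monoidality to the identity $Z\Psi\phi\circ z^{-1}_{\Psi\Phi}F=z^{-1}_\Psi F\Phi\circ\Psi Z\phi\circ\Psi z^{-1}_\Phi F$, and its resolution by the centre's hexagon axiom plus the interchange law — is essentially the paper's argument; the paper's prose only cites the interchange law, but its pasted diagrams implicitly contain the hexagon decomposition $z^{-1}_{\Psi\Phi}=z^{-1}_\Psi\Phi\circ\Psi z^{-1}_\Phi$ that you make explicit, and you additionally verify unit preservation via $z_{\id_\D}=\id_Z$, a point the paper passes over in silence (it is automatic here since $z$ is an isomorphism, even though Definition \ref{def:centre} does not impose it). The one place you genuinely diverge is invertibility: the paper takes an adjoint quasi-inverse $Z^-$ with unit $\eta:\id_\D\arr\sim Z^-Z$ and builds the inverse functor by conjugating $\phi'$ through $\eta$, finishing with ``standard properties of adjunctions''; you instead exploit that $Z\in\Autom(\D)$ has a strict inverse, equip $Z^{-1}$ with the inverse half-braiding $z^\dagger_\Phi=Z^{-1}z^{-1}_\Phi Z^{-1}$ (the invertible-object case of Lemma \ref{lem:dualincentre}), and observe that $(Z^{-1},z^\dagger)_\star$ is a strict two-sided inverse of $(Z,z)_\star$. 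Your route is cleaner in the paper's setting and makes the inverse manifestly of the same $(-)_\star$ form, while the paper's adjunction-based construction would survive if $Z$ were only an autoequivalence; just replace your ``$\simeq$'' in the compatibility $z^\dagger_\Phi\circ Z^{-1}z_\Phi Z^{-1}=\id$ by an actual equality with the whiskerings spelled out, and note that $(Z,z)_\star$ is a functor between monoidal categories rather than a 2-functor.
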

\begin{proof}
First we check that $\alpha$ is a morphism of equivariant functors $(Z,z)_\star(\Phi,\phi)\to(Z,z)_\star(\Psi,\psi)$.  The diagram
\[ \xymatrix { 
\Phi ZF \ar[r]^{z^{-1}_{\Phi}F} \ar[d]^{\alpha ZF} & Z\Phi F \ar[r]^{Z\phi} \ar[d]^{Z\alpha F} & ZF \Phi \ar[d]^{ZF\alpha} \\
\Psi ZF \ar[r]^{z^{-1}_{\Psi}F} & Z\Psi F \ar[r]^{Z\psi} & ZF \Psi \\
} \]
commutes by the naturality of $z$ and the equivariance of $\alpha$.  Composition of morphisms in both categories is the same, so we have shown that $(Z,z)_\star$ is a functor.

Next we check that $(Z,z)_\star$ is strict monoidal.  We want to compare $(Z,z)_\star \left(  (\Psi,\psi) \circ (\Phi,\phi)  \right)$ with $\left((Z,z)_\star  (\Psi,\psi) \right) \circ  \left((Z,z)_\star(\Phi,\phi)  \right)$.  Both have functor $\Psi\Phi$.  The commutation transformations are: 
\[ \xymatrix @R=15pt {
\Psi\Phi Z F \ar[rr] \ar[dr] \ar[dd] && ZF\Psi\Phi &&  \Psi\Phi Z F \ar[rr] \ar[dr] \ar[dd] && ZF\Psi\Phi \\
& Z\Psi\Phi F \ar[dr] \ar[ur] & &\text{ and }& & \Psi Z F \Psi  \ar[dr] \ar[ur] & \\
\Psi Z\Phi F \ar[ur] && Z \Psi F \Phi \ar[uu]  &&  \Psi Z \Phi F \ar[ur] && Z\Psi F \Phi \ar[uu] 
} \]
These are equal by the interchange law for natural transformations.

Now we show that $(Z,z)_\star$ is an isomorphism of categories.  Let $Z^-$ be a right adjoint quasi-inverse of $Z$ with counit natural isomorphism $\eta:\id_\D\arr\sim Z^-Z$.
We define an inverse functor which sends the object $(\Phi,\phi')\in(\D,\gen{ZF})$ to $(\Phi,\phi'')\in\M_{\D,ZF}$, where $\phi''$ is defined by:
\[ \xymatrix{
  & \Phi F \ar@{-->}[rr]^{\phi''} \ar[dl]_{\eta\id} && F\Phi & \\
Z^-Z \Phi F \ar[rr]^{Z^-} &&  Z^-\Phi Z F \ar[rr]^{Z^- \varphi'} && Z^-Z F\Phi \ar[ul]_{\eta^{-1}\id}
} \]
This map on objects naturally extends to a functor $\M_{\D,ZF}\to \M_{\D,F}$, and one checks that it is a two-sided inverse to $(Z,z)_\star$ by using standard properties of adjunctions.
\end{proof}


\subsubsection{The Auslander-Reiten functor}\label{sss:inv-ar}

Fix $\chi:\Z\to\kk^\times$ and let $(\se,\sse,\kappa)$ be a $\chi$-equivariant Serre structure on $(\D,\gen{F})$.  We will concentrate on $\sse^1$ but from now on we will drop the superscript $1$, just writing $\sse:\se F\to F\se$.

By Proposition \ref{prop:serre-centre} we know that $(\se,\zeta)\in\centre\Autom(\D)$.
Therefore, by Lemma \ref{lem:dualincentre}, we have $(\se^-,\zeta^\dagger)\in\centre\Autom(\D)$.  Let's revise the notation: given $\Phi\in\Autom(\D)$, the map $\zeta^\dagger_\Phi$ is defined by:
\[ \zeta^\dagger_\Phi: \se^- \Phi \arrr{\id\eta} \se^- \Phi \se\se^- \arrr{ \se^-\zeta_\Phi^{-1} \se^-} \se^- \se \Phi \se^- \arrr{\id\varepsilon} \Phi \se^-. \]

Recall that a $\chi$-equivariant Serre functor $(\se,\sse)$ for $(\D,F)$ satisfies $\sse=\chi(1)\zeta_F$.  A quasi-inverse of $(\se,\sse)$ in $\Z\aCat$ is given by $(\se^-,\sse^-)$ where $\se^-$ is a quasi-inverse functor to $\se$ and $\sse^-$ is defined by:
\[ \sse^-: \se^- F\arrr{\id\eta} \se^- F\se\se^- \arrr{ \se^-\sse^{-1} \se^-} \se^- \se F\se^- 
\arrr{\varepsilon\id} F\se^- \]
(see \cite[Section 9.1]{asa2}).  
Therefore we have
\[ s^- = \chi(-1)\zeta^\dagger_F. \]
\begin{lemma}\label{lem:invserrecentral}
$(\se^-,\sse^-)\in\centre\Autom(\D,F)$.
\end{lemma}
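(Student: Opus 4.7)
The plan is to reduce this to the non-equivariant statement via Proposition~\ref{prop:ev-centre}, together with a simple scaling observation that absorbs the character $\chi$.

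First, by Proposition~\ref{prop:serre-centre} we have $(\se,\zeta)\in\centre\Autom(\D)$, and Lemma~\ref{lem:dualincentre} then gives that its left dual $(\se^-,\zeta^\dagger)$ also lies in $\centre\Autom(\D)$. Applying Proposition~\ref{prop:ev-centre}, I would promote this to the equivariant centre: $(\se^-,\zeta^\dagger)$ yields an element of $\centre\Autom_{\Z\aCat}((\D,\gen{F}))$ in which $\se^-$ acquires the equivariant structure $\zeta^\dagger_F$ and the braiding with an arbitrary equivariant endofunctor $(\Phi,\phi)$ is $\zeta^\dagger_\Phi$.

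Second, I want to pass from the equivariant structure $\zeta^\dagger_F$ to $\sse^-=\chi(-1)\zeta^\dagger_F$, i.e.\ to rescale by the nonzero scalar $\chi(-1)\in\kk^\times$. The key observation is that rescaling the equivariant structure of an object of $\centre(\M_{\D,F})$ by $\lambda\in\kk^\times$ preserves membership in the centre, with the \emph{same} braiding. Indeed, the equivariance of each $\zeta^\dagger_\Phi$ as a 2-cell in $\Z\aCat$ is bilinear in the equivariant structures of its source and target, so multiplying $\zeta^\dagger_F$ by $\lambda$ multiplies both sides of the equivariance condition by $\lambda$; and the hexagon axiom for the braiding does not involve $\zeta^\dagger_F$ at all. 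Taking $\lambda=\chi(-1)$, we conclude $(\se^-,\sse^-)\in\centre\Autom(\D,F)$.

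The only non-trivial step is the scaling invariance, and this is immediate from the bilinearity of horizontal composition of natural transformations — no genuine obstacle. A parallel route, of the same difficulty, would be to first establish $(\se,\sse)\in\centre\Autom(\D,F)$ by the same scaling trick and then invoke Lemma~\ref{lem:dualincentre} at the equivariant level, using that by construction $(\se^-,\sse^-)$ is the left dual of $(\se,\sse)$ in the monoidal category $\M_{\D,F}$.
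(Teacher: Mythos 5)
Your proposal is correct and follows essentially the same route as the paper: the paper likewise observes that $(\se^-,\zeta^\dagger)\in\centre\Autom(\D)$ (via Proposition~\ref{prop:serre-centre} and Lemma~\ref{lem:dualincentre}), notes that $\sse^-$ differs from $\zeta^\dagger_F$ only by the scalar $\chi(-1)\in\kk^\times$, and concludes by Proposition~\ref{prop:ev-centre}. The only difference is that you spell out the scalar-rescaling step, which the paper treats as immediate.
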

\begin{proof}
As explained above, $(\se^-,\zeta^\dagger)\in\centre\Autom(\D)$, and $s^-$ only differs from $\zeta^\dagger_F$ by the scalar $\chi(-1)\in\kk^\times$.  So this follows from Proposition \ref{prop:ev-centre}.
\end{proof}
We might call $(\se^-,\sse^-)$ a $\chi$-equivariant \emph{inverse Serre functor} for $(\D,F)$.  
We use it to define another important equivariant functor.
\begin{definition}
The \emph{Auslander-Reiten functor} (or \emph{AR functor}) on $\D$ is the functor $\te:=\se^-\circ F$.  The  \emph{$\gen{F}$-equivariant AR functor} is the equivariant functor
\[ (\te,t):=(\se^-,\sse^-)\circ(F,\chi(1)\id_{F^2}) : (\D,\gen F)\to (\D,\gen F). \]
\end{definition}
\begin{remark}
When $\D=\Db(\kk Q)$, $\te$ is the derived functor of the inverse Auslander-Reiten translate $\tau^-$.  For us, $\te$ is more fundamental than its quasi-inverse.
\end{remark}

Now we apply the change of action isomorphism: we use Proposition \ref{prop:central-equiv} with $(Z,z)=(\se^-,\zeta^\dagger)$. 
\[ (\se^-,\zeta^\dagger)_\star:\M_{\D,F}\to \M_{\D,\te}. \]  
The image of $(\te,t)$ under this isomorphism could be called the $\chi$-equivariant AR functor for $(\D,{\te})$.
\begin{lemma}\label{lem:taucomm1}
The commutation map of the $\chi$-equivariant inverse AR translation for $(\D,{\te})$
 is the identity, i.e., 
\[ (\se^-,\zeta^\dagger)_\star\left( (\te,t) \right) = (\te,\id_{\te^{2}}). \]
\end{lemma}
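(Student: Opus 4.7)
The plan is to reduce the claim to a single identity about the central element $(\se^-,\zeta^\dagger)$ and then establish that identity via triangle identities.

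First I would compute the commutation map $t$ of the equivariant composite $(\te,t) = (\se^-,\sse^-)\circ (F,\chi(1)\id_{F^2})$ using the composition rule for 1-cells in $\Z\aCat$: this gives $t = \sse^- F \circ \se^-(\chi(1)\id_{F^2}) = \chi(1)\sse^- F$. Substituting the formula $\sse^- = \chi(-1)\zeta^\dagger_F$ established at the start of Section~\ref{sss:inv-ar}, and using that $\chi$ is a character of $\Z$ so $\chi(1)\chi(-1) = 1$, yields the clean form $t = \zeta^\dagger_F F$.

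Next I would plug $(Z,z) = (\se^-,\zeta^\dagger)$, $\Phi = \te$, and $\phi = t$ into the formula of Proposition~\ref{prop:central-equiv} for the new commutation map. Factoring out the common whiskering by $F$, the result becomes $\bigl(\se^-\zeta^\dagger_F \circ (\zeta^\dagger_\te)^{-1}\bigr)F$, so proving $t' = \id_{\te^2}$ is equivalent to proving the identity $\zeta^\dagger_\te = \se^-\zeta^\dagger_F$ of natural transformations $\se^-\te \to \te\se^-$. The hexagon axiom of Definition~\ref{def:centre}, applied to the central element $(\se^-,\zeta^\dagger)$ with the pair $(\se^-,F)$, expands the left-hand side as $\zeta^\dagger_{\se^- F} = \se^-\zeta^\dagger_F \circ \zeta^\dagger_{\se^-} F$, so it remains to check that $\zeta^\dagger_{\se^-} = \id_{\se^-\se^-}$.

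This last identity is where the real work lies, and is the main obstacle. My plan is to invert Lemma~\ref{lem:commutewithqinv} to get $\zeta_{\se^-}^{-1} = \eta_2 \circ \varepsilon_1$, substitute this into the three-step definition of $\zeta^\dagger_{\se^-}$ recalled at the start of Section~\ref{sss:inv-ar}, and reorganise the resulting four-step composite via the interchange law. It then splits into two adjacent ``zig-zags'', one whiskered by $\se^-$ on each side, each of which is a triangle identity for the adjunction $\se^-\ladj\se$; both therefore collapse to the identity. The bookkeeping of whiskerings is the only delicate point: one must track the direction of every unit and counit and recognise each factor as the correct whiskered triangle identity, after which everything falls out cleanly.
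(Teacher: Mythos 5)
Your proposal is correct, and it reaches the paper's pivotal identity $\zeta^\dagger_{\te}=\se^-\zeta^\dagger_F$ by a slightly different decomposition. The first two steps coincide exactly with the paper's proof: computing $t=\zeta^\dagger_F F$ from $\sse^-=\chi(-1)\zeta^\dagger_F$, and unwinding the change-of-action formula of Proposition \ref{prop:central-equiv} so that the lemma reduces to $\zeta^\dagger_{\te}=\se^-\zeta^\dagger_F$. Where the paper then expands $\zeta^\dagger_{\te}$ from its definition and decomposes the inner $\zeta_{\te}$ via the hexagon for $(\se,\zeta)$ (Proposition \ref{prop:serre-centre}) together with Lemma \ref{lem:commutewithqinv}, you instead apply the hexagon of Definition \ref{def:centre} directly to the dual half-braiding $(\se^-,\zeta^\dagger)$ -- which is legitimate, since Lemma \ref{lem:dualincentre} places it in the centre, as the paper itself notes at the start of the section -- thereby isolating the single identity $\zeta^\dagger_{\se^-}=\id_{\se^-\se^-}$. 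Your verification of that identity is sound: since $\eta_i=\varepsilon_i^{-1}$, Lemma \ref{lem:commutewithqinv} gives $\zeta_{\se^-}^{-1}=\eta_2\circ\varepsilon_1$, and substituting this into the four-step composite defining $\zeta^\dagger_{\se^-}$ makes it split (no genuine interchange is even needed, just grouping consecutive pairs) into two whiskerings of the triangle identity $(\varepsilon_1\se^-)\circ(\se^-\eta_2)=\id_{\se^-}$ for the unit--counit pair $(\eta_2,\varepsilon_1)$ of $\se^-\dashv\se$, one whiskered on the left and one on the right by $\se^-$. This is precisely the cancellation the paper performs inside one large diagram; your factorisation buys a cleaner reusable statement ($\zeta^\dagger_{\se^-}=\id$, the $\dagger$-analogue of Lemma \ref{lem:zetaunit} combined with Lemma \ref{lem:commutewithqinv}) at the cost of invoking centrality of $\zeta^\dagger$ rather than of $\zeta$, both of which are available in the paper.
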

\begin{proof}
First, by definition, $t:\te F\arr\sim F\te$ is defined as
\[ t:\te F = \se^- FF\arrr{\chi(1)\id} \se^-FF\arrr{\sse^-}F\se^- F = F\te. \]
Using that 
$ s^- = \chi(-1)\zeta^\dagger_F$, we have that $t$ is
\[ t:\te F = \se^- FF\arrr{\zeta_F^\dagger F} F\se^- F = F\te. \]
Let $t'$ denote the $\gen{\te}$-equivariant commutation map, so $(\se^-,\zeta^\dagger)_\star\left( (\te,t) \right) = (\te,t')$.  
Then $t'$ is the composition
\[ t':\te \se^- F\arrr{ (\zeta^\dagger_{\te})^{-1}F } \se^-\te F=\se^-\se^-FF \arrr{ \se^- \zeta^\dagger_F  F }  \se^-F\se^-F = \se^- F \te. \]
We want to understand the maps in this composition.

From the definition of $\zeta^\dagger$ we have:
\[\xymatrix{
\se^- \te \ar[rr]^{\zeta^\dagger_{\te} }\ar[d]_{\se^- \te \eta} &&  \te \se^- \\
\se^-\te \se\se^- \ar[rr]^{\se^- (\zeta_{\te} )^{-1} \se^-} && \se^-\se\te \se^- \ar[u]^{\varepsilon \te \se^-}
}\]
By Proposition \ref{prop:serre-centre} we know
\[\xymatrix{
\se\se^- F  =\se\te \ar[rr]^{\zeta_{\te}}\ar[dr]_{\zeta_{\se^-} F } && \te\se= \se^- F \se \\
& \se^-\se F  \ar[ur]_{\se^-\zeta_{ F }} &
}\]
and by Lemma \ref{lem:commutewithqinv} we know that $\zeta_{\se^-}$ is the following composition:
\[ \zeta_{\se^-}: \se\se^-\arrr{\eta^{-1}}\id_\C \arrr{\varepsilon^{-1}} \se^-\se \]
So we get:
\[\xymatrix{
\se^- F \se \ar[rr]^{(\zeta_{\te})^{-1}} \ar[d]^{\se^-(\zeta_{ F })^{-1}}  && \se\se^- F  \\
\se^-\se F  \ar[rr]^{\varepsilon F } &&  F  \ar[u]_{\eta F } 
}\]
Putting the last four diagrams together gives:
\[\xymatrix @C=60pt {
\se^- \se^- F \ar[rrr]^{\zeta^\dagger_{\te} }\ar[d]_{\se^- \se^- F \eta} &&&  \se^- F \se^- \\
\se^-\se^- F \se\se^- 
 \ar[r] ^{\se^- \se^-(\zeta_{ F })^{-1} \se^-}
 & \se^-  \se^-\se F \se^-  \ar[r] ^{\se^- {\varepsilon F } \se^-}
 & \se^- F  \se^-        \ar[r] ^{\se^- {\eta F }   \se^-} \ar[ur]^=
& \se^-\se\se^- F \se^- \ar[u]_{\varepsilon \se^- F \se^-}
}\]
so $\zeta^\dagger_{\te}=\se^-\zeta^\dagger_F$.  Therefore $t':\te\te\to \te\te$ is the identity natural transformation.
\end{proof}


\subsubsection{Transfer of equivariant Serre structures}

We now want to compare Serre structures on the equivariant categories $(\D,F)$ and $(\D,\te)$.  Recall that we have natural transformations
$\zeta_{\se^-}:\se\se^-\arr\sim \se^-\se$ 
and $\zeta^\dagger_\se: \se^-\se\arr\sim\se\se^-$. 
\begin{lemma}\label{lem:zetainv}
$\zeta^\dagger_\se=(\zeta_{\se^-})^{-1}$.
\end{lemma}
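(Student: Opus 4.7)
The plan is to prove Lemma \ref{lem:zetainv} in two steps: first show that the self-commutation $\zeta_\se\colon \se\se\to\se\se$ is the identity, and then substitute this into the defining formula for $\zeta^\dagger_\se$ to collapse it to $\eta_2\circ\varepsilon_1$, matching $(\zeta_{\se^-})^{-1}$ computed from Lemma \ref{lem:commutewithqinv}.

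For the first step, I will use that $(\se,\zeta)$ lies in the Drinfeld centre $\centre(\Autom(\D))$ by Proposition \ref{prop:serre-centre}, so $\zeta$ is natural in the $\Autom(\D)$-variable and satisfies the hexagon axiom. Applying naturality of $\zeta$ to the $2$-cell $\varepsilon_2\colon \se\se^-\to\id_\D$, together with $\zeta_{\id_\D} = \id_\se$ from Lemma \ref{lem:zetaunit}, yields
\[ \se\varepsilon_2 \;=\; (\varepsilon_2\se)\circ\zeta_{\se\se^-}. \]
I then expand $\zeta_{\se\se^-} = (\se\zeta_{\se^-})\circ(\zeta_\se\se^-)$ via the hexagon, substitute $\zeta_{\se^-} = \eta_1\circ\varepsilon_2$ from Lemma \ref{lem:commutewithqinv}, and invoke the triangle identity $(\varepsilon_2\se)\circ(\se\eta_1) = \id_\se$ for the adjunction $\se\dashv\se^-$. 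The relation collapses to $\se\varepsilon_2 = (\se\varepsilon_2)\circ(\zeta_\se\se^-)$; cancelling the isomorphism $\se\varepsilon_2$ gives $\zeta_\se\se^- = \id$, and essential surjectivity of $\se^-$ together with naturality of $\zeta_\se$ upgrades this to $\zeta_\se = \id_{\se\se}$.

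For the second step, substitute $\zeta_\se = \id$ into the defining formula from Section \ref{sss:centre}:
\[ \zeta^\dagger_\se \;=\; (\varepsilon_1\,\se\se^-)\circ(\se^-\,\zeta_\se^{-1}\,\se^-)\circ(\se^-\se\,\eta_2). \]
The middle factor becomes the identity, so $\zeta^\dagger_\se = (\varepsilon_1\,\se\se^-)\circ(\se^-\se\,\eta_2)$, which by naturality of $\varepsilon_1\colon \se^-\se\to\id_\D$ applied to the components of $\eta_2$ equals $\eta_2\circ\varepsilon_1$. On the other hand, since $\varepsilon_i=\eta_i^{-1}$ in the adjoint equivalence, Lemma \ref{lem:commutewithqinv} gives $(\zeta_{\se^-})^{-1} = (\eta_1\varepsilon_2)^{-1} = \eta_2\circ\varepsilon_1$, so the two natural transformations agree.

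The main obstacle is the bookkeeping in the first step, where one must carefully combine the hexagon identity, the formula for $\zeta_{\se^-}$ from Lemma \ref{lem:commutewithqinv}, and the triangle identity with the appropriate whiskerings on the left and right; once $\zeta_\se = \id$ has been secured, the rest is two lines of naturality together with a routine adjoint-equivalence computation.
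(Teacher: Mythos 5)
Your proof is correct, but it follows a genuinely different route from the paper. The paper disposes of this lemma in one line: since $\se^-$ is simultaneously left and right adjoint to $\se$, the statement is a special case of Lemma \ref{lem:inversebraid} (the general rigid-category fact that the components of a half-braiding with $z_\unit=\id$ are inverted by the dual half-braiding at the dual object), with Lemma \ref{lem:zetaunit} supplying the normalisation hypothesis. You instead avoid Lemma \ref{lem:inversebraid} altogether and first establish the intermediate identity $\zeta_\se=\id_{\se\se}$, which is nowhere stated in the paper; your derivation of it is sound: naturality of $\zeta$ at $\varepsilon_2\colon\se\se^-\to\id_\D$ plus $\zeta_{\id_\D}=\id_\se$ gives $\se\varepsilon_2=(\varepsilon_2\se)\circ\zeta_{\se\se^-}$, the hexagon and Lemma \ref{lem:commutewithqinv} rewrite $\zeta_{\se\se^-}$ as $(\se\eta_1)\circ(\se\varepsilon_2)\circ(\zeta_\se\se^-)$, the triangle identity for the pair $(\eta_1,\varepsilon_2)$ kills $(\varepsilon_2\se)\circ(\se\eta_1)$, and cancelling the isomorphism $\se\varepsilon_2$ followed by essential surjectivity of $\se^-$ yields $\zeta_\se=\id$. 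Substituting this into the defining composite for $\zeta^\dagger_\se$ (whose coevaluation and evaluation are indeed $\eta_2$ and $\varepsilon_1$, the unit--counit of $\se^-\dashv\se$) and using interchange/naturality gives $\zeta^\dagger_\se=\eta_2\circ\varepsilon_1=(\eta_1\circ\varepsilon_2)^{-1}=(\zeta_{\se^-})^{-1}$, exactly as claimed. The trade-off: the paper's argument is shorter and reuses general monoidal machinery, whereas yours is self-contained and produces two explicit by-products of independent interest --- the fact that the Serre functor's canonical self-commutation $\zeta_\se$ is the identity, and the closed formula $\zeta^\dagger_\se=\eta_2\circ\varepsilon_1$ mirroring Lemma \ref{lem:commutewithqinv} --- at the cost of a slightly longer whiskering computation.
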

\begin{proof}
Our quasi-inverse $\se^-$ is both left and right adjoint to the Serre functor $\se$, so this follows from Lemma \ref{lem:inversebraid}, using Lemma \ref{lem:zetaunit}.
\end{proof}

Recall the isomorphism of monoidal categories
\[ (\se^-,\zeta^\dagger)_\star:\M_{\D,F}\to \M_{\D,\te}. \]  
described in Sections \ref{sss:inv-ar} and \ref{sss:changeaction}.
Fix a character $\chi:\Z\to\kk^\times$.  
\begin{proposition}\label{prop:transfer-serre}
Given two natural isomorphisms
\[ \sse:\se F\arr\sim F\se \;\; \text{ and } \;\; \sse':\se \te\arr\sim \te\se, \]
any two of the following statements imply the third:  
\begin{enumerate}[label=(\alph*)]
\item $(\se^-,\zeta^\dagger)_\star(\se,\sse)=(\se,\sse')$;
\item $(\se,\sse,\kappa)$ is a {$\chi$-equivariant Serre structure} for $(\D,F)$;
\item $(\se,\sse',\kappa)$ is a {$\chi$-equivariant Serre structure} for $(\D,\te)$;
\end{enumerate}
\end{proposition}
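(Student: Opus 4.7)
The plan is to reduce the three-way implication to the single identity
\[ (\se^-,\zeta^\dagger)_\star(\se,\zeta_F) \;=\; (\se,\zeta_\te) \]
in $\M_{\D,\te}$, where $\zeta_F$ and $\zeta_\te$ are the canonical commutation maps supplied by Proposition~\ref{prop:serre-comm-autoeq}. Once this is in hand, the statement becomes essentially formal: by Definition~\ref{def:equivar-Serre}, condition~(b) is the equation $\sse=\chi(1)\zeta_F$ and condition~(c) is $\sse'=\chi(1)\zeta_\te$, while (a) asserts that $\sse'$ is the image of $\sse$ under $(\se^-,\zeta^\dagger)_\star$. Since the change-of-action functor of Proposition~\ref{prop:central-equiv} is visibly $\kk$-linear in the 1-cell commutation datum, scaling the core identity by the unit $\chi(1)\in\kk^\times$ yields $(\se^-,\zeta^\dagger)_\star(\se,\chi(1)\zeta_F)=(\se,\chi(1)\zeta_\te)$, and any two of (a), (b), (c) then force the third.

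For the core identity, I first unfold the change-of-action formula of Proposition~\ref{prop:central-equiv} applied with $Z=\se^-$ and $z=\zeta^\dagger$: the functor $(\se^-,\zeta^\dagger)_\star$ sends $(\se,\zeta_F)$ to $(\se,\phi')$ where
\[ \phi'\colon \se\,\te \arrr{(\zeta^\dagger_\se)^{-1} F} \se^-\,\se\, F \arrr{\se^-\zeta_F} \se^- F\,\se \;=\; \te\,\se. \]
The task thus reduces to showing $\phi'=\zeta_\te$.

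Next, I invoke Proposition~\ref{prop:serre-centre} to recall that $(\se,\zeta)\in\centre(\Autom(\D))$. The hexagon axiom of Definition~\ref{def:centre}, read with the monoidal product given by composition and applied to the pair of endofunctors $(\se^-,F)$, yields
\[ \zeta_\te \;=\; \zeta_{\se^- F} \;=\; \se^-\zeta_F\circ \zeta_{\se^-}F. \]
Finally, Lemma~\ref{lem:zetainv} identifies $\zeta_{\se^-}$ with $(\zeta^\dagger_\se)^{-1}$, and substituting into this hexagon decomposition reproduces precisely the formula for $\phi'$, completing the identification $\phi'=\zeta_\te$.

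The main obstacle is really just careful bookkeeping of the dagger and centre structures — remembering that $\zeta^\dagger$ witnesses the centrality of $(\se^-,\cdot)$ via Lemma~\ref{lem:dualincentre} while $\zeta$ witnesses that of $(\se,\cdot)$, and that Lemma~\ref{lem:zetainv} is the bridge between them. No new calculation is required beyond the centre hexagon and the adjunction-duality material of Section~\ref{sss:centre}, so the proof should be short once the diagram chase above is written out.
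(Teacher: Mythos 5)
Your proposal is correct and follows essentially the same route as the paper: you reduce (a), (b), (c) to equations between natural transformations and verify the single identity $\zeta_\te=(\se^-\zeta_F)\circ((\zeta^\dagger_\se)^{-1}F)$ by combining the change-of-action formula of Proposition~\ref{prop:central-equiv} with the centre hexagon of Proposition~\ref{prop:serre-centre} and Lemma~\ref{lem:zetainv}, exactly as in the paper's proof. The remaining bookkeeping with the scalar $\chi(1)$ and the "two imply three" logic is handled the same way, so nothing further is needed.
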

\begin{proof}
The assumptions ensure that $(\se,\kappa)$ is a Serre structure on $\D$, so this reduces to proving the ``two implies three'' property for the following list of statements:
\begin{enumerate}[label=(\alph*)]
\item $\sse'=(\se^-\sse)\circ ((\zeta^\dagger_\se)^{-1} F)$.
\item $\sse=\chi(1)\zeta_F$;
\item $\sse'=\chi(1)\zeta_{\te}$;
\end{enumerate}
By Proposition  \ref{prop:serre-centre} we know that $\zeta_{\te}=(\se^-\zeta_F)\circ(\zeta_{\se^-}F)$, so by  Lemma \ref{lem:zetainv} we have
\[ \zeta_{\te}=(\se^-\zeta_F)\circ((\zeta^\dagger_\se)^{-1} F). \]
So as all maps are isomorphisms this is clear.
\end{proof}


\subsection{Synthetic Calabi-Yau categories}

Now we prove a result which will give a characterisation of fractional Calabi-Yau triangulated categories.   However, we consider a general equivariant category $(\D,F)$ with Serre duality, as even when $\D$ is triangulated it can be useful to choose an automorphism $F$ other than the automorphism $\Sigma$ coming from the triangulated structure: for example, in Section \ref{ss:hrfa} below, $F$ will be a power of $\Sigma$.  So, in general, we think of these relations as ``synthetic'' (not ``genuine'') fractional Calabi-Yau relations.
\begin{definition}\label{def:syncy}
Let $(\D,F)$ be a $\Z$-equivariant category with $\chi$-equivariant Serre functor $(\se,\sse)$.  We say $(\D,F)$ is \emph{$\chi$-synthetic Calabi-Yau of dimension $N/m$} if 
\[ (\se,\sse)^m\cong(F,\chi(1)\id_{F^2})^N \]
in $\Z\aCat$.  
\end{definition}

Let  $(\D,F)$ be a $\Z$-equivariant category and let $\te=\se^- F$.  Let  $\chi:\Z\to\kk^\times$ be a character and let $0\leq N\leq m$.
\begin{theorem}\label{thm:scy-criteria}
Let $k=m-N$.  The following are equivalent:
\begin{enumerate}[label=(\alph*)]
\item\label{scy:SF} 
$(\D,F)$ is synthetic Calabi-Yau of dimension $N/m$;
\item\label{scy:ST} the $\Z$-equivariant category $(\D,\te)$ has a $\chi$-equivariant Serre functor $(\se,\sse')$ and $$(\se,\sse')^{k}\cong(\te,\id_{\te^{2}})^N;$$
\item\label{scy:TF} the $\Z$-equivariant category $(\D,F)$ has a $\chi$-equivariant Serre functor $(\se,\sse)$ and 
$$(\te,\zeta^\dagger_FF)^m\cong(F,\chi(1)\id_{F^2})^k.$$
\end{enumerate}
\end{theorem}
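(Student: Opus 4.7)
The plan is to work in the monoid $\pi_0 \M_{\D,F}$ of iso-classes of $F$-equivariant endofunctors of $\D$ and exploit the change-of-action monoidal isomorphism $(\se^-,\zeta^\dagger)_\star:\M_{\D,F}\to\M_{\D,\te}$ from Proposition \ref{prop:central-equiv}. First I would establish a direct identification: unpacking the composition $(\te,t):=(\se^-,\sse^-)\circ(F,\chi(1)\id_{F^2})$ using the composition rule for 1-cells in $\Z\aCat$ together with the formula $\sse^-=\chi(-1)\zeta^\dagger_F$ from Section \ref{sss:inv-ar} gives $t=\chi(1)\sse^- F=\zeta^\dagger_F F$, since $\chi(1)\chi(-1)=\chi(0)=1$. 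So the equivariant functor $(\te,\zeta^\dagger_F F)$ appearing in (c) is precisely the $F$-equivariant AR functor $(\te,t)$, and we have the monoid relation $[(\te,t)]=[(\se,\sse)]^{-1}[(F,\chi(1)\id_{F^2})]$ in $\pi_0 \M_{\D,F}$.

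Next I would show that $[(\se,\sse)]$ lies in the centre of $\pi_0 \M_{\D,F}$. Proposition \ref{prop:serre-centre} tells us that $(\se,\zeta)\in\centre(\Autom(\D))$; Proposition \ref{prop:ev-centre} then places it in the equivariant centre, so that $\zeta_\Phi:\se\Phi\to\Phi\se$ provides a 2-cell in $\Z\aCat$ for every equivariant $(\Phi,\phi)$. Rescaling $\zeta_F$ by the unit $\chi(1)\in\kk^\times$ to obtain $\sse$ does not affect these commutation properties, so $[(\se,\sse)]$, and hence also $[(\se^-,\sse^-)]$, is central in $\pi_0 \M_{\D,F}$. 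Consequently the submonoid generated by $[(\se,\sse)]$, $[(F,\chi(1)\id_{F^2})]$ and $[(\te,t)]$ is abelian, and the manipulation of the relation above is legitimate.

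For (a) iff (c): commutativity in this abelian submonoid gives $[(\te,t)]^m \cong [(\se,\sse)]^{-m}[(F,\chi(1)\id_{F^2})]^m$, so $[(\te,t)]^m \cong [(F,\chi(1)\id_{F^2})]^k$ iff $[(\se,\sse)]^m \cong [(F,\chi(1)\id_{F^2})]^{m-k}=[(F,\chi(1)\id_{F^2})]^N$, which is exactly (a). For (a) iff (b): first rewrite (a) as $[(\se,\sse)]^{m-N} \cong [(\te,t)]^N$ in $\pi_0 \M_{\D,F}$ using the relation above, then transport this equivalence across the monoidal isomorphism $(\se^-,\zeta^\dagger)_\star$. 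By Proposition \ref{prop:transfer-serre} the latter sends $(\se,\sse)$ to the equivariant Serre datum $(\se,\sse')$ for $(\D,\te)$, simultaneously establishing existence of the latter; by Lemma \ref{lem:taucomm1} it sends $(\te,t)$ to $(\te,\id_{\te^2})$; so the translated relation becomes exactly the isomorphism in (b).

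The main obstacle will be the centrality step: one must verify that the natural isomorphisms $\zeta_\Phi$ witnessing $(\se,\sse)$ commuting past arbitrary equivariant endofunctors $(\Phi,\phi)$ are genuinely 2-cells in $\Z\aCat$, not merely in $\kk\Cat$, i.e. morphisms of equivariant functors. This is exactly the content of Proposition \ref{prop:ev-centre}, applied carefully after the scalar rescaling by $\chi(1)$. Once this is in place, the rest reduces to bookkeeping in an abelian monoid combined with the monoidal-isomorphism transport supplied by Proposition \ref{prop:central-equiv}, Proposition \ref{prop:transfer-serre} and Lemma \ref{lem:taucomm1}.
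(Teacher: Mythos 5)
Your proposal is correct, and it leans on the same toolbox as the paper (Proposition \ref{prop:central-equiv}, Lemma \ref{lem:taucomm1}, Proposition \ref{prop:transfer-serre}, and the equivariant-centre machinery of Propositions \ref{prop:serre-centre} and \ref{prop:ev-centre}), but it organizes the equivalences differently. The paper cycles \ref{scy:SF}$\Rightarrow$\ref{scy:ST}$\Rightarrow$\ref{scy:TF}$\Rightarrow$\ref{scy:SF}: the first step is your transport argument (reduce to $(\se,\sse)^{m-N}\cong(\te,t)^N$ using centrality of $(\se^-,\sse^-)$ from Lemma \ref{lem:invserrecentral}, then apply $(\se^-,\zeta^\dagger)_\star$ and Lemma \ref{lem:taucomm1}), but the step \ref{scy:ST}$\Rightarrow$\ref{scy:TF} is an explicit computation which applies $(\se,\zeta)_\star$ and chases unit--counit zigzags to see that the resulting commutation 2-cells are exactly $\chi(1)\id_{F^2}$ and $\zeta^\dagger_F F$. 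You bypass that computation entirely by observing at the outset that $t=\zeta^\dagger_F F$ (an identity the paper itself derives inside the proof of Lemma \ref{lem:taucomm1}), so that \ref{scy:TF} is literally a statement about the $F$-equivariant AR functor $(\te,t)$, and then \ref{scy:SF}$\Leftrightarrow$\ref{scy:TF} becomes bookkeeping in the group of isomorphism classes of equivariant autoequivalences, once you know $[(\se,\sse)]$ is central there; your justification of that centrality (Proposition \ref{prop:ev-centre} plus the harmlessness of the scalar $\chi(1)$) is exactly the paper's own argument for Lemma \ref{lem:invserrecentral}, applied to $\se$ instead of $\se^-$, so it is sound. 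Two small points to make explicit when writing this up: for \ref{scy:ST}$\Rightarrow$\ref{scy:SF} you need the ``two imply the third'' of Proposition \ref{prop:transfer-serre} in the reverse configuration (transporting $(\se,\sse')$ back along the inverse of $(\se^-,\zeta^\dagger)_\star$ and concluding the preimage commutation map is $\chi(1)\zeta_F$), and your $\pi_0$-level manipulations are legitimate precisely because all functors involved are equivalences, so the relevant classes are invertible. What each approach buys: yours is shorter and isolates the only genuinely nontrivial inputs (centrality, the change-of-action isomorphism, and $t=\zeta^\dagger_F F$), while the paper's explicit \ref{scy:ST}$\Rightarrow$\ref{scy:TF} chase exhibits concretely how the specific 2-cells in \ref{scy:TF} arise under change of action, which is the kind of bookkeeping reused later in the applications.
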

\begin{proof}
``$\ref{scy:SF}\then\ref{scy:ST}$'':
Suppose $(\D,F)$ has a
 $\chi$-equivariant Serre functor $(\se,\sse)$ and
we have an isomorphism of equivariant functors 
\[ \alpha:(\se,\sse)^m\arr\sim(F,\chi(1)\id_{F^2})^N \]
 in $\Z\aCat$.  
 This gives 
\[(\se^-,\sse^-)^N(\se,\sse)^m  \arr{\sim}  (\se^-,\sse^-)^N(F,\chi(1)\id_{F^2})^N.\]
As $\se$ is an equivalence we have an isomorphism 
\[ (\id,\id)\arr\sim (\se^-,\sse^-)(\se,\sse). \]
By Lemma \ref{lem:invserrecentral} we have an isomorphism
\[ (\se^-,\sse^-)(F,\chi(1)\id_{F^2})\arr\sim  (F,\chi(1)\id_{F^2})(\se^-,\sse^-)\]
which we use repeatedly.
Composing these gives an isomorphism
\[(\se,\sse)^{m-N}\arr{\sim}(\te,t)^N\]
of equivariant endofunctors of $(\D,F)$.   Applying the change of structure isomorphism $(\se^-,\zeta^\dagger)_\star$ and using Lemma \ref{lem:taucomm1}
gives an isomorphism
\[ (\se,\sse')^{m-N}\arr\sim(\te,\id_{\te^{2}})^N \]
and we know $(\se,\sse')$ is $\chi$-equivariant by Proposition \ref{prop:transfer-serre}.

``$\ref{scy:ST}\then\ref{scy:TF}$'':
Suppose we have
\[ (\se,\sse')^{k}\arr\sim(\te,\id_{\te^{2}})^N \]
where $(\se,\sse')$ is $\chi$-equivariant for $(\D,\te)$.  Applying $(\te,\id_{\te^2})^k$ to the right and using the methods above gives an isomorphism
\[ \left( (\se,\sse') (\te,\id_{\te^2})  \right)^{k}\arr\sim(\te,\id_{\te^{2}})^{N+k} \]
As $\te=\se^-F$, we have
\[ (\se,\sse')(\te,\id_{\te^{2}}) = ( \se\te,   \;  \se\te\te 
\arrr{\sse'\te} \te\se\te ) \]
so using the isomorphism
\[ F\arr{\eta F} \se\se^- F=\se \te \]
we get
\[ (\se,\sse')(\te,\id_{\te^{2}}) \cong ( F,   \;  F\te =F\se^- F\arrr{\eta F\se^- F} \se\se^- F\se^- F=\se \te\se^- F \arrr{\sse'  \se^- F}   \te \se\se^- F  \arrr{\te\eta^{-1} F} \te F    ) \]
Now apply $(\se,\zeta)_\star$.  The map $F\te\to\te F $ above is sent to:
\[ F \se \se^- F \arrr{{\zeta_F}^{-1} \se^- F}  \se F \se^- F \arrr{\se \eta F \se^- F}  \se\se\se^- F \se^- F \arrr{ \se \sse' F \se^- F }  \se\se^- F\se \se^- F  \arrr{\se\se^- F\eta^{-1} F} \se\se^- F F
\]
To get an endomorphism in $(\D,F)$ we use $\eta$ and $\eta^{-1}$.  Using Proposition \ref{prop:transfer-serre} and the zigzag equations, this leaves us with $\chi(1)\id_{F^2}:FF\to FF$.  A similar calculation shows that
 \[  (\se,\zeta)_\star\left(  (\te,\id_{\te^{2}}) \right) = \se^- F \se \se^- F \arrr{\zeta_{\te}^{-1}\se^- F} 
\se \se^- F  \se^- F \]
and so using $\eta$ and $\eta^{-1}$ leaves us with $\zeta^\dagger_FF:\se^-FF\to F\se^- F$.

``$\ref{scy:TF}\then\ref{scy:SF}$'':
An isomorphism
$$(\te,\zeta^\dagger_FF)^m\arr\sim(F,\chi(1)\id_{F^2})^k$$
gives an isomorphism
$$(F^-,\chi(-1)\id_{F^2}^-)^k \arr\sim  (\te^-,(\zeta^\dagger_FF)^-)^m.$$
We compose with $(F,\chi(1)\id_{F^2})^m$ on the right to get
\[ (F,\chi(1)\id_{F^2})^{m-k}\arr\sim (\se,\sse)^m. \]
\end{proof}

Now we move between equivariant and hom-graded categories.

\begin{theorem}\label{thm:fcy-orbit}
Let $k=m-N$.  The following are equivalent:
\begin{enumerate}[label=(\alph*)]
\item\label{it:se-tau} the $\Z$-equivariant category $(\D,\te)$ has a $\chi$-equivariant Serre functor $(\se,\sse')$ and $$(\se,\sse')^{k}\cong(\te,\id_{\te^{2}})^N;$$
\item\label{it:se-gr} the orbit category $\C=(\D,\te)/\Z$ has a $\chi$-hom-graded Serre functor $(\se^\C,\ell)$ and $$(\se^\C,\ell)^{k}\cong(\id_\C,\underline{N}).$$
\end{enumerate}
Moreover, if
$\C$ has finitely many isoclasses of indecomposable objects and $A$ denotes the $\Z$-graded base algebra of $\C$, then \ref{it:se-gr} is equivalent to:
\begin{enumerate}[label=(\alph*)]\setcounter{enumi}{2}
\item\label{it:nak-alg} $A$ is a $\chi$-graded Frobenius algebra with Nakayama automorphism $(\alpha,\ell)$ satisfying $$(\alpha,\ell)^{k}\cong(\id_A,\underline{N}).$$
\end{enumerate}
\end{theorem}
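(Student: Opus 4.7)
My plan is to prove the three equivalences by transferring structures along the biequivalences established earlier in the paper, so that no new calculation is really required; everything reduces to tracking what the various functors do on the specific objects $(\se,\sse')$, $(\te,\id_{\te^2})$, and $(\id,\underline{N})$.

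For \ref{it:se-tau} $\Leftrightarrow$ \ref{it:se-gr}, I will apply Asashiba's biequivalence in its Serre-equivariant refinement, Theorem \ref{thm:transfersss}, which gives
\[ ?/\Z: \Z\aSCat_\ei \stackrel{\sim}{\rightleftarrows} \GrSCat{\Z}_\ei : ?\smsh\Z. \]
Under the orbit 2-functor, the $\chi$-equivariant Serre structure $(\se,\sse',\kappa)$ on $(\D,\te)$ goes to the $\chi$-hom-graded Serre structure $(\se^\C,\underline 0,\kappa^\C)$ on $\C$ by Proposition \ref{prop:orbiting-serre}, and since $(\se^\C,\underline 0)$ is unique up to isomorphism of degree-preserving functors (Proposition \ref{prop:unique-hg}), we may identify it with $(\se^\C,\ell)$. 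More importantly, the equivariant functor $(\te,\id_{\te^2})$, viewed as a power of the canonical equivariant action, is sent by the orbit 2-functor to $(\id_\C,\underline{1})$ by Lemma \ref{lem:gshift}; raising to the $N$th power gives $(\id_\C,\underline{N})$. Because $?/\Z$ is a 2-equivalence, it induces a bijection on isomorphism classes of 1-cells between any pair of objects, so an isomorphism $(\se,\sse')^k \cong (\te,\id_{\te^2})^N$ in $\Z\aCat$ corresponds precisely to an isomorphism $(\se^\C,\ell)^k \cong (\id_\C,\underline{N})$ in $\GrCat\Z$.

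For \ref{it:se-gr} $\Leftrightarrow$ \ref{it:nak-alg} (under the finiteness hypothesis), I will use the graded version of the algebra--category biequivalence, Theorem \ref{thm:graded-ss-base}:
\[ \biP^\Z: \fBasicFAlg_\ei^\Z \stackrel{\sim}{\longrightarrow} \fAddSCat^\Z. \]
Since $\C$ has finitely many isoclasses of indecomposables, its base algebra $A$ is a $\Z$-graded basic finite-dimensional algebra, and $\biP^\Z A$ is equivalent to $\Ind\ic\C$ as a graded category with Serre structure. The Nakayama automorphism $(\alpha,\ell)$ of $A$ corresponds under this biequivalence to the degree-preserving functor $(\se^\C,\ell)$ via Proposition \ref{prop:frobserre-gr} (after transporting through the idempotent completion and matrix 2-functors, which preserve Serre structures by Lemma \ref{lem:serre-icmatind} and the uniqueness results of Section \ref{ss:uniqueserre}). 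On the other side, the identity degree-adjusted functor $(\id_A,\underline{N})$ of $A$ corresponds to $(\id_\C,\underline{N})$. Since $\biP^\Z$ is locally an equivalence, an isomorphism $(\alpha,\ell)^k \cong (\id_A,\underline{N})$ in $\fBasicAlg^\Z$ exists if and only if the corresponding isomorphism exists in $\fAddCat^\Z$, which is exactly \ref{it:se-gr}.

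The only real subtlety is bookkeeping of the degree-adjusters under composition: when iterating $(\se^\C,\ell)$ one has to verify that its $k$-th power has degree-adjuster given by $\sum_{i=0}^{k-1} \ell\circ (\se^\C)^i$, and that an isomorphism with $(\id_\C,\underline{N})$ forces this sum to equal the constant function $\underline{N}$ up to a $2$-cell (a homogeneous natural transformation of the correct degree). This is entirely formal from the composition rule for 1-cells in $\GrCat\Z$ spelled out earlier, so no obstacle arises. The main conceptual point is simply that the three statements are literal transcriptions of one another across the biequivalences $?/\Z$ and $\biP^\Z$, and I expect the argument to occupy only a short paragraph once these identifications are explicitly invoked.
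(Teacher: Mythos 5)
Your proposal is correct and follows essentially the same route as the paper: the equivalence \ref{it:se-tau}$\iff$\ref{it:se-gr} is obtained by applying the orbit 2-functor, invoking Proposition \ref{prop:orbiting-serre} for the Serre structure and Lemma \ref{lem:gshift} for $(\te,\id_{\te^2})^N\mapsto(\id_\C,\underline{N})$, with reversibility guaranteed by Theorems \ref{thm:asa2eq} and \ref{thm:transfersss}; and \ref{it:se-gr}$\iff$\ref{it:nak-alg} is exactly Theorem \ref{thm:graded-ss-base}. Your extra remarks on degree-adjuster bookkeeping and on identifying the strict orbit Serre functor $(\se^\C,\underline{0})$ with $(\se^\C,\ell)$ via Proposition \ref{prop:unique-hg} only make explicit what the paper leaves implicit.
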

\begin{proof}
``$\ref{it:se-tau}\iff\ref{it:se-gr}$'':
Given an isomorphism $(\se,\sse')^{k}\arr\sim(\te,\id_{\te^{2}})^N$,
apply the orbit 2-functor $-/G$.  By Proposition \ref{prop:orbiting-serre}, $(\se,\sse')/G$ is a $\chi$-hom-graded Serre functor and, by Lemma \ref{lem:gshift}, $((\te,\id_{\te^{2}})^N)/G$ is isomorphic to $(\id_\C,\underline{N})$.  By Theorems \ref{thm:asa2eq}
and \ref{thm:transfersss}
these steps are reversible.

Now suppose $\C$ has finitely many isoclasses of indecomposable objects.
\\``$\ref{it:se-gr}\iff\ref{it:nak-alg}$'': Use Theorem \ref{thm:graded-ss-base}.
\end{proof}

\begin{corollary}\label{thm:arcyalg}
Let $(\D,\Sigma,\Delta)$ be a triangulated category and let $\C=\D/\te$ be its orbit category by $\te$.   Suppose $\C$ has finitely many isoclasses of indecomposable objects and let $A$ be the $\Z$-graded base algebra of $\C$.  Then the following are equivalent:
\begin{enumerate}[label=(\alph*)]
\item\label{it:fcy-cat} $(\D,\Sigma,\Delta)$ is fractional Calabi-Yau of dimension $N/m$;
\item\label{it:fcy-alg} $A$ is a $\sgn$-graded Frobenius algebra with Nakayama automorphism $(\alpha,\ell)$ satisfying $$(\alpha,\ell)^{m-N}\cong(\id_A,\underline{N}).$$
\end{enumerate}
\end{corollary}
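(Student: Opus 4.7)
The plan is to chain together the main technical results of the paper, specialising to $F = \Sigma$ and $\chi = \sgn$. The starting observation is that, by Van den Bergh's theorem (Theorem \ref{thm:vdb-tri}), the triangulated structure on $\D$ promotes any Serre structure $(\se,\kappa)$ to a $\sgn$-equivariant Serre structure $(\se,\sse,\kappa)$ for $(\D,\gen{\Sigma})$. Comparing Definition \ref{def:cy} with Definition \ref{def:syncy} then shows that $(\D,\Sigma,\Delta)$ being fractional Calabi-Yau of dimension $N/m$ is literally the statement that $(\D,\gen{\Sigma})$ is $\sgn$-synthetic Calabi-Yau of dimension $N/m$, since $(\Sigma,-\id_{\Sigma^2}) = (\Sigma,\sgn(1)\id_{\Sigma^2})$.

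Next, I would apply Theorem \ref{thm:scy-criteria} with $F = \Sigma$ and $\chi = \sgn$. Implication \ref{scy:SF}$\iff$\ref{scy:ST} rewrites the synthetic Calabi-Yau condition as the existence of a $\sgn$-equivariant Serre functor $(\se,\sse')$ on $(\D,\gen{\te})$ together with an isomorphism
\[ (\se,\sse')^{m-N} \cong (\te,\id_{\te^2})^N \]
in $\Z\aCat$. This moves us from the $\Sigma$-equivariant world to the $\te$-equivariant world, which is the natural input for the orbit construction.

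Then I would feed this into Theorem \ref{thm:fcy-orbit}, again with $\chi = \sgn$. The equivalence \ref{it:se-tau}$\iff$\ref{it:nak-alg} there (using our finiteness hypothesis on $\C$) translates the equivariant isomorphism into the statement that the $\Z$-graded base algebra $A$ of $\C = \D/\te$ is a $\sgn$-graded Frobenius algebra whose Nakayama automorphism $(\alpha,\ell)$ satisfies $(\alpha,\ell)^{m-N}\cong(\id_A,\underline{N})$. Composing these three equivalences gives \ref{it:fcy-cat}$\iff$\ref{it:fcy-alg}.

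There is no serious obstacle: the corollary is essentially a concatenation of the two earlier theorems, specialised to the triangulated setting. The only delicate point is book-keeping, namely checking that the $\sgn$-equivariant Serre structure produced by Theorem \ref{thm:vdb-tri} is the one feeding into Theorem \ref{thm:scy-criteria}, and that the change-of-action step in \ref{scy:SF}$\implies$\ref{scy:ST} produces precisely the $(\se,\sse')$ whose orbit data is computed in Proposition \ref{prop:orbiting-serre}. Both are built into the statements of the quoted theorems, so the proof itself should be a short invocation of \ref{thm:vdb-tri}, \ref{thm:scy-criteria}, and \ref{thm:fcy-orbit}.
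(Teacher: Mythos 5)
Your proposal is correct and is exactly the argument the paper intends: the corollary is stated without a separate proof as the immediate concatenation of Theorem \ref{thm:scy-criteria} (with $F=\Sigma$, $\chi=\sgn$) and Theorem \ref{thm:fcy-orbit}, bridged by the observation that Definition \ref{def:cy} is precisely the $\sgn$-synthetic condition of Definition \ref{def:syncy} since $(\Sigma,-\id_{\Sigma^2})=(\Sigma,\sgn(1)\id_{\Sigma^2})$. One cosmetic point: the existence of the $\sgn$-equivariant Serre structure is automatic from Definition \ref{def:equivar-Serre} (any character works once a Serre structure exists); Theorem \ref{thm:vdb-tri} is what makes that structure triangulated, so your appeal to it is harmless but not where the existence comes from.
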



\section{Applications}\label{s:apps}

\subsection{Derived categories}

Let $A,B,C$ be $\kk$-algebras.  Given chain complexes $X$ of $C\da B$-bimodules and $Y$ of $B\da A$-bimodules, we get a chain complex $X\otimes_B Y$ of $C \da A$-bimodules defined as follows.  Its degree $n$ term is
\[ (X\otimes_B Y)^n=\bigoplus_{i+j=n}X^i\otimes_B Y^j \]
and its differential $d: (X\otimes_B Y)^n\to  (X\otimes_B Y)^{n+1}$ on the summand $X^i\otimes_B Y^j $ is $d_X\otimes \id+ (-1)^i\id\otimes d_Y$.  

Given an algebra $\Lambda$, let $\Db(\Lambda)$ denote the bounded derived category of left $\Lambda$-modules.  This is a triangulated category with automorphism $\Sigma$ which shifts cochain complexes one place to the left and changes the sign of all differentials.  Similarly, let $\Db(\Gamma\da\Lambda)$ denote the bounded derived category of $\Gamma\da\Lambda$-bimodules.

Let $\fBim$ denote the following bicategory:
\begin{itemize}
\item The objects are finite-dimensional algebras $\Lambda, \Gamma$ of finite global dimension.
\item The 1-cells $X,Y:\Lambda\to \Gamma$ are objects of $\Db(\Gamma\da\Lambda)$.
\item The 2-cells $f:X\to Y$ are morphisms in $\Db(\Gamma\da\Lambda)$.
\end{itemize}
Composition of 1-cells is by derived tensor product.

Recall the 2-category $\Tri$ of triangulated categories from Section \ref{ss:cy}.  We have a weak 2-functor
\[ \twofun{D}:\fBim\to\Tri \]
which sends the object $\Lambda\in\Tri$ to $\Db(\Lambda)$.  On 1-cells, it sends $X\in\Db(\Gamma\da\Lambda)$ to the triangulated functor $(\Phi, \phi)$ where $\Phi=X\dert_\Lambda-$ is the left derived tensor product.  If $M\in\Db(\Lambda)$ is a bounded complex of projective left $\Lambda$-modules then the commutation isomorphism
\[ \phi_M: X\otimes_\Lambda (\Sigma M)\arr\sim \Sigma(X\otimes_\Lambda M) \]
is given by summing the following maps:
\[ X^i\otimes_\Lambda M^j \arrr {(-1)^i} X^i\otimes_\Lambda M^j . \]
On 2-cells, it sends $f:X\to Y$ to $f\otimes\id:X\dert_\Lambda-\to Y\dert_\Lambda-$.

\begin{lemma}\label{lem:derff}
The weak 2-functor $\twofun{D}:\fBim\to\Tri$ is locally fully faithful.
\end{lemma}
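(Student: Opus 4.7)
The task is to show that for each pair of finite-dimensional algebras $\Lambda,\Gamma$ of finite global dimension, the functor
\[ \twofun{D}_{\Lambda,\Gamma}:\Db(\Gamma\da\Lambda) \to \Tri(\Db(\Lambda),\Db(\Gamma)) \]
is fully faithful. The guiding idea, as is standard, is that a natural transformation between the derived tensor functors $X\dert_\Lambda-$ and $Y\dert_\Lambda-$ should be entirely determined by its value at the regular module $\Lambda\in\Db(\Lambda)$, which serves as a compact generator.

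First I would dispose of faithfulness, which is immediate: evaluating $f\otimes\id:X\dert_\Lambda-\to Y\dert_\Lambda-$ at $\Lambda$ and using the natural identifications $X\otimes_\Lambda\Lambda\cong X$ and $Y\otimes_\Lambda\Lambda\cong Y$, the component $(f\otimes\id)_\Lambda$ recovers $f$, so $f\otimes\id=g\otimes\id$ forces $f=g$.

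For fullness, given a natural transformation of triangulated functors $\alpha:X\dert_\Lambda-\to Y\dert_\Lambda-$, I would define $f:X\to Y$ in $\Db(\Gamma)$ as $\alpha_\Lambda$ via the above identifications. Two checks are needed. First, $f$ must lift to a morphism in $\Db(\Gamma\da\Lambda)$: naturality of $\alpha$ applied to the right-multiplication maps $r_\lambda:\Lambda\to\Lambda$, which are morphisms of left $\Lambda$-modules and account for all of $\End_{\Db(\Lambda)}(\Lambda)=\Lambda^{\op}$, forces $f$ to commute with the right $\Lambda$-action. Since $\Lambda$ and $\Gamma$ have finite global dimension, $\RHom$ groups between bimodule complexes can be computed using projective resolutions, and a morphism in $\Db(\Gamma)$ that respects the right $\Lambda$-action in this derived sense lifts canonically to $\Db(\Gamma\da\Lambda)$.

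The main obstacle is the second check: that $\alpha=f\otimes\id$ as natural transformations on all of $\Db(\Lambda)$, not just on $\Lambda$. The naive inductive approach through distinguished triangles fails because triangulated categories lack unique fill-ins, so I cannot simply propagate equality along triangles. Instead I would leverage the finite global dimension hypothesis: every object $M\in\Db(\Lambda)$ is quasi-isomorphic to a bounded complex $P^\bullet$ of finitely generated projectives, and these projectives are direct summands of some $\Lambda^n$. Beginning from the tautological case $M=\Lambda$, additivity handles $\Lambda^n$, naturality applied to splitting idempotents handles finitely generated projectives, and for a bounded complex of projectives the derived tensor product coincides with the honest tensor product (no resolution of $X$ or $Y$ is required). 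Consequently $f\otimes\id_{P^\bullet}$ is a literal chain map, and it can be compared with $\alpha_{P^\bullet}$ component by component via naturality with respect to the inclusions and projections onto each term of $P^\bullet$. It is this rigidity of the strict tensor product on complexes of projectives that converts the otherwise floppy triangulated fill-in argument into a genuine proof.
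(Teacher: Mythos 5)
Your overall strategy mirrors the paper's sketch: invert $\twofun{D}$ on 2-cells by evaluating at the generator $\Lambda$, then propagate using naturality, projectivity, and finite global dimension (the paper's stated inverse is $\alpha\mapsto\alpha_\Lambda$, and its key device is naturality of $\alpha$ against maps $\Lambda\to M^i$). But the specific steps you supply at the two crucial points do not hold up. First, the component at $\Lambda$ does not ``recover $f$'': it only recovers the image of $f$ under the forgetful functor $\Db(\Gamma\da\Lambda)\to\Db(\Gamma)$, which has a genuine kernel. For instance, with $\Gamma=\Lambda$ the Kronecker algebra, $X=\Lambda$ and $Y=\Lambda[1]$, one has $\Hom_{\Db(\Gamma\da\Lambda)}(X,Y)=HH^1(\Lambda)\neq 0$ while $\Hom_{\Db(\Lambda)}(\Lambda,\Lambda[1])=0$. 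So your faithfulness argument says nothing for such $f$, and your fullness recipe $f:=\alpha_\Lambda$ returns $0$ when applied to the natural transformation induced by a nonzero Hochschild class, whose nonvanishing can only be seen at non-projective objects; any correct argument must exploit components at objects other than $\Lambda$ together with naturality, which is what the paper's appeal to maps $\Lambda\to M^i$ is aimed at.

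Second, the lifting step is asserted rather than proved: knowing that $\alpha_\Lambda$ commutes in $\Db(\Gamma)$ with each right multiplication $r_\lambda$ is much weaker than having a morphism in $\Db(\Gamma\da\Lambda)$. A morphism of complexes of left modules that is compatible with the right $\Lambda$-action only up to homotopy need not lift to the derived category of bimodules, and when it does the lift is not unique --- in the example above the two distinct bimodule morphisms $0$ and a nonzero $HH^1$-class have the same underlying morphism --- so ``lifts canonically'' is unjustified, and finite global dimension does not supply it; this is precisely the delicate kernel-versus-functor point that the lemma is about. Third, your final comparison invokes ``naturality with respect to the inclusions and projections onto each term of $P^\bullet$'', but for a complex with nonzero differential these are not chain maps, hence not morphisms in $\Db(\Lambda)$, so naturality cannot be applied to them; the morphisms actually available are the brutal truncation triangles, and using those puts you back against the non-uniqueness of fill-ins that you set out to avoid. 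As it stands, the proposal does not close the argument at either of the places where the real work lies.
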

\begin{proof}
The action of $\twofun{D}$ on 1-cells has an inverse which sends the natural transformation $\alpha:X\dert_\Lambda-\to Y\dert_\Lambda-$ to the morphism 
\[ X\cong X\otimes_\Lambda\Lambda \arrr{\alpha_\Lambda}  Y\otimes_\Lambda\Lambda\cong\Lambda. \]
To see this is a bijection one uses naturality of $\alpha$ for maps $\Lambda\to M^i$.
\end{proof}
So by Proposition \ref{prop:picinjsurj} we get:
\begin{corollary}
We have a group monomorphism
\[ \DPic(\Lambda)\into \PicEnd_\Tri(\Db(\Lambda)). \]
\end{corollary}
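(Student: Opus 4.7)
The plan is to extract a monoidal functor from $\twofun{D}$ at the object $\Lambda$, check it is fully faithful, and then apply Proposition \ref{prop:picinjsurj}.

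First I would observe that the argument of Proposition \ref{prop:bieqmoneq} applies equally well to any weak $2$-functor, not just to biequivalences: for any such $\twofun{F}:\biC\to\biB$ and any object $C\in\biC$, the component functor $\twofun{F}_{C,C}$ together with the coherence isomorphism $J$ comparing composition in $\biC(C,C)$ and $\biB(\twofun{F}C,\twofun{F}C)$ gives a monoidal functor
\[ \End_\biC(C)\to\End_\biB(\twofun{F}C). \]
Specialising to $\twofun{F}=\twofun{D}:\fBim\to\Tri$ and $C=\Lambda$, this yields a monoidal functor
\[ (F_\Lambda,J):\Db(\Lambda\da\Lambda)=\End_\fBim(\Lambda)\to\End_\Tri(\Db(\Lambda)). \]

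Next, Lemma \ref{lem:derff} says that $\twofun{D}$ is locally fully faithful, which is precisely the statement that every such component functor $F_\Lambda$ is fully faithful. Proposition \ref{prop:picinjsurj}(a) then tells us that $\Pic(F_\Lambda,J)$ is an injective group homomorphism.

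Finally I would unwind the definitions on each side: by definition $\Pic(\Db(\Lambda\da\Lambda))=\DPic(\Lambda)$, and $\Pic(\End_\Tri(\Db(\Lambda)))=\PicEnd_\Tri(\Db(\Lambda))$. So $\Pic(F_\Lambda,J)$ is exactly the desired monomorphism. No genuine obstacle arises here; the only point worth being careful about is the (minor) extension of Proposition \ref{prop:bieqmoneq} from biequivalences to arbitrary weak $2$-functors, which is entirely formal since the proof of that proposition only uses the existence of the component functor and its coherence isomorphism.
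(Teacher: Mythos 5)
Your proposal is correct and follows essentially the same route as the paper, which deduces the corollary directly from Lemma \ref{lem:derff} together with Proposition \ref{prop:picinjsurj}(a); your explicit remark that the monoidal-functor extraction of Proposition \ref{prop:bieqmoneq} works for any weak 2-functor is exactly the (implicit) step the paper relies on.
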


Let $\Lambda[1]$ denote the identity bimodule concentrated in degree $-1$. 
Note that both $\Lambda[1]\otimes_\Lambda-$ and $\Sigma$ shift complexes one place to the left and reverse the sign of the differentials, so we get a natural isomorphism $\alpha:\Lambda[1]\otimes_\Lambda-\arr\sim \Sigma$ with components defined by identity maps.

As $\Lambda$ has finite global dimension, the category $\Db(\Lambda)$ has a Serre functor $\Lambda^*\dert_\Lambda-$: see, for example, \cite[Proposition 20.5.5]{gin-nc}.  
Then by Theorem \ref{thm:vdb-tri}, the graded category $(\Db(\Lambda), \Sigma)$ has a graded Serre functor $(\se,\sse)=\twofun{D}\Lambda^*$.

The following result says that our ``strong'' fractional Calabi-Yau condition is equivalent to the ``bimodule'' Calabi-Yau condition.
\begin{theorem}\label{thm:bimfcy}
The triangulated category $\Db(\Lambda)$ is $N/m$-fCY if and only if there exists an isomorphism $$(\Lambda^*)^{\dert m}\arr\sim\Lambda[N]$$ in $\Db(\Lambda\da\Lambda)$.
\end{theorem}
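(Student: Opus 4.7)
The strategy is to transfer the statement through the weak 2-functor $\twofun{D}:\fBim\to\Tri$ and invoke its local full faithfulness (Lemma \ref{lem:derff}).

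First I would identify the two triangulated functors corresponding to $\Lambda[1]$ and $\Lambda^*$ under $\twofun{D}$. For $\Lambda[1]$, the natural transformation $\alpha:\Lambda[1]\otimes_\Lambda -\arr\sim\Sigma$ with identity components is compatible with the commutation isomorphism $\phi_M$ of $\twofun{D}\Lambda[1]$, because $\phi_M$ acts as $(-1)^{-1}=-1$ on the single nonzero summand $(\Lambda[1])^{-1}\otimes_\Lambda(\Sigma M)^j$; this yields a triangulated isomorphism $\twofun{D}\Lambda[1]\cong(\Sigma,-\id_{\Sigma^2})$. For $\Lambda^*$, the functor $\twofun{D}\Lambda^*$ is triangulated and realises the Serre functor, so by Theorem \ref{thm:vdb-tri} and the uniqueness of $\sgn$-equivariant Serre structures (Proposition \ref{prop:unique-ev}) there is an isomorphism $\twofun{D}\Lambda^*\cong(\se,\sse)$ in $\Z\aCat$. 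Iterating and applying the coherence isomorphisms of the weak 2-functor $\twofun{D}$, one obtains equivariant isomorphisms
\[ \twofun{D}\bigl((\Lambda^*)^{\dert m}\bigr)\cong(\se,\sse)^m \quad\text{and}\quad \twofun{D}(\Lambda[N])\cong(\Sigma,-\id_{\Sigma^2})^N. \]

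For the forward direction, an isomorphism $(\Lambda^*)^{\dert m}\arr\sim\Lambda[N]$ in $\Db(\Lambda\da\Lambda)$ is sent by $\twofun{D}$ to an isomorphism of its images, and pre/post-composition with the two equivariant isomorphisms above delivers $(\se,\sse)^m\cong(\Sigma,-\id_{\Sigma^2})^N$ in $\Tri$, hence in $\Z\aCat$, which is the $N/m$-fCY property. For the reverse direction, an isomorphism $(\se,\sse)^m\cong(\Sigma,-\id_{\Sigma^2})^N$ in $\Z\aCat$ is automatically an isomorphism in $\Tri$, since the 2-cells in $\Tri$ are exactly morphisms of equivariant functors and both sides here are triangulated. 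Transporting across the two identifications above turns this into an isomorphism $\twofun{D}((\Lambda^*)^{\dert m})\cong\twofun{D}(\Lambda[N])$ in $\Tri$. By Lemma \ref{lem:derff}, this isomorphism lifts uniquely to an isomorphism $(\Lambda^*)^{\dert m}\arr\sim\Lambda[N]$ in $\Db(\Lambda\da\Lambda)$.

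The main obstacle is the sign bookkeeping in the first step: the Koszul sign $(-1)^i$ built into the commutation isomorphism $\phi$ of $\twofun{D}$ must be propagated through iterated derived tensor products and reconciled with the inductively-defined $\sgn$-equivariant structures $(\Sigma,-\id_{\Sigma^2})^N\cong(\Sigma^N,(-1)^N\id)$ and $(\se,\sse)^m$. Once these identifications are in hand, the rest of the argument is formal, relying only on the local full faithfulness of $\twofun{D}$ and the weak 2-functoriality of derived tensor product.
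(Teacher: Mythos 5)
Your proposal is correct and follows essentially the same route as the paper: identify $\twofun{D}(\Lambda[1])$ with $(\Sigma,-\id_{\Sigma^2})$ via the Koszul sign $(-1)^{-1}=-1$ in the commutation isomorphism $\phi$ for $X=\Lambda[1]$, use $\twofun{D}\Lambda^*\cong(\se,\sse)$ from Theorem \ref{thm:vdb-tri}, and transfer isomorphisms in both directions through the locally fully faithful weak 2-functor $\twofun{D}$ (Lemma \ref{lem:derff}). The paper's proof is simply a terser version of this argument, with the sign check on $\Lambda[1]$ as its only explicit computation.
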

\begin{proof}
From Lemma \ref{lem:derff} and Definition \ref{def:cy}, all that we need to check is that $\twofun{D}(\Lambda[1])\cong (\Sigma,-\id_{\Sigma^2})$.  
We know that $\twofun{D}(\Lambda[1])\cong (\Lambda[1]\otimes_\Lambda,\phi)$ and so we get a diagram
\[ \xymatrix{
\Lambda[1]\otimes_\Lambda\Sigma M \ar[r]^{\phi_M} \ar[d]^{\alpha_{\Sigma M}} & \Sigma(\Lambda[1]\otimes_\Lambda M)\ar[d]^{\Sigma\alpha_M} \\
\Sigma\Sigma M \ar@{-->}[r] & \Sigma\Sigma M 
} \]
and we need to show that the bottom arrow is multiplication by $-1$.  But from the formula for $\phi_M$ above, applied when $X=\Lambda[1]$ which is concentrated in degree $-1$, we get that $\phi_M$ is $-1$.  So, as both maps $\alpha$ are $1$, we are done.
\end{proof}


\subsection{Dynkin quivers}\label{ss:dynkin}

\subsubsection{Dynkin diagrams}\label{sss:dynkin}

We revise some standard Lie theory, referring to \cite{hum} for details.

The simply laced Dynkin diagrams are those of ``$ADE$ type'': they are simple (unoriented) graphs belonging to the following list: 
$A_n$ for $n\geq1$,  $D_n$  for $n\geq4$, $E_6$, $E_7$, and $E_8$.   The subscript denotes the number of vertices in the graph and is called the \emph{rank}.
For each Dynkin diagram $\Gamma$ we have a graph automorphism $\rho:\Gamma\to\Gamma$ defined on the vertices of $\Gamma$ as follows:
\begin{itemize}
\item {\makebox[3.5cm]{\textbf{Type $A_n$}:\hfill} $\rho$ swaps vertices $i\leftrightarrow n+1-i$.}
\item {\makebox[3.5cm]{\textbf{Type $D_n$, $n$ is even}:\hfill} $\rho$ is the identity.}
\item {\makebox[3.5cm]{\textbf{Type $D_n$, $n$ is odd}:\hfill} $\rho$ swaps vertices $n-1\leftrightarrow n$ and fixes the others. }
\item {\makebox[3.5cm]{\textbf{Type $E_6$}:\hfill} $\rho$ swaps vertices $1\leftrightarrow 5$ and $2\leftrightarrow 4$ and fixes $3$ and $6$.}
\item {\makebox[3.5cm]{\textbf{Types $E_7$ and $E_8$}:\hfill} $\rho$ is the identity.}
\end{itemize}
Notice that $\rho$ has order $1$ or $2$, depending on the type.

Each Dynkin diagram has an associated finite root system $\Phi$ which is the disjoint union $\Phi=\Phi^+ \cup \Phi^-$ of positive and negative roots.  Let $R=\abs{\Phi^+}=\abs{ \Phi^-}$ denote the number of positive roots.  (This is traditionally denoted $N$, but we use this symbol elsewhere.)

Each type has a \emph{Coxeter number}, denoted $h$, defined as the order of the Coxeter element of the associated reflection group.  This is related to the number $R$ of positive roots by the following formula:
\begin{proposition}\label{prop:rankrootsh}
Let $n$, $R$, and $h$ denote the rank, number of positive roots, and Coxeter number of a Dynkin diagram, respectively.  Then $h=2R/n$.
\end{proposition}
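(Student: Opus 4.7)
The plan is to deduce the relation from the classical theory of the Coxeter element of the Weyl group. Recall that a Coxeter element $c$ is the product of all simple reflections taken in some order; all such products are conjugate in the Weyl group $W$, so the order of $c$ is well-defined, and by definition this order is the Coxeter number $h$. The proof will compare two different ways of counting using the action of $c$.

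The direct approach is via the exponents $m_1,\ldots,m_n$ of the root system. I would quote two standard facts from Humphreys \cite{hum}: first, $\sum_{j=1}^{n} m_j = R$, and second, the Coxeter duality $m_j + m_{n+1-j} = h$ for all $j$. Summing the second identity over $j$ from $1$ to $n$ and using the first identity gives
\[ nh \;=\; \sum_{j=1}^n (m_j + m_{n+1-j}) \;=\; 2\sum_{j=1}^n m_j \;=\; 2R, \]
from which $h = 2R/n$ follows immediately.

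An alternative (and perhaps more conceptual) route is to use that $c$ acts on $\Phi$ without fixed roots, so every $c$-orbit in $\Phi$ has size exactly $h$; since the number of orbits equals the rank $n$ (as $c$ has no eigenvalue $1$ on the Cartan subalgebra, its characteristic polynomial factors according to the exponents, and the orbits on $\Phi$ biject with these eigenspaces), we get $2R = |\Phi| = nh$. The main obstacle in either route is simply that both of these arguments rely on nontrivial classical facts about the Coxeter element; rather than reprove them, I would just cite the relevant sections of \cite{hum}, since the result is used here only as a book-keeping tool for the Dynkin types listed in Section \ref{sss:dynkin} and can alternatively be verified by inspection from the standard tables of $(n,R,h)$.
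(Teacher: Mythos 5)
The paper offers no proof of this proposition at all: it is stated as standard Lie theory with a blanket reference to Humphreys \cite{hum}, and the table of $(n,R,h)$ immediately below it lets one check it type by type. Your first route is a correct and complete derivation given the two cited facts ($\sum_j m_j = R$ and $m_j + m_{n+1-j} = h$), so it is entirely consistent with the paper's treatment and in fact more explicit than what the paper provides. One small caveat on your second route: the parenthetical justification that the $\langle c\rangle$-orbits on $\Phi$ ``biject with the eigenspaces'' is not an accurate statement of the relevant theorem --- the correct input is Kostant's result (proved in \cite{hum} via the Coxeter plane) that $c$ has exactly $n$ orbits on $\Phi$, each of size $h$ --- but since you defer to the literature for this, the argument still goes through; the exponent argument alone suffices in any case.
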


We record the relevant information:
\begin{center}
  \begin{tabular}{ c | c | c | c | c | c }
    Type & $A_n$ & $D_n$ & $E_6$ & $E_7$ & $E_8$ \\ \hline
    $R$ & $n(n+1)/2$ & $n(n-1)$ & $36$ & $63$ & $120$ \\ \hline
    $h$ & $n+1$ & $2n-2$ & $12$ & $18$ & $30$ 
  \end{tabular}
\end{center}

\subsubsection{Path algebras}

Let $\kk$ be an algebraically closed field.  We refer to \cite{grant-nak} for more details and for references.

Let $Q$ be an acyclic quiver and let $\Lambda=\kk Q$ denote its path algebra.
Let $Q_0$ denote the vertices of $Q$, then we have a canonical bijection $Q_0\arr\sim \prim(\Lambda)$ sending the vertex $i$ to the length zero path $e_i$.  Write $P_i=\Lambda e_i$.  The left $\Lambda$-modules $P_i$, $i\in Q_0$, form a complete list of indecomposable projective $\Lambda$-modules up to isomorphism.

We say that $Q$ is a  \emph{Dynkin quiver} if the underlying graph of $Q$ is of $ADE$ type.
Let $\Lambda\mMod$ denote the category of 
finite-dimensional left $\Lambda$-modules. 
\begin{theorem}[Gabriel]\label{thm:gabriel}
The category $\kk Q\mMod$ has finitely many isomorphism classes of indecomposable objects if and only if $Q$ is a Dynkin quiver.  In this case, the set of indecomposable objects up to isomorphism is in canonical bijection with the positive roots $\Phi^+$ of the Dynkin diagram.
\end{theorem}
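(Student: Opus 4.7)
The plan is to follow the classical approach via the Tits quadratic form and reflection functors. I would first attach to the underlying graph of $Q$ its Tits form $q_Q(\mathbf{d}) = \sum_{i\in Q_0} d_i^2 - \sum_{\alpha\in Q_1} d_{s(\alpha)}d_{t(\alpha)}$ on $\Z^{Q_0}$, which does not depend on the orientation and which agrees with the Euler form $\langle M,M\rangle = \dim\Hom_\Lambda(M,M) - \dim\Ext^1_\Lambda(M,M)$ when evaluated on dimension vectors (using that $\kk Q$ has global dimension at most one). The essential input from Lie theory is the classification result that $q_Q$ is positive definite precisely when the underlying graph is of $ADE$ type, and that in that case the set of integer vectors with $q_Q(\mathbf{d})=1$ coincides with $\Phi^+\cup\Phi^-$.

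For the ``only if'' direction, suppose the underlying graph is not Dynkin. Then $q_Q$ is not positive definite, so one produces an isotropic or negative vector $\mathbf{d}\in\Z_{\geq 0}^{Q_0}$. A dimension count on the representation variety $R(\mathbf{d}) = \bigoplus_{\alpha}\Hom_\kk(\kk^{d_{s(\alpha)}},\kk^{d_{t(\alpha)}})$ versus the acting group $\prod_i GL_{d_i}$ shows that $\dim R(n\mathbf{d})$ exceeds the maximal orbit dimension for all $n\geq 1$, forcing infinitely many isomorphism classes of indecomposables (a standard algebraic-geometry argument).

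For the ``if'' direction, assume $Q$ is Dynkin. I would invoke the Bernstein--Gelfand--Ponomarev reflection functors $S_i^+:\kk Q\mMod\to \kk(\sigma_iQ)\mMod$ at a sink $i$ (and $S_i^-$ at a source), which restrict to a bijection between indecomposables other than the simple $S_i$ and which act on dimension vectors by the simple reflection $s_i$ of the Weyl group $W$. By iterating these functors along an admissible sequence coming from a Coxeter element, every indecomposable can be transported to a simple module $S_j$ via some element $w\in W$, and hence has dimension vector $w(\mathbf{e}_j)\in\Phi^+$. Conversely, any positive root can be reached from some $\mathbf{e}_j$ by such a sequence (this uses finiteness of $W$ in the Dynkin case, equivalently finiteness of $\Phi$), so each positive root is realised. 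Uniqueness up to isomorphism follows because a positive root $\beta$ satisfies $q_Q(\beta)=1$, forcing $\dim\End(M)-\dim\Ext^1(M,M)=1$, which combined with indecomposability (so $\End(M)$ is local, hence of dimension $\geq 1$) forces $\End(M)=\kk$ and $\Ext^1(M,M)=0$; standard deformation arguments then show that the $GL_\mathbf{d}$-orbit of $M$ is dense and open in the irreducible variety of indecomposable representations of dimension vector $\beta$, hence unique. This canonical assignment $M\mapsto \underline{\dim}M$ gives the claimed bijection.

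The main obstacle is the reverse direction of the bijection: showing both that every positive root is realised and that dimension vector is a complete isomorphism invariant of indecomposables. Everything else (the Tits-form classification and the infinite-type obstruction) is fairly formal, but pinning down uniqueness requires either the reflection functor reduction to the simples together with the finite-order property of the Coxeter element, or a direct Auslander--Reiten style argument using that Dynkin quivers have no $\tau$-periodic orbits missing the projectives. I would use the BGP approach, as it also gives an explicit inductive construction of every indecomposable and fits well with the AR-theoretic perspective the rest of the paper develops.
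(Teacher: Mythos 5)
The paper does not prove this statement: it is quoted as a classical background result, attributed to Gabriel, with the surrounding material (and the reference \cite{grant-nak}) pointing to the standard literature, so there is no in-paper argument to compare yours against. Your outline is the standard classical proof — Tits/Euler form positive definite exactly in the $ADE$ case, a representation-variety dimension count ruling out finite type off the Dynkin list, and Bernstein--Gelfand--Ponomarev reflection functors to match indecomposables with positive roots — and as a sketch it is essentially correct and entirely compatible with how the theorem is used in the paper.

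One small caution in your uniqueness paragraph: the inequality $\dim\End(M)-\dim\Ext^1(M,M)=1$ together with $\End(M)$ local gives only $\dim\End(M)=1+\dim\Ext^1(M,M)$, so it does not by itself force $\End(M)=\kk$ and $\Ext^1(M,M)=0$. That vanishing has to come from somewhere else, and your BGP reduction supplies it: the sequence of reflection functors determined by the dimension vector carries any indecomposable with dimension vector $\beta$ to the simple at the relevant vertex, the functors are inverse equivalences away from the simples, and $\End$ and $\Ext^1$ of a simple at a sink are $\kk$ and $0$; this simultaneously gives uniqueness and the vanishing you wanted, after which the dense-orbit remark is a corollary rather than an ingredient. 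With that step routed through the reflection functors (or, alternatively, through an Auslander--Reiten knitting argument), the proof is complete and agrees with the standard one.
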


Let $\tau$ and $\tau^-$ denote the classical Auslander-Reiten translate and its inverse.
\begin{theorem}[Platzeck-Auslander, Gabriel]\label{thm:pa-g}
If $Q$ is Dynkin then every indecomposable $\Lambda$-module is isomorphic to $\tau^{-p}P_i$ for some $i\in Q_0$ and $p\geq0$.
\end{theorem}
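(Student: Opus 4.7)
My plan is to combine Gabriel's finiteness result (Theorem \ref{thm:gabriel}, stated just above) with the standard Auslander-Reiten theory for hereditary algebras, which says that the AR translate $\tau$ restricts to a well-defined, injective operation on isomorphism classes of non-projective indecomposable $\Lambda$-modules, with quasi-inverse $\tau^{-}$ defined on non-injective indecomposables. In particular, if $M$ is an indecomposable non-projective $\Lambda$-module, then $\tau M$ is a non-zero indecomposable $\Lambda$-module, and if $\tau M \cong \tau N$ for indecomposable non-projective $M,N$, then $M \cong N$.

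Given an arbitrary indecomposable $\Lambda$-module $M$, I would consider the sequence
\[
 M,\; \tau M,\; \tau^2 M,\; \tau^3 M,\; \ldots
\]
stopping the first time we hit a projective module (or, if we never hit one, continuing indefinitely). Each term that appears is a non-zero indecomposable $\Lambda$-module. The key step is to observe that this sequence must eventually produce a projective. Indeed, by injectivity of $\tau$ on non-projective indecomposables, if no $\tau^p M$ were projective then the modules $\tau^p M$ for $p \geq 0$ would be pairwise non-isomorphic indecomposables, contradicting Theorem \ref{thm:gabriel}, which gives only finitely many isomorphism classes in the Dynkin case. Hence there exists a minimal $p \geq 0$ such that $\tau^p M \cong P_i$ for some $i \in Q_0$, and applying $\tau^{-p}$ yields $M \cong \tau^{-p} P_i$.

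The only subtle point is the injectivity of $\tau$ on non-projective indecomposables and the compatibility $\tau^{-p} \tau^p M \cong M$ used at the final step; both follow from the existence and uniqueness of almost split sequences ending in a given non-projective indecomposable, which is standard AR theory and would simply be cited rather than reproved. The main obstacle, such as it is, is really just to articulate the finite-pigeonhole step cleanly; everything else is bookkeeping.
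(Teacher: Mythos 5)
A first remark: the paper does not prove this statement at all — it is quoted as a classical theorem of Platzeck--Auslander and Gabriel — so there is no internal proof to compare with; your proposal has to stand on its own. As written it does not, because the pigeonhole step contains a genuine gap. Injectivity of $\tau$ on non-projective indecomposables does \emph{not} give that the modules $\tau^p M$, $p\geq 0$, are pairwise non-isomorphic when no term is projective: if $\tau^a M\cong\tau^b M$ with $a<b$, cancelling $\tau$ only yields $M\cong\tau^{b-a}M$, i.e.\ that $M$ is $\tau$-periodic. A finite $\tau$-periodic orbit containing no projective is perfectly compatible with Theorem \ref{thm:gabriel}; indeed, over representation-finite self-injective algebras every non-projective indecomposable is $\tau$-periodic, so finiteness of the number of indecomposables alone can never rule this out. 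The real content of the theorem is precisely that for a Dynkin path algebra no $\tau$-orbit avoids the projectives (equivalently, there are no $\tau$-periodic, i.e.\ regular, indecomposables), and that needs an extra ingredient beyond what you cite.

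The classical way to close the gap (and essentially the Platzeck--Auslander/Gabriel argument) is via the Coxeter transformation: for $\Lambda=\kk Q$ hereditary and $N$ indecomposable non-projective one has $\dim\tau N=\Phi(\dim N)$ in the Grothendieck group, where $\Phi$ is the Coxeter matrix. In the Dynkin case $\Phi$ has finite order $h$ and no nonzero fixed vector, whence $1+\Phi+\cdots+\Phi^{h-1}=0$; so the $\Phi$-orbit of the nonzero vector $\dim M$ sums to zero and cannot consist entirely of positive vectors. Therefore some $\tau^p M$ must be projective, and your concluding bookkeeping with a minimal such $p$ then goes through. (Alternatively one can invoke the existence of the preprojective component together with Auslander's theorem that a finite connected component of the AR quiver of a connected algebra is the whole AR quiver, but either way a substantive fact about hereditary/Dynkin algebras must be added; it cannot be replaced by the finiteness-plus-injectivity pigeonhole you propose.)
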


The algebra $\Lambda=\kk Q$ is hereditary, so its global dimension is $\leq1$.  It is exactly $1$ if $Q$ has at least one arrow.  

Let  $\D=\Db(\Lambda)$.  
As $\Lambda$ is hereditary, every indecomposable $X\in\D$ is of the form $\Sigma^nM$ where $n\in\Z$ and $M\in\Lambda\mMod\into\D$ under the embedding taking a module to a stalk complex in degree $0$.  The derived functor of $\tau^-$ is $\se^-\Sigma$, so using the same notation as with the module category, we have $\tau^-(e_i\Lambda)^*\cong \Sigma P_i$.
Thus, by Theorem \ref{thm:pa-g},
\[ \ind\D \cong \gen{ \tau^{-p}P_i\st i\in Q_0, p\in\Z} \subseteq\D.\]

The following result was suggested by calculations of Gabriel which describe the action of $\se$ on $\ob\Db(\kk Q)$ \cite[Section 6.5]{gab-ar}.  It was proved by Miyachi and Yekutieli \cite[Theorem 4.1]{my}.
\begin{theorem}\label{thm:my}
If $Q$ is a Dynkin quiver then $\Db(\kk Q)$ is fractional Calabi-Yau of dimension $(h-2)/h$.  Moreover, if $\rho$ has order $1$ then $\Db(\kk Q)$ is fractional Calabi-Yau of dimension $(h/2-1)/(h/2)$.  
\end{theorem}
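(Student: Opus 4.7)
The plan is to deduce both statements from Corollary \ref{thm:arcyalg} (our equivariant-to-algebraic bridge), using the classical Nakayama structure on the preprojective algebra. First I would verify the setup: since $\Lambda=\kk Q$ is hereditary of finite representation type, Theorem \ref{thm:pa-g} tells us every indecomposable of $\D=\Db(\Lambda)$ is isomorphic to $\Sigma^n \tau^{-p}P_i$ for some $n,p\in\Z$ and $i\in Q_0$. The derived AR functor is $\te=\se^-\Sigma$, and on projectives $\te^{-p}P_i\simeq\tau^{-p}P_i$, so the orbit category $\C=\D/\te$ has exactly $\abs{Q_0}=n$ isomorphism classes of indecomposable objects, represented by the $P_i$. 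Therefore the hypothesis of Corollary \ref{thm:arcyalg} applies, and the $\Z$-graded base algebra of $\C$ is
\[ A=\bigoplus_{p\geq 0}\bigoplus_{i,j\in Q_0} \Hom_{\D}(P_i,\te^{-p}P_j) \cong \Pi(Q), \]
graded by path length, where the isomorphism is the Baer--Geigle--Lenzing / Iyama--Oppermann identification of $\Pi(Q)$ with this endomorphism algebra.

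The next step is to exhibit the required graded Nakayama structure on $\Pi(Q)$. Since $Q$ is Dynkin, $\Pi(Q)$ is finite-dimensional and graded Frobenius, with socle generated (as a left module) by elements $\pi_i\in e_{\rho(i)}\Pi(Q)e_i$ concentrated in degree $h-2$. This gives, in the classical (i.e.\ $\tr$-graded) sense, a Frobenius pairing and a Nakayama pair $(\alpha_0,\ell_0)$ in which the automorphism $\alpha_0$ realises the graph involution $\rho$ on the primitive idempotents $e_i$, and the degree-adjuster $\ell_0(e_i)$ records the degree of $\pi_i$. Lemma \ref{lem:trgr-chigr} then converts this into a $\sgn$-graded Frobenius structure $(\alpha,\ell)=(\alpha_0^{\sgn},\ell_0)$ on $\Pi(Q)$, which is the input we need for Corollary \ref{thm:arcyalg}.

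The third step, which is the combinatorial core, is to verify that $(\alpha,\ell)^{2}\cong(\id_{\Pi(Q)},\underline{h-2})$ as $1$-cells in $\fBasicAlg^{\Z}_\ei$. Since $\rho^2=\id$, the composite automorphism $\alpha^2$ permutes primitive idempotents trivially and is therefore graded-inner; the nontrivial content is the degree shift. Unwinding the composition law in Definition \ref{def:damap}, the condition reduces to $\ell(e_i)+\ell(e_{\rho(i)})=h-2$ for every $i$, i.e., the total length of a "round trip" through socle elements $\pi_i$ and $\pi_{\rho(i)}$ equals the top degree $h-2$. This balancing property is a structural consequence of the self-injectivity of $\Pi(Q)$ together with Gabriel's bijection (Theorem \ref{thm:gabriel}) between indecomposables and positive roots, combined with $2R/n=h$ (Proposition \ref{prop:rankrootsh}). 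Corollary \ref{thm:arcyalg} then gives the first claim: $\D$ is fractional Calabi--Yau of dimension $(h-2)/h$.

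For the refinement when $\rho$ is trivial (types $D_n$ with $n$ even, $E_7$, $E_8$ — precisely the cases with $h$ even), the Nakayama permutation is trivial so $\alpha$ itself is graded-inner, and by the symmetry just used, the socle degrees are uniform: $\ell(e_i)=(h-2)/2=h/2-1$. Thus $(\alpha,\ell)\cong(\id_{\Pi(Q)},\underline{h/2-1})$ as a single power, and Corollary \ref{thm:arcyalg} upgrades the conclusion to fractional Calabi--Yau dimension $(h/2-1)/(h/2)$. The main obstacle throughout is the sign subtlety highlighted in the paper's introduction: the \emph{classical} Brenner--Butler--King Nakayama automorphism and a literal transport to the derived setting do not agree on the nose, and one must use the $\sgn$-twist of Lemma \ref{lem:trgr-chigr} carefully to reconcile the parity of $\ell$ with the Calabi--Yau dimension predicted by Miyachi--Yekutieli. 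Once this twist is correctly accounted for, both claims drop out of the machinery.
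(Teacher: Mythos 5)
Your overall route --- pass to the orbit category $\D/\te$, identify its graded base algebra with the preprojective algebra, and read the Calabi--Yau dimension off the graded Nakayama data via Corollary \ref{thm:arcyalg} --- is the same one the paper takes (it deduces Theorem \ref{thm:my} from Theorem \ref{thm:bbk}), but your execution has genuine gaps. The first is the grading: the base algebra of $\D/\te$ is $\Pi(Q)$ with the \emph{tensor} grading, $\Pi_p\cong\Hom_\Lambda(\Lambda,\tau^{-p}\Lambda)$, not the path-length grading. This is not cosmetic. In the path-length grading the socle sits in degree $h-2$ at \emph{every} vertex, so your $\ell_0$ would be the constant $h-2$, the relation would read $(\alpha,\ell)^2\cong(\id,\underline{2(h-2)})$, and Corollary \ref{thm:arcyalg} would return dimension $(2h-4)/(2h-2)$ rather than $(h-2)/h$; moreover your balancing condition $\ell(e_i)+\ell(e_{\rho(i)})=h-2$ contradicts ``each $\pi_i$ has degree $h-2$'' outside type $A_1$. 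In the tensor grading the socle degrees genuinely vary with $i$, and the identity $\ell(e_i)+\ell(e_{\rho(i)})=h-2$ is the quantitative heart of the theorem; it does not simply follow from self-injectivity, Theorem \ref{thm:gabriel} and Proposition \ref{prop:rankrootsh}, as you assert. The paper proves it in two steps: constancy of the degree adjuster of $\se^2$ via the Herschend--Iyama argument (if $\Hom_\Lambda(P_i,P_j)\neq0$, comparing $\se^{m_im_j}$ applied to $P_i$ and $P_j$ forces $m_i=m_j$, then use connectedness of $Q$), and then evaluation of the constant by counting indecomposables (Theorems \ref{thm:gabriel} and \ref{thm:pa-g}): $\tau^{-m}\cong\Sigma^2$ forces $nm=2R$, hence $m=h$ by Proposition \ref{prop:rankrootsh}. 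Your proposal contains neither ingredient.

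The second gap concerns the automorphism itself. The inference ``$\alpha^2$ (or, when $\rho=\id$, $\alpha$) fixes the primitive idempotents, hence is graded-inner'' is invalid: an automorphism of a basic algebra can fix every idempotent and still be non-inner, and the classical Nakayama automorphism $\beta$ in the $\rho=\id$ cases is exactly such an example --- so your argument for the refined $(h/2-1)/(h/2)$ statement would equally ``prove'' it with the untwisted $\beta$, which is precisely the mismatch the paper is built to avoid. To get $\alpha^2\cong\id$ in general and $\alpha\cong\id$ when $\rho=\id$ one needs the explicit Brenner--Butler--King description in Theorem \ref{thm:bbk}: $\beta$ acts by $\overline{\rho}$ on unstarred arrows and by $\sgn(\deg)\cdot\overline{\rho}$ on starred arrows, so $\beta^2=\id$, and the $\sgn$-twist of Lemma \ref{lem:trgr-chigr} turns the sign on tensor-degree-one generators into the identity when $\rho=\id$. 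Your write-up never invokes Theorem \ref{thm:bbk} (or any substitute computation), so neither $\alpha^2=\id$, nor $\alpha=\id$ for $\rho=\id$, nor the uniform value $\ell(e_i)=h/2-1$ needed for the second claim, is actually established.
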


\subsubsection{Preprojective algebras}

The preprojective algebra $\Pi(Q)$ of the quiver $Q$ was defined classically using generators and relations.  We have
 \[ \Pi(Q)=\kk \overline{Q}/I \]
where $\overline{Q}$ is the \emph{doubled} quiver of $Q$ and $I$ is an ideal of relations.  For example, 
\[ 
\xymatrix @R=10pt @C=20pt {
&&&3
&&
&&&3 \ar@/^/[dl] \\
{\text{ if } Q=} &1\ar[r] &2\ar[ur]\ar[dr] & 
&&
{\text{ then } \overline{Q}=} &1\ar@/^/[r] &2\ar@/^/[ur]\ar@/^/[dr]\ar@/^/[l] & 
\\
&&& 4
&&
&&& 4 \ar@/^/[ul]
}
\]
where for each arrow $a:i\to j$ in $Q$ we added an arrow $a^*:j\to i$ in $\overline{Q}$.  The ideal of relations is generated by the sum $\sum (aa^*-a^*a)$ over all arrows of $Q$.

Note that the graph automorphism $\rho$ from Section \ref{sss:dynkin} induces an automorphism $\overline{\rho}$ of the quiver $\overline{Q}$.

There are two natural $\Z$-gradings on $\Pi(Q)$.  The first is the path length grading, where all arrows $a$ and $a^*$ have degree $1$.  The other is the tensor grading, where arrows $a$ in $Q$ have degree $0$ and arrows $a^*$ in $Q^*$ have degree $1$.  Note that the ideal of relations is homogeneous of degree 2 with respect to the path length grading, and of degree 1 with respect to the tensor grading.

Given $x\in\Pi(Q)$ we write $\deg(p)$ for the tensor grading of $Q$.
Recall that $\sgn$ denotes the character $\Z\to\kk^\times$ sending $1\in\Z$ to $-1\in\kk$.

Recall $\Lambda=\kk Q$.  
It is known that $\Pi(Q)$ is isomorphic to the following algebra:
\[ \Pi(\Lambda)= \bigoplus_{p\geq0}\Hom_\Lambda(\Lambda,\tau^{-p}\Lambda). \]
Here we see a natural grading $\Pi(\Lambda)_p=\Hom_\Lambda(\Lambda,\tau^{-p}\Lambda)$ which corresponds to the tensor grading on $\Pi(Q)$.

If $Q$ is Dynkin then 
up to isomorphism, $\Pi(Q)$ does not depend on the orientation of $Q$ \cite[Lemma 4.1]{bbk}.  
Building on the calculations of Gabriel mentioned above, the following result was proved by Brenner, Butler, and King 
\cite[Theorem 4.8]{bbk}:
\begin{theorem}\label{thm:bbk}
If $Q$ is Dynkin then $\Pi(Q)$ is a Frobenius algebra.  
If $1\neq-1$ in $\kk$ and $Q$ is not of type $A_1$, then its Nakayama automorphism has order exactly 2.
\end{theorem}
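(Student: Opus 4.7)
The plan is to deduce the theorem from the abstract machinery developed in this paper, namely Corollary \ref{thm:arcyalg}, applied to the derived category $\D = \Db(\kk Q)$ and the inverse Auslander-Reiten functor $\te = \se^- \Sigma$. The starting ingredient is the Miyachi-Yekutieli theorem (Theorem \ref{thm:my}), which asserts that $\D$ is fractional Calabi-Yau of dimension $(h-2)/h$.

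First I would identify $\Pi(Q)$ with the $\Z$-graded base algebra of the orbit category $\C = \D/\te$. For $p \geq 0$, unpacking the definition of the orbit category gives
\[ \C^p(\Lambda, \Lambda) = \D(\Lambda, \te^p \Lambda) = \Hom_\Lambda(\Lambda, \tau^{-p}\Lambda), \]
which is exactly the $p$-th graded piece of $\Pi(\Lambda) = \Pi(Q)$ with its tensor grading. By Gabriel's theorem and Theorem \ref{thm:pa-g}, every indecomposable $\kk Q$-module is a summand of some $\tau^{-p}\Lambda$, and the fractional Calabi-Yau relation forces $\Sigma$ to become periodic in $\C$ (since $\se^h \cong \Sigma^{h-2}$ yields $\te^h \cong \Sigma^2$); therefore $\C$ has finitely many isomorphism classes of indecomposable objects and $\Pi(Q)$ is its base algebra in the sense of Definition \ref{def:basealg}.

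Now I apply Corollary \ref{thm:arcyalg} with $N = h-2$ and $m = h$, so $m - N = 2$. The fractional Calabi-Yau property directly produces a $\sgn$-graded Frobenius structure on $\Pi(Q)$, which settles the first assertion. Moreover, its graded Nakayama automorphism $(\alpha, \ell)$ satisfies
\[ (\alpha, \ell)^2 \cong (\id_{\Pi(Q)}, \underline{h-2}). \]
Forgetting the grading, $\alpha^2$ is inner, so the class of $\alpha$ in $\Out(\Pi(Q))$ has order dividing $2$. In the excluded case $Q = A_1$, one checks directly that $\Pi(A_1) \cong \kk[x]/(x^2)$ is symmetric and $\alpha$ is inner, consistent with order $1$.

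The main obstacle is showing the order is \emph{exactly} $2$ for the remaining Dynkin types. My plan is to trace the action of $\se$ on the indecomposable projectives $P_i \in \D$, which by the proof of Theorem \ref{thm:my} (or Gabriel's calculation in \cite{gab-ar}) is determined, up to shifts and powers of $\te$, by the graph involution $\rho$ of Section \ref{sss:dynkin}. This pins down the action of $\alpha$ on the primitive idempotents $e_i$ of $\Pi(Q)$. When $\rho \neq \id$ (types $A_n$ with $n \geq 2$, $D_{2k+1}$, $E_6$), $\alpha$ permutes the $e_i$ non-trivially and is therefore not inner, since in a basic algebra inner automorphisms fix each primitive idempotent up to conjugation by a unit. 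When $\rho = \id$ (types $D_{2k}$, $E_7$, $E_8$), $\alpha$ fixes each idempotent but must act non-trivially via the $\sgn$-twist encoded in the bimodule isomorphism $\varphi: \Pi(Q) \arr\sim (_\sgn \Pi(Q)^*)_{(\alpha,\ell)}$; the condition $1 \neq -1$ is exactly what is needed for this sign automorphism to be non-inner. Verifying the non-innerness in this last case is the delicate point and requires comparing the pairing produced by the Serre structure with the explicit data for $\Pi(Q)$, using Lemma \ref{lem:trgr-chigr} to translate between graded and ungraded conventions.
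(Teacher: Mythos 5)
Your first half is essentially the paper's own route: identify $\Pi(Q)$ with the $\Z$-graded base algebra of the orbit category $\D/\te$, use Gabriel's theorem and Theorem \ref{thm:pa-g} for finiteness, and feed the $(h-2)/h$ Calabi-Yau property of Theorem \ref{thm:my} into Corollary \ref{thm:arcyalg} to get the Frobenius property and $(\alpha,\ell)^2\cong(\id,\underline{h-2})$; your treatment of the types with $\rho\neq\id$, reading off from Gabriel's object-level calculation that $\alpha$ permutes the primitive idempotents as $\rho$ and is therefore non-inner, is also fine (the paper does not even spell that case out). The genuine gap is the case $\rho=\id$ (types $D_{2k}$, $E_7$, $E_8$), which is the whole content of ``order exactly $2$''. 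All that your machinery gives you there is that $\alpha$ fixes the idempotents and that $\alpha^2$ is inner, and this cannot rule out $\beta=\id$: the relation $(\alpha,\ell)^2\cong(\id,\underline{h-2})$ is equally consistent with $\alpha$ trivial (so that $\beta=\alpha^{\sgn}$ is the sign automorphism) and with $\alpha$ itself being the sign automorphism (so that $\beta=\id$ and $\Pi(Q)$ is symmetric). Your fallback -- verifying by hand that the sign automorphism is non-inner by comparing the Serre pairing with explicit data for $\Pi(Q)$ -- is exactly the Brenner-Butler-King style computation this derivation is supposed to avoid, and you do not carry it out; you also conflate $\alpha$ and $\beta$ when you say $\alpha$ ``must act non-trivially via the $\sgn$-twist'': the sign twist is the discrepancy between the $\sgn$-graded and $\tr$-graded Frobenius structures (Lemma \ref{lem:trgr-chigr}), not something forced on $\alpha$.

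What you are missing is the second, sharper clause of Theorem \ref{thm:my}, which you quote only in its weak form: when $\rho=\id$ the derived category is fractional Calabi-Yau of dimension $(h/2-1)/(h/2)$, so the machinery applies with $m-N=1$ and yields $(\alpha,\ell)\cong(\id,\underline{h/2-1})$, i.e.\ $\alpha$ is trivial. Lemma \ref{lem:trgr-chigr} then identifies the classical Nakayama automorphism as $\beta=\alpha^{\sgn}$, multiplication by $(-1)^p$ on tensor-degree-$p$ elements, which squares to the identity and is non-identity precisely because $1\neq-1$ and, for $Q\neq A_1$, $\Pi(Q)$ has arrows in odd degree. That is how the paper closes the $\rho=\id$ case formally; without this refined input the abstract Calabi-Yau relation is simply too weak to distinguish order $1$ from order $2$ in those types.
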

In fact they describe the Nakayama automorphism $\beta$ of $\Pi(Q)$ explicitly: it acts as $\rho$ on the vertices of $\overline{Q}$ and asks on arrows by:
\[ \beta(a) = \overline{\rho}(a) \]
\[ \beta(a^*) = \sgn(\deg( \overline{\rho}(a^*) )) \overline{\rho}(a^*) \]

\subsubsection{Comparison of results}

Let $\D=\Db(\kk Q)$.  Note that 
\[ \Pi(\Lambda)\cong \bigoplus_{p\geq0}\Hom_\D(\Lambda,\tau^{-p}\Lambda) =\Hom_{\D/\tau^-}(\Lambda,\tau^{-p}\Lambda) \]
so $\Pi(\Lambda)$, with the tensor grading, is the graded base algebra of the orbit category $\D/\tau^-$.

We will show that Theorems \ref{thm:my} and \ref{thm:bbk} are closely related.

\begin{proof}[Proof of Theorem \ref{thm:bbk} from Theorem \ref{thm:my}]
As $\D$ has a Serre functor, so does $\C=\D/\tau^-$ by Proposition \ref{prop:orbiting-serre}, and its base algebra $A_\C\cong\Pi$ is graded Frobenius by Theorem \ref{thm:graded-ss-base}.  By Theorems \ref{thm:scy-criteria} and \ref{thm:fcy-orbit}, the graded Nakayama automorphism is of the form $(\alpha,\ell)$ with $\alpha^2=\id$.  We have $\beta^{\sgn}=\alpha$ by Lemma \ref{lem:trgr-chigr}.  In the cases where $\rho=\id$ we have $\alpha=\id$, so we must have $\beta=\alpha^{\sgn}\neq1$.
\end{proof}

\begin{proof}[Proof of Theorem \ref{thm:my} from Theorem \ref{thm:bbk}]
The Serre functor $(\se,\ell)$ on $\C=\D/\tau^-$ induces a $\sgn$-hom-graded Nakayama functor $(\alpha,\ell)$ on the graded algebra $\Pi$.  We have $\beta^{\sgn}=\alpha$ by Lemma \ref{lem:trgr-chigr}.  Note that $\alpha^2=\beta^2$, and we know $\beta^2=\id$, so $\alpha^2=\id$.  Therefore we have $(\se,\ell)^2=(\id,\gamma)$ on $\C$, where $\gamma:\ob\C\to\Z$.  

We claim that $\gamma$ is a constant function.  Write $m_i=\gamma(i)+2$.
By Theorem \ref{thm:scy-criteria} we have that, for each $P_i$, $\tau^{-m_i}P_i\cong\Sigma^2 P_i$.  Now, following an argument from \cite[Section 4.1]{hi-frac}, suppose $\Hom_\Lambda(P_i,P_j)\neq0$.  Then apply $\se^{m_im_j}=\se^{m_jm_i}$: we see $\se^{m_im_j}P_i=\Sigma^{2m_j}P_i$ and $\Sigma^{2m_i}P_j$.  As there is a nonzero map between them, they must be concentrated in the same degree.  Hence $m_i=m_j$ so, as $Q$ is connected, $\gamma$ is constant.  By Theorem \ref{thm:gabriel} $\Lambda$ has $R$ indecomposable modules, so as $\tau^{-m}\cong \Sigma^2$ we must have $nm=2R$.  So, by Proposition \ref{prop:rankrootsh}, we have $m=h$.  So by Theorem \ref{thm:scy-criteria}, $\D$ is fractional Calabi-Yau of dimension $(h-2)/h$.

For the cases with $\rho=\id$ we have that $\beta$ acts as $-1$ on the elements of $\Pi$ of (tensor) degree $1$, so $\alpha$ acts as $\id$ and so, arguing as above, in these cases $\D$ is fractional Calabi-Yau of dimension $((h/2)-1)/(h/2)$.
\end{proof}


\subsection{Higher representation finite algebras}\label{ss:hrfa}

\subsubsection{Higher preprojective algebras}

Fix an integer $0\leq d<\infty$ and let $\Lambda$ be an algebra of global dimension $\leq d$.  
Let $\Db(\Lambda)$ be the bounded derived category of finitely generated left $\Lambda$-modules with shift functor $\Sigma$ and inverse shift functor $\Sigma^-$.  So $\Db(\Lambda)$ has a Serre functor $\se$.  Define $\se_d^{-}:=\Sigma^d\circ\se^-$.

Following \cite{io-stab}, we consider the following full subcategory of $\Db(\Lambda)$:
\[ \U = \gen{ \se_d^{-p}\Lambda \st p\in\Z }. \]
Then $\U$ is a $d$-cluster tilting subcategory of $\Db(\Lambda)$ in the sense of Iyama \cite[Section 3]{iy}.

The algebra $\Lambda$ is called \emph{$d$-representation finite} if the category $\Lambda\mMod$ contains a $d$-cluster tilting object.  We won't elaborate on what that means here; a definition can be found in \cite[Definition 0.1]{hi-frac}.  Instead, we make use of the following result of Iyama and Oppermann \cite[Theorem 3.1]{io-stab}:
\begin{theorem}[Iyama-Oppermann]\label{thm:io-seu}
$\Lambda$ is {$d$-representation finite} if and only if ${\se}\U=\U$.
\end{theorem}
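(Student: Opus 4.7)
The plan is to exploit that $\se$ is a triangulated autoequivalence of $\Db(\Lambda)$ (Theorem \ref{thm:vdb-tri}), so it commutes with $\Sigma$, hence with $\se^-$ and with $\se_d^- = \Sigma^d\se^-$. In particular $\se \circ \se_d^- \cong \Sigma^d$, giving
\[ \se(\se_d^{-p}\Lambda) \cong \Sigma^d\,\se_d^{-(p-1)}\Lambda \]
for all $p \in \Z$. The theorem thus asks: when does this ``index-shift by $-1$, cohomological shift by $+d$'' preserve the subcategory $\U$?

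For the forward direction, assume $\Lambda$ is $d$-representation finite with $d$-cluster tilting module $M \in \Lambda\mMod$. Invoking Iyama's structure theorem (in the setting of \cite{io-stab}), $M$ may be taken of the form $M = \bigoplus_{0 \leq p \leq n} \se_d^{-p}\Lambda$, where each $\se_d^{-p}\Lambda$ for $0 \leq p \leq n$ is concentrated in cohomological degree $0$, and the orbit closes up in the sense that $\se_d^{-(n+1)}\Lambda \cong \Sigma^d \Lambda$. Every generator $\se_d^{-p}\Lambda$ of $\U$ is then isomorphic to some $\Sigma^{dk}\se_d^{-q}\Lambda$ with $0 \leq q \leq n$ and $k \in \Z$, and the displayed formula above shows $\se$ permutes these shifted pieces. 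Hence $\se\U = \U$.

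For the converse, assume $\se\U = \U$. The formula forces $\se(\se_d^{-1}\Lambda) \cong \Sigma^d\Lambda \in \U$, so $\Sigma^d\Lambda$ must be a direct summand of $\bigoplus_i \se_d^{-p_i}\Lambda$ for finitely many $p_i$. Iterating and using that the global dimension of $\Lambda$ is at most $d$, which bounds the cohomological amplitude of the relevant iterates, I would show by induction that each $\se_d^{-p}\Lambda$ for $p \geq 0$ is concentrated in degree $0$ and that the sequence $(\se_d^{-p}\Lambda)_{p \geq 0}$ eventually closes up modulo $\Sigma^d$. The resulting finite sum $M = \bigoplus_{0 \leq p \leq n}\se_d^{-p}\Lambda$ is the candidate $d$-cluster tilting module; the required Ext-vanishing conditions between summands follow from Serre duality
\[ \Db(\Lambda)(X,\Sigma^i Y) \cong \Db(\Lambda)(Y,\Sigma^{-i}\se X)^\vee \]
combined with the $\se$-stability of $\U$, which lets one transfer vanishing conditions along the orbit.

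The main obstacle will be the cohomological bookkeeping in the converse direction: pinning down precisely which cohomological degree each iterate $\se_d^{-p}\Lambda$ sits in, and translating the closure condition $\se\U = \U$ into the Ext-vanishing axioms defining a $d$-cluster tilting subcategory. This uses the interplay between finite global dimension, Serre duality, and the specific generating set of $\U$, and is really the technical heart of Iyama--Oppermann's theorem.
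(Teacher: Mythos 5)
First, a point of comparison: the paper does not prove this statement at all --- it is quoted verbatim as \cite[Theorem 3.1]{io-stab}, so there is no internal proof to measure your argument against; what you are attempting is a reproof of Iyama--Oppermann's theorem itself. Your opening reduction is correct and is the easy part: since $\se$ commutes with $\Sigma$ (and with $\se^-$), one has $\se\circ\se_d^{-}\cong\Sigma^d$, so $\se\U=\U$ is equivalent to $\Sigma^{d}\U=\U$. But both directions as written have genuine gaps. In the forward direction you invoke a ``structure theorem'' asserting that the $d$-cluster tilting module may be taken to be $\bigoplus_{0\le p\le n}\se_d^{-p}\Lambda$ with each term a stalk module and $\se_d^{-(n+1)}\Lambda\cong\Sigma^d\Lambda$. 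That statement is not an independent input: identifying the module-category iterates $\tau_d^{-p}\Lambda$ with the derived iterates $\se_d^{-p}\Lambda$ and showing the orbit lands on injectives is precisely the content of Iyama's and Iyama--Oppermann's analysis, so assuming it makes the forward direction close to circular. Moreover, as stated it is false in general: the closing-up happens summand-wise, $\se_d^{-(\ell_i+1)}P_i\cong\Sigma^d P_{j(i)}$ with $\ell_i$ depending on $i$ and a possibly nontrivial permutation $i\mapsto j(i)$; a single $n$ with $\se_d^{-(n+1)}\Lambda\cong\Sigma^d\Lambda$ is exactly the restrictive ``homogeneous'' situation that this paper is at pains to avoid. (The summand-wise statement still yields $\se\U=\U$, so the conclusion survives, but the claim you lean on does not.)

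In the converse direction the argument is only a declaration of intent: the induction showing that each $\se_d^{-p}\Lambda$ is concentrated in degree $0$ until the orbit closes up modulo $\Sigma^d$, and the verification that $\bigoplus_p\se_d^{-p}\Lambda$ is $d$-cluster tilting in $\Lambda\mMod$, are exactly the technical heart, and Serre duality plus $\se$-stability of $\U$ do not by themselves deliver them. One needs the vanishing statements such as $\Hom_{\Db(\Lambda)}(\U,\Sigma^i\U)=0$ for $0<i<d$ (which rest on the assumption $\operatorname{gldim}\Lambda\le d$ and on how $\U$ is generated), and then the maximal-orthogonality condition in the module category, which is where the real work in \cite{io-stab} lies. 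As it stands, your proposal correctly locates where the difficulty is but does not close it; if your aim is to use the theorem (as this paper does), cite \cite[Theorem 3.1]{io-stab}, and if your aim is to prove it, the two highlighted steps must be carried out rather than sketched.
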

Note that, by Lemma \ref{lem:serreonsubcat}, $\se\U=\U$ implies that $\U$ has Serre duality.
Note also that, by definition of $\U$, if $\se\U=\U$ then we also have $\Sigma^d\U=\U$.  So if 
$\Lambda$ is {$d$-representation finite}
 we get a $\Z$-equivariant category
\[ (\D,F):=(\U,\Sigma^d).  \]
Note that $\te=\se_d^-$.

Set $\C=\U/\se_d^{-}$.  Then the object $\Lambda\in\C$ generates $\C$ (by which we mean the functor $\Mat\ic \gen{\Lambda}\into \C$ is an equivalence of graded categories, where $\gen{\Lambda}$ denotes the full subcategory of $\C$ on the object $\Lambda$).  

We may assume without loss of generality that $\Lambda$ is basic.
Note that, by definition, $\C$ has as many objects as there are summands of $\Lambda$, which is finite because $\Lambda$ is finite-dimensional.
\begin{definition}
The \emph{preprojective algebra} of $\Lambda$, denoted $\Pi$, is the $\Z$-graded base algebra of $\C$. 
\end{definition}
Note that this agrees with the usual \cite{bgl,io-stab} definition of the higher $(d+1)$-preprojective algebra, with its tensor grading.  We usually omit the word \emph{higher}.
\begin{theorem}[Iyama-Oppermann]\label{thm:io-frob}
If $\Lambda$ is basic and {$d$-representation finite} then $\Pi$ is Frobenius.
\end{theorem}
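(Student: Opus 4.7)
The plan is to apply the machinery developed in Sections \ref{s:serre}--\ref{s:graded} to transport the Serre structure on $\U$ all the way down to a graded Frobenius structure on $\Pi$. Specifically, I would run the following chain of transfers: Serre structure on $\U$ $\leadsto$ $\sgn$-equivariant Serre structure on $(\U,\Sigma^d)$ $\leadsto$ $\sgn$-equivariant Serre structure on $(\U,\se_d^-)$ $\leadsto$ $\sgn$-hom-graded Serre structure on $\C=\U/\se_d^-$ $\leadsto$ $\sgn$-graded Frobenius structure on the base algebra $\Pi$.

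First, since $\Lambda$ is $d$-representation finite, Theorem \ref{thm:io-seu} gives $\se\U=\U$, so Lemma \ref{lem:serreonsubcat} yields a Serre structure $(\se,\kappa)$ on $\U$. As noted in the text, this also gives $\Sigma^d\U=\U$, producing the $\Z$-equivariant category $(\D,F)=(\U,\Sigma^d)$. Next, I would upgrade $(\se,\kappa)$ to a $\sgn$-equivariant Serre structure $(\se,\sse,\kappa)$ on $(\D,F)$, which exists (and is unique up to isomorphism) by the remark after Definition \ref{def:equivar-Serre}. I would then apply Proposition \ref{prop:transfer-serre} with the change-of-action monoidal isomorphism $(\se^-,\zeta^\dagger)_\star$ to obtain a $\sgn$-equivariant Serre structure $(\se,\sse',\kappa)$ on $(\D,\te)$, where $\te=\se^-F=\se_d^-$.

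With this in hand, Proposition \ref{prop:orbiting-serre} (the ``orbiting Serre functors'' result) gives a $\sgn$-hom-graded Serre structure on the orbit category $\C=\D/\te=\U/\se_d^-$. To conclude via Theorem \ref{thm:graded-ss-base}, I need $\C$ to be a \emph{finite} $\Z$-graded $\kk$-category in the sense of Definition preceding Theorem \ref{thm:graded-ss-base}: finitely many isoclasses of indecomposables, and hom-finite. Since $\Lambda$ is basic and finite-dimensional, the discussion before the statement of Theorem \ref{thm:io-frob} already observes that $\Lambda$ generates $\C$ and that $\C$ has as many indecomposables as summands of $\Lambda$, so there are only finitely many; hom-finiteness follows because the hom spaces of $\C$ are direct sums of $\Hom_{\Db(\Lambda)}(\Lambda,\se_d^{-p}\Lambda)$ which are finite-dimensional (each is a $\Hom$ space between finitely generated modules over a finite-dimensional algebra). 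Applying Theorem \ref{thm:graded-ss-base} then produces a $\sgn$-graded Frobenius structure on the $\Z$-graded base algebra $\Pi$ of $\C$; forgetting the grading (or using Lemma \ref{lem:trgr-chigr} to compare with the $\tr$-graded version) shows $\Pi$ is Frobenius in the classical sense.

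The main obstacle I anticipate is the finiteness verification in the penultimate step: one must be careful that $\C$ really is finite as a $\Z$-graded category, and that the base algebra construction (Definition \ref{def:basealg}) indeed recovers the algebra $\Pi$ defined here. Both are essentially bookkeeping, but the bookkeeping hinges on identifying the indecomposables of $\C$ with the (finitely many) summands of $\Lambda$ and identifying the graded endomorphism ring $\bigoplus_{p\geq 0}\Hom_\U(\Lambda,\se_d^{-p}\Lambda)$ with the base algebra of $\C$ as defined via $\Ind\ic$. Once that identification is pinned down, the rest of the chain is a direct application of the transfer theorems already proved.
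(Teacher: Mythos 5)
Your proposal is correct and follows essentially the same route as the paper: transport the Serre structure on $\U$ (coming from Theorem \ref{thm:io-seu} and Lemma \ref{lem:serreonsubcat}) through the orbit category $\C=\U/\se_d^-$ via Theorem \ref{thm:transfersss} (equivalently, Proposition \ref{prop:orbiting-serre}) and then apply Theorem \ref{thm:graded-ss-base} to the $\Z$-graded base algebra. The only differences are cosmetic: your intermediate change-of-action step via Proposition \ref{prop:transfer-serre} is not needed for bare existence, since a $\chi$-equivariant Serre structure on $(\D,\te)$ exists automatically once $\D$ has Serre duality, and the identification of $\Pi$ with the graded base algebra of $\C$ is definitional in the paper.
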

\begin{proof}
By Theorem \ref{thm:io-seu}, $\U$ has a Serre functor.  So by Theorem \ref{thm:transfersss}
$\C$ has a Serre functor.  So by Theorem \ref{thm:graded-ss-base}  $\Pi$ is Frobenius.
\end{proof}

Recall Definition \ref{def:syncy}.  Together, Theorems \ref{thm:scy-criteria} and \ref{thm:fcy-orbit} immediately give the following:
\begin{proposition}
$(\U,\Sigma^d)$ is $\chi$-synthetic Calabi-Yau of dimension $N/m$ if and only if the $\chi$-graded Nakayama automorphism $(\alpha,\ell)$ of $\Pi$ satisfies $(\alpha,\ell)^{m-N}\cong(\id,\underline{N})$.
\end{proposition}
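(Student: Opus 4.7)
The plan is to chain together Theorem \ref{thm:scy-criteria} and Theorem \ref{thm:fcy-orbit}, applied to the $\Z$-equivariant category $(\D,F) := (\U,\Sigma^d)$. Under this specialization we have $\te = \se^- \Sigma^d = \se_d^-$, the orbit category $\D/\te$ coincides with $\C = \U/\se_d^-$, and the $\Z$-graded base algebra of $\C$ is $\Pi$ by the definition of the preprojective algebra. So the proposition will follow from the two abstract theorems once their hypotheses are verified in this setting.

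First I would check that $(\U,\Sigma^d)$ admits a $\chi$-equivariant Serre structure. Since $\Lambda$ is $d$-representation finite, Theorem \ref{thm:io-seu} gives $\se\U = \U$; combined with Lemma \ref{lem:serreonsubcat} this produces a Serre structure on $\U$, which then promotes to a $\chi$-equivariant Serre structure on $(\U,\Sigma^d)$ for the fixed character $\chi$ (since $\Sigma^d\U = \U$ automatically). With this in place, Theorem \ref{thm:scy-criteria}~(a)$\iff$(b) says that $(\U,\Sigma^d)$ is $\chi$-synthetic Calabi-Yau of dimension $N/m$ if and only if $(\U,\te)$ carries a $\chi$-equivariant Serre functor $(\se,\sse')$ satisfying $(\se,\sse')^{m-N}\cong(\te,\id_{\te^2})^N$.

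Next I would feed this into Theorem \ref{thm:fcy-orbit}, whose condition (a) is exactly what we have. To apply the equivalence (a)$\iff$(c) I need $\C$ to have finitely many isomorphism classes of indecomposable objects; this holds because $\Lambda$ is basic and finite-dimensional, hence has finitely many indecomposable summands, and $\Lambda$ generates $\C$ as noted earlier. Since the $\Z$-graded base algebra of $\C$ is $\Pi$, Theorem \ref{thm:fcy-orbit}~(a)$\iff$(c) translates the previous condition into the statement that $\Pi$ is a $\chi$-graded Frobenius algebra with Nakayama automorphism $(\alpha,\ell)$ satisfying $(\alpha,\ell)^{m-N}\cong(\id_\Pi,\underline{N})$; the Frobenius property is in any case guaranteed by Theorem \ref{thm:io-frob}. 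Composing the two chains of equivalences concludes the proof. The argument is essentially bookkeeping —— matching the abstract $\te$, $\C$, and base algebra in the two preceding theorems with their concrete incarnations $\se_d^-$, $\U/\se_d^-$, and $\Pi$ here, and checking the finiteness hypothesis on $\C$ —— so I do not anticipate any substantive obstacle.
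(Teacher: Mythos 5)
Your proposal is correct and is essentially the paper's own argument: the paper proves this proposition simply by combining Theorems \ref{thm:scy-criteria} and \ref{thm:fcy-orbit} with $(\D,F)=(\U,\Sigma^d)$, $\te=\se_d^-$, $\C=\U/\se_d^-$, and base algebra $\Pi$. Your additional bookkeeping (Serre structure on $\U$ via Theorem \ref{thm:io-seu} and Lemma \ref{lem:serreonsubcat}, and the finiteness of $\C$ from the summands of $\Lambda$) just makes explicit the hypotheses the paper treats as already established in that section.
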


We can take powers of characters.  In particular, $(\sgn)^d$ sends $1\in\Z$ to $(-1)^d\in\kk^\times$.
\begin{proposition}\label{prop:u-fcy}
$\Db(\Lambda)$ is $dp/q$-fCY if and only if $(\U,\Sigma^d)$ is $(\sgn)^d$-synthetic $p/q$-CY.
\end{proposition}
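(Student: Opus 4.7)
The plan is to translate both synthetic Calabi-Yau conditions into statements about natural isomorphisms of underlying functors together with matching equivariant commutation data. The forward direction will proceed by restricting from $\Db(\Lambda)$ to the subcategory $\U$, which is stable under both $\se$ (by Theorem \ref{thm:io-seu}) and $\Sigma^d$ (by construction of $\U$), and by reinterpreting the $\gen\Sigma$-equivariance as $\gen{\Sigma^d}$-equivariance via iteration as in Remark \ref{rmk:Zequivar}. The converse direction will use Theorem \ref{thm:bimfcy} to lift a statement on $\U$ back to all of $\Db(\Lambda)$ by passing through a bimodule isomorphism.

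First I will check that the two target equivariant endofunctors match on $\U$. Iterating an equivariant structure, $(\Sigma,-\id_{\Sigma^2})^{dp}$ has functor $\Sigma^{dp}$ with $\gen\Sigma$-commutation $(-1)^{dp}\id$; passing to $\gen{\Sigma^d}$-equivariance by iterating $d$ times produces commutation $(-1)^{d^2 p}\id=(-1)^{dp}\id$, using that $d(d-1)$ is always even. This matches $(\Sigma^d,(-1)^d\id_{\Sigma^{2d}})^p$, which directly has functor $\Sigma^{dp}$ and commutation $(-1)^{dp}\id$. Similarly, the $\sgn$-equivariant commutation $\sse_\Sigma=-\zeta_\Sigma$ iterated $d$ times yields $(-1)^d\zeta_{\Sigma^d}$, which is precisely the $(\sgn)^d$-equivariant commutation $\sse'$ on $\U$. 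With these compatibilities in hand, the forward direction is immediate: any $\gen\Sigma$-equivariant iso $\alpha:(\se,\sse_\Sigma)^q\arr\sim(\Sigma,-\id_{\Sigma^2})^{dp}$ on $\Db(\Lambda)$ restricts to $\U$, and its $\gen\Sigma$-equivariance automatically implies $\gen{\Sigma^d}$-equivariance, giving the required iso $(\se,\sse')^q\cong(\Sigma^d,(-1)^d\id_{\Sigma^{2d}})^p$ on $(\U,\Sigma^d)$.

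For the converse, I would forget equivariance to obtain a natural iso $\alpha:\se^q\arr\sim\Sigma^{dp}$ on $\U$ and evaluate at $\Lambda\in\U$, using $\se^q\Lambda=(\Lambda^*)^{\dert q}$ and $\Sigma^{dp}\Lambda=\Lambda[dp]$, to get a morphism $\alpha_\Lambda:(\Lambda^*)^{\dert q}\arr\sim\Lambda[dp]$ in $\Db(\Lambda)$. Naturality of $\alpha$ with respect to the action of $\End_\U(\Lambda)=\Lambda$ by right multiplication promotes $\alpha_\Lambda$ to an isomorphism of $\Lambda\da\Lambda$-bimodules in $\Db(\Lambda\da\Lambda)$, and Theorem \ref{thm:bimfcy} then furnishes the $dp/q$-fCY property for $\Db(\Lambda)$. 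The main obstacle is the careful bookkeeping of iterated commutation maps and sign conventions when switching between the $\gen\Sigma$- and $\gen{\Sigma^d}$-actions, particularly ensuring that the parity identity $d^2\equiv d\pmod 2$ is correctly applied to align the commutation scalars.
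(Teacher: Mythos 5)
Your forward direction is essentially the paper's own argument: pass from $(\Db(\Lambda),\Sigma)$ to $(\Db(\Lambda),\Sigma^d)$ by taking $d$th powers of the commutation maps, use $(-1)^{d^2}=(-1)^d$ to identify the resulting commutation data with the $(\sgn)^d$-equivariant one, and restrict to $\U$, which is legitimate since $\se$ and $\Sigma^d$ preserve $\U$.

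The converse, however, has a genuine gap at the sentence ``naturality of $\alpha$ with respect to the action of $\End_\U(\Lambda)=\Lambda$ by right multiplication promotes $\alpha_\Lambda$ to an isomorphism of $\Lambda\da\Lambda$-bimodules in $\Db(\Lambda\da\Lambda)$.'' What naturality gives you is a morphism $\alpha_\Lambda:(\Lambda^*)^{\dert q}\to\Lambda[dp]$ \emph{in $\Db(\Lambda)$} which commutes with the right multiplications $r_a$, $a\in\Lambda$, as morphisms of $\Db(\Lambda)$. In general a morphism in $\Db(\Lambda)$ between restrictions of bimodule complexes that intertwines the right actions need not be induced by any morphism in $\Db(\Lambda\da\Lambda)$: the restriction functor $\Db(\Lambda\da\Lambda)\to\Db(\Lambda)$ is not full, and compatibility with the right action only up to the derived category of left modules does not by itself produce a bimodule morphism (this is precisely the kind of subtlety that makes the existence of two-sided tilting complexes a theorem rather than a triviality). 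The missing step is the one the paper takes from the standard argument (\cite[Lemma 4.2]{hi-frac}): since $\alpha_\Lambda$ is an isomorphism onto the stalk complex $\Lambda[dp]$, the bimodule complex $(\Lambda^*)^{\dert q}$ has cohomology concentrated in a single degree, hence is isomorphic in $\Db(\Lambda\da\Lambda)$ to a shifted twisted bimodule $\Lambda_\sigma[dp]$ for some $\sigma\in\Aut(\Lambda)$; only \emph{after} this reduction are both sides stalk complexes in the same degree, so that derived-category morphisms between them are honest module homomorphisms, and your naturality argument with respect to the $r_a$ then does show the left-module isomorphism $\Lambda_\sigma\cong\Lambda$ is a bimodule isomorphism, killing the twist. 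With that intermediate identification inserted, your appeal to Theorem \ref{thm:bimfcy} completes the proof exactly as in the paper.
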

\begin{proof}
Suppose $\Db(\Lambda)$ is $dp/q$-fCY, so the $\sgn$-equivariant Serre functor $(\se,\sse)$ satisfies $(\se,\sse)^q=(\Sigma,-\id_{\Sigma^2})^{dp}$.  So $(\se,\sse)^q=(\Sigma^d,(-1)^d\id_{\Sigma^{d+1}})^{p}$.  We now take $d$th powers of the commutator natural transformations to move from the $\Z$-graded category $(\Db(\Lambda),\Sigma)$ to $(\Db(\Lambda),\Sigma^d)$: this gives  $(\se,\sse^d)^q=(\Sigma^d,(-1)^{d^2}\id_{\Sigma^{2d}})^{p}$.  Note that $s^d:\se\Sigma^d\arr\sim\Sigma^d\se$ makes $(\se,\sse^d)$ a $(\sgn)^d$-equivariant Serre functor, and $(\sgn)^d(1)=(-1)^{d^2}=(-1)^{d}$, so this says that $(\Db(\Lambda),\Sigma^d)$ is $(\sgn)^d$-synthetic $p/q$-CY.  Thus the equivariant subcategory $(\U,\Sigma^d)$ is also $(\sgn)^d$-synthetic $p/q$-CY.

Now suppose $\U$ is $p/q$-fCY.
Note that $\Lambda=\se_d^0\Lambda\in\U$, so applying $\se^q\cong(\Sigma^d)^p$ to $\Lambda$ we get an isomorphism $\se^q\Lambda\cong \Lambda[dp]$ in $\U\subseteq\Db(\Lambda)$.  By a standard argument (e.g, see \cite[Lemma 4.2]{hi-frac}) this implies that we have a morphism of endofunctors of $\Db(\Lambda)$ given by $\se^q\cong \Lambda_\sigma[dp]\otimes_\Lambda-$ for some $\sigma\in\Aut(\Lambda)$.  So $\se^q$ is given by tensoring with some $\Lambda\da\Lambda$-bimodule $\Lambda_\sigma$ and shifting by $[dp]$.  But now as $\Lambda\in\U$ is concentrated in a single degree, and the right $\Lambda$-module structure corresponds to endomorphisms of $\Lambda$, the naturality of our isomorphism shows that our left module isomorphism $\Lambda\cong\Lambda_\sigma$
is in fact a morphism of $\Lambda\da\Lambda$-bimodules.  Hence we have an isomorphism $\se^q\Lambda\cong \Lambda[dp]$ of $\Lambda\da\Lambda$-bimodules and so, by Theorem \ref{thm:bimfcy}, 
$\Db(\Lambda)$ is $dp/q$-fCY.
\end{proof}
Note that $(\U,\Sigma^d)$ is a $(d+2)$-angulated category \cite{gko}, and the $(\sgn)^d$ character in the above theorem ensures our graded Serre functor preserves $(d+2)$-angles \cite[Theorem 3.3]{zhou}.

From Theorem \ref{thm:graded-ss-base} and Proposition \ref{prop:u-fcy} we get:
\begin{theorem}\label{thm:drf-fcy}
Let $\Lambda$ be a $d$-representation finite algebra and $\Pi$ its higher preprojective algebra.  Then $\Db(\Lambda)$ is fractional Calabi-Yau of dimension $dN/m$ if and only if the $(\sgn)^d$-graded Nakayama automorphism $(\alpha,\ell)$ of $\Pi$ satisfies $(\alpha,\ell)^{m-N}\cong(\id,\underline{N})$.
\end{theorem}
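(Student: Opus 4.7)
The plan is to concatenate three equivalences already established in the paper, applied with the data $(\D,F)=(\U,\Sigma^d)$, $\chi=(\sgn)^d$, and $\te=\se^-\Sigma^d=\se_d^-$. First, I would invoke Proposition \ref{prop:u-fcy} to restate the hypothesis: $\Db(\Lambda)$ is $dN/m$-fCY if and only if $(\U,\Sigma^d)$ is $(\sgn)^d$-synthetic Calabi-Yau of dimension $N/m$, i.e.\ $(\se,\sse)^m\cong(\Sigma^d,(\sgn)^d(1)\id_{\Sigma^{2d}})^N$ in $\Z\aCat$. To use this, I must first check that $(\U,\Sigma^d)$ carries a $(\sgn)^d$-equivariant Serre structure; this follows from the Iyama--Oppermann result Theorem \ref{thm:io-seu} (since $\Lambda$ is $d$-representation finite, $\se\U=\U$), together with Lemma \ref{lem:serreonsubcat} to restrict Van den Bergh's triangulated Serre functor on $\Db(\Lambda)$ to $\U$, and Definition \ref{def:equivar-Serre} to supply the $(\sgn)^d$-twisted commutation data.

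Next, I would apply the equivalence $\ref{scy:SF}\iff\ref{scy:ST}$ of Theorem \ref{thm:scy-criteria} with $\chi=(\sgn)^d$ to convert the synthetic CY condition on $(\U,\Sigma^d)$ into the existence of a $(\sgn)^d$-equivariant Serre structure $(\se,\sse')$ on $(\U,\te)$ together with an isomorphism $(\se,\sse')^{m-N}\cong(\te,\id_{\te^{2}})^N$. Then I would apply Theorem \ref{thm:fcy-orbit} to transport this condition across Asashiba's biequivalence to the orbit category $\C=\U/\te=\U/\se_d^-$, yielding a $(\sgn)^d$-hom-graded Serre structure $(\se^\C,\ell)$ on $\C$ with $(\se^\C,\ell)^{m-N}\cong(\id_\C,\underline{N})$.

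For the final step, I would use that $\Lambda$ is basic and finite-dimensional, so $\Lambda$ has only finitely many isomorphism classes of indecomposable summands, and thus $\C$ is generated by one object whose endomorphism algebra has finitely many primitive idempotents; hence $\C$ has finitely many isoclasses of indecomposables. This allows me to invoke the third equivalence of Theorem \ref{thm:fcy-orbit}, together with Theorem \ref{thm:graded-ss-base}, which identifies the $\Z$-graded base algebra of $\C$ with $\Pi$ (with its tensor grading) and converts the hom-graded Serre condition into the $(\sgn)^d$-graded Nakayama automorphism condition $(\alpha,\ell)^{m-N}\cong(\id,\underline{N})$.

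The main obstacle is bookkeeping rather than mathematics: I need to verify that the standing hypotheses of each theorem in the chain (existence of Serre structures, finiteness of isoclasses, matching of characters and equivariant structures) are simultaneously satisfied in the $d$-representation finite setting, and that the numerical data $(m,N)$ line up coherently through each translation. In particular, I should be careful that the power $(\sgn)^d$ appearing on the algebra side is exactly the character that arises from $d$ applications of Bondal--Kapranov/Van den Bergh in Proposition \ref{prop:u-fcy}, so that no hidden sign discrepancy is introduced when passing from $(\D,\Sigma)$ to $(\U,\Sigma^d)$. Once these compatibilities are confirmed, the theorem follows by stringing together the stated equivalences.
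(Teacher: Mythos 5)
Your proposal follows exactly the paper's route: the theorem is obtained by combining Proposition \ref{prop:u-fcy} (passing from the fractional Calabi--Yau property of $\Db(\Lambda)$ to the $(\sgn)^d$-synthetic condition on $(\U,\Sigma^d)$) with Theorems \ref{thm:scy-criteria} and \ref{thm:fcy-orbit}, using Theorem \ref{thm:graded-ss-base} and the finiteness of the orbit category $\C=\U/\se_d^-$ whose graded base algebra is $\Pi$ by definition. Your supporting checks (Theorem \ref{thm:io-seu} plus Lemma \ref{lem:serreonsubcat} for the Serre structure on $\U$, and the basic/finite-dimensional reduction for finiteness of indecomposables) match the paper's setup, so the argument is correct and essentially identical.
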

This theorem is useful because we have examples of $d$-representation finite algebras where the Nakayama automorphism of the higher preprojective algebra is known.  In Section \ref{sss:cuts} below we describe one such case.  It also provides a link between existing results in the literature, as we explain in Section \ref{sss:highertypea}.

Recall that a finite-dimensional algebra is \emph{connected} if it cannot be written as a non-trivial product of algebras; equivalently, its Gabriel quiver is connected.  The following simple result is quite useful.
\begin{lemma}\label{lem:constda}
Let $A$ be a $\Z$-graded, connected finite-dimensional algebra.  Let $(\alpha,\ell)$ be a degree-adjusted automorphism of $A$.  If $\alpha^k=\id$ then there exists $N\in\Z$ such that $(\alpha,\ell)^k=(\id,\underline{N})$.
\end{lemma}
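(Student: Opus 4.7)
The plan is to unpack the composition rule in $\fBasicAlg^G$, compute the iterate $(\alpha,\ell)^k$ explicitly, and then exploit connectedness to show the resulting degree-adjuster is constant.

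First I would spell out the composition in $\fBasicAlg^G$ coming from Definition \ref{def:damap} and Proposition \ref{prop:gralgcat}: the composite of $(\alpha_1,\ell_1)$ and $(\alpha_2,\ell_2)$ is $(\alpha_2\alpha_1,\ell_3)$ with $\ell_3(e)=\ell_1(e)+\ell_2(\alpha_1(e))$. Iterating gives
\[ (\alpha,\ell)^k = \Bigl(\alpha^k,\; \ell_k\Bigr), \quad \text{where} \quad \ell_k(e)=\sum_{i=0}^{k-1}\ell(\alpha^i(e)) \]
for every $e\in\prim(A)$. Since $\alpha^k=\id$ by assumption, it is enough to show that $\ell_k$ is a constant function $\underline{N}:\prim(A)\to\Z$; then $(\alpha,\ell)^k=(\id,\underline{N})$ by definition.

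Next I would extract the key numerical consequence of being degree-adjusted: given primitive idempotents $d,e$ and any homogeneous $a\in A^p$, we have $\alpha(dae)\in A^{p+\ell(e)-\ell(d)}$. Since $\alpha$ is an algebra automorphism it permutes $\prim(A)$, so $\alpha(dae)=\alpha(d)\alpha(a)\alpha(e)$ is again a ``localised'' homogeneous element and we may apply the condition again. A straightforward induction gives
\[ \alpha^k(dae)\in A^{p+\ell_k(e)-\ell_k(d)}. \]
Now suppose $dAe\neq 0$. Choose a nonzero homogeneous element $dae\in A^p$. Then $\alpha^k(dae)=dae$ by hypothesis, and this element lies simultaneously in $A^p$ and in $A^{p+\ell_k(e)-\ell_k(d)}$. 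Homogeneity forces $\ell_k(d)=\ell_k(e)$. By the symmetric argument, the same conclusion holds whenever $eAd\neq 0$.

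Finally I would invoke connectedness. Since $A$ is a basic connected finite-dimensional algebra, the graph on $\prim(A)$ with an edge between $d$ and $e$ whenever $dAe\neq 0$ or $eAd\neq 0$ is connected --- otherwise a partition of $\prim(A)$ into two such components would yield a direct product decomposition of $A$. Traversing such a path from any $d$ to any $e$ and applying the previous step at each edge shows $\ell_k(d)=\ell_k(e)$, so $\ell_k$ is constant on $\prim(A)$; taking $N$ to be its common value finishes the proof. There is no serious obstacle here: the argument is really just bookkeeping, with the only subtle point being the observation that $\alpha$ preserves primitive idempotents so the degree-adjusted property can be iterated cleanly.
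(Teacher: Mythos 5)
Your proof is correct, and since the paper states Lemma \ref{lem:constda} without proof (calling it a ``simple result''), your bookkeeping argument --- compute $\ell_k(e)=\sum_{i=0}^{k-1}\ell(\alpha^i(e))$ from the composition rule, use a nonzero homogeneous element of $dAe$ fixed by $\alpha^k$ to force $\ell_k(d)=\ell_k(e)$, and spread this over $\prim(A)$ by connectedness --- is exactly the intended one. It also mirrors the connectedness argument the paper itself runs at the categorical level in deducing Theorem \ref{thm:my} from Theorem \ref{thm:bbk} (following Herschend--Iyama), so there is nothing to add beyond the minor conventions you already note (e.g.\ that $\alpha$ permutes $\prim(A)$ and that primitive idempotents sit in degree $0$, as the paper assumes).
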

Lemma \ref{lem:constda} can be useful in practice to calculate explicit Calabi-Yau dimensions.  Combined with Theorem \ref{thm:drf-fcy}, it also gives a theoretical result.  We say an algebra is fractional Calabi-Yau if it is $p/q$-fCY for some $p,q\in\Z$.
\begin{corollary}\label{cor:fcyfinitenak}
Let $\Lambda$ be a $d$-representation finite algebra and let $(\alpha,\ell)$ denote the $\sgn$-graded Nakayama automorphism of its higher preprojective algebra.  Then $\alpha$ has finite order if and only if $\Lambda$ is fractional Calabi-Yau.
\end{corollary}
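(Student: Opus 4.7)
The plan is to derive this corollary as an immediate consequence of Theorem \ref{thm:drf-fcy} together with Lemma \ref{lem:constda}, treating each implication separately.

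For the ``fractional Calabi-Yau implies finite order'' direction, I would argue directly. If $\Db(\Lambda)$ is $dN/m$-fCY, then Theorem \ref{thm:drf-fcy} (applied to the $(\sgn)^d$-graded Nakayama automorphism, which differs from the $\sgn$-graded one only by a character-valued diagonal rescaling that preserves finite order, by Lemma \ref{lem:trgr-chigr}) yields an isomorphism $(\alpha,\ell)^{m-N}\cong(\id,\underline{N})$ of 1-cells in $\fBasicAlg^\Z_\ei$. Forgetting the degree-adjuster data collapses this to $\alpha^{m-N}\cong\id$ in $\kk\Alg_\ei$, meaning $\alpha^{m-N}$ is an inner automorphism. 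Since the Nakayama automorphism is only determined up to inner automorphism anyway, this is exactly the statement that $\alpha$ has finite order.

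For the converse, suppose $\alpha$ has finite order, so $\alpha^k\cong\id$ in $\kk\Alg_\ei$ for some $k\geq 1$. Then I would invoke Lemma \ref{lem:constda} applied to the finite-dimensional $\Z$-graded algebra $\Pi$ and the degree-adjusted automorphism $(\alpha,\ell)$: this produces an integer $N\in\Z$ with $(\alpha,\ell)^k\cong(\id,\underline{N})$. Feeding this back into Theorem \ref{thm:drf-fcy} gives that $\Db(\Lambda)$ is fractional Calabi-Yau, of explicit dimension $dN/k$.

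The main obstacle is the connectedness hypothesis in Lemma \ref{lem:constda}: the preprojective algebra $\Pi$ of a general $d$-representation finite algebra need not be connected (e.g., $\Lambda$ might decompose). I would handle this by decomposing $\Pi$ into its connected components, applying Lemma \ref{lem:constda} to each to obtain integers $N_1,\ldots,N_r$, and then noting that after replacing $k$ by a suitable multiple one can arrange a common shift $N$ — alternatively, one reduces to the connected case by treating each block of $\Lambda$ separately, since both the fractional Calabi-Yau property and having a finite-order Nakayama automorphism are conjunctions of the corresponding properties for the blocks. A minor secondary point to record is the $\sgn$ versus $(\sgn)^d$ discrepancy already noted above; once Lemma \ref{lem:trgr-chigr} is invoked, finite order is insensitive to which character is used.
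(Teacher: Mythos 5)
Your main line — combine Theorem \ref{thm:drf-fcy} with Lemma \ref{lem:constda}, using Lemma \ref{lem:trgr-chigr} to pass between the $\sgn$- and $(\sgn)^d$-graded Nakayama automorphisms — is exactly the proof the paper intends; the paper offers nothing beyond ``combine these two results'', and your handling of the two implications (including the observation that finite order is insensitive to twisting by a $\pm1$-valued character) is the intended argument.

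The genuine problem is your patch for the connectedness hypothesis of Lemma \ref{lem:constda}: it fails, and it cannot be repaired, because the corollary is actually false for disconnected $\Lambda$. First, if the blocks give $(\alpha_i,\ell_i)^k=(\id,\underline{N_i})$ with distinct $N_i$, then replacing $k$ by $kt$ replaces each shift by $tN_i$, so the shifts remain distinct: no multiple of $k$ produces a common $\underline{N}$. Second, the fractional Calabi--Yau property is \emph{not} a blockwise conjunction: $\Db(\Lambda_1\times\Lambda_2)$ is fCY only if a single pair $(N,m)$ works simultaneously on both factors. For instance, $\Lambda=\kk A_2\times\kk A_3$ is $1$-representation finite if one drops connectedness, and the graded Nakayama automorphism of $\Pi(A_2)\times\Pi(A_3)$ has finite order; but the pairs $(N,m)$ with $\se^m\cong\Sigma^N$ form the subgroup of $\Z^2$ generated by $(1,3)$ on the $A_2$ factor and by $(2,4)$ on the $A_3$ factor (recall the paper's own remark that $A_3$ is $2/4$- but not $1/2$-fCY), and these subgroups meet only in $0$, so the product is not fractional Calabi--Yau. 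The correct resolution is therefore not a reduction to blocks but the observation that $\Lambda$ is implicitly basic and connected here (as in Theorem B and throughout the $d$-representation finite literature), and then $\Pi$ is connected too — its degree-zero part is $\Lambda$ and its Gabriel quiver contains that of $\Lambda$ on the same vertex set — so Lemma \ref{lem:constda} applies directly. One further small correction: from $(\alpha,\ell)^k\cong(\id,\underline{N})$, Theorem \ref{thm:drf-fcy} (with $m-N=k$) gives Calabi--Yau dimension $dN/(k+N)$, not $dN/k$; compare the paper's Postnikov example, where $k=5$, $N=7$, $d=2$ yield dimension $14/12$.
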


In \cite[Remark 1.6]{hi-frac}, Herschend and Iyama ask:  is every $d$-representation-finite algebra fractionally Calabi-Yau?  Keeping the notation above, we translate this into the following:
\begin{question}
Does $\alpha$ have finite order?
\end{question}


\subsubsection{Higher type A algebras}\label{sss:highertypea}

Recall the higher type A preprojective algebras as studied in \cite{iya-ct,io-napr}.
In type $A^d_s$, 
they are given as $kQ/I$ where $Q$ has vertex set:
\[ Q_0=\{x=(x_1,x_2,\ldots,x_{d+1})\in\Z^{d+1}_{\geq0}\st\sum_{i=1}^{d+1}x_i=s-1\} \]
The arrows are of the form
\[ \alpha_{i,x}:x\to x+f_i, \;\; x, x+f_i\in Q_0 \]
for $1\leq i\leq d+1$ 
where 
\[f_1=(-1,1,0,\ldots,0), \;\; f_2=(0,-1,1,0,\ldots,0), \;\; \ldots, \;\; f_d=(0,\ldots,0,-1,1), \;\; f_{d+1}=(1,0,\ldots,0,-1).\]

Define a permutation $\omega_0$ of $Q_0$ by:
\[ \omega_0: (x_1,x_2,\ldots,x_{d+1}) \mapsto (x_{d+1},x_1,\ldots,x_d). \]
Then $\omega_0$ extends to a permutation $\omega=(\omega_0,\omega_1)$ of the quiver $Q$.  It fixes the ideal $I$ in $kQ$.
Let $\sigma$ denote the algebra automorphism induced by $\omega$ on $\Pi^d_s$.  
Note that $\sigma^{d+1}=\id$.
We have:
\begin{theorem}[{\cite[Theorem 3.5]{hi-frac}}]
$\sigma$ is a Nakayama automorphism of the (ungraded) algebra $\Pi$.
\end{theorem}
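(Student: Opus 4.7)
The plan is to verify, via a combination of derived-category reasoning and an explicit combinatorial calculation, that the algebra automorphism $\sigma$ acts as the Nakayama twist for a Frobenius form on $\Pi=\Pi^d_s$.

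First, I would recall that $\Pi$ is already known to be Frobenius by Theorem \ref{thm:io-frob}, so it remains only to identify its Nakayama automorphism up to inner automorphism. The basic invariant is the Nakayama permutation $\pi$ on primitive idempotents, defined by $\mathrm{soc}(\Pi e_x) \cong S_{\pi(x)}$; the main claim reduces to showing $\pi = \omega_0$, together with compatibility on arrows.

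Second, I would use the description of $\Pi$ as the base algebra of the orbit category $\C=\U/\se_d^-$, where $\U\subseteq \Db(\Lambda)$ is Iyama--Oppermann's $d$-cluster tilting subcategory of Iyama's higher Auslander algebra $\Lambda$ of type $A$. The vertices $x \in Q_0$ index indecomposable summands $M_x$ of a canonical generator of $\C$. By Theorem \ref{thm:graded-ss-base}, the (graded) Nakayama automorphism of $\Pi$ is induced by the Serre structure on $\C$, which by Proposition \ref{prop:orbiting-serre} in turn comes from $\se$ on $\U$. So the Nakayama permutation is precisely the permutation of $Q_0$ induced by $\se$ modulo $\se_d^-$. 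A direct calculation with the coordinates $x=(x_1,\ldots,x_{d+1})$, using the explicit description of the indecomposables $M_x$ in type $A$ and the action of $\se=\Sigma^d\se_d$, yields $\se M_x \cong M_{\omega_0(x)}$ in the orbit category, so $\pi = \omega_0$.

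Third, to upgrade agreement of the Nakayama automorphism and $\sigma$ on idempotents to an equality of algebra automorphisms (modulo inner ones), I would check compatibility on arrows via the explicit Frobenius pairing. The cyclic symmetry $\omega$ preserves the mesh relations defining $\Pi$, so $\sigma$ is a well-defined algebra automorphism; what must be shown is that twisting by $\sigma$ on the right gives a bimodule isomorphism $\Pi\arr\sim \Pi^*$. Equivalently, one exhibits socle generators $\varphi_x\in e_{\omega_0(x)}\Pi e_x$ and verifies the compatibility $\varphi_{y}\cdot \alpha_{i,x} = \sigma(\alpha_{i,x})\cdot \varphi_{x}$ (with $y=x+f_i$) for every arrow. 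The main obstacle is precisely this combinatorial verification: one must locate the specific top-degree socle generators and confirm that the pairing twist is the given cyclic $\omega$, rather than differing from it by an inner automorphism. The large symmetry group of type $A$ (acting transitively on the ``layers'' of $Q$) makes the bookkeeping tractable, but this check is the one piece of real content in the proof.
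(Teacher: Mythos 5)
You should first note that this statement has no proof in the paper at all: it is imported verbatim from Herschend--Iyama \cite[Theorem 3.5]{hi-frac}, whose proof is a direct computation with the quiver presentation of $\Pi^d_s$, so there is no in-paper argument for your proposal to match. Judged on its own terms, your outline (Frobenius from Theorem \ref{thm:io-frob}, Nakayama permutation read off from the Serre functor on the orbit category via Theorem \ref{thm:graded-ss-base} and Proposition \ref{prop:orbiting-serre}, then an arrow-level check) is a sensible strategy, but as written it is a plan rather than a proof: both substantive steps are only asserted. The object-level claim that $\se$ induces $\omega_0$ on the vertices modulo $\se_d^{-}$ requires the combinatorics of the $d$-cluster tilting subcategory in type $A$ and is not carried out, and the arrow-level verification, which you yourself identify as ``the one piece of real content,'' is likewise deferred.

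Moreover, the arrow-level identity you propose to verify is not the right one. With $\varphi_x\in e_{\omega_0(x)}\Pi e_x$ a socle generator, the right-hand side $\sigma(\alpha_{i,x})\cdot\varphi_x$ lies in $\operatorname{rad}(\Pi)\cdot\operatorname{soc}(\Pi e_x)=0$, and the left-hand side $\varphi_y\cdot\alpha_{i,x}$ lies in $e_{\omega_0(y)}\operatorname{soc}(\Pi e_x)$, which is also $0$ for $y\neq x$; so the identity holds vacuously and verifies nothing. The correct check is via a Frobenius form $\lambda$ dual to the socle basis, namely $\lambda(ab)=\lambda(b\,\sigma(a))$, i.e.\ comparing, for complementary paths $p:x\to z$ and $q:z\to\omega_0(x)$ with $qp=\varphi_x$, the element $\sigma(p)q$ with $\varphi_z$. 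There is also an unaddressed sign issue: the Serre-functor route naturally produces a $\chi$-graded Nakayama automorphism (with $\chi=(\sgn)^d$ in the triangulated setting), whereas the theorem concerns the classical ungraded Nakayama automorphism; by Lemma \ref{lem:trgr-chigr} these differ by a character twist, and the Dynkin comparison in this paper shows such twists are in general \emph{not} inner, so you cannot silently identify the two. You would need either to show the twist is harmless for $\Pi^d_s$ or to do the explicit verification with scalars on arrows (recall that in the $d=1$ case outside type $A$ genuine signs do appear), and neither is done.
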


Let $G=\Z^{d+1}$ with generating set $e_1,\ldots,e_{d+1}$.  The algebra $\Pi$ is graded with arrows $\alpha_{x,i}$ in degree $e_i$.  If $p:x\to y$ is a path from $x$ to $y$ of degree $\delta\in G$, we have the relation $y-x=\sum_i \delta_if_i$: see the proof of {\cite[Theorem 3.5]{hi-frac}}.  In components, this says:
\[ y_i-x_i=\delta_{i-1}-\delta_i \]
where we set $\delta_0=\delta_d$.

Let $\PP^G$ denote the free hom-graded $\kk$-linear category on $Q$ modulo the relations from $I$, so $A_{\PP^G}\cong\Pi$.   Note that the vertices are already elements of $\Z^{d+1}_{\geq0}\subset G$, so we have a natural degree adjuster $n:\ob\PP^G\to G$ sending $x\in Q_0$ to $x\in G$.
\begin{lemma}
$(\sigma,n)$ is a 1-cell $\PP^G\to\PP^G$ in $\GrCat G$.
\end{lemma}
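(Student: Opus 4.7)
The plan is to verify the two requirements that make a pair $(H,\gamma)$ into a 1-cell of $\GrCat G$, specialised to $H=\sigma$ and $\gamma=n$: first that $\sigma$ is a well-defined $\kk$-functor $\PP^G\to\PP^G$, and second that $\deg(\sigma f)=\deg(f)+n(y)-n(x)$ for every morphism $f:x\to y$ of $\PP^G$.

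Functoriality is essentially given. The permutation $\omega=(\omega_0,\omega_1)$ of the quiver $Q$ fixes the ideal $I$ (this was already noted in the paper), and $\sigma$ is by definition the induced $\kk$-algebra automorphism of $\Pi\cong A_{\PP^G}$; since $\ob\PP^G=Q_0$ and $\sigma$ acts on objects via $\omega_0$, this data assembles into a $\kk$-functor of $\PP^G$.

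For the degree-adjustment equation, I would reduce to the generating arrows. Both sides of the equation are additive along composites of composable paths (the left side by multiplicativity of degrees under $\sigma$, the right side because the object contribution telescopes), so it is enough to check the condition on a single arrow $\alpha_{i,x}:x\to x+f_i$ of degree $e_i$. A quick calculation gives $\omega_0(f_i)=f_{i+1}$ with indices taken mod $d+1$, since $f_i=-e_i+e_{i+1}$ and $\omega_0$ cyclically shifts coordinates. Hence $\sigma(\alpha_{i,x})=\alpha_{i+1,\omega_0(x)}$, which has degree $e_{i+1}$. On the other side, $n(x+f_i)-n(x)=f_i=e_{i+1}-e_i$, so $\deg(\alpha_{i,x})+n(x+f_i)-n(x)=e_i+(e_{i+1}-e_i)=e_{i+1}$, matching $\deg(\sigma\alpha_{i,x})$.

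As a sanity check that the additivity reduction is correct, for a general path $p:x\to y$ of degree $\delta=\sum_i\delta_ie_i$ the left side reads $\deg(\sigma p)=\sum_i\delta_ie_{i+1}$, while the right side is $\delta+(y-x)=\delta+\sum_i\delta_if_i=\sum_i\delta_ie_{i+1}$, using the identity $y-x=\sum_i\delta_if_i$ recalled immediately before the lemma. There is no real obstacle here: the only thing to be careful about is that the degree adjuster in the definition of a 1-cell of $\GrCat G$ is evaluated on the source and target of the original morphism (not of its image), which is exactly how $n(y)-n(x)=y-x$ enters.
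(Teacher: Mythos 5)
Your proof is correct and takes essentially the same route as the paper: the paper verifies the degree-adjuster identity directly on a homogeneous morphism of degree $\delta$ using the componentwise form of $y-x=\sum_i\delta_i f_i$, which is exactly the computation in your final ``sanity check'' paragraph. Your reduction to the generating arrows via additivity is just a harmless repackaging of that same verification.
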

\begin{proof}
We want the following equation to hold:
\[ \deg f = \deg\sigma(f) +n(x)-n(y) \]
If $\deg f=\delta\in G$ then $\deg\sigma(f)=\omega(\delta)$.
So, in degree $i$, this says
\[ \delta_i = \delta_{i-1}+x_i-y_i \]
which is true by the formula given above.
\end{proof}

We have a group homomorphism $\varphi:G\to\Z$ which projects onto the last component, so $\sum \lambda_ie_i$ is sent to $\lambda$.  Therefore we get a $\Z$-graded category $\PP=\varphi_*(\PP^G)$, and $(\sigma,\ell)$ is a 1-cell $\PP\to \PP$, where $\ell(x)=x_{d+1}$ is the last term of $x$.  
\begin{proposition}
$(\sigma,\ell)$ is the $\tr$-graded Nakayama automorphism of $\PP=\PP^d_s$, and we have $(\sigma,\ell)^{d+1}=(\id,\underline{s-1})$.
\end{proposition}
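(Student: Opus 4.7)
The plan is to split the proof into two essentially independent pieces: (i) exhibiting $(\sigma,\ell)$ as a $\tr$-graded Nakayama automorphism of $\PP$, and (ii) computing its $(d+1)$-th power. Piece (ii) is a short explicit calculation, while piece (i) will require an argument at the level of graded bimodules.

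For (ii), I would work directly from the composition law for degree-adjusted morphisms in $\fBasicAlg^\Z$: $(f_2,\gamma_2)\circ(f_1,\gamma_1)$ has degree adjuster $x\mapsto\gamma_1(x)+\gamma_2(f_1(x))$. Iterating, $(\sigma,\ell)^{d+1}$ has degree adjuster
\[ x\mapsto\sum_{j=0}^{d}\ell(\sigma^j(x))=\sum_{j=0}^{d}(\sigma^j(x))_{d+1}=\sum_{j=0}^{d}x_{d+1-j}=\sum_{i=1}^{d+1}x_i=s-1, \]
where the second equality uses $\sigma=\omega_0$ cyclically shifting coordinates, and the last uses the defining constraint $\sum x_i=s-1$ on the vertex set $Q_0$. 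Combined with $\sigma^{d+1}=\id$ (noted just before the proposition), this gives $(\sigma,\ell)^{d+1}=(\id,\underline{s-1})$.

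For (i), my approach is to first upgrade the Herschend--Iyama ungraded bimodule isomorphism $\Pi\arr\sim(\Pi^*)_\sigma$ to a $G$-graded isomorphism $\PP^G\arr\sim({\PP^G}^*)_{(\sigma,n)}$ in the richer $G=\Z^{d+1}$ setting, using the already-established fact that $(\sigma,n)$ is a 1-cell in $\GrCat G$. The natural candidate is the trace pairing supported on the socle of each $e_x\PP^G$, so the key check is that this socle generator lives in a $G$-homogeneous component whose degree matches $n(\sigma(x))-n(x)$; given such a check, the resulting Frobenius form is automatically $G$-homogeneous and $(\sigma,n)$ is a $\tr$-graded Nakayama automorphism for $\PP^G$. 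Then one pushes forward along the group homomorphism $\varphi:G\to\Z$ to obtain the $\Z$-graded statement, since $\varphi_\star$ acts only on the grading and not on the underlying bimodule structure, and the degree adjuster pushes to $\varphi\circ n=\ell$.

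The main obstacle is the $G$-degree computation for the socle generator of $e_x\PP^G$: this is a combinatorial statement about top paths in the quiver $Q$ of $\Pi^d_s$ whose verification would require engaging carefully with the mesh relations. A cleaner way to finesse this is to invoke Lemma \ref{lem:constda} together with piece (ii): since $Q$ is connected, any two valid degree adjusters $\ell$ and $\ell'$ making $(\sigma,\ell)$ and $(\sigma,\ell')$ graded Nakayama automorphisms with the same underlying $\sigma$ differ by a 2-cell in $\fBasicAlg^\Z$, and the $2$-cell condition $cbc\in B^{\ell(c)-\ell'(c)}$ forces $\ell-\ell'$ to be constant on the connected quiver. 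Thus, once existence of \emph{some} graded Nakayama automorphism of the form $(\sigma,-)$ is known (which follows from the existence of a Serre structure on the orbit category via Theorem \ref{thm:graded-ss-base} applied to the $d$-representation finite $\Lambda$), one only needs to pin down the constant by checking the formula $\ell(x)=x_{d+1}$ on a single well-chosen vertex (e.g.\ $x=(0,\dots,0,s-1)$, where the socle of $e_x\PP$ is easily located at the bottom of the cycle).
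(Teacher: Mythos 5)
Your computation of $(\sigma,\ell)^{d+1}$ is correct and is what the paper intends: iterating the composition rule for degree adjusters gives $x\mapsto\sum_{j=0}^{d}\ell(\sigma^j x)=\sum_{i=1}^{d+1}x_i=s-1$, and $\sigma^{d+1}=\id$. For the first assertion the paper prints no argument; the implicit one is the direct check that the Herschend--Iyama Frobenius form is homogeneous: by the relation $y_i-x_i=\delta_{i-1}-\delta_i$, every path $x\to\sigma(x)$ has $G$-degree of the form $x+c(1,\dots,1)$, and the socle class of $\Hom(x,\sigma x)$ is the one with $c=0$, hence sits in $G$-degree $n(x)$ and $\Z$-degree $x_{d+1}=\ell(x)$ --- which is exactly the condition $\kappa\colon\C^0(x,x)\arr{\sim}\C^{-\ell(x)}(x,\se x)^*$ in the definition of a $\tr$-hom-graded Serre structure. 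Note that the ``key check'' you state in your first route is the wrong quantity: the socle should lie in degree $n(x)$, not $n(\sigma(x))-n(x)$ (the latter has negative entries, so it cannot be the degree of a path); the identity $\deg\sigma(f)=\deg f+n(y)-n(x)$ governs how $\sigma$ moves degrees, not where the socle sits.

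The genuine gap is in your fallback route. Theorem~\ref{thm:graded-ss-base} yields \emph{some} $\tr$-graded Nakayama automorphism $(\alpha',\ell')$ of $\Pi$, but $\alpha'$ is determined only up to 2-cells, i.e.\ up to inner automorphisms; nothing there identifies $\alpha'$ with Herschend--Iyama's $\sigma$ on the nose, so ``existence of some graded Nakayama automorphism of the form $(\sigma,-)$'' does not follow as claimed. To bridge this you would have to show that the ungraded inner automorphism relating $\alpha'$ and $\sigma$ can be taken compatible with the grading (inner by a unit with homogeneous degree-zero vertex components), an extra argument using basicness, connectedness and $e_x\Pi^0e_x=\kk e_x$ which you do not supply. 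Your rigidity step is also circular as phrased: you cannot invoke uniqueness of graded Nakayama automorphisms to compare $\ell$ with the unknown adjuster while $(\sigma,\ell)$ is precisely what is being proved to be one; what does work is the 1-cell-level observation that two degree adjusters for the same underlying automorphism of a connected quiver algebra differ by a constant, which applies because the preceding lemma already makes $(\sigma,\ell)$ a 1-cell. Finally, even after these repairs, pinning down the constant at a single vertex is exactly the socle-degree computation above carried out at that vertex, so the combinatorial check is localized rather than avoided --- and doing it for all $x$ at once, as sketched above, is no harder.
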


If we cut $\Pi$ at the arrows $f_{d+1}$ (or, equivalently, take the degree $0$ subalgebra with respect to the $\Z$-grading) we get the $d$-representation finite algebra of type $A_s^d$, denoted $\Lambda^d_s$. 

The following result was first proved by Dyckerhoff, Jasso, and Walde \cite[Remark 2.29]{djw}.  Their proof uses the language of $\infty$-categories and a description of the algebras $\Lambda^d_s$ from \cite{jk}.  It was also proved by Dyckerhoff, Jasso, and Lekili using symplectic geometry \cite[Remark 2.5.2]{djl}.
\begin{theorem}\label{thm:typea-fcy}
$\Lambda^d_s$ is fCY of dimension $\displaystyle \frac{d(s-1)}{(s+d)}$.
\end{theorem}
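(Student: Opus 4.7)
The plan is to combine the preceding proposition with Theorem \ref{thm:drf-fcy}, after converting between the $\tr$-graded and the $(\sgn)^d$-graded Nakayama automorphism via Lemma \ref{lem:trgr-chigr}.  That lemma identifies the relevant $(\sgn)^d$-graded Nakayama automorphism of $\Pi=\Pi^d_s$ as $(\beta,\ell)$, where $\beta(a)=(-1)^{d\deg a}\,\sigma(a)$ for $a$ homogeneous of $\Z$-degree $\deg a$, with the same degree adjuster $\ell$.  By Theorem \ref{thm:drf-fcy} it is enough to exhibit positive integers $N, k$ with $(\beta,\ell)^k\cong(\id,\underline{N})$ and $dN/(N+k)=d(s-1)/(s+d)$; I will take $(N,k)=(c(s-1),c(d+1))$ with $c\in\{1,2\}$ depending on the parity of $d$.

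The heart of the argument is to iterate $\beta$ and track the resulting sign.  Primitive idempotents of $\Pi$ lie in $\Z$-degree $0$, so $\beta$ and $\sigma$ induce the same permutation of $\prim(\Pi)$; therefore $(\beta,\ell)^k$ has the same degree adjuster as $(\sigma,\ell)^k$, and its underlying algebra map satisfies
\[ \beta^k(a)=(-1)^{d\sum_{j=0}^{k-1}\deg(\sigma^j(a))}\,\sigma^k(a) \]
for $a$ homogeneous.  For $k=d+1$ the proposition gives $\sigma^{d+1}=\id$, so this collapses to $\beta^{d+1}(a)=(-1)^{dL(a)}\,a$, where I set $L(a):=\sum_{j=0}^{d}\deg(\sigma^j(a))$.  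To identify $L(a)$ with the total path length of $a$ (the length grading on $\Pi$ in which each arrow has degree one), I would use the refined $G=\Z^{d+1}$-grading under which $\alpha_{i,x}$ has degree $e_i$: on this grading $\sigma$ acts by the cyclic shift $e_i\mapsto e_{i+1}$, and the $\Z$-grading on $\Pi$ is the projection $G\to\Z$ onto the last coordinate.  Summing over $j=0,\ldots,d$ then picks up each $G$-component of $\deg_G a$ exactly once and recovers the total path length.

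The proof then splits on the parity of $d$.  If $d$ is even then $(-1)^{dL(a)}=1$ for every $a$, so $(\beta,\ell)^{d+1}=(\id,\underline{s-1})$ and Theorem \ref{thm:drf-fcy} with $c=1$ yields fractional Calabi-Yau dimension $d(s-1)/(s+d)$.  If $d$ is odd then only $\beta^{d+1}(a)=(-1)^{L(a)}\,a$ holds, which is not the identity since arrows change sign; squaring gives $\beta^{2(d+1)}=\id$, so $(\beta,\ell)^{2(d+1)}=(\id,\underline{2(s-1)})$, and Theorem \ref{thm:drf-fcy} with $c=2$ again produces the dimension $d\cdot 2(s-1)/(2(d+1)+2(s-1))=d(s-1)/(s+d)$.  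The main obstacle is precisely the sign bookkeeping in the second paragraph: once the action of $\sigma$ on the finer $G$-grading is nailed down the identity $\sum_j\deg(\sigma^j(a))=L(a)$ is automatic, but the residual $(-1)^{L(a)}$ factor when $d$ is odd is easy to overlook and is what forces the doubling $c=2$ rather than the naive $c=1$.
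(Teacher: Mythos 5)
Your reduction to Theorem \ref{thm:drf-fcy}, the identification $\beta=\sigma^{(\sgn)^d}$ via Lemma \ref{lem:trgr-chigr}, and the sign bookkeeping $\beta^{d+1}(a)=(-1)^{dL(a)}a$ (with $L(a)$ the path length, obtained from the cyclic action of $\sigma$ on the $\Z^{d+1}$-grading) are all correct, and your even-$d$ case agrees with the paper. The gap is in the odd-$d$ case: by passing to $\beta^{2(d+1)}$ you only obtain $(\beta,\ell)^{2(d+1)}\cong(\id,\underline{2(s-1)})$, hence that $\Lambda^d_s$ is fCY of dimension $2d(s-1)/(2(s+d))$. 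Under the paper's convention this is \emph{not} the same statement as fCY of dimension $d(s-1)/(s+d)$ — the Calabi--Yau dimension is a pair of integers, not a reduced fraction (recall the $A_3$ example: $2/4$-fCY but not $1/2$-fCY) — so your argument proves a strictly weaker conclusion than the theorem for $d$ odd. The final equality "$d\cdot 2(s-1)/(2(d+1)+2(s-1))=d(s-1)/(s+d)$" is exactly the forbidden cancellation.

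The missing idea (and the content of the paper's remark that $d+1$ is even) is that the residual automorphism $\nu\colon a\mapsto(-1)^{L(a)}a$, i.e.\ negation of every arrow, is \emph{isomorphic to the identity as a 1-cell} in $\fBasicAlg^G$: Theorem \ref{thm:drf-fcy} only asks for $(\alpha,\ell)^{m-N}\cong(\id,\underline{N})$, and 2-cells allow inner twists. Concretely, set $c(x)=x_2+x_4+\cdots+x_{d+1}\bmod 2$ for a vertex $x=(x_1,\dots,x_{d+1})$; since every arrow $\alpha_{i,x}\colon x\to x+f_i$ changes $c$ by $1$ (using that the indices are cyclic of even length $d+1$ — this is precisely where $d$ odd enters), conjugation by the degree-zero unit $u=\sum_x(-1)^{c(x)}e_x$ negates all arrows, and $u$ is a valid invertible 2-cell $(\beta,\ell)^{d+1}\to(\id,\underline{s-1})$ (the degree adjusters agree because $\beta$ and $\sigma$ induce the same permutation of the primitive idempotents). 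Hence $(\beta,\ell)^{d+1}\cong(\id,\underline{s-1})$ already, with no doubling, and Theorem \ref{thm:drf-fcy} gives $N=s-1$, $m=s+d$ and the stated dimension $d(s-1)/(s+d)$ uniformly in the parity of $d$.
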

\begin{proof}[Proof of Theorem \ref{thm:typea-fcy}]
We use Proposition \ref{prop:u-fcy}.  If $d$ is even then $(\sgn)^d=\tr$.  If $d$ is odd then we use the fact that we take an even power $\sigma^{d+1}$ together with Lemma \ref{lem:trgr-chigr}.
\end{proof}

\subsubsection{Example: an algebra coming from a Postnikov diagram}\label{sss:cuts}

Given a quiver with potential, i.e., a formal sum of cycles up to cyclic permutation, one obtains an algebra by formally differentiating the potential with respect to the arrows: this is known as the Jacobi algebra.  Herschend and Iyama studied self-injective Jacobi algebras and showed that they always arise as preprojective algebras of 2-representation finite algebras; moreover, one can recover the underlying 2-representation finite algebra by taking a \emph{cut} of the quiver \cite{hi-qp}.

Pasquali studied Jacobi algebras associated to Postnikov diagrams  \cite{pas}.  These algebras are obtained by taking stable endomorphism algebras of tilting modules for some other algebras, which were constructed by Jensen, King, and Su to categorify the cluster algebra structure on the homogeneous coordinate ring of the Grassmannian of $k$-planes in $n$-dimensional space \cite{jks}.  Pasquali showed that the Jacobi algebra is self-injective precisely when the Postnikov diagram is \emph{symmetric}, i.e., invariant under a rotation by $2k\pi/n$.  In this case, the rotation of the Postnikov diagram induces the Nakayama automorphism of the corresponding Jacobi algebra \cite[Theorem 8.2 and Corollary 8.3]{pas}.  In particular, it has finite order.  Pasquali notes the connection to fractionally Calabi-Yau algebras, but the theory of \cite{hi-frac} requires the quiver to have a cut which is fixed by the Nakayama automorphism in order for the 2-representation finite algebra to satisfy be \emph{homogeneous}.  This rarely happens in examples.

Using Pasquali's description of the Nakayama automorphism together with Corollary \ref{cor:fcyfinitenak}, we obtain:
\begin{theorem}
Every 2-representation finite algebra obtained as a cut of a self-injective quiver with potential coming from a Postnikov diagram is fractional Calabi-Yau.
\end{theorem}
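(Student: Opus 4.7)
My plan is to apply Corollary \ref{cor:fcyfinitenak} with $d=2$. Let $(Q,W)$ be the quiver with potential associated to a symmetric Postnikov diagram and let $\Pi=\mathrm{Jac}(Q,W)$ be its Jacobi algebra; by Pasquali's results $\Pi$ is self-injective. By the main theorem of \cite{hi-qp}, choosing a cut $c\subset Q_1$ produces a 2-representation finite algebra $\Lambda$ whose 3-preprojective algebra is $\Pi$, graded by the tensor grading in which arrows of $c$ have degree $1$ and the remaining arrows have degree $0$. The first step is simply to record this identification and note that, in this grading, $\Pi^0 \cong \Lambda$.

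Next I would invoke Pasquali's Theorem 8.2, which states that the classical (ungraded) Nakayama automorphism $\alpha_0$ of $\Pi$ is induced by the rotational symmetry of the Postnikov diagram. In particular, $\alpha_0$ has finite order (dividing the number $n$ of strands), and because it permutes the arrows of $Q$ it preserves the tensor grading. To pass from $\alpha_0$ to the $\sgn$-graded Nakayama automorphism $\alpha$ required by our framework, I would apply Lemma \ref{lem:trgr-chigr}, which gives $\alpha(a) = (-1)^p \alpha_0(a)$ for homogeneous $a \in \Pi^p$. Since $\alpha_0$ is degree-preserving, a one-line calculation shows
\[ \alpha^2(a) = (-1)^p \alpha_0\bigl((-1)^p \alpha_0(a)\bigr) = \alpha_0^2(a), \]
so $\alpha^2 = \alpha_0^2$ and $\alpha$ has finite order. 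Corollary \ref{cor:fcyfinitenak} then gives the claim.

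The main obstacle is not deep, but requires care: one must verify that Pasquali's combinatorially-defined Nakayama automorphism on the Jacobi algebra agrees on the nose with the classical Nakayama automorphism entering our Frobenius-algebra framework, not merely up to an inner automorphism, since it is the outer-automorphism class in $\Aut(\Pi)$ but the genuine automorphism whose finite order we exploit. This is a matter of unwinding Pasquali's construction against the Herschend--Iyama identification of $\Pi$ as the 3-preprojective algebra of $\Lambda$. Once this is settled, an explicit Calabi-Yau dimension $2N/m$ in any given example can in principle be extracted: apply Lemma \ref{lem:constda} to the connected algebra $\Pi$ to upgrade $\alpha^{2n}=\id$ to an identity $(\alpha,\ell)^{2n} \cong (\id,\underline{N})$ in $\fBasicAlg^{\Z}$, then read off $m = 2n$ and $N$ from Theorem \ref{thm:drf-fcy}—this is what enables the explicit computation promised in the introduction.
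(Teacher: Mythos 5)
Your proposal is, in substance, the paper's own argument: identify the Jacobi algebra with the tensor-graded higher preprojective algebra of the cut algebra via Herschend--Iyama, use Pasquali's theorem that the rotation induces the Nakayama automorphism (hence it has finite order), and feed this into Corollary \ref{cor:fcyfinitenak}; the paper's proof is exactly this, compressed into one sentence, with Lemma \ref{lem:constda} and Theorem \ref{thm:drf-fcy} doing the work in the explicit dimension computation.

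One step is wrong as stated, and it is precisely the subtlety the paper's formalism is built to absorb: the rotation does \emph{not} preserve the tensor grading, because the cut is in general not invariant under the rotation (the paper's cobweb example explicitly has no rotation-invariant cut, which is why the homogeneity hypothesis of Herschend--Iyama fails there). So Pasquali's automorphism $\alpha_0$ is not degree-preserving, and your one-line computation $\alpha^2=\alpha_0^2$ via Lemma \ref{lem:trgr-chigr} does not literally apply. The repair is to treat $\alpha_0$ as a degree-adjusted automorphism $(\alpha_0,\ell)$ in $\fBasicAlg^{\Z}$, with $\ell$ read off from the degrees of socle-generating paths as in the paper's example; the character twist then only rescales $\alpha_0$ by signs on homogeneous pieces, so finite order of $\alpha_0$ still yields finite order after twisting (and for $d=2$ one has $(\sgn)^d=\tr$, so no twist is needed where it matters in Theorem \ref{thm:drf-fcy}), after which Lemma \ref{lem:constda} applied to the connected algebra $\Pi$ gives $(\alpha_0,\ell)^k=(\id,\underline{N})$ and hence the fractional Calabi--Yau property. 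Finally, your worry about agreement ``on the nose'' rather than up to inner automorphism points in the wrong direction: the criterion in Theorem \ref{thm:drf-fcy} is an isomorphism of 1-cells, so it depends on the graded Nakayama automorphism only up to 2-cells (inner automorphisms compatible with the degree adjusters); what actually needs checking is that Pasquali's rotation, equipped with a suitable $\ell$, is compatible with the cut grading in this degree-adjusted sense, which is the verification the paper carries out implicitly in its example.
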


Computing the Calabi-Yau dimension explicitly takes more work.  We show how to do this in an explicit example.
Consider the following quiver $Q$:
\[
\tikzstyle{every node}=[circle, draw, fill=black!50, inner sep=0pt, minimum width=4pt]
\begin{tikzpicture}[thick,scale=0.5]
\tikzstyle{vertex}=[circle, draw, fill=black, inner sep=0pt, minimum width=4pt]
\tikzstyle{arc}=[draw,  decoration={markings,mark=at position 0.5 with {\arrow[semithick]{angle 90}, node[below left]{1}}},  postaction={decorate}]
\tikzstyle{rel}=[draw, dashed  decoration={markings,mark=at position 0.5 with {\arrow[semithick]{angle 90}, node[below left]{1}}},  postaction={decorate}]
\draw (90:3) node[vertex] (c1) [label=below:$c_1$] {}
      (18:3) node[vertex] (c2) [label=left:$c_2$] {}
      (306:3) node[vertex] (c3) [label=above left:$c_3$] {}
      (234:3) node[vertex] (c4) [label=above right:$c_4$] {}
      (162:3) node[vertex] (c5) [label=right:$c_5$] {};
\draw (108:6) node[vertex] (d1) [label=above:$d_1$] {}
      (72:6) node[vertex] (d2) [label=above:$d_2$] {}
      (36:6) node[vertex] (d3) [label=right:$d_3$] {}
      (0:6) node[vertex] (d4) [label=right:$d_4$] {}
      (324:6) node[vertex] (d5) [label=right:$d_5$] {}
      (288:6) node[vertex] (d6) [label=below:$d_6$] {}
      (252:6) node[vertex] (d7) [label=below:$d_7$] {}
      (216:6) node[vertex] (d8) [label=left:$d_8$] {}
      (180:6) node[vertex] (d9) [label=left:$d_9$] {}
      (144:6) node[vertex] (d10) [label=left:$d_{10}$] {};
\draw (c1) edge[arc]  (c2) (c2) edge[arc]  (c3) (c3) edge[arc]  (c4)  (c4) edge[arc]  (c5) (c5) edge[arc]  (c1);
\draw (d1) edge[arc]  (d2) (d2) edge[arc]  (c1) (c1) edge[arc]  (d1) ;
\draw (d3) edge[arc]  (d4) (d4) edge[arc]  (c2) (c2) edge[arc]  (d3) ;
\draw (d5) edge[arc]  (d6) (d6) edge[arc]  (c3) (c3) edge[arc]  (d5) ;
\draw (d7) edge[arc]  (d8) (d8) edge[arc]  (c4) (c4) edge[arc]  (d7) ;
\draw (d9) edge[arc]  (d10) (d10) edge[arc]  (c5) (c5) edge[arc]  (d9) ;
\draw (d1) edge[arc]  (d10) ;
\draw (d9) edge[arc]  (d8) ;
\draw (d7) edge[arc]  (d6) ;
\draw (d5) edge[arc]  (d4) ;
\draw (d3) edge[arc]  (d2) ;
\end{tikzpicture}
\]
It is a \emph{cobweb} quiver and comes from a $(4,10)$-Postnikov diagram: see \cite[Proposition 10.2]{pas}.  It has an obvious potential $W$ given by the sum of the clockwise cycles minus the sum of the anticlockwise cycles.  The Jacobi algebra $A=J(Q,W)$ has Nakayama automorphism induced by the rotation of $\frac 2 5 2\pi$, i.e., by the unique graph automorphism which acts on vertices by
\begin{align*}
 c_1 &\mapsto c_3 \mapsto c_5 \mapsto c_2 \mapsto c_4 \mapsto c_1; \\
d_1& \mapsto d_5 \mapsto d_9 \mapsto d_3 \mapsto d_7 \mapsto d_1; \\
d_2 &\mapsto d_6 \mapsto d_{10} \mapsto d_4 \mapsto d_8 \mapsto d_2. 
\end{align*}
Note that $\sigma^5=\id_\Pi$.

Any cut must contain exactly one arrow from each cycle, so by considering the central pentagon we see this quiver has no cut which is invariant under the Nakayama permutation.  Consider the following cut, consisting of all but one outer arrow and one arrow from the central pentagon:
\[\begin{tikzpicture}[thick,scale=0.5]
\tikzstyle{vertex}=[circle, draw, fill=black, inner sep=0pt, minimum width=4pt]
\tikzstyle{arc}=[draw,  decoration={markings,mark=at position 0.5 with {\arrow[semithick]{angle 90}, node[below left]{1}}},  postaction={decorate}]
\draw (90:3) node[vertex] (c1) [label=below:$c_1$] {}
      (18:3) node[vertex] (c2) [label=left:$c_2$] {}
      (306:3) node[vertex] (c3) [label=above left:$c_3$] {}
      (234:3) node[vertex] (c4) [label=above right:$c_4$] {}
      (162:3) node[vertex] (c5) [label=right:$c_5$] {};
\draw (108:6) node[vertex] (d1) [label=above:$d_1$] {}
      (72:6) node[vertex] (d2) [label=above:$d_2$] {}
      (36:6) node[vertex] (d3) [label=right:$d_3$] {}
      (0:6) node[vertex] (d4) [label=right:$d_4$] {}
      (324:6) node[vertex] (d5) [label=right:$d_5$] {}
      (288:6) node[vertex] (d6) [label=below:$d_6$] {}
      (252:6) node[vertex] (d7) [label=below:$d_7$] {}
      (216:6) node[vertex] (d8) [label=left:$d_8$] {}
      (180:6) node[vertex] (d9) [label=left:$d_9$] {}
      (144:6) node[vertex] (d10) [label=left:$d_{10}$] {};
\draw[dashed] (c1) to (c2);
\draw (c2) edge[arc]  (c3) (c3) edge[arc]  (c4)  (c4) edge[arc]  (c5) (c5) edge[arc]  (c1);
\draw  (d2) edge[arc]  (c1) (c1) edge[arc]  (d1) ;
\draw  (d4) edge[arc]  (c2) (c2) edge[arc]  (d3) ;
\draw (d6) edge[arc]  (c3) (c3) edge[arc]  (d5) ;
\draw  (d8) edge[arc]  (c4) (c4) edge[arc]  (d7) ;
\draw (d10) edge[arc]  (c5) (c5) edge[arc]  (d9) ;
\draw[dashed] (d3) to (d4) (d4) to (d5) (d5) to (d6) (d6) to (d7) (d7) to (d8) (d8) to (d9) (d9) to (d10) (d10) to (d1) (d1) to (d2) ;
\draw (d3) edge[arc]  (d2) ;
\end{tikzpicture}
\]
This defines a 2-representation finite algebra $\Lambda$, with quiver given by the arrows not in the cut.  It also induces a grading on the preprojective algebra $\Pi=A$ of $\Lambda$, with arrows in the cut having degree $1$.  

By direct calculation we see that the projective module associated to the vertex $d_1$ has socle generated by the following path:
\[ d_1 \to d_2 \to c_1 \to c_2 \to c_3 \to d_5. \]
This path has degree $2$, and so the graded Nakayama automorphism $(\sigma,\ell)$ of $\Pi$ has $\ell(d_1)=2$.  Rotating, we see that:
\[ \ell(d_1)=2; \;\;\; \ell(d_5)=1; \;\;\; \ell(d_9)=2; \;\;\; \ell(d_3)=1; \;\;\; \ell(d_7)=1 \]
so $\sigma^5=\id_\Pi$ and $\sum_{i=0}^4\ell(\sigma^i(d_1))=7$.  Thus, by Lemma \ref{lem:constda}, we have $(\sigma,\ell)^5=(\id,\underline{7})$.

As $d=2$ we have $\sgn^d=\tr$.  So by Theorem \ref{thm:drf-fcy}, $\Lambda$ is fractionally Calabi-Yau of dimension $\frac{14}{12}$.

\subsection*{Data availability statement}
Data sharing not applicable to this article as no datasets were generated or analysed during the current study.

School of Mathematics, University of East Anglia, Norwich NR4 7TJ, UK
\\\url{j.grant@uea.ac.uk}

\end{document}